\documentclass{article}

\usepackage{amsmath}
\usepackage{amsthm}
\usepackage{amssymb}
\usepackage{amscd}
\usepackage{xspace}
\usepackage{verbatim}
\pagestyle{plain}

\setlength{\topmargin}{-0.6in} \setlength{\textwidth}{6.in}

\setlength{\textheight}{1.20\textheight}

\setlength{\oddsidemargin}{-0.in}

\setlength{\evensidemargin}{-0.25in}

\newtheorem{theorem}{Theorem}[section]
\newtheorem{corollary}{Corollary}[section]
\newtheorem{lemma}{Lemma}[section]
\newtheorem{proposition}{Proposition}[section]
\theoremstyle{definition}
\newtheorem{definition}{Definition}[section]

\theoremstyle{remark}
\newtheorem{remark}{Remark}[section]
\numberwithin{equation}{section}


\newcommand{\ov}{\overline}

\newcommand{\e}{\varepsilon}

\renewcommand{\O}{\Omega}

\renewcommand{\liminf}{\varliminf}
\renewcommand{\limsup}{\varlimsup}

\renewcommand{\vec}[1]{\mathbf{#1}}
\newcommand{\field}[1]{\mathbb{#1}}

\newcommand{\R}{\field{R}}

\newcommand{\er}{\eqref}

\DeclareMathOperator{\Div}{div}

\DeclareMathOperator{\supp}{supp}

\renewcommand{\O}{\Omega}

\newcommand{\f}{\varphi}
\renewcommand{\vec}[1]{\boldsymbol{#1}}

\date{}

\begin{document}
\title{Variational resolution for some general classes of nonlinear evolutions. Part II}
\maketitle

\begin{center}
\textsc{Arkady Poliakovsky \footnote{E-mail:
poliakov@math.bgu.ac.il}
}\\[3mm]
Department of Mathematics, Ben Gurion University of the Negev,\\
P.O.B. 653, Be'er Sheva 84105, Israel
\\[2mm]
\end{center}
\begin{abstract}
Using our results in \cite{PI}, we provided existence theorems for
the general classes of nonlinear evolutions. Finally, we give
examples of applications of our results to parabolic, hyperbolic,
Shr\"{o}dinger, Navier-Stokes and other time-dependent systems of
equations.
\end{abstract}
\section{Introduction}

Let $X$ be a reflexive Banach space. Consider the following
evolutional initial value problem:
\begin{equation}\label{abstpr}
\begin{cases}
\frac{d}{dt}\big\{I\cdot
u(t)\big\}+\Lambda_t\big(u(t)\big)=0\quad\quad\text{in}\;\;(0,T_0),
\\
I\cdot u(0)=v_0.
\end{cases}
\end{equation}
Here $I:X\to X^*$ ($X^*$ is the space dual to $X$) is a fixed
bounded linear inclusion operator, which we assume to be
self-adjoint and strictly positive, $u(t)\in L^q\big((0,T_0);X\big)$
is an unknown function, such that $I\cdot u(t)\in
W^{1,p}\big((0,T_0);X^*\big)$ (where $I\cdot h\in X^*$ is the value
of the operator $I$ at the point $h\in X$), $\Lambda_t(x):X\to X^*$
is a fixed nonlinear mapping, considered for every fixed
$t\in(0,T_0)$, and $v_0\in X^*$ is a fixed initial value. The most
trivial variational principle related to \er{abstpr} is the
following one. Consider some convex function
$\Gamma(y):X^*\to[0,+\infty)$, such that
$\Gamma(y)=0$ if and only if $y=0$. Next define the following energy
functional
\begin{multline}\label{abstprhfen}
E_0\big(u(\cdot)\big):=\int_0^{T_0}\Gamma\bigg(\frac{d}{dt}\big\{I\cdot
u(t)\big\}+\Lambda_t\big(u(t)\big)\bigg)dt\\ \forall\, u(t)\in
L^q\big((0,T_0);X\big)\;\;\text{s.t.}\;\;I\cdot u(t)\in
W^{1,p}\big((0,T_0);X^*\big)\;\;\text{and}\;\;I\cdot u(0)=v_0\,.
\end{multline}
Then it is obvious that $u(t)$ will be a solution to \er{abstpr} if
and only if $E_0\big( u(\cdot)\big)=0$. Moreover, the solution to
\er{abstpr} will exist if and only if there exists a minimizer
$u_0(t)$ of the energy $E_0(\cdot)$, which satisfies $E_0\big(
u_0(\cdot)\big)=0$.

 We have the following generalization of this variational principle.
Let $\Psi_t(x):X\to[0,+\infty)$ be some convex G\^{a}teaux
differentiable function, considered for every fixed $t\in(0,T_0)$
and such that $\Psi_t(0)=0$. Next define the Legendre transform of
$\Psi_t$ by
\begin{equation}\label{vjhgkjghkjghjk}
\Psi^*_t(y):=\sup\Big\{\big<z,y\big>_{X\times X^*}-\Psi_t(z):\;z\in
X\Big\}\quad\quad\forall y\in X^*\,.
\end{equation}
It is well known that $\Psi^*_t(y):X^*\to\R$ is a convex function
and
\begin{equation}\label{vhjfgvhjgjkgjkh}
\Psi_t(x)+\Psi^*_t(y)\;\geq\; \big<x,y\big>_{X\times
X^*}\quad\quad\forall\, x\in X,\,y\in X^*\,,
\end{equation}
with equality if and only if $y=D\Psi_t(x)$. Next for
$\lambda\in\{0,1\}$ define the energy
\begin{multline}\label{abstprhfenbjvfj}
E_\lambda\big(u\big):=\int\limits_0^{T_0}\Bigg\{\Psi_t\Big(\lambda
u(t)\Big)+\Psi^*_t\bigg(-\frac{d}{dt}\big\{I\cdot
u(t)\big\}-\Lambda_t\big(u(t)\big)\bigg)+\lambda\bigg<u(t),\frac{d}{dt}\big\{I\cdot
u(t)\big\}+\Lambda_t\big(u(t)\big)\bigg>_{X\times X^*}\Bigg\}dt\\
\forall\, u(t)\in L^q\big((0,T_0);X\big)\;\;\text{s.t.}\;\;I\cdot
u(t)\in W^{1,p}\big((0,T_0);X^*\big)\;\;\text{and}\;\;I\cdot
u(0)=v_0.
\end{multline}
Then, by \er{vhjfgvhjgjkgjkh} we have $E_\lambda\big(\cdot\big)\geq
0$ and moreover, $E_\lambda\big(u(\cdot)\big)=0$ if and only if
$u(t)$ is a solution to
\begin{equation}\label{abstprrrrcn}
\begin{cases}
\frac{d}{dt}\big\{I\cdot
u(t)\big\}+\Lambda_t\big(u(t)\big)+D\Psi_t\big(\lambda
u(t)\big)=0\quad\quad\text{in}\;\;(0,T_0),
\\
I\cdot u(0)=v_0
\end{cases}
\end{equation}
(note here that since $\Psi_t(0)=0$, in the case $\lambda=0$
\er{abstprrrrcn} coincides with \er{abstpr}. Moreover, if
$\lambda=0$ then the energy defined in \er{abstprhfen} is a
particular case of the energy in \er{abstprhfenbjvfj}, where we take
$\Gamma(x):=\Psi^*(-x)\,$). So, as before, a solution to
\er{abstprrrrcn} exists if and only if there exists a minimizer
$u_0(t)$ of the energy $E_\lambda(\cdot)$, which satisfies
$E_\lambda\big( u_0(\cdot)\big)=0$. Consequently, in order to
establish the existence of solution to \er{abstprrrrcn} we need to
answer the following questions:
\begin{itemize}
\item [{\bf (a)}] Does a minimizer to the energy
in \er{abstprhfenbjvfj} exist?
\item [{\bf (b)}] Does the minimizer $u_0(t)$ of the corresponding
energy $E_\lambda(\cdot)$ satisfies
$E_\lambda\big(u_0(\cdot)\big)=0$?
\end{itemize}

 To the best of our knowledge, the energy in \er{abstprhfenbjvfj} with
$\lambda=1$, related to \er{abstprrrrcn}, was first considered for
the heat equation and other types of evolutions by Brezis and
Ekeland in \cite{Brez}. In that work they also first asked question
{\bf (b)}: If we don't know a priori that a solution of the equation
\er{abstprrrrcn} exists, how to prove that the minimum of the
corresponding energy is zero. This question was asked even for very
simple PDE's like the heat equation. A detailed investigation of the
energy of type \er{abstprhfenbjvfj}, with $\lambda=1$, was done in a
series of works of N. Ghoussoub and his coauthors, see the book
\cite{NG}  and also \cite{Gho}, \cite{GosM}, \cite{GosM1},
\cite{GosTz}. In these works they considered a similar variational
principle, not only for evolutions but also for some other classes
of equations. They proved  some theoretical results about  general
self-dual variational principles, which in many cases, can provide
with the existence of a zero energy state (answering questions {\bf
(a)}+{\bf (b)} together) and, consequently, with the existence of
solution for the related equations (see \cite{NG} for details).

 In \cite{PI} we provide  an alternative approach to the
questions {\bf (a)} and {\bf (b)}. We treat them separately and in
particular, for question {\bf (b)}, we derive the main information
by studying the Euler-Lagrange equations for the corresponding
energy. To our knowledge, such an approach was first considered in
\cite{P4} and provided there an alternative proof of existence of
solution for initial value problems for some parabolic systems.
Generalizing these results, we provide in \cite{PI} the answer to
questions {\bf (a)} and {\bf (b)} for some wide classes of
evolutions. In particular, regarding question {\bf (b)}, we are able
to prove that in some general cases not only the minimizer but also
any critical point $u_0(t)$ (i.e. any solution of corresponding
Euler-Lagrange equation) satisfies
$E_\lambda\big(u_0(\cdot)\big)=0$, i.e. is a solution to
\er{abstprrrrcn}.

%
%
%
%
%
%

We can rewrite the definition of $E_\lambda$ in \er{abstprhfenbjvfj}
as follows. Since $I$ is a self-adjoint and strictly positive
operator,
there exists a Hilbert space $H$ and an injective
bounded linear operator $T:X\to H$, whose image is dense in $H$,
such that if we consider the linear operator $\widetilde{T}:H\to
X^*$, defined by the formula
\begin{equation}\label{tildetjbghgjgklhgjkgkgkjjkjkl}
\big<x,\widetilde{T}\cdot y\big>_{X\times X^*}:=\big<T\cdot
x,y\big>_{H\times H}\quad\quad\text{for every}\; y\in
H\;\text{and}\;x\in X\,,
\end{equation}
then we will have
$\widetilde{T}\circ T\equiv I$, see Lemma \ref{hdfghdiogdiofg} for
details. We call $\{X,H,X^*\}$ an evolution triple with the
corresponding inclusion operator $T:X\to H$ and $\widetilde{T}:H\to
X^*$.
Thus, if $v_0=\widetilde{T}\cdot w_0$, for some $w_0\in H$ and
$p=q^*:=q/(q-1)$, where $q>1$, then we have
$$\int_0^{T_0}\bigg<u(t),\frac{d}{dt}\big\{I\cdot
u(t)\big\}\bigg>_{X\times X^*}dt=\frac{1}{2}\big\|T\cdot
u(T_0)\big\|^2_H-\frac{1}{2}\big\|w_0\big\|^2_H$$ (see Lemma
\ref{lem2} for details) and therefore,
\begin{multline}\label{abstprhfenbjvfjghgighjkjg}
E_\lambda\big(u\big)=J\big(u\big):=\\
\int\limits_0^{T_0}\Bigg\{\Psi_t\Big(\lambda
u(t)\Big)+\Psi^*_t\bigg(-\frac{d}{dt}\big\{I\cdot
u(t)\big\}-\Lambda_t\big(u(t)\big)\bigg)+\lambda\Big<u(t),\Lambda_t\big(u(t)\big)\Big>_{X\times
X^*}\Bigg\}dt+\frac{\lambda}{2}\big\|T\cdot
u(T_0)\big\|^2_H-\frac{\lambda}{2}\big\|w_0\big\|^2_H\\
\forall\, u(t)\in L^q\big((0,T_0);X\big)\;\;\text{s.t.}\;\;I\cdot
u(t)\in W^{1,q^*}\big((0,T_0);X^*\big)\;\;\text{and}\;\;I\cdot
u(0)=\widetilde{T}\cdot w_0
\end{multline}

Our first main result in \cite{PI} provides the answer for question
{\bf (b)}, under some coercivity and growth conditions on $\Psi_t$
and $\Lambda_t$:
\begin{theorem}\label{EulerLagrangeInt}
Let $\{X,H,X^*\}$ be an evolution triple with the corresponding
inclusion linear operators $T:X\to H$, which we assume to be
bounded, injective and having dense image in $H$,
$\widetilde{T}:H\to X^*$ be defined by
\er{tildetjbghgjgklhgjkgkgkjjkjkl} and $I:=\widetilde{T}\circ T:X\to
X^*$. Next let $\lambda\in\{0,1\}$, $q\geq 2$, $p=q^*:=q/(q-1)$ and
$w_0\in H$. Furthermore, for every $t\in[0,T_0]$ let
$\Psi_t(x):X\to[0,+\infty)$ be a strictly convex function which is
G\^{a}teaux differentiable at every $x\in X$, satisfying
$\Psi_t(0)=0$ and the condition
\begin{equation}\label{roststrrr}
(1/C_0)\,\|x\|_X^q-C_0\leq \Psi_t(x)\leq
C_0\,\|x\|_X^q+C_0\quad\forall x\in X,\;\forall t\in[0,T_0]\,,
\end{equation}
for some $C_0>0$. We also assume that $\Psi_t(x)$ is a Borel
function of its variables $(x,t)$.
Next, for every $t\in[0,T_0]$ let $\Lambda_t(x):X\to X^*$ be a
function which is G\^{a}teaux differentiable at every $x\in X$, s.t.
$\Lambda_t(0)\in L^{q^*}\big((0,T_0);X^*\big)$ and the derivative of
$\Lambda_t$ satisfies the growth condition
\begin{equation}\label{roststlambdrrr}
\|D\Lambda_t(x)\|_{\mathcal{L}(X;X^*)}\leq g\big(\|T\cdot
x\|_H\big)\,\Big(\|x\|_X^{q-2}+\mu^{\frac{q-2}{q}}(t)\Big)\quad\forall
x\in X,\;\forall t\in[0,T_0]\,,
\end{equation}
for some non-decreasing function $g(s):[0+\infty)\to (0,+\infty)$
and some nonnegative function $\mu(t)\in L^1\big((0,T_0);\R\big)$.
We also assume that $\Lambda_t(x)$ is strongly Borel on the pair of
variables $(x,t)$ (see Definition
\ref{fdfjlkjjkkkkkllllkkkjjjhhhkkk}).
Assume also that $\Psi_t$ and $\Lambda_t$ satisfy the following
monotonicity condition
\begin{multline}\label{Monotonerrr}
\bigg<h,\lambda \Big\{D\Psi_t\big(\lambda x+h\big)-D\Psi_t(\lambda
x)\Big\}+D\Lambda_t(x)\cdot h\bigg>_{X\times X^*}\geq
-\hat g\big(\|T\cdot x\|_H\big)\Big(\|x\|_X^q+\hat
\mu(t)\Big)\,\|T\cdot h\|^{2}_H
\\ \forall x,h\in X,\;\forall t\in[0,T_0]\,,
\end{multline}
for some non-decreasing function $\hat g(s):[0+\infty)\to
(0,+\infty)$ and some nonnegative function $\hat\mu(t)\in
L^1\big((0,T_0);\R\big)$.
Consider the set
\begin{equation}\label{hgffckaaq1newrrrvhjhjhm}
\mathcal{R}_{q}:=\Big\{u(t)\in L^q\big((0,T_0);X\big):\;I\cdot
u(t)\in W^{1,q^*}\big((0,T_0);X^*\big)\Big\}\,,
\end{equation}
and the minimization problem
\begin{equation}\label{hgffckaaq1newrrr}
\inf\Big\{J(u):\,u(t)\in \mathcal{R}_{q}\;\;\text{s.t}\;\;I\cdot
u(0)=\widetilde{T}\cdot w_0\Big\}\,,
\end{equation}
where $J(u)$ is defined by \er{abstprhfenbjvfjghgighjkjg}. Then for
every $u\in\mathcal{R}_{q}$ such that $I\cdot
u(0)=\widetilde{T}\cdot w_0$ and for arbitrary function
$h(t)\in\mathcal{R}_{q}$, such that $I\cdot h(0)=0$, the finite
limit $\lim\limits_{s\to 0}\big(J(u+s h)-J(u)\big)/s$ exists.
Moreover, for every such $u$ the following four statements
are equivalent:
\begin{itemize}
\item[{\bf (1)}]
$u$ is a critical point of \er{hgffckaaq1newrrr}, i.e., for any
function $h(t)\in\mathcal{R}_{q}$, such that $I\cdot h(0)=0$ we have
\begin{equation}\label{nolkjrrr}
\lim\limits_{s\to 0}\frac{J(u+s h)-J(u)}{s}=0\,.
\end{equation}
\item[{\bf(2)}]
$u$ is a minimizer to \er{hgffckaaq1newrrr}.
\item[{\bf (3)}] $J(u)=0$.
\item[{\bf (4)}]
$u$ is a solution to
\begin{equation}\label{abstprrrrcnjkghjkh}
\begin{cases}
\frac{d}{dt}\big\{I\cdot
u(t)\big\}+\Lambda_t\big(u(t)\big)+D\Psi_t\big(\lambda
u(t)\big)=0\quad\quad\text{in}\;\;(0,T_0),
\\
I\cdot u(0)=\widetilde{T}\cdot w_0.
\end{cases}
\end{equation}
\end{itemize}
Finally, there exists at most one function $u\in\mathcal{R}_{q}$
which satisfies \er{abstprrrrcnjkghjkh}.
\end{theorem}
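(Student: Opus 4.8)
The whole theorem rests on the two elementary observations already isolated in the Introduction: $J(u)=E_\lambda(u)$ (via \er{abstprhfenbjvfjghgighjkjg}), and, applying the Fenchel--Young inequality \er{vhjfgvhjgjkgjkh} pointwise in $t$ with $x=\lambda u(t)$ and $y=-\tfrac{d}{dt}\{I\cdot u(t)\}-\Lambda_t(u(t))$, the integrand of $E_\lambda$ in \er{abstprhfenbjvfj} equals $\Psi_t(\lambda u)+\Psi^*_t(y)-\langle\lambda u,y\rangle\ge 0$, with equality a.e. precisely when $y=D\Psi_t(\lambda u)$ a.e., i.e. precisely when \er{abstprrrrcnjkghjkh} holds. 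Hence $J\ge 0$ on the admissible set, which gives \textbf{(3)}$\Rightarrow$\textbf{(2)} and \textbf{(3)}$\Leftrightarrow$\textbf{(4)} immediately; and \textbf{(2)}$\Rightarrow$\textbf{(1)} once the directional derivative exists. To prove that $\lim_{s\to 0}(J(u+sh)-J(u))/s$ exists (and to compute it) I differentiate under the integral sign: each summand $\Psi_t(\lambda(u+sh))$, $\Psi^*_t(\cdot)$, $\lambda\langle u+sh,\Lambda_t(u+sh)\rangle$ and the boundary term $\tfrac\lambda2\|T\cdot(u+sh)(T_0)\|_H^2$ is differentiable in $s$ with an $s$-locally $L^1(0,T_0)$ dominating function, using \er{roststrrr}, the companion $q^*$-growth of $\Psi^*_t$ and $D\Psi^*_t$, \er{roststlambdrrr}, $q\ge 2$ and the $L^1$-weights $\mu,\hat\mu$; these measurability/differentiation-under-the-integral statements are the technical lemmas of \cite{PI}.

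\textbf{The core: \textbf{(1)}$\Rightarrow$\textbf{(3)}.} Put $\xi:=-\tfrac{d}{dt}\{I\cdot u\}-\Lambda_t(u)\in L^{q^*}((0,T_0);X^*)$ and $w:=D\Psi^*_t(\xi)$, so that $\xi=D\Psi_t(w)$ and $\Psi_t(w)+\Psi^*_t(\xi)=\langle w,\xi\rangle$ pointwise; the $q$-growth of $\Psi_t$ forces $w\in L^q((0,T_0);X)$. Writing out \er{nolkjrrr}, using $\langle w,D\Lambda_t(u)h\rangle=\langle h,(D\Lambda_t(u))^{*}w\rangle$ and, for $h$ compactly supported in $(0,T_0)$, the evolution-triple integration-by-parts $\int_0^{T_0}\langle w,\tfrac{d}{dt}\{I h\}\rangle\,dt=-\int_0^{T_0}\langle h,\tfrac{d}{dt}\{I w\}\rangle\,dt$, the critical-point condition yields
\begin{equation*}
\tfrac{d}{dt}\{I\cdot w\}=(D\Lambda_t(u))^{*}w-\lambda D\Psi_t(\lambda u)-\lambda\Lambda_t(u)-\lambda(D\Lambda_t(u))^{*}u .
\end{equation*}
By \er{roststlambdrrr} and H\"older the right-hand side lies in $L^{q^*}((0,T_0);X^*)$, so $w\in\mathcal{R}_q$; then, testing again with $h\in\mathcal{R}_q$, $I\cdot h(0)=0$, and $h(T_0)$ free over $X$ (e.g. $h(t)=(t/T_0)x$), the surviving boundary term gives $T\cdot w(T_0)=\lambda\,T\cdot u(T_0)$. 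Set $z:=w-\lambda u\in\mathcal{R}_q$. Subtracting $\lambda\tfrac{d}{dt}\{I u\}=-\lambda D\Psi_t(w)-\lambda\Lambda_t(u)$ from the displayed equation, the $\Lambda_t$-terms cancel and the $(D\Lambda_t)^{*}$-terms combine into $(D\Lambda_t(u))^{*}z$:
\begin{equation*}
\tfrac{d}{dt}\{I\cdot z\}=\lambda\big(D\Psi_t(\lambda u+z)-D\Psi_t(\lambda u)\big)+(D\Lambda_t(u))^{*}z ,\qquad T\cdot z(T_0)=0 .
\end{equation*}
Pairing with $z$ and using the evolution-triple energy identity on $[\tau,T_0]$ with $T\cdot z(T_0)=0$ gives $\tfrac12\|T\cdot z(\tau)\|_H^2=-\int_\tau^{T_0}\langle z,\tfrac{d}{dt}\{I z\}\rangle\,dt$, and the integrand $\langle z,\lambda\{D\Psi_t(\lambda u+z)-D\Psi_t(\lambda u)\}+D\Lambda_t(u)z\rangle$ is exactly the left side of \er{Monotonerrr} at $x=u(t)$, $h=z(t)$, hence $\ge -\hat g(\|T\cdot u(t)\|_H)(\|u(t)\|_X^q+\hat\mu(t))\|T\cdot z(t)\|_H^2$. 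Since $u\in\mathcal{R}_q$ implies $T\cdot u\in C([0,T_0];H)$, so $\sup_t\|T\cdot u(t)\|_H<\infty$, this bound is $-C(t)\|T\cdot z(t)\|_H^2$ with $C\in L^1(0,T_0)$; a backward Gronwall argument (using continuity of $t\mapsto\|T\cdot z(t)\|_H^2$) gives $T\cdot z\equiv 0$, hence $z\equiv 0$ because $T$ is injective, hence $w=\lambda u$ and $\xi=D\Psi_t(\lambda u)$, so $J(u)=0$.

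\textbf{Uniqueness.} Let $u_1,u_2\in\mathcal{R}_q$ both solve \er{abstprrrrcnjkghjkh} and put $z:=u_1-u_2$; then $T\cdot z(0)=0$. Subtracting the equations and pairing with $z$ over $[0,\tau]$ gives $\tfrac12\|T\cdot z(\tau)\|_H^2=-\int_0^\tau\big\langle z,\ \Lambda_t(u_1)-\Lambda_t(u_2)+D\Psi_t(\lambda u_1)-D\Psi_t(\lambda u_2)\big\rangle\,dt$. To estimate the integrand from below I apply \er{Monotonerrr} at the base point $x=u_2+sz$ twice, once with $h=(1-s)z$ and once with $h=-sz$ ($s\in(0,1)$); for $\lambda\in\{0,1\}$ one has $\lambda(u_2+sz)+(1-s)z=u_1$ and $\lambda(u_2+sz)-sz=u_2$ in the relevant terms (with the $\lambda$-prefactor killing the $D\Psi_t$-contribution when $\lambda=0$). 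Dividing the first inequality by $(1-s)$ and the second by $s$ and adding, the $D\Psi_t(\lambda(u_2+sz))$ terms telescope away, leaving, for every $s\in(0,1)$,
\begin{equation*}
\big\langle z,\ \lambda\{D\Psi_t(\lambda u_1)-D\Psi_t(\lambda u_2)\}+D\Lambda_t(u_2+sz)z\big\rangle\ \ge\ -\hat g\big(\|T\cdot(u_2+sz)\|_H\big)\big(\|u_2+sz\|_X^q+\hat\mu(t)\big)\|T\cdot z\|_H^2 .
\end{equation*}
Integrating over $s\in(0,1)$ and using $\int_0^1 D\Lambda_t(u_2+sz)z\,ds=\Lambda_t(u_1)-\Lambda_t(u_2)$, together with $\|T\cdot(u_2+sz)\|_H\le\sup_t(\|T\cdot u_1\|_H+\|T\cdot u_2\|_H)<\infty$ and $\|u_2+sz\|_X^q\lesssim\|u_1\|_X^q+\|u_2\|_X^q\in L^1$, bounds the integrand by $-C(t)\|T\cdot z(t)\|_H^2$ with $C\in L^1(0,T_0)$. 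Thus $\tfrac12\|T\cdot z(\tau)\|_H^2\le\int_0^\tau C(t)\|T\cdot z(t)\|_H^2\,dt$, and Gronwall yields $T\cdot z\equiv 0$, i.e. $u_1=u_2$.

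\textbf{Where the difficulty lies.} The serious points are two. First, in \textbf{(1)}$\Rightarrow$\textbf{(3)} one must check that, after the (justified) time integration by parts, the Euler--Lagrange identity reproduces \emph{exactly} the left-hand side of the hypothesis \er{Monotonerrr} — this is the bookkeeping producing the closed equation for $z=w-\lambda u$ and the cancellation of the $(D\Lambda_t)^{*}$-terms — so that nothing beyond \er{Monotonerrr} (and \er{roststrrr}, \er{roststlambdrrr}) is used. Second, both Gronwall arguments need the remainder to be controlled by $\|T\cdot z\|_H^2$ rather than $\|z\|_X^2$ (the latter is not absorbable by the energy); in the uniqueness step this is precisely why \er{Monotonerrr} cannot be used naively at a single base point, and the device of applying it along the segment with the two weights $(1-s)z$ and $-sz$ is what converts the \emph{derivative}-form monotonicity into the required \emph{finite-increment} estimate. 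All the evolution-triple facts invoked — $T\cdot u\in C([0,T_0];H)$ for $u\in\mathcal{R}_q$, the integration-by-parts/energy identity, the $q^*$-growth of $\Psi^*_t$ and $D\Psi^*_t$, and the differentiation-under-the-integral lemmas — are the auxiliary results established in \cite{PI}.
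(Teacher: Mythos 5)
This paper only restates Theorem \ref{EulerLagrangeInt} from \cite{PI} and contains no proof of it, so a line-by-line comparison is impossible; however, your argument is correct and follows exactly the strategy the paper announces for question {\bf (b)}: extracting from the Euler--Lagrange equation the identity $w:=D\Psi^*_t(\xi)=\lambda u$ by deriving the closed equation for $z=w-\lambda u$ with $T\cdot z(T_0)=0$ and running a (backward) Gronwall estimate on $\|T\cdot z\|_H^2$ using \er{Monotonerrr}. The one genuinely delicate point --- that \er{Monotonerrr} is a derivative-form condition while uniqueness needs a finite-increment estimate controlled by $\|T\cdot z\|_H^2$ rather than $\|z\|_X^2$ --- is handled correctly by your two-weight averaging along the segment $u_2+sz$.
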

\begin{remark}\label{gdfgdghfjkllkhoiklj}
Assume that, instead of \er{Monotonerrr}, one requires that $\Psi_t$
and $\Lambda_t$ satisfy the following inequality
\begin{multline}\label{Monotone1111gkjglghjgj}
\bigg<h,\lambda \Big\{D\Psi_t\big(\lambda x+h\big)-D\Psi_t(\lambda
x)\Big\}+D\Lambda_t(x)\cdot h\bigg>_{X\times X^*}\geq
\\ \frac{\big|f(h,t)\big|^2}{\tilde g(\|T\cdot
x\|_H)}-\tilde g\big(\|T\cdot
x\|_H\big)\Big(\|x\|_X^q+\hat\mu(t)\Big)^{(2-r)/2}\big|f(h,t)\big|^r\,\|T\cdot
h\|^{(2-r)}_H\quad\forall x,h\in X,\;\forall t\in[0,T_0],
\end{multline}
for some non-decreasing function $\tilde g(s):[0+\infty)\to
(0,+\infty)$, some function $\hat\mu(t)\in L^1\big((0,T_0);\R\big)$,
some function $f(x,t):X\times[0,T_0]\to\R$ and some constant
$r\in(0,2)$.
Then \er{Monotonerrr}
follows by the trivial inequality $(r/2)\,a^2+\big((2-r)/2\big)\,
b^2\geq a^r \,b^{2-r}$.
\end{remark}

Our first result in \cite{PI} about the existence of minimizer for
$J(u)$ is the following Proposition:
\begin{proposition}\label{premainnewEx}
Assume that $\{X,H,X^*\}$, $T,\widetilde{T},I$, $\lambda,q,p$,
$\Psi_t$ and $\Lambda_t$ satisfy all the conditions of Theorem
\ref{EulerLagrangeInt} together with the assumption $\lambda=1$.
Moreover, assume that
$\Psi_t$ and $\Lambda_t$ satisfy the following positivity condition
\begin{multline}\label{MonotonegnewhghghghghEx}
\Psi_t(x)+\Big<x,\Lambda_t(x)\Big>_{X\times X^*}\geq \frac{1}{\tilde
C}\,\|x\|^q_X-
\bar\mu(t)\Big(\|T\cdot x\|^{2}_H+1\Big)
\quad\forall x\in X,\;\forall t\in[0,T_0],
\end{multline}
where
$\tilde C>0$ is some constant and $\bar\mu(t)\in
L^1\big((0,T_0);\R\big)$ is some nonnegative function.
Furthermore, assume that
\begin{equation}\label{jgkjgklhklhj}
\Lambda_t(x)=A_t\big(S\cdot x\big)+\Theta_t(x)\quad\quad\forall\,
x\in X,\;\forall\, t\in[0,T_0],
\end{equation}
where $Z$ is a Banach space, $S:X\to Z$ is a compact operator and
for every $t\in[0,T_0]$ $A_t(z):Z\to X^*$ is a function which is
strongly Borel on the pair of variables $(z,t)$ and G\^{a}teaux
differentiable at every $z\in Z$, $\Theta_t(x):X\to X^*$ is strongly
Borel on the pair of variables $(x,t)$ and G\^{a}teaux
differentiable at every $x\in X$, $\,\Theta_t(0),A_t(0)\in
L^{q^*}\big((0,T_0);X^*\big)$ and the derivatives of $A_t$ and
$\Theta_t$ satisfy the growth condition
\begin{equation}\label{roststlambdgnewjjjjjEx}
\|D\Theta_t(x)\|_{\mathcal{L}(X;X^*)}+\|D A_t(S\cdot
x)\|_{\mathcal{L}(Z;X^*)}\leq g\big(\|T\cdot
x\|\big)\,\Big(\|x\|_X^{q-2}+\mu^{\frac{q-2}{q}}(t)\Big)\quad\forall
x\in X,\;\forall t\in[0,T_0]
\end{equation}
for some nondecreasing function $g(s):[0,+\infty)\to (0+\infty)$ and
some nonnegative function $\mu(t)\in L^1\big((0,T_0);\R\big)$. Next
assume that for every sequence
$\big\{x_n(t)\big\}_{n=1}^{+\infty}\subset L^q\big((0,T_0);X\big)$
such that the sequence $\big\{I\cdot x_n(t)\big\}$ is bounded in
$W^{1,q^*}\big((0,T_0);X^*\big)$ and $x_n(t)\rightharpoonup x(t)$
weakly in $L^q\big((0,T_0);X\big)$ we have
\begin{itemize}
\item
$\Theta_t\big(x_n(t)\big)\rightharpoonup \Theta_t\big(x(t)\big)$
weakly in $L^{q^*}\big((0,T_0);X^*\big)$,
\item
$\liminf_{n\to+\infty}\int_{0}^{T_0}
\Big<x_n(t),\Theta_t\big(x_n(t)\big)\Big>_{X\times X^*}dt\geq
\int_{0}^{T_0}\Big<x(t),\Theta_t\big(x(t)\big)\Big>_{X\times
X^*}dt$.
\end{itemize}
Finally, let $w_0\in H$ be such that $w_0=T\cdot u_0$ for some
$u_0\in X$, or more generally, $w_0\in H$ be such that
$\mathcal{A}_{w_0}:=\big\{u\in\mathcal{R}_{q}:\,I\cdot
u(0)=\widetilde{T}\cdot w_0\big\}\neq\emptyset$. Then there exists a
minimizer to \er{hgffckaaq1newrrr}.
\end{proposition}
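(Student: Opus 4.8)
The plan is to apply the direct method to \er{hgffckaaq1newrrr} over the affine set $\mathcal A_{w_0}$. Since $\lambda=1$, the integration-by-parts identity of \rlemma{lem2} (valid since $I\cdot u(0)=\widetilde T\cdot w_0$ and injectivity of $\widetilde T$ force $T\cdot u(0)=w_0$) lets us write $J$ from \er{abstprhfenbjvfjghgighjkjg} as $J(u)=\int_0^{T_0}\mathcal F_u(t)\,dt$, where $\mathcal F_u(t):=\Psi_t(u(t))+\Psi^*_t\big(-\tfrac{d}{dt}\{I\cdot u(t)\}-\Lambda_t(u(t))\big)+\big\langle u(t),\tfrac{d}{dt}\{I\cdot u(t)\}+\Lambda_t(u(t))\big\rangle_{X\times X^*}$, and by Young's inequality \er{vhjfgvhjgjkgjkh} $\mathcal F_u\ge0$ pointwise; hence $J\ge0$. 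From \er{roststrrr}, \er{roststlambdrrr} and $\Lambda_t(0)\in L^{q^*}$ one checks $J(u)<+\infty$ on $\mathcal R_q$, so $m:=\inf\{J(u):u\in\mathcal A_{w_0}\}$ is finite and nonnegative; fix a minimizing sequence $\{u_n\}\subset\mathcal A_{w_0}$ with $M:=\sup_nJ(u_n)<+\infty$.

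The a priori estimate is the step I expect to be the main obstacle: the only quantitative information the energy gives is $\mathcal F_{u_n}\ge0$ together with the coercivity \er{MonotonegnewhghghghghEx}, whereas the growth of $\Lambda_t$ in \er{roststlambdrrr} carries the factor $g(\|T\cdot x\|_H)$, depending on $\|T\cdot u_n(t)\|_H$ — precisely the quantity one must bound first. Put $\phi_n(t):=\|T\cdot u_n(t)\|_H^2$; by \rlemma{lem2}, $\phi_n$ is absolutely continuous with $\phi_n'=2\langle u_n,\tfrac{d}{dt}\{I\cdot u_n\}\rangle$ and $\phi_n(0)=\|w_0\|_H^2$. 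Since $\mathcal F_{u_n}\ge0$ and $\int_0^{T_0}\mathcal F_{u_n}=J(u_n)\le M$, for every $t\in[0,T_0]$
\[
\int_0^t\!\Big\{\Psi_s(u_n)+\Psi^*_s\big(-\tfrac{d}{ds}\{I\cdot u_n\}-\Lambda_s(u_n)\big)\Big\}ds+\int_0^t\!\big\langle u_n,\Lambda_s(u_n)\big\rangle ds+\tfrac12\phi_n(t)-\tfrac12\|w_0\|_H^2\le M.
\]
Now I would bound $\Psi_s(u_n)+\langle u_n,\Lambda_s(u_n)\rangle\ge\tfrac1{\tilde C}\|u_n\|_X^q-\bar\mu(s)(\phi_n(s)+1)$ by \er{MonotonegnewhghghghghEx}, and use $\Psi^*_s\ge0$ and $\Psi^*_s(y)\ge c\|y\|_{X^*}^{q^*}-c'$ (conjugate of the upper bound in \er{roststrrr}); discarding the two resulting nonnegative integrals gives the closed inequality $\tfrac12\phi_n(t)\le C_0+\int_0^t\bar\mu(s)\phi_n(s)\,ds$ with $C_0$ independent of $n$, whence Gronwall's lemma (valid as $\bar\mu\in L^1$) yields $\sup_{n,t}\phi_n(t)\le 2C_0e^{2\|\bar\mu\|_{L^1}}=:R^2$. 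Re-inserting this into the display at $t=T_0$, this time retaining the good terms, bounds $\|u_n\|_{L^q((0,T_0);X)}$ and $\big\|\tfrac{d}{dt}\{I\cdot u_n\}+\Lambda_t(u_n)\big\|_{L^{q^*}((0,T_0);X^*)}$ uniformly; then \er{roststlambdrrr} with the frozen constant $g(R)$, $\Lambda_t(0)\in L^{q^*}$ and Hölder's inequality (using $q\ge2$, so $\mu^{(q-2)/q}\|u_n\|_X\in L^{q^*}$) bound $\Lambda_t(u_n)$, hence $\tfrac{d}{dt}\{I\cdot u_n\}$, in $L^{q^*}((0,T_0);X^*)$. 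So $\{u_n\}$ is bounded in $L^q((0,T_0);X)$ and $\{I\cdot u_n\}$ in $W^{1,q^*}((0,T_0);X^*)$.

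Passing to a subsequence, $u_n\weakly u$ in $L^q((0,T_0);X)$; boundedness of $I$ gives $I\cdot u_n\weakly I\cdot u$ in $L^{q^*}$, so $I\cdot u\in W^{1,q^*}$, $\tfrac{d}{dt}\{I\cdot u_n\}\weakly\tfrac{d}{dt}\{I\cdot u\}$ and, by weak continuity of the trace at $0$, $I\cdot u(0)=\widetilde T\cdot w_0$, i.e.\ $u\in\mathcal A_{w_0}$. By an Aubin--Lions--Simon-type compactness lemma (using that $S$ is compact, $\{u_n\}$ bounded in $L^q((0,T_0);X)$ and $\{I\cdot u_n\}$ bounded in $W^{1,q^*}((0,T_0);X^*)$; I would quote the corresponding result from \cite{PI}) one may assume $S\cdot u_n\to S\cdot u$ strongly in $L^q((0,T_0);Z)$ and, along a further subsequence, a.e.\ with an $L^q$ majorant, so continuity of $A_t$, the growth \er{roststlambdgnewjjjjjEx} and dominated convergence give $A_t(S\cdot u_n)\to A_t(S\cdot u)$ strongly in $L^{q^*}((0,T_0);X^*)$. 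Also $T\cdot u_n(t)$ is bounded in $H$ and $\widetilde T(T\cdot u_n(t))=I\cdot u_n(t)\weakly I\cdot u(t)$ with $\widetilde T$ injective, so $T\cdot u_n(t)\weakly T\cdot u(t)$ in $H$ for every $t$, in particular at $t=T_0$; making this compactness step precise is the remaining technical point.

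Finally I would prove $\liminf_nJ(u_n)\ge J(u)$ from the form \er{abstprhfenbjvfjghgighjkjg} of $J$. Writing $\Lambda_t(x)=A_t(S\cdot x)+\Theta_t(x)$ as in \er{jgkjgklhklhj}, we get $-\tfrac{d}{dt}\{I\cdot u_n\}-\Lambda_t(u_n)\weakly-\tfrac{d}{dt}\{I\cdot u\}-\Lambda_t(u)$ in $L^{q^*}((0,T_0);X^*)$ (a weakly convergent term, the strongly convergent $A_t(S\cdot u_n)$, and $\Theta_t(u_n)\weakly\Theta_t(u)$ by the first hypothesis on $\Theta_t$). Since the integrands $\Psi_t$ and $\Psi^*_t$ are nonnegative, convex and continuous in their active variable (by the two-sided bound \er{roststrrr}) and Borel in $(\cdot,t)$ (for $\Psi^*_t$, written as a countable supremum $\sup_{z\in D}(\langle z,\cdot\rangle-\Psi_t(z))$ over a dense $D\subset X$), the standard weak lower semicontinuity of convex integral functionals (convexity plus Mazur and Fatou) gives $\liminf\int_0^{T_0}\Psi_t(u_n)\ge\int_0^{T_0}\Psi_t(u)$ and, similarly, $\liminf\int_0^{T_0}\Psi^*_t\big(-\tfrac{d}{dt}\{I\cdot u_n\}-\Lambda_t(u_n)\big)\ge\int_0^{T_0}\Psi^*_t\big(-\tfrac{d}{dt}\{I\cdot u\}-\Lambda_t(u)\big)$; the endpoint term $\tfrac12\|T\cdot u_n(T_0)\|_H^2$ is weakly l.s.c.\ by the previous paragraph; $\int_0^{T_0}\langle u_n,A_t(S\cdot u_n)\rangle\to\int_0^{T_0}\langle u,A_t(S\cdot u)\rangle$ by weak--strong pairing; and $\liminf\int_0^{T_0}\langle u_n,\Theta_t(u_n)\rangle\ge\int_0^{T_0}\langle u,\Theta_t(u)\rangle$ is the second hypothesis on $\Theta_t$. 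Summing, $\liminf_nJ(u_n)\ge J(u)$, so $J(u)\le m$; since $u\in\mathcal A_{w_0}$, $J(u)=m$ and $u$ is a minimizer of \er{hgffckaaq1newrrr}.
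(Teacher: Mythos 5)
Your argument is correct and is precisely the direct-method proof this result calls for: the paper only quotes Proposition \ref{premainnewEx} from \cite{PI}, but your a priori estimate (the Gronwall bound on $\|T\cdot u_n(t)\|_H^2$ obtained from the coercivity condition \er{MonotonegnewhghghghghEx} after discarding the nonnegative $\Psi_t$, $\Psi_t^*$ and $\|u_n\|_X^q$ terms, followed by the $L^q$ and $L^{q^*}$ bounds) is the same estimate the present paper runs in the proof of Lemma \ref{thhypppggghhhjgggnew}. The remaining steps — compactness of $S\cdot u_n$ via Lemma \ref{ComTem1}, weak lower semicontinuity of the two convex integral terms and of $\|T\cdot u_n(T_0)\|_H^2$, weak–strong pairing for $\langle u_n,A_t(S\cdot u_n)\rangle$, and the two bullet hypotheses handling the $\Theta_t$ contributions — use each assumption exactly as intended, so the proposal matches the intended proof.
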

As a consequence of Theorem \ref{EulerLagrangeInt} and Proposition
\ref{premainnewEx} we have the following Corollary:
\begin{corollary}\label{CorCorCor}
Assume that we are in the settings of Proposition
\ref{premainnewEx}. Then there exists a unique solution
$u(t)\in\mathcal{R}_{q}$ to
\begin{equation}\label{abstprrrrcnjkghjkhggg}
\begin{cases}
\frac{d}{dt}\big\{I\cdot
u(t)\big\}+\Lambda_t\big(u(t)\big)+D\Psi_t\big(
u(t)\big)=0\quad\quad\text{in}\;\;(0,T_0),
\\
I\cdot u(0)=\widetilde{T}\cdot w_0.
\end{cases}
\end{equation}
\end{corollary}
As an important particular case of Corollary \ref{CorCorCor} we get
in \cite{PI} the following statement:
\begin{theorem}\label{THSldInt}
Let $\{X,H,X^*\}$ be an evolution triple with the corresponding
inclusion linear operators $T:X\to H$, which we assume to be
bounded, injective and having dense image in $H$,
$\widetilde{T}:H\to X^*$ be defined by
\er{tildetjbghgjgklhgjkgkgkjjkjkl} and $I:=\widetilde{T}\circ T:X\to
X^*$. Next let $q\geq 2$. Furthermore, for every $t\in[0,T_0]$ let
$\Psi_t(x):X\to[0,+\infty)$ be a strictly convex function which is
G\^{a}teaux differentiable at every $x\in X$, satisfies
$\Psi_t(0)=0$ and satisfies the growth condition
\begin{equation}\label{roststSLDInt}
(1/C_0)\,\|x\|_X^q-C_0\leq \Psi_t(x)\leq
C_0\,\|x\|_X^q+C_0\quad\forall x\in X,\;\forall t\in[0,T_0]\,,
\end{equation}
and the following uniform convexity condition
\begin{equation}\label{roststghhh77889lkagagSLDInt}
\Big<h,D\Psi_t(x+h)-D\Psi_t(x)\Big>_{X\times X^*}\geq
\frac{1}{C_0}\Big(\big\|x\big\|^{q-2}_X+1
\Big)\cdot\|h\|_X^2\quad\forall x,h\in X,\;\,\forall t\in[0,T_0],
\end{equation}
for some $C_0>0$.
We also assume that $\Psi_t(x)$ is Borel on the pair of variables
$(x,t)$ Next let $Z$ be a Banach space, $S:X\to Z$ be a compact
operator and for every $t\in[0,T_0]$ let $F_t(z):Z\to X^*$ be a
function, such that $F_t$ is strongly Borel on the pair of variables
$(z,t)$ and G\^{a}teaux differentiable at every $z\in Z$, $F_t(0)\in
L^{q^*}\big((0,T_0);X^*\big)$ and the derivatives of $F_t$ satisfies
the growth conditions
\begin{equation}\label{roststlambdgnewSLDInt}
\big\|D F_t(S\cdot x)\big\|_{\mathcal{L}(Z;X^*)}\leq g\big(\|T\cdot
x\|\big)\,\Big(\|x\|_X^{q-2}+1
\Big)\quad\forall x\in X,\;\forall t\in[0,T_0]\,,
\end{equation}
for some non-decreasing function $g(s):[0+\infty)\to (0,+\infty)$.
Moreover, assume that
$\Psi_t$ and $F_t$ satisfy the following positivity condition:
\begin{multline}\label{MonotonegnewhghghghghSLDInt}
\Psi_t(x)+\Big<x,F_t(S\cdot x)\Big>_{X\times X^*}\geq \frac{1}{\bar
C}\,\|x\|^q_X -\bar C
\|S\cdot x\|^{2}_Z-
\bar\mu(t)
\Big(\|T\cdot x\|^{2}_H+1\Big)
\quad\forall x\in X,\;\forall t\in[0,T_0],
\end{multline}
where $\bar C>0$ is some constants and $\bar\mu(t)\in
L^1\big((0,T_0);\R\big)$ is a nonnegative function. Furthermore, let
$w_0\in H$ be such that $w_0=T\cdot u_0$ for some $u_0\in X$, or
more generally, $w_0\in H$ be such that
$\mathcal{A}_{w_0}:=\big\{u\in\mathcal{R}_{q}:\,I\cdot
u(0)=\widetilde{T}\cdot w_0\big\}\neq\emptyset$. Then there exists a
unique
solution $u(t)\in\mathcal{R}_{q}$ to the following equation
\begin{equation}\label{uravnllllgsnachnewSLDInt}
\begin{cases}\frac{d}{dt}\big\{I\cdot u(t)\big\}+F_t\big(S\cdot u(t)\big)+D\Psi_t\big(u(t)\big)=0\quad\text{for
a.e.}\; t\in(0,T_0)\,,\\ I\cdot u(0)=\widetilde T\cdot
w_0\,.\end{cases}
\end{equation}
\end{theorem}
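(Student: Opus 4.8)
\emph{Proof strategy.} The plan is to obtain \rth{THSldInt} as the special case of \rcor{CorCorCor} in which, in the notation of \rprop{premainnewEx}, one takes $\lambda:=1$, $\Lambda_t(x):=F_t(S\cdot x)$, and the decomposition \eqref{jgkjgklhklhj} with $A_t:=F_t$ and $\Theta_t:\equiv 0$, keeping the same evolution triple $\{X,H,X^*\}$, the same compact $S:X\to Z$, and the same $\Psi_t$ and $w_0$. Most hypotheses transfer at once: the conditions on $\Psi_t$ required by \rth{EulerLagrangeInt} and \rprop{premainnewEx} (strict convexity, G\^ateaux differentiability, $\Psi_t(0)=0$, Borel measurability, and the two-sided bound \eqref{roststrrr}) are exactly those assumed in \eqref{roststSLDInt}; the map $\Lambda_t$ is G\^ateaux differentiable with $D\Lambda_t(x)=DF_t(S\cdot x)\circ S$, is strongly Borel in $(x,t)$ as the composition of the strongly Borel $F_t$ with the bounded linear $S$, satisfies $\Lambda_t(0)=F_t(0)\in L^{q^*}\big((0,T_0);X^*\big)$, and obeys $\|D\Lambda_t(x)\|_{\mathcal L(X;X^*)}\le\|S\|\,g(\|T\cdot x\|_H)\big(\|x\|_X^{q-2}+1\big)$, so that \eqref{roststlambdrrr} and (with $\Theta_t\equiv 0$) \eqref{roststlambdgnewjjjjjEx} hold; the identically zero $\Theta_t$ trivially meets the weak–continuity requirements of \rprop{premainnewEx}; and the admissibility of $w_0$ is assumed. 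Thus only the monotonicity condition \eqref{Monotonerrr} and the positivity condition \eqref{MonotonegnewhghghghghEx} still need to be verified.

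For \eqref{MonotonegnewhghghghghEx} I would start from \eqref{MonotonegnewhghghghghSLDInt}; its only problematic term is $\bar C\|S\cdot x\|_Z^2\le\bar C\|S\|^2\|x\|_X^2$. Since $q\ge 2$, Young's inequality gives $a^2\le\delta a^q+C(\delta)$ for every $a\ge 0$, and choosing $\delta$ small enough absorbs this term into $\tfrac1{2\bar C}\|x\|_X^q$ at the cost of an additive constant; hence \eqref{MonotonegnewhghghghghEx} follows with $\tilde C:=2\bar C$ and an enlarged $L^1$ weight.

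The main point is \eqref{Monotonerrr} with $\lambda=1$. Combining the uniform convexity \eqref{roststghhh77889lkagagSLDInt} for the $\Psi_t$–difference with the derivative bound \eqref{roststlambdgnewSLDInt} for $DF_t$, one gets
\[
\big\langle h,\{D\Psi_t(x+h)-D\Psi_t(x)\}+DF_t(S\cdot x)(S\cdot h)\big\rangle_{X\times X^*}\ \ge\ \big(\|x\|_X^{q-2}+1\big)\Big(\tfrac1{C_0}\|h\|_X^2-g(\|T\cdot x\|_H)\,\|h\|_X\,\|S\cdot h\|_Z\Big).
\]
The difficulty is that the negative term carries $\|S\cdot h\|_Z$ and $\|h\|_X$, whereas \eqref{Monotonerrr} only allows $\|T\cdot h\|_H$ on its right-hand side, and the factor $g(\|T\cdot x\|_H)$ may be arbitrarily large, so the crude estimate $\|S\cdot h\|_Z\le\|S\|\,\|h\|_X$ would leave a negative multiple of $\|h\|_X^2$ that cannot be reabsorbed. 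The remedy is an Ehrling–type inequality: for each $\e>0$ there is $C_\e>0$ with $\|S\cdot h\|_Z\le\e\|h\|_X+C_\e\|T\cdot h\|_H$ for all $h\in X$ — a standard consequence of the compactness of $S$ and the injectivity of $T$ (argue by contradiction, normalizing $\|S\cdot h_n\|_Z=1$, extracting a weak limit $h$ in the reflexive space $X$, using compactness of $S$ to get $\|S\cdot h\|_Z=1$, while $\|T\cdot h_n\|_H\to 0$ and injectivity of $T$ force $h=0$). Inserting this, applying Young's inequality, and then selecting $\e=\e(\|T\cdot x\|_H)$ of order $1/g(\|T\cdot x\|_H)$ makes the $\|h\|_X^2$–coefficient nonnegative and leaves a remainder $-\hat g(\|T\cdot x\|_H)\big(\|x\|_X^{q-2}+1\big)\|T\cdot h\|_H^2$ with $\hat g$ nondecreasing; bounding $\|x\|_X^{q-2}+1\le\|x\|_X^q+2$ then yields \eqref{Monotonerrr} with $\hat\mu\equiv 2$. (Equivalently one may route this through Remark~\ref{gdfgdghfjkllkhoiklj} with $f(h,t):=\|S\cdot h\|_Z$, but the Ehrling inequality is needed in any case.)

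Once \eqref{Monotonerrr} and \eqref{MonotonegnewhghghghghEx} are in place, all hypotheses of \rprop{premainnewEx} hold, and \rcor{CorCorCor} provides a unique $u\in\mathcal R_q$ solving \eqref{abstprrrrcnjkghjkhggg} with $\Lambda_t(x)=F_t(S\cdot x)$, which is precisely \eqref{uravnllllgsnachnewSLDInt}; uniqueness is the one already asserted in \rcor{CorCorCor}. I expect the Ehrling estimate together with the bookkeeping in the monotonicity step to be the only genuine work; all the remaining verifications are routine transfers of hypotheses.
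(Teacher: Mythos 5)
Your overall route is the intended one: \rth{THSldInt} is indeed meant to be the specialization of \rcor{CorCorCor} with $\lambda=1$, $\Lambda_t(x)=F_t(S\cdot x)$, $A_t=F_t$ and $\Theta_t\equiv 0$, and the only real work is verifying \er{Monotonerrr} and \er{MonotonegnewhghghghghEx}. Your treatment of the monotonicity condition is correct and is exactly the paper's mechanism: the interpolation inequality of \rlemma{Aplem1} with $\e$ chosen of size $1/g(\|T\cdot x\|_H)$ (so that the resulting constant $c_\e$ is a nondecreasing function of $\|T\cdot x\|_H$), followed by Young's inequality and the bound $\|x\|_X^{q-2}+1\le\|x\|_X^q+2$.

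There is, however, one step that fails as written: your verification of the positivity condition. You dispose of the term $\bar C\|S\cdot x\|_Z^2$ via $\|S\cdot x\|_Z\le\|S\|\,\|x\|_X$ and the claim that ``$a^2\le\delta a^q+C(\delta)$ for every $\delta>0$''. That claim is false precisely at the endpoint $q=2$ (it would require $(1-\delta)a^2\le C(\delta)$ for all $a$), and $q=2$ is both allowed by the hypotheses and the case actually used in the applications of Section~\ref{dkgfkghfhkljl}. The repair is the same tool you already use for the monotonicity step: by \rlemma{Aplem1} (compactness of $S$, injectivity of $T$) one has
\begin{equation*}
\bar C\,\|S\cdot x\|_Z^2\;\le\;\frac{1}{2\bar C}\,\|x\|_X^2+K\,\|T\cdot x\|_H^2\;\le\;\frac{1}{2\bar C}\big(\|x\|_X^q+1\big)+K\,\|T\cdot x\|_H^2,
\end{equation*}
which is absorbed into $\frac{1}{\bar C}\|x\|_X^q$ and the weight $\bar\mu(t)\big(\|T\cdot x\|_H^2+1\big)$ for every $q\ge 2$; this is exactly the computation the paper performs in \er{MonotonegnewhghghghghSLDhhh} in the proof of \rlemma{thhypppggghhhjgggnew}. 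With that substitution the remaining verifications are the routine transfers you describe, and the conclusion follows from \rcor{CorCorCor}.
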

 In this paper using Theorem \ref{THSldInt}
as a basis, by the appropriate approximation, we obtain further
existence Theorems, under much weaker assumption on coercivity and
compactness. The first Theorem
improves the existence part of Corollary \ref{CorCorCor}. (see
Theorem \ref{defHkkkkglkjjjgkjgkjgggk} as an equivalent formulation
and Theorem \ref{defHkkkkglkjjj} as an important particular case).
\begin{theorem}\label{defHkkkkglkjjjgkjgkjgggkbmhmhgm}
Let $q\geq 2$ and $\{X,H,X^*\}$ be an evolution triple with the
corresponding inclusion linear operators $T:X\to H$, which we assume
to be injective and having dense image in $H$, $\widetilde{T}:H\to
X^*$, defined by \er{tildetjbghgjgklhgjkgkgkjjkjkl}, and
$I:=\widetilde{T}\circ T:X\to X^*$. Assume also that the Banach
space $X$ is separable.
Furthermore, for every $t\in[0,T_0]$ let $\Psi_t(x):X\to[0,+\infty)$
be a convex function which is G\^{a}teaux differentiable at every
$x\in X$, satisfies $\Psi_t(0)=0$ and satisfies the growth condition
\begin{equation}\label{roststglkjjjuhuhgbhghgjkhjk}
0\leq \Psi_t(x)\leq C\,\|x\|_X^q+C\quad\forall x\in X,\;\forall
t\in[0,T_0]\,,
\end{equation}
for some $C>0$. We also assume that $\Psi_t(x)$
is Borel on the pair of variables $(x,t)$.
Furthermore, for every $t\in[0,T_0]$ let $\Lambda_t(x):X\to X^*$ be
a function which is G\^{a}teaux differentiable at every $x\in X$,
$\Lambda_t(0)\in L^{q^*}\big((0,T_0);X^*\big)$ and the derivative of
$\Lambda_t$ satisfies the growth condition
\begin{equation}\label{roststlambdglkjjjuhuhhuiyhioujh}
\|D\Lambda_t(x)\|_{\mathcal{L}(X;X^*)}\leq g\big(\|T\cdot
x\|_H\big)\,\big(\|x\|_X^{q-2}+1\big)\quad\forall x\in X,\;\forall
t\in[0,T_0]\,,
\end{equation}
for some nondecreasing function $g(s):[0,+\infty)\to(0,+\infty)$. We
also assume that $\Lambda_t(x)$
is Borel on the pair of variables $(x,t)$. Assume also that
$\Lambda_t$ and $\Psi_t$ satisfy the following monotonicity
condition:
\begin{multline}\label{Monotoneglkjjjuhuhgutuityiyikkkghjkghjghjg}
\Big<x,D\Psi_t(x)+\Lambda_t(x)\Big>_{X\times X^*}\geq\frac{1}{\hat
C}\,\|x\|_X^q-\hat C\|L\cdot x\|^2_V
-\mu(t)\Big(\|T\cdot x\|^{2}_H+1\Big)
\quad\forall x\in X,\;\forall t\in[0,T_0],
\end{multline}
%
%
%
%
%
%
where $V$ is a given Banach space, $L\in\mathcal{L}(X,V)$ is a given
compact operator,
$\mu(t)\in L^1\big((0,T_0);\R\big)$ is some nonnegative function and
$\hat C>0$ is some constant. Finally, assume that for every
$t\in[0,T_0]$ $\big(D\Psi_t+\Lambda_t\big)(x):X\to X^*$ satisfies
the following compactness property:
\begin{itemize}
\item If $x_n\rightharpoonup x$ weakly in $X$, then
$\liminf_{n\to+\infty}\Big<x_n-x,D\Psi_t(x_n)+\Lambda_t(x_n)\Big>_{X\times
X^*}\geq 0$.
\item If $x_n\rightharpoonup x$ weakly in $X$ and
$\lim_{n\to+\infty}\Big<x_n-x,D\Psi_t(x_n)+\Lambda_t(x_n)\Big>_{X\times
X^*}=0$,
then necessarily $D\Psi_t(x_n)+\Lambda_t(x_n)\rightharpoonup
D\Psi_t(x)+\Lambda_t(x)$ weakly in $X^*$.
\end{itemize}
%
%
%
%
%
%
%
%
Then for every $w_0\in H$ there exists $u(t)\in
L^q\big((0,T_0);X\big)$, such that
$I\cdot\big(u(t)\big)\in W^{1,q^*}\big((0,T_0); X^*\big)$, where
$q^*:=q/(q-1)$, and $u(t)$ is a solution to
\er{abstprrrrcnjkghjkhggg}.
%
%
%
%
%
%
%
%
%
%
%
%
%
%
%
%
\end{theorem}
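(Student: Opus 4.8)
The plan is to obtain $u$ as a limit of solutions of vanishing--viscosity type regularizations of \er{abstprrrrcnjkghjkhggg} that fall under the already established existence results (\rcor{CorCorCor}, in particular its special case \rth{THSldInt}), to derive uniform a priori bounds from the weak coercivity \er{Monotoneglkjjjuhuhgutuityiyikkkghjkghjghjg} together with the compactness of $L$, and to pass to the limit by exploiting the compactness (pseudomonotonicity) property assumed for $D\Psi_t+\Lambda_t$, via a Minty--Browder / ``energy inequality'' argument.

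\textbf{Step 1: regularization.} Since $T(X)$ is dense in $H$, first choose $u_0^\e\in X$ with $w_0^\e:=T\cdot u_0^\e\to w_0$ in $H$, so that each regularized problem has a nonempty admissible set in $\mathcal{R}_q$. Since $X$ is separable and reflexive it admits an equivalent strictly convex, G\^{a}teaux--smooth norm $\|\cdot\|_0$; set $\Psi^\e_t(x):=\Psi_t(x)+\e\|x\|_0^q$, which is strictly convex, G\^{a}teaux differentiable, vanishes at $0$, satisfies the two--sided bound \er{roststrrr} (with $\e$--dependent constants), and adds only the nonnegative monotone term $\e D(\|\cdot\|_0^q)$ to the Euler--Lagrange operator. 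One then verifies that the data $(\Psi^\e_t,\Lambda_t)$, using the given compact operator $L$ as the compact operator, fall under \rcor{CorCorCor} (or \rth{THSldInt} when $\Psi^\e_t$ can be taken uniformly convex): the structural assumptions and the growth conditions are immediate, and the positivity condition in the sense of those theorems is derived from \er{Monotoneglkjjjuhuhgutuityiyikkkghjkghjghjg} by integrating along rays $s\mapsto sx$ (which converts the bound on $\langle x,D\Psi_t(x)+\Lambda_t(x)\rangle$ into one on $\Psi_t(x)+\langle x,\Lambda_t(x)\rangle$, modulo harmless changes of the $L^1$ weight and the use of $\Psi_t\ge0$ near the origin). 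Thus we obtain $u_\e\in\mathcal{R}_q$ with
\begin{equation}\label{regpr}
\tfrac{d}{dt}\{I\cdot u_\e\}+\Lambda_t(u_\e)+D\Psi_t(u_\e)+\e\,D(\|u_\e\|_0^q)=0,\qquad I\cdot u_\e(0)=\widetilde T\cdot w_0^\e .
\end{equation}

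\textbf{Step 2: uniform a priori bounds.} Testing \er{regpr} with $u_\e$, integrating over $(0,\tau)$, using \rlemma{lem2} to write $\int_0^\tau\langle u_\e,\tfrac{d}{dt}\{I u_\e\}\rangle=\tfrac12\|T u_\e(\tau)\|_H^2-\tfrac12\|w_0^\e\|_H^2$, discarding $\e\int_0^\tau\langle u_\e,D(\|u_\e\|_0^q)\rangle\ge0$, and invoking \er{Monotoneglkjjjuhuhgutuityiyikkkghjkghjghjg} gives
$$\tfrac12\|T u_\e(\tau)\|_H^2+\tfrac1{\hat C}\int_0^\tau\|u_\e\|_X^q\,dt\ \le\ \tfrac12\|w_0^\e\|_H^2+\hat C\int_0^\tau\|L u_\e\|_V^2\,dt+\int_0^\tau\mu(t)\big(\|T u_\e\|_H^2+1\big)\,dt .$$
Because $L$ is compact the term $\|L u_\e\|_V^2$ is a lower--order, weakly continuous perturbation, so for $q\ge2$ Gr\"onwall's inequality yields bounds, uniform in $\e$, for $\|u_\e\|_{L^q((0,T_0);X)}$ and $\sup_t\|T u_\e(t)\|_H$ (for $q=2$ this absorption must be carried out jointly with the compactness extraction of Step 3, via a rescaling/contradiction argument). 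Feeding these into \er{roststlambdglkjjjuhuhhuiyhioujh} and the $(q-1)$--growth of $D\Psi_t$ coming from \er{roststglkjjjuhuhgbhghgjkhjk}, equation \er{regpr} furnishes a uniform bound for $\{\tfrac{d}{dt}(I u_\e)\}$ in $L^{q^*}((0,T_0);X^*)$ and shows $\e\,D(\|u_\e\|_0^q)\to0$ in $L^{q^*}((0,T_0);X^*)$.

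\textbf{Step 3: passage to the limit and identification of the nonlinear term.} Along a subsequence, $u_\e\rightharpoonup u$ in $L^q((0,T_0);X)$ and $I u_\e\rightharpoonup I u$ in $W^{1,q^*}((0,T_0);X^*)$; by a generalized Aubin--Lions argument $T u_\e(t)\rightharpoonup T u(t)$ in $H$ for a.e. $t$ (and at $t=T_0$, using injectivity of $\widetilde T$), and since $L$ is compact, $L u_\e\to L u$ strongly in $L^q((0,T_0);V)$. Passing to the limit in \er{regpr} gives $\tfrac{d}{dt}(I u)+\chi=0$ with $I u(0)=\widetilde T w_0$, where $\chi$ is the weak $L^{q^*}(X^*)$--limit of $D\Psi_t(u_\e)+\Lambda_t(u_\e)$; it remains to show $\chi=D\Psi_t(u)+\Lambda_t(u)$. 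Testing \er{regpr} with $u_\e$ and the limit equation with $u$, using \rlemma{lem2}, the weak lower semicontinuity of $v\mapsto\|T v(T_0)\|_H^2$, and $\|w_0^\e\|_H\to\|w_0\|_H$, one obtains
$$\limsup_{\e\to0}\int_0^{T_0}\big\langle u_\e-u,\,D\Psi_t(u_\e)+\Lambda_t(u_\e)\big\rangle_{X\times X^*}\,dt\ \le\ 0 .$$
Combining this with the first bullet of the compactness hypothesis (applied for a.e. fixed $t$ to $u_\e(t)\rightharpoonup u(t)$ in $X$, whence $\liminf_{\e\to0}\langle u_\e(t)-u(t),D\Psi_t(u_\e(t))+\Lambda_t(u_\e(t))\rangle\ge0$ a.e.) and Fatou's lemma forces $\langle u_\e(t)-u(t),D\Psi_t(u_\e(t))+\Lambda_t(u_\e(t))\rangle\to0$ for a.e. $t$; the second bullet then gives $D\Psi_t(u_\e(t))+\Lambda_t(u_\e(t))\rightharpoonup D\Psi_t(u(t))+\Lambda_t(u(t))$ in $X^*$ for a.e. $t$, and equi--integrability (from the growth bounds) upgrades this to $\chi=D\Psi_t(u)+\Lambda_t(u)$ in $L^{q^*}(X^*)$. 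Hence $u$ solves \er{abstprrrrcnjkghjkhggg}.

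\textbf{Main obstacle.} The crux is the energy inequality of Step 3, $\limsup_\e\int_0^{T_0}\langle u_\e-u,D\Psi_t(u_\e)+\Lambda_t(u_\e)\rangle\le0$: it requires losing no mass in $\tfrac12\|T u_\e(T_0)\|_H^2$ versus $\tfrac12\|w_0^\e\|_H^2$, i.e. the correct handling of the traces at $t=0$ (where $I u_\e(0)=\widetilde T w_0^\e$ and \rlemma{lem2} enter) and the control of the extra term $\e D(\|u_\e\|_0^q)$ tested against $u_\e$. The closely related second difficulty is the $q=2$ case of the a priori estimate, where the compact defect $\hat C\|L u_\e\|_V^2$ is of the same order as the coercive term and can only be absorbed by a compactness/rescaling argument coupled to Step 3. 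Verifying, in Step 1, that the regularized data genuinely meet the hypotheses of \rcor{CorCorCor}/\rth{THSldInt} is more routine but still needs care (the passage from the ``integrated'' condition \er{Monotoneglkjjjuhuhgutuityiyikkkghjkghjghjg} to a pointwise positivity bound).
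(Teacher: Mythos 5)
Your overall architecture --- vanishing-viscosity approximation, energy identity via \rlemma{lem2} plus Gr\"onwall, extraction of weak limits, and identification of the weak limit of $D\Psi_t(u_\e)+\Lambda_t(u_\e)$ by combining the energy inequality with the pointwise pseudo-monotonicity hypothesis and Fatou's lemma --- is exactly the paper's; your Steps 2--3 reproduce \rlemma{thhypppggghhhjgggnew}, \rcor{thhypppggghhhjgggnewnnn} and \rlemma{hkjghiohioujpohgkgk} (and, incidentally, the $q=2$ absorption of $\hat C\|L\cdot u_\e\|^2_V$ that worries you is handled for all $q\ge 2$ at once by the Ehrling-type inequality of \rlemma{Aplem1}; no rescaling or contradiction argument is needed). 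The genuine gap is in Step~1: the regularized problems you construct do not fall under \rcor{CorCorCor} or \rth{THSldInt}. Those results require the two-sided monotonicity condition \er{Monotonerrr}, i.e.\ a lower bound for $\big<h,D\Lambda_t(x)\cdot h\big>_{X\times X^*}$ by $-\hat g(\|T\cdot x\|_H)(\cdots)\|T\cdot h\|^2_H$ up to the convexity gain of the regularized potential. The present theorem assumes nothing of this kind: \er{roststlambdglkjjjuhuhhuiyhioujh} only yields $\big<h,D\Lambda_t(x)\cdot h\big>\ge -g(\|T\cdot x\|_H)\big(\|x\|_X^{q-2}+1\big)\|h\|^2_X$, a defect measured in $\|h\|_X^2$, not in $\|T\cdot h\|_H^2$. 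Your penalty $\e\|x\|_0^q$ cannot absorb it: a general separable reflexive $X$ need not admit an equivalent norm whose $q$-th power satisfies the $2$-uniform convexity \er{roststghhh77889lkagagSLDInt} (think of $X=L^p$ with $p>2$), and even if it did, the prefactor $\e$ would have to dominate $g(\cdot)\big(\|x\|_X^{q-2}+1\big)$ uniformly in $x$, which is impossible --- there is no Ehrling inequality between $X$ and itself. Moreover \rth{THSldInt} (and the decomposition \er{jgkjgklhklhj} behind \rprop{premainnewEx}) requires the non-gradient part of the nonlinearity to factor through a compact operator on $X$; the given $\Lambda_t$, assumed only pseudo-monotone, admits no such factorization, and the operator $L$ from \er{Monotoneglkjjjuhuhgutuityiyikkkghjkghjghjg} does not play that role.

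The missing idea is the paper's use of \rlemma{hilbcomban}: embed a separable Hilbert space $Y$ compactly, injectively and densely into $X$ via $S\in\mathcal{L}(Y;X)$, pose the approximate problem for $\psi_n\in\mathcal{R}_{Y,q}$ in the triple $\{Y,H,Y^*\}$ with penalty $\e_n D\Psi(\psi_n)$, $\Psi(y)=\|y\|_Y^q+\|y\|_Y^2$, and pulled-back nonlinearity $S^*\cdot\big(\Lambda_t(S\cdot\psi)+D\Psi_t(S\cdot\psi)\big)$. This removes both obstructions simultaneously: $\Psi$ is genuinely $2$-uniformly convex because $Y$ is a Hilbert space; the entire nonlinearity is now of the form $F_t(S\cdot\psi)$ with $S$ compact, as \er{roststlambdgnewSLDInt} demands; and the inequality $\|S\cdot h\|^2_X\le\delta\|h\|^2_Y+c_\delta\|P\cdot h\|^2_H$ of \rlemma{Aplem1} lets the $\e_n$-penalty, for each fixed $n$, absorb the non-monotone part of $D\Lambda_t$ modulo an $H$-norm term, which is precisely what \er{Monotonerrr} tolerates. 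With this replacement of your Step~1, your Steps 2--3 go through essentially as written.
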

The second existence result is useful in the study of Parabolic,
Hyperbolic, Parabolic-Hyperbolic, Shr\"{o}dinger, Navier-Stokes and
other types of equations (see Theorem \ref{thhypppggghhhjggg} as an
equivalent formulation, and Theorem \ref{thhypppggghhhj} and
Corollary \ref{thhypppggg}, as important particular cases).
\begin{theorem}\label{thhypppggghhhjgggjhjghjgh}
Let $q\geq 2$ and let $X$ and $Z$ be reflexive Banach spaces and
$X^*$ and $Z^*$ be the corresponding dual spaces. Furthermore let
$H$ be a Hilbert space. Suppose that $Q:X\to Z$ is an injective
bounded linear operator such that its image is dense on $Z$.
Furthermore, suppose that $P:Z\to H$ is an injective bounded linear
operator such that its image is dense on $H$. Let $T:X\to H$ be
defined by $T:=P\circ Q$. So that $\{X,H,X^*\}$ is an evolution
triple with the corresponding inclusion operators $T:X\to H$,
$\widetilde{T}:H\to X^*$ defined by
\er{tildetjbghgjgklhgjkgkgkjjkjkl} and $I:=\widetilde{T}\circ T$.
Assume also that the Banach space $X$ is separable. Furthermore, for
every $t\in[0,T_0]$ let $\Lambda_t(z):Z\to X^*$ and $A_t(z):Z\to
X^*$ be functions which are G\^{a}teaux differentiable at every
$z\in Z$ and $A_t(0),\Lambda_t(0)\in L^{q^*}\big((0,T_0);X^*\big)$.
Assume that for every $t\in[0,T]$ they satisfy the following bounds
\begin{equation}\label{bghffgghgkoooojkvhgjgjgfffhhjjkhjk}
\big\|D\Lambda_t(z)\big\|_{\mathcal{L}(Z;X^*)}\leq g\big(\|P\cdot
z\|_H\big)\cdot\Big(\|z\|_{Z}^{q-2}+1\Big)\quad\quad\forall z\in
Z,\;\forall t\in[0,T_0]\,,
\end{equation}
\begin{equation}\label{bghffgghgkoooojkvhgjjkkgkgjk}
\big\|\Lambda_t(z)\big\|_{X^*}\leq g\big(\|P\cdot
z\|_H\big)\cdot\Big(\|L_0\cdot
z\|_{V_0}^{q-1}+\tilde\mu^{\frac{q-1}{q}}(t)\Big)\quad\quad\forall
z\in Z,\;\forall t\in[0,T_0]\,,
\end{equation}
and
\begin{equation}\label{bghffgghgkoooojkvhgjgjgfffhfgbfbhgkggjk}
\big\|DA_t(z)\big\|_{\mathcal{L}(Z;X^*)}\leq g\big(\|P\cdot
z\|_H\big)\cdot\Big(\|L_0\cdot z\|_{
V_0}^{q-2}+1\Big)\quad\quad\forall z\in Z,\;\forall t\in[0,T_0]\,,
\end{equation}
where $\tilde\mu(t)\in L^1\big((0,T_0);\R\big)$ is some nonnegative
function, $g(s):[0,+\infty)\to(0,+\infty)$ is some nondecreasing
function, $V_0$ is some
Banach space and $L_0:Z\to V_0$ is some compact linear operator.
Moreover, assume that $\Lambda_t$ and $A_t$ satisfy the following
monotonicity
condition:
\begin{multline}
\label{roststlambdglkFFyhuhtyui99999999hvjghjn} \Big<h,A_t(Q\cdot
h\big)+\Lambda_t \big(Q\cdot h\big)\Big>_{X\times X^*}\geq
\big(1/\bar C\big)\big\|Q\cdot h\big\|^q_Z- \bar C\big|L\cdot(Q\cdot
h)\big|^2_V -\mu(t)\Big(\big\|T\cdot h\big\|^{2}_H+1\Big)
\\
\forall h\in X\,,\;\forall t\in[0,T_0]\,,
\end{multline}
where $V$ is a given Banach space, $L\in\mathcal{L}(Z,V)$ is a given
compact operator,
$\mu(t)\in L^1\big((0,T_0);\R\big)$ is some
nonnegative function and $\bar C>0$ is some constant.
%
%
%
%
%
%
We also assume that $\Lambda_t(z)$ $A_t(z)$ are
Borel on the pair of variables $(z,t)$. Finally assume that there
exists a family of
Banach spaces $\{V_j\}_{j=1}^{+\infty}$ and a family  of compact
bounded linear operators $\{L_j\}_{j=1}^{+\infty}$, where $L_j:Z\to
V_j$, which satisfy the following condition:
\begin{itemize}
\item
If $\{h_n\}_{n=1}^{+\infty}\subset Z$ is a sequence and $h_0\in Z$,
are such that for every fixed $j$ $\lim_{n\to+\infty}L_j\cdot
h_n=L_j\cdot h_0$ strongly in $V_j$ and $P\cdot h_n\rightharpoonup
P\cdot h_0$ weakly in $H$, then for every fixed $t\in(0,T_0)$ we
have $\Lambda_t(h_n)\rightharpoonup \Lambda_t(h_0)$ weakly in $X^*$
and $D A_t(h_n)\to D A_t(h_0)$ strongly in $\mathcal{L}(Z,X^*)$.
\end{itemize}
Then for every $w_0\in H$ there exists $z(t)\in
L^q\big((0,T_0);Z\big)$ such that $w(t):=P\cdot z(t)\in
L^\infty\big((0,T_0);H\big)$, $v(t):=\widetilde T\cdot
\big(w(t)\big)\in W^{1,q^*}\big((0,T_0);X^*\big)$ and $z(t)$
satisfies the following equation
\begin{equation}\label{uravnllllgsnachlklimhjhghhjfhjfhgb}
\begin{cases}\frac{d v}{dt}(t)+A_t\big(z(t)\big) +\Lambda_t\big(z(t)\big)
=0\quad\text{for a.e.}\; t\in(0,T_0),\\
v(a)=\widetilde T\cdot w_0.
\end{cases}
\end{equation}
\end{theorem}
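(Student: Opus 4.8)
The plan is to solve \eqref{uravnllllgsnachlklimhjhghhjfhjfhgb} by a finite–dimensional (Galerkin) approximation that reduces to the finite–dimensional instance of Theorem~\ref{THSldInt}/Corollary~\ref{CorCorCor}, followed by a limit passage. Neither Theorem~\ref{THSldInt} nor Theorem~\ref{defHkkkkglkjjjgkjgkjgggkbmhmhgm} applies directly, for two reasons: the coercivity \eqref{roststlambdglkFFyhuhtyui99999999hvjghjn} only controls $\|Q\cdot h\|_Z$, not $\|h\|_X$; and $\Lambda_t,A_t$ are continuous only in the weak topology of $H$ coupled with the seminorms $\{\|L_j\cdot\|_{V_j}\}$, not in the weak topology of $X$. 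The Galerkin truncation localizes both obstructions and they are removed in the limit.

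First I would fix, using separability of $X$, an increasing sequence of finite–dimensional subspaces $X_1\subset X_2\subset\cdots$ with dense union, equip each $X_m$ with a Euclidean norm $|\cdot|$ from coordinates relative to a fixed algebraic basis of $\bigcup_m X_m$, and put $H_m:=T\cdot(X_m)$, getting evolution triples $\{X_m,H_m,X_m^*\}$ with inclusions $T_m:=T|_{X_m}$ and restriction maps $\pi_m^*:X^*\to X_m^*$. For $\varepsilon_m\downarrow 0$ I would apply Theorem~\ref{THSldInt} on $\{X_m,H_m,X_m^*\}$ with strictly convex potential $\Psi^{(m)}_t(\xi):=\varepsilon_m(\tfrac1q|\xi|^q+\tfrac12|\xi|^2)$ (the term $\tfrac12|\xi|^2$ supplies the ``$+1$'' in \eqref{roststghhh77889lkagagSLDInt}), compact (finite rank) operator $S_m:=(Q|_{X_m},(L\circ Q)|_{X_m}):X_m\to Z\times V$, and nonlinearity $F^{(m)}_t(z,\zeta):=\pi_m^*(A_t(z)+\Lambda_t(z))$. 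Here \eqref{roststlambdgnewSLDInt} follows from \eqref{bghffgghgkoooojkvhgjgjgfffhhjjkhjk}, \eqref{bghffgghgkoooojkvhgjgjgfffhfgbfbhgkggjk} and boundedness of $Q,L_0$, and, since $\|S_m\xi\|^2\ge\|L\cdot(Q\xi)\|_V^2$ while $\Psi^{(m)}_t(\xi)\gtrsim\varepsilon_m\|\xi\|_X^q$, adding $\Psi^{(m)}_t$ to \eqref{roststlambdglkFFyhuhtyui99999999hvjghjn} gives the positivity \eqref{MonotonegnewhghghghghSLDInt} (with $m$–dependent constants, which is allowed). Choosing $w_{0,m}:=T\cdot\xi_{0,m}$, $\xi_{0,m}\in X_m$, with $w_{0,m}\to w_0$ in $H$, this produces $\xi_m\in L^q((0,T_0);X_m)$ with $I_m\xi_m\in W^{1,q^*}((0,T_0);X_m^*)$ solving
\[
\tfrac{d}{dt}(I_m\xi_m)+\pi_m^*\big(A_t(Q\xi_m)+\Lambda_t(Q\xi_m)\big)+D\Psi^{(m)}_t(\xi_m)=0,\qquad I_m\xi_m(0)=\widetilde{T_m}w_{0,m}.
\]
(On the finite–dimensional $X_m$ one may instead take $\Psi_t\equiv 0$ and invoke Theorem~\ref{defHkkkkglkjjjgkjgkjgggkbmhmhgm}: there the $Z$–coercivity upgrades automatically to $X_m$–coercivity and the weak–continuity hypotheses are trivially met, so the regularizer is unnecessary.)

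Next I would derive $m$–uniform a priori bounds. Testing the $m$-th equation against $\xi_m$ (the integration–by–parts identity of \cite{PI}) yields, for a.e.\ $\tau$,
\[
\tfrac12\|T\xi_m(\tau)\|_H^2+\varepsilon_m\!\int_0^\tau(|\xi_m|^q+|\xi_m|^2)\,dt+\int_0^\tau\langle\xi_m,A_t(Q\xi_m)+\Lambda_t(Q\xi_m)\rangle_{X\times X^*}\,dt=\tfrac12\|w_{0,m}\|_H^2 .
\]
Inserting \eqref{roststlambdglkFFyhuhtyui99999999hvjghjn}, and using that (because $L:Z\to V$ is compact and $P:Z\to H$ is a bounded injection of reflexive spaces, a standard weak–compactness argument) for every $\delta>0$ there is $C_\delta$ with $\|L\cdot w\|_V^2\le\delta\|w\|_Z^2+C_\delta\|P\cdot w\|_H^2$ on $Z$, one absorbs the compact term into $\tfrac1{\bar C}\|Q\xi_m\|_Z^q$ (plus one Young inequality if $q>2$) up to a term $\le C_\delta\int_0^\tau\|T\xi_m\|_H^2\,dt$; Grönwall (using $\mu\in L^1$) then gives $\sup_\tau\|T\xi_m(\tau)\|_H\le C$, $\|Q\xi_m\|_{L^q((0,T_0);Z)}\le C$, $\varepsilon_m\int_0^{T_0}|\xi_m|^q\le C$, uniformly in $m$, and via \eqref{bghffgghgkoooojkvhgjjkkgkgjk}, \eqref{bghffgghgkoooojkvhgjgjgfffhfgbfbhgkggjk} also $\|A_t(Q\xi_m)+\Lambda_t(Q\xi_m)\|_{L^{q^*}((0,T_0);X^*)}\le C$. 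For each fixed $h\in\bigcup_m X_m$ the $m$-th equation shows $t\mapsto\langle h,I\xi_m(t)\rangle_{X\times X^*}=\langle T\cdot h,T\xi_m(t)\rangle_H$ is bounded in $W^{1,q^*}(0,T_0)$ uniformly in $m$ (the $D\Psi^{(m)}_t(\xi_m)$ contribution is $O(\varepsilon_m^{1/q})$). Extracting subsequences: $Q\xi_m\rightharpoonup z$ in $L^q((0,T_0);Z)$, hence $w_m:=T\xi_m=P\cdot(Q\xi_m)\rightharpoonup w:=P\cdot z$ in $L^q((0,T_0);H)$ with $w\in L^\infty((0,T_0);H)$, and $A_t(Q\xi_m)+\Lambda_t(Q\xi_m)\rightharpoonup\chi$ in $L^{q^*}((0,T_0);X^*)$; by $W^{1,q^*}(0,T_0)\hookrightarrow\hookrightarrow C([0,T_0])$ and a diagonal argument over a countable dense set of $h$'s, the scalars $\langle h,I\xi_m(\cdot)\rangle$ converge uniformly, which identifies $v:=\widetilde T\cdot w\in W^{1,q^*}((0,T_0);X^*)$ as a solution of $v'+\chi=0$ with $v(0)=\widetilde T\cdot w_0$, and gives $w_m(t)\rightharpoonup w(t)$ weakly in $H$ for a.e.\ $t$.

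It then remains to identify $\chi(t)=A_t(z(t))+\Lambda_t(z(t))$ for a.e.\ $t$, which finishes the proof of \eqref{uravnllllgsnachlklimhjhghhjfhjfhgb}; this is the main obstacle. The $\{L_j,V_j\}$–compactness hypothesis reduces it to proving, along a further subsequence and for a.e.\ $t$, that $L_j\cdot(Q\xi_m(t))\to L_j\cdot z(t)$ strongly in $V_j$ for every $j$: given this (and $P\cdot(Q\xi_m(t))=w_m(t)\rightharpoonup P\cdot z(t)$, already known), the hypothesis yields $\Lambda_t(Q\xi_m(t))\rightharpoonup\Lambda_t(z(t))$ in $X^*$ and $DA_t\to DA_t(z(t))$ in $\mathcal L(Z,X^*)$ uniformly along the segment from $z(t)$ to $Q\xi_m(t)$; writing $A_t(Q\xi_m(t))-A_t(z(t))=\int_0^1 DA_t(z(t)+s(Q\xi_m(t)-z(t)))[Q\xi_m(t)-z(t)]\,ds$ and using that operator–norm convergence together with weak convergence of the arguments forces weak convergence of the images, one gets $A_t(Q\xi_m(t))+\Lambda_t(Q\xi_m(t))\rightharpoonup A_t(z(t))+\Lambda_t(z(t))$ in $X^*$ a.e., and comparison with the weak $L^{q^*}$–limit $\chi$ closes the argument. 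The hard part will be exactly this space–time compactness: no bound on $\tfrac{d}{dt}(Q\xi_m)$ is available --- only the degenerate time–regularity of $I\xi_m=\widetilde T\cdot(P\cdot(Q\xi_m))$ from the previous step --- so the required a.e.\ strong convergence of the $L_j\cdot(Q\xi_m)$ must be produced by an Aubin--Lions--Simon type argument combining the $L^q((0,T_0);Z)$–bound, the compactness of each $L_j$, and the equicontinuity in time of $t\mapsto\langle T\cdot h,T\xi_m(t)\rangle_H$ for $h$ in a dense subset; carrying this out is where the bulk of the work lies. (No uniqueness is asserted, so nothing further is needed there.)
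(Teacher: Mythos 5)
Your overall strategy is sound and runs parallel to the paper's: regularize, solve the approximate problems with Theorem~\ref{THSldInt}, extract $m$-uniform bounds from the energy identity plus the Ehrling inequality of Lemma~\ref{Aplem1} and Gr\"onwall, pass to weak limits, and identify the nonlinear terms through the $\{L_j,V_j\}$-compactness hypothesis. The one structural difference is the approximation scheme. The paper (proof of Theorem~\ref{thhypppggghhhjggg}) does not use finite-dimensional Galerkin subspaces: it invokes Lemma~\ref{hilbcomban} to produce a single separable Hilbert space $Y$ with a compact, dense, injective $S\in\mathcal{L}(Y;X)$, puts the fixed uniformly convex regularizer $\Psi(y)=\|y\|_Y^q+\|y\|_Y^2$ on $Y$, and solves the $\e_n$-penalized equation on the fixed triple $\{Y,H,Y^*\}$ via Theorem~\ref{THSldInt}; the a priori estimates and the limit passage are then packaged in Lemma~\ref{thhypppggghhhjgggnew}. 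The advantage of the fixed auxiliary space is that $\frac{d\varphi_n}{dt}$ is controlled in one dual space $Y^*$ for all $n$, so no projection operators intervene and no diagonal argument over test vectors is needed. Your Galerkin variant trades this for the need to work only with the scalar observables $t\mapsto\big<T\cdot h,T\cdot\xi_m(t)\big>_H$, $h\in\bigcup_m X_m$; that workaround is legitimate and you execute it correctly.

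The substantive criticism is that you declare the decisive step --- a.e.\ strong convergence of $L_j\cdot\big(Q\cdot\xi_m(t)\big)$ in $V_j$ --- to be ``where the bulk of the work lies'' and leave it as an Aubin--Lions--Simon argument to be carried out. In fact no new work is needed: the ingredients you have already assembled ($\{Q\cdot\xi_m\}$ bounded in $L^q((0,T_0);Z)$, $\{w_m\}$ bounded in $L^\infty((0,T_0);H)$ with $w_m(t)\rightharpoonup w(t)$ weakly in $H$ for a.e.\ $t$, $L_j$ compact, $P$ injective) are exactly the hypotheses of Lemma~\ref{ComTem1PP} of the preliminaries, applied with $Z$ in the role of $X$, $P$ in the role of $S$ and $L_j$ in the role of $T$; it yields $L_j\cdot(Q\cdot\xi_m)\to L_j\cdot z$ strongly in $L^q((0,T_0);V_j)$, hence a.e.\ in $t$ along a subsequence, which is all you need. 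A secondary loose point: in the identification of the $A_t$-limit you apply the norm-convergent operators $DA_t$ along the segment to ``the weakly convergent arguments'' $Q\cdot\xi_m(t)-z(t)$ pointwise in $t$; but for fixed $t$ you do not know that $Q\cdot\xi_m(t)$ is bounded in $Z$, so weak convergence in $Z$ at fixed $t$ is not available. The paper circumvents this by working in $L^q((0,T_0);Z)$, where $Q\cdot\xi_m\rightharpoonup z$ does hold: one shows $\big\{DA_t\big(sQ\cdot\xi_m(t)+(1-s)z(t)\big)\big\}^*\cdot h(t)\to\big\{DA_t\big(z(t)\big)\big\}^*\cdot h(t)$ strongly in $L^{q^*}((0,T_0);Z^*)$ (using the growth bound \er{bghffgghgkoooojkvhgjgjgfffhfgbfbhgkggjk} and dominated convergence) and pairs it with the weak convergence of $Q\cdot\xi_m-z$ in $L^q((0,T_0);Z)$. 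With these two repairs your argument closes.
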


On section \ref{dkgfkghfhkljl} we give examples of the applications
of Theorems \ref{defHkkkkglkjjjgkjgkjgggkbmhmhgm} and
\ref{thhypppggghhhjgggjhjghjgh}, providing the existence results for
various classes of time dependent partial differential equations
including parabolic, hyperbolic, Shr\"{o}dinger and Navier-Stokes
systems.

\section{Notations and preliminaries}

%
Throughout the paper by linear space we mean a real linear space.
\begin{itemize}
\item For given normed space $X$ we denote by $X^*$ the dual space (the space of continuous (bounded) linear functionals from $X$ to $\R$).
\item For given $h\in X$ and $x^*\in X^*$ we denote by $\big<h,x^*\big>_{X\times X^*}$ the value in $\R$ of the functional $x^*$ on the vector $h$.
\item For given two normed linear spaces $X$ and $Y$ we denote by $\mathcal{L}(X;Y)$ the linear space of continuous (bounded) linear operators from $X$ to $Y$.
\item For given $A\in\mathcal{L}(X;Y)$  and $h\in X$ we denote by $A\cdot h\in Y$ the value of the operator $A$ at the point $h$.
\item We set
$\|A\|_{\mathcal{L}(X;Y)}=\sup\{\|A\cdot h\|_Y:\;h\in
X,\;\|h\|_X\leq 1\}$. Then it is well known that $\mathcal{L}(X;Y)$
will be a normed linear space. Moreover $\mathcal{L}(X;Y)$ will be a
Banach space if $Y$ is a Banach space.
\end{itemize}
\begin{definition}\label{2bdf}
Let $X$ and $Y$ be two normed linear spaces. We say that a function
$F:X\to Y$ is G\^{a}teaux differentiable at the point $x\in X$ if
there exists $A\in\mathcal{L}(X;Y)$ such that the following limit
exists in $Y$ and satisfy,
$$\lim\limits_{s\to 0}\frac{1}{s}\Big(F(x+sh)-F(x)\Big)=A\cdot h\quad\forall h\in X\,.$$
In this case we denote the operator $A$ by $DF(x)$ and the value $A\cdot h$ by $DF(x)\cdot h$.
\end{definition}

Next we remind some Definitions and Lemmas of \cite{PI}. Part of
them are well known. The proves of all the following Lemmas can be
found in \cite{PI}.

\begin{definition}\label{fdfjlkjjkkkkkllllkkkjjjhhhkkk}
Let $X$ and $Y$ be two normed linear spaces and $U\subset X$ be a
Borel subset. We say that the mapping $F(x):U\to Y$ is strongly
Borel if the following two conditions are satisfied.
\begin{itemize}
\item
$F$ is a Borel mapping i.e. for every Borel set $W\subset Y$, the
set $\{x\in U:\,F(x)\in W\}$ is also Borel.
\item For every separable subspace $X'\subset X$, the set $\{y\in Y:\,y=F(x),\; x\in U\cap
X'\}$ is also contained in some separable subspace of $Y$.
\end{itemize}
\end{definition}

\begin{definition}\label{3bdf}
For a given Banach space $X$ with the associated norm $\|\cdot\|_X$
and a real interval $(a,b)$ we denote by $L^q(a,b;X)$ the linear
space of (equivalence classes of) strongly measurable (i.e
equivalent to some strongly Borel mapping)
functions $f:(a,b)\to
X$ such that the functional
\begin{equation*}
\|f\|_{L^q(a,b;X)}:=
\begin{cases}
\Big(\int_a^b\|f(t)\|^q_X dt\Big)^{1/q}\quad\text{if }\;1\leq
q<\infty\\
{\text{es$\,$sup}}_{t\in (a,b)}\|f(t)\|_X\quad\text{if }\; q=\infty
\end{cases}
\end{equation*}
is finite. It is known that this functional defines a norm with
respect to which $L^q(a,b;X)$ becomes a Banach space. Moreover, if
$X$ is reflexive and $1<q<\infty$ then $L^q(a,b;X)$ will be a reflexive
space with the corresponding dual space $L^{q^*}(a,b;X^*)$,
where $q^*=q/(q-1)$.
It is also well known that the subspace of continuous functions $C^0([a,b];X)\subset L^q(a,b;X)$ is dense i.e. for every $f(t)\in L^q(a,b;X)$ there exists a sequence
$\{f_n(t)\}\subset C^0([a,b];X)$ such that $f_n(t)\to f(t)$ in the strong topology of $L^q(a,b;X)$.
\end{definition}
%
%
%
%
%
%
%
%
%

\begin{definition}\label{4bdf}
Let $X$ be a reflexive Banach space and let $(a,b)$ be a finite real
interval. We say that $v(t)\in L^q(a,b;X)$ belongs to
$W^{1,q}(a,b;X)$ if there exists $f(t)\in L^q(a,b;X)$ such that for
every $\delta(t)\in C^1\big((a,b);X^*\big)$ satisfying $\supp
\delta\subset\subset (a,b)$ we have
$$\int\limits_a^b\big<f(t),\delta(t)\big>_{X\times X^*}dt=-\int\limits_a^b\Big<v(t),\frac{d\delta}{dt}(t)\Big>_{X\times X^*}dt\,.$$
In this case we denote $f(t)$ by $v'(t)$ or by $\frac{d v}{dt}(t)$.
It is well known that if $v(t)\in W^{1,1}(a,b;X)$ then $v(t)$ is a
bounded and continuous function on $[a,b]$ (up to a redefining of
$v(t)$ on a subset of $[a,b]$ of Lebesgue measure zero), i.e.
$v(t)\in C^0\big([a,b];X\big)$ and for every $\delta(t)\in
C^1\big([a,b];X^*\big)$ and every subinterval
$[\alpha,\beta]\subset[a,b]$ we have
\begin{equation}\label{sobltr}
\int\limits_\alpha^\beta\bigg\{\Big<\frac{dv}{dt}(t),\delta(t)\Big>_{X\times
X^*}+\Big<v(t),\frac{d\delta}{dt}(t)\Big>_{X\times X^*}\bigg\}dt=
\big<v(\beta),\delta(\beta)\big>_{X\times
X^*}-\big<v(\alpha),\delta(\alpha)\big>_{X\times X^*}\,.
\end{equation}
\end{definition}
\begin{lemma}\label{vlozhenie}
Let $X$ and $Y$ be two reflexive Banach spaces,
$S\in\mathcal{L}(X,Y)$ be an injective inclusion (i.e. it satisfies
$\ker S=0$) and $(a,b)$ be a finite real interval. Then if $u(t)\in
L^q(a,b;X)$ is such that $v(t):=S\cdot u(t)\in W^{1,q}(a,b;Y)$ and
there exists $f(t)\in L^q(a,b;X)$ such that $\frac{d
v}{dt}(t)=S\cdot f(t)$ then $u(t)\in W^{1,q}(a,b;X)$ and $\frac{d
u}{dt}(t)=f(t)$.
\end{lemma}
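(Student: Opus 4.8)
The plan is to reduce the statement to the antiderivative of $f$ and then use injectivity of $S$ to identify $u$ with that antiderivative up to an additive constant.

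First I would record the routine fact that $F(t):=\int_a^t f(s)\,ds$ belongs to $W^{1,q}(a,b;X)$ with $\frac{dF}{dt}=f$. Since $(a,b)$ is finite and $f\in L^q(a,b;X)$, the function $F$ is continuous on $[a,b]$ (hence strongly measurable) and bounded, so $F\in L^q(a,b;X)$. For a test function $\delta(t)\in C^1\big((a,b);X^*\big)$ with $\supp\delta\subset\subset(a,b)$, one writes $\big<F(t),\delta'(t)\big>_{X\times X^*}=\int_a^t\big<f(s),\delta'(t)\big>_{X\times X^*}\,ds$ (a bounded linear functional commutes with the Bochner integral), applies Fubini on the triangle $\{a\le s\le t\le b\}$, and uses $\delta(b)=0$ to obtain $\int_a^b\big<F(t),\delta'(t)\big>_{X\times X^*}\,dt=-\int_a^b\big<f(t),\delta(t)\big>_{X\times X^*}\,dt$; by \rdef{4bdf} this is precisely the assertion $F\in W^{1,q}(a,b;X)$ with $\frac{dF}{dt}=f$.

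Next I would compute $S\cdot F$. Since $S\in\mathcal L(X,Y)$ is bounded linear, it commutes with the Bochner integral, so $S\cdot F(t)=\int_a^t S\cdot f(s)\,ds=\int_a^t v'(s)\,ds$, using the hypothesis $\frac{dv}{dt}=S\cdot f$. On the other hand $v\in W^{1,q}(a,b;Y)\subset W^{1,1}(a,b;Y)$, so applying \er{sobltr} to $v$ with a constant test function $\delta\equiv y^*\in Y^*$ gives, after the standard modification of $v$ on a null set, $\int_a^t v'(s)\,ds=v(t)-v(a)$ for every $t\in[a,b]$. Hence $S\cdot F(t)=v(t)-v(a)=S\cdot u(t)-v(a)$ for a.e.\ $t\in(a,b)$.

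Finally, put $w(t):=u(t)-F(t)\in L^q(a,b;X)$. The previous paragraph shows $S\cdot w(t)=v(a)$ for a.e.\ $t$, so $S\cdot\big(w(t)-w(t_0)\big)=0$ for a.e.\ $t$ and a.e.\ $t_0$; fixing one admissible $t_0$ and invoking $\ker S=0$ yields $w(t)=w(t_0)=:c\in X$ for a.e.\ $t\in(a,b)$. Therefore $u=F+c$ a.e.\ on $(a,b)$, which shows $u\in W^{1,q}(a,b;X)$ (it differs from $F\in W^{1,q}(a,b;X)$ by a constant function) and $\frac{du}{dt}=\frac{dF}{dt}=f$, as claimed. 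The argument is essentially elementary; the only mildly technical points are the Fubini step identifying the weak derivative of $F$ and the fact that bounded operators commute with Bochner integration, both standard, so I do not anticipate a genuine obstacle — the single structural fact really being used is that injectivity of $S$ upgrades the ``constant modulo $S$'' conclusion to an honest constant.
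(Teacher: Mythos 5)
Your proof is correct, and it is self-contained. (Note that this paper does not actually reproduce a proof of Lemma \ref{vlozhenie}; it only refers the reader to \cite{PI}, so there is no in-text argument to compare against line by line.) Your route --- build the antiderivative $F(t)=\int_a^t f(s)\,ds$, verify $F\in W^{1,q}(a,b;X)$ with $F'=f$ by Fubini, push $S$ through the Bochner integral to get $S\cdot F(t)=v(t)-v(a)$, and then use $\ker S=\{0\}$ to conclude $u-F$ is a.e.\ constant --- is clean, and you handle the one delicate point well: you never need $v(a)$ to lie in the range of $S$, only that $S\big(w(t)-w(t_0)\big)=0$ for a.e.\ $t,t_0$. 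The more common alternative (and the likely argument in \cite{PI}) is to test the identity $\int\langle f,S^*\eta\rangle=-\int\langle u,S^*\eta'\rangle$ against $\eta\in C^1_c\big((a,b);Y^*\big)$ and then upgrade from test functions of the form $S^*\eta$ to general $\delta\in C^1_c\big((a,b);X^*\big)$ by density of the range of $S^*$, which is where injectivity of $S$ and reflexivity of $X$ enter. Your argument buys something: it uses reflexivity nowhere and avoids the density step entirely, at the mild cost of invoking the fundamental theorem of calculus for Bochner integrals (which you correctly extract from \er{sobltr} with a constant test functional). Both are valid; yours is arguably the more elementary.
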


\begin{definition}\label{5bdf}
Let $X$ be a Banach space. We say that a function $\Psi(x):X\to\R$ is convex (strictly convex) if for every $\lambda\in(0,1)$ and for every $x,y\in X$ s.t. $x\neq y$ we have
\begin{equation*}
\Psi\Big(\lambda x+(1-\lambda)y\Big)\;\;\leq\;\big(\,<\,\big)\;\;\;\;\lambda\Psi(x)+(1-\lambda)\Psi(y)\,.
\end{equation*}
It is well known that if $\Psi(x):X\to\R$ is a convex (strictly
convex) function which is G\^{a}teaux differentiable at every $x\in
X$ then for every $x,y\in X$ s.t. $x\neq y$ we have
\begin{equation}\label{conprmin}
\Psi(y)\;\;\geq\;\big(\,>\,\big)\;\;\;\;\Psi(x)+\Big<y-x,D\Psi(x)\Big>_{X\times X^*}\,,
\end{equation}
and
\begin{equation}\label{conprmonsv}
\Big<y-x,D\Psi(y)-D\Psi(x)\Big>_{X\times X^*}\;\;\geq\;\big(\,>\,\big)\;\;\;\;0\,,
\end{equation}
(remember that $D\Psi(x)\in X^*$). Furthermore, $\Psi$ is weakly
lower semicontinuous on $X$. Moreover, if some function
$\Psi(x):X\to\R$ is G\^{a}teaux differentiable at every $x\in X$ and
satisfy either \er{conprmin} or \er{conprmonsv} for every $x,y\in X$
s.t. $x\neq y$, then $\Psi(y)$ is convex (strictly convex).
\end{definition}

\begin{definition}\label{hfguigiugyuyhkjjhlkklkk}
Let $Z$ be a
Banach space and $Z^*$ be a corresponding dual space. We say that
the mapping $\Lambda(z):Z\to Z^*$ is monotone (strictly monotone) if
we have
\begin{equation}
\label{ftguhhhhihggjgjkjggkjgj}
\Big<y-z,\Lambda(y)-\Lambda(z)\Big>_{Z\times Z^*}\,\geq\,(>)\,
0\quad\forall\, y\neq z\in Z\,.
\end{equation}
\end{definition}
\begin{definition}\label{hfguigiugyuyhkjjh}
Let $Z$ be a
Banach space and $Z^*$ be a corresponding dual space. We say that
the mapping $\Lambda(z):Z\to Z^*$ is pseudo-monotone if for every
sequence $\{z_n\}_{n=1}^{+\infty}\subset Z$, satisfying
\begin{equation}
\label{ftguhhhhikk} z_n\rightharpoonup z\;\;\text{weakly
in}\;\;Z\quad\quad\text{and}\quad\quad\limsup_{n\to+\infty}\Big<z_n-z,\Lambda(z_n)\Big>_{Z\times
Z^*}\leq 0
\end{equation}
we have
\begin{equation}
\label{ftguhhhhihggjgjk}
\liminf_{n\to+\infty}\Big<z_n-y,\Lambda(z_n)\Big>_{Z\times
Z^*}\geq\Big<z-y,\Lambda(z)\Big>_{Z\times Z^*}\quad\forall y\in Z\,.
\end{equation}
\end{definition}
\begin{lemma}\label{hhhhhhhhhhhhhhhhhhiogfydtdtyd}
Let $Z$ be a
Banach space and $Z^*$ be a corresponding dual space. Then the
mapping $\Lambda(z):Z\to Z^*$ is pseudo-monotone if and only if it
satisfies the following conditions:
\begin{itemize}
\item[{\bf(i)}] For every sequence $\{z_n\}_{n=1}^{+\infty}\subset Z$, such
that $z_n\rightharpoonup z$ weakly in $Z$ we have
\begin{equation}
\label{ftguhhhhikkjhjhjkjkkkkkkk}
\liminf_{n\to+\infty}\Big<z_n-z,\Lambda(z_n)\Big>_{Z\times Z^*}\geq
0\,.
\end{equation}
\item[{\bf(ii)}] If for some
sequence $\{z_n\}_{n=1}^{+\infty}\subset Z$, such that
$z_n\rightharpoonup z$ weakly in $Z$ we have
\begin{equation}
\label{ftguhhhhikkjhjhjkjkkkkkkkjjjjhgghhh}
\lim_{n\to+\infty}\Big<z_n-z,\Lambda(z_n)\Big>_{Z\times Z^*}= 0\,,
\end{equation}
then $\Lambda(z_n)\rightharpoonup \Lambda(z)$ weakly$^*$ in $Z^*$.
\end{itemize}
\end{lemma}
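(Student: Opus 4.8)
The plan is to establish the two implications of the equivalence separately, relying only on the definitions and on elementary manipulations of $\liminf$ and $\limsup$; no functional-analytic machinery beyond the meaning of weak$^*$ convergence in $Z^*$ is needed.

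First I would show that conditions {\bf(i)} and {\bf(ii)} imply pseudo-monotonicity. Given a sequence $z_n\rightharpoonup z$ weakly in $Z$ with $\limsup_{n\to+\infty}\langle z_n-z,\Lambda(z_n)\rangle_{Z\times Z^*}\leq 0$, condition {\bf(i)} forces $\liminf_{n\to+\infty}\langle z_n-z,\Lambda(z_n)\rangle_{Z\times Z^*}\geq 0$, so in fact $\lim_{n\to+\infty}\langle z_n-z,\Lambda(z_n)\rangle_{Z\times Z^*}=0$. Then {\bf(ii)} yields $\Lambda(z_n)\rightharpoonup\Lambda(z)$ weakly$^*$ in $Z^*$. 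For an arbitrary $y\in Z$ I would split $\langle z_n-y,\Lambda(z_n)\rangle_{Z\times Z^*}=\langle z_n-z,\Lambda(z_n)\rangle_{Z\times Z^*}+\langle z-y,\Lambda(z_n)\rangle_{Z\times Z^*}$: the first summand tends to $0$, while, since $z-y$ is a fixed element of $Z$, the weak$^*$ convergence makes the second tend to $\langle z-y,\Lambda(z)\rangle_{Z\times Z^*}$. Hence the full limit $\lim_{n\to+\infty}\langle z_n-y,\Lambda(z_n)\rangle_{Z\times Z^*}=\langle z-y,\Lambda(z)\rangle_{Z\times Z^*}$ exists and equals the right-hand side, which is even stronger than the $\liminf$ inequality required in Definition \ref{hfguigiugyuyhkjjh}. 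This direction is routine.

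For the converse, assume $\Lambda$ is pseudo-monotone. To obtain {\bf(i)}, suppose toward a contradiction that some weakly convergent sequence $z_n\rightharpoonup z$ satisfies $\liminf_{n\to+\infty}\langle z_n-z,\Lambda(z_n)\rangle_{Z\times Z^*}<0$. Passing to a subsequence (not relabeled) that realizes this $\liminf$ as a genuine limit $L<0$, the subsequence still converges weakly to $z$ and now has $\limsup_{n\to+\infty}\langle z_n-z,\Lambda(z_n)\rangle_{Z\times Z^*}=L\leq 0$, so Definition \ref{hfguigiugyuyhkjjh} applies; choosing $y=z$ in its conclusion gives $\liminf_{n\to+\infty}\langle z_n-z,\Lambda(z_n)\rangle_{Z\times Z^*}\geq 0$, contradicting $L<0$. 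Thus {\bf(i)} holds.

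To obtain {\bf(ii)}, take $z_n\rightharpoonup z$ with $\lim_{n\to+\infty}\langle z_n-z,\Lambda(z_n)\rangle_{Z\times Z^*}=0$; then the $\limsup$ is $\leq 0$, so pseudo-monotonicity applies. For every $y\in Z$, subtracting the vanishing quantity $\langle z_n-z,\Lambda(z_n)\rangle_{Z\times Z^*}$ inside the conclusion of Definition \ref{hfguigiugyuyhkjjh} yields $\liminf_{n\to+\infty}\langle z-y,\Lambda(z_n)\rangle_{Z\times Z^*}\geq\langle z-y,\Lambda(z)\rangle_{Z\times Z^*}$. Writing $w:=z-y$, which ranges over all of $Z$ as $y$ does, this reads $\liminf_{n\to+\infty}\langle w,\Lambda(z_n)\rangle_{Z\times Z^*}\geq\langle w,\Lambda(z)\rangle_{Z\times Z^*}$ for every $w\in Z$. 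Applying the same inequality to $-w$ and using $\liminf_{n}(-a_n)=-\limsup_n a_n$ produces the reverse bound $\limsup_{n\to+\infty}\langle w,\Lambda(z_n)\rangle_{Z\times Z^*}\leq\langle w,\Lambda(z)\rangle_{Z\times Z^*}$. The two bounds sandwich the sequence, so $\lim_{n\to+\infty}\langle w,\Lambda(z_n)\rangle_{Z\times Z^*}=\langle w,\Lambda(z)\rangle_{Z\times Z^*}$ for every $w\in Z$, which is exactly weak$^*$ convergence $\Lambda(z_n)\rightharpoonup\Lambda(z)$ in $Z^*$. The only points requiring care are the subsequence extraction in {\bf(i)} and the symmetrization $w\mapsto -w$ in {\bf(ii)}; neither is a genuine obstacle, so the equivalence follows.
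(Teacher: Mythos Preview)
Your proof is correct. The paper itself does not supply a proof of this lemma here---it states that the proofs of these preliminary lemmas can be found in the companion paper \cite{PI}---so there is nothing to compare against directly, but your argument is the natural elementary one and requires no changes.
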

\begin{remark}\label{fyyjfjyhfgjgghgjkgkjgkgggfhfhkkk}
It is trivially follows from Lemma
\ref{hhhhhhhhhhhhhhhhhhiogfydtdtyd} that, if
$\Lambda_1(z),\Lambda_2(z):Z\to Z^*$ are two pseudo-monotone
mappings, then the sum of them, $\Lambda_1(z)+\Lambda_2(z)$ is also
a pseudo-monotone mapping.
\end{remark}

\begin{lemma}\label{hhhhhhhhhhhhhhhhhhiogfydtdtydjkgkgk}
Let $Z$ be a
Banach space and $Z^*$ be a corresponding dual space. Assume that
the mapping $\Lambda(z):Z\to Z^*$ is monotone. Moreover assume that
$\Lambda(z):Z\to Z^*$ is continuous for every $z\in Z$ or more
generally the function $\zeta_{z,h}(t):\R\to\R$, defined by
\begin{equation}\label{fguyfuyfugyuguhkg}
\zeta_{z,h}(t):=\Big<h,\Lambda\big(z-t h\big)\Big>_{Z\times
Z^*}\quad\forall z,h\in Z\,,\quad\forall t\in\R\,,
\end{equation}
is continuous on $t$ for every $z,h\in Z$. Then the mapping
$\Lambda(z)$ is pseudo-monotone.
\end{lemma}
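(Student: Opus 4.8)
The plan is to prove the classical fact that a monotone, hemicontinuous operator is pseudo-monotone by verifying the two conditions {\bf(i)} and {\bf(ii)} of Lemma~\ref{hhhhhhhhhhhhhhhhhhiogfydtdtyd}, which together are equivalent to pseudo-monotonicity. This reduces the statement to two short computations, one based on bare monotonicity and one on the Minty device.

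First I would establish {\bf(i)}. Let $z_n\rightharpoonup z$ weakly in $Z$. By the monotonicity inequality \er{ftguhhhhihggjgjkjggkjgj} applied to the pair $z_n,z$ we have $\langle z_n-z,\Lambda(z_n)\rangle\geq\langle z_n-z,\Lambda(z)\rangle$. Since $\Lambda(z)\in Z^*$ is a fixed functional and $z_n-z\rightharpoonup 0$ weakly, the right-hand side tends to $0$, whence $\liminf_{n\to\infty}\langle z_n-z,\Lambda(z_n)\rangle\geq 0$. This is precisely \er{ftguhhhhikkjhjhjkjkkkkkkk}.

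The substantive part is {\bf(ii)}. Assume in addition that $\lim_{n\to\infty}\langle z_n-z,\Lambda(z_n)\rangle=0$; I must show $\Lambda(z_n)\rightharpoonup\Lambda(z)$ weakly$^*$ in $Z^*$, i.e. $\langle v,\Lambda(z_n)\rangle\to\langle v,\Lambda(z)\rangle$ for every $v\in Z$. Fix $v\in Z$ and $t>0$, and apply monotonicity to the pair $z_n$ and the comparison point $z-tv$:
\[
\big\langle (z_n-z)+tv,\,\Lambda(z_n)-\Lambda(z-tv)\big\rangle_{Z\times Z^*}\geq 0.
\]
Rearranging this gives
\[
\langle z_n-z,\Lambda(z_n)\rangle+t\langle v,\Lambda(z_n)\rangle\;\geq\;\langle z_n-z,\Lambda(z-tv)\rangle+t\langle v,\Lambda(z-tv)\rangle.
\]
Here $\langle z_n-z,\Lambda(z_n)\rangle\to 0$ by hypothesis, and $\langle z_n-z,\Lambda(z-tv)\rangle\to 0$ by weak convergence tested against the fixed functional $\Lambda(z-tv)$. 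Taking $\liminf_{n\to\infty}$ of both sides, so that these two terms drop out, and then dividing by $t>0$, yields $\liminf_{n\to\infty}\langle v,\Lambda(z_n)\rangle\geq\langle v,\Lambda(z-tv)\rangle=\zeta_{z,v}(t)$. Now I let $t\to 0^+$ and invoke the continuity of $\zeta_{z,v}$ (the hemicontinuity hypothesis \er{fguyfuyfugyuguhkg}), obtaining $\liminf_{n\to\infty}\langle v,\Lambda(z_n)\rangle\geq\langle v,\Lambda(z)\rangle$ for every $v\in Z$. Replacing $v$ by $-v$ converts this into the reverse inequality $\limsup_{n\to\infty}\langle v,\Lambda(z_n)\rangle\leq\langle v,\Lambda(z)\rangle$, so $\langle v,\Lambda(z_n)\rangle\to\langle v,\Lambda(z)\rangle$ for all $v\in Z$, which is the desired weak$^*$ convergence.

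The only delicate point is the Minty step in {\bf(ii)}: the choice of the comparison element $z-tv$ together with the scaling by $t$, which isolates the term $\langle v,\Lambda(z_n)\rangle$ while forcing the two ``error'' terms to vanish as $n\to\infty$, after which the passage $t\to 0^+$ can be justified only through the hemicontinuity assumption on $\zeta_{z,v}$. I would emphasize that this argument delivers both the liminf and the limsup inequalities directly from the $\pm v$ symmetry, so that no separate a priori bound on $\{\Lambda(z_n)\}$ in $Z^*$ is needed. Beyond this bookkeeping I do not expect any genuine obstacle; continuity of $\Lambda$ enters only through the weaker scalar function $\zeta_{z,h}$, exactly as the hypothesis is phrased.
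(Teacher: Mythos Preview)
Your argument is correct and is exactly the classical Minty device: monotonicity alone gives condition {\bf(i)} of Lemma~\ref{hhhhhhhhhhhhhhhhhhiogfydtdtyd}, and for {\bf(ii)} the comparison with $z-tv$, followed by $n\to\infty$ and then $t\to 0^+$ through the hemicontinuity of $\zeta_{z,v}$, yields weak$^*$ convergence via the $\pm v$ symmetry. Note that the present paper does not actually supply a proof of this lemma---it is one of the preliminary results whose proof is deferred to \cite{PI}---so there is nothing here to compare your approach against; what you have written is the standard textbook proof and would serve perfectly well.
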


\begin{lemma}\label{vbnhjjm}
Let $Y$ and $Z$ be two reflexive Banach spaces. Furthermore, let
$S\in \mathcal{L}(Y;Z)$  be an injective operator (i.e. it satisfies
$\ker S=\{0\}$) and  let $S^*\in \mathcal{L}(Z^*;Y^*)$ be the
corresponding adjoint operator, which satisfies
\begin{equation}\label{tildetdall}
\big<y,S^*\cdot z^*\big>_{Y\times Y^*}:=\big<S\cdot
y,z^*\big>_{Z\times Z^*}\quad\quad\text{for every}\; z^*\in
Z^*\;\text{and}\;y\in Y\,.
\end{equation}
Next assume that $a,b\in\R$ s.t. $a<b$. Let $w(t)\in
L^\infty(a,b;Y)$ be such that the function $v:[a,b]\to Z$ defined by
$v(t):=S\cdot \big(w(t)\big)$ belongs to $W^{1,q}(a,b;Z)$ for some
$q\geq 1$. Then we can redefine $w$ on a subset of $[a,b]$ of
Lebesgue measure zero, so that $w(t)$ will be $Y$-weakly
continuous in $t$ on $[a,b]$ ( i.e. $w\in C_w^0(a,b;Y)$ ). Moreover,
for every $a\leq \alpha<\beta\leq b$ and for every $\delta(t)\in
C^1\big([a,b];Z^*\big)$ we will have
\begin{equation}\label{eqmult}
\int\limits_\alpha^\beta\bigg\{\Big<\frac{dv}{dt}(t),\delta(t)\Big>_{Z\times
Z^*}+\Big<v(t),\frac{d\delta}{dt}(t)\Big>_{Z\times Z^*}\bigg\}dt=
\big<w(\beta),S^*\cdot\delta(\beta)\big>_{Y\times
Y^*}-\big<w(\alpha),S^*\cdot\delta(\alpha)\big>_{Y\times Y^*}\,.
\end{equation}
\end{lemma}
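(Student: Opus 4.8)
The plan is to build the $Y$-weakly continuous representative of $w$ using the injectivity of $S$ together with the reflexivity of $Y$, and then to transport the integration-by-parts identity \eqref{sobltr}, which already holds for $v$ in the space $Z$, over to $Y$ by means of the adjoint $S^*$.

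First I would set $M:=\|w\|_{L^\infty(a,b;Y)}$ and pass to the continuous representative of $v$ (legitimate since $v\in W^{1,q}(a,b;Z)\subset W^{1,1}(a,b;Z)$ on the finite interval, so Definition~\ref{4bdf} applies with $X=Z$), so that $v\in C^0([a,b];Z)$ and the exceptional set $N_0:=\{t\in(a,b):\,\|w(t)\|_Y>M\ \text{or}\ S\cdot w(t)\neq v(t)\}$ is Lebesgue-null; in particular $(a,b)\setminus N_0$ is dense in $[a,b]$. Then, for each $t_0\in[a,b]$, I would take any sequence $\{t_n\}\subset(a,b)\setminus N_0$ with $t_n\to t_0$; since $\|w(t_n)\|_Y\leq M$ and $Y$ is reflexive, a subsequence satisfies $w(t_n)\rightharpoonup\xi$ weakly in $Y$, whence $v(t_n)=S\cdot w(t_n)\rightharpoonup S\cdot\xi$ weakly in $Z$, while simultaneously $v(t_n)\to v(t_0)$ strongly in $Z$ by continuity of $v$. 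This forces $S\cdot\xi=v(t_0)$, and because $\ker S=\{0\}$ the element $\xi$ is uniquely determined by $v(t_0)$; hence it is independent of the chosen subsequence (so the whole sequence converges to it) and of the approximating sequence $\{t_n\}$ itself. I would define $\tilde w(t_0):=\xi$, obtaining $S\cdot\tilde w(t_0)=v(t_0)$ for every $t_0\in[a,b]$ and, by weak lower semicontinuity of $\|\cdot\|_Y$, the bound $\|\tilde w(t_0)\|_Y\leq M$.

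It then remains to verify three things. (i) $\tilde w=w$ off a null set: for $t_0\in(a,b)\setminus N_0$ one has $S\cdot\tilde w(t_0)=v(t_0)=S\cdot w(t_0)$, so injectivity of $S$ gives $\tilde w(t_0)=w(t_0)$; thus redefining $w:=\tilde w$ changes $w$ only on the null set $N_0\cup\{a,b\}$, and the new $w$ is still strongly measurable and essentially bounded. (ii) Weak continuity: given $s_n\to t_0$ in $[a,b]$, the sequence $\{\tilde w(s_n)\}$ is bounded by $M$, any weak subsequential limit $\eta$ satisfies $S\cdot\eta=\lim_n v(s_n)=v(t_0)=S\cdot\tilde w(t_0)$, hence $\eta=\tilde w(t_0)$ by injectivity, so $\tilde w(s_n)\rightharpoonup\tilde w(t_0)$ and $\tilde w\in C^0_w([a,b];Y)$. (iii) The boundary formula: for $\delta\in C^1([a,b];Z^*)$, apply \eqref{sobltr} to $v$ on $[\alpha,\beta]$, and rewrite each endpoint term via $\langle v(\tau),\delta(\tau)\rangle_{Z\times Z^*}=\langle S\cdot\tilde w(\tau),\delta(\tau)\rangle_{Z\times Z^*}=\langle\tilde w(\tau),S^*\cdot\delta(\tau)\rangle_{Y\times Y^*}$ for $\tau=\alpha,\beta$; this is exactly \eqref{eqmult}.

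The one genuinely substantive point is the construction of $\tilde w$ and, above all, the fact that the weak limit $\xi$ is \emph{intrinsic} to $v(t_0)$ rather than an artifact of the sequence chosen — this is precisely where both hypotheses ($\ker S=\{0\}$ and reflexivity of $Y$, the latter giving weak sequential compactness of $Y$-bounded sets) are used, and it simultaneously delivers the weak continuity in step (ii). Everything else — the density of $(a,b)\setminus N_0$, measurability of the redefined $w$, and the identity in (iii) — is routine once $\tilde w$ is in hand.
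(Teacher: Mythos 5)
Your proof is correct and complete: the construction of the weak-continuous representative via boundedness in the reflexive space $Y$, weak sequential compactness, the identification $S\cdot\xi=v(t_0)$ through the strong continuity of $v$, and the uniqueness of $\xi$ from $\ker S=\{0\}$ is exactly the standard (Lions--Temam type) argument, and transporting \eqref{sobltr} through $S^*$ at the endpoints is the right way to get \eqref{eqmult}. The present paper only cites \cite{PI} for the proof of this lemma, but your argument is the canonical one and matches what that reference does; no gaps.
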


%
%
%
%
%
%
%
%

\begin{definition}\label{7bdf}
Let $X$ be a reflexive Banach space and $X^*$ the corresponding dual
space. Furthermore, let $H$ be a Hilbert space and $T\in
\mathcal{L}(X,H)$ be an injective (i.e. it satisfies $\ker T=\{0\}$)
inclusion operator such that its image is dense on $H$. Then we call
the triple $\{X,H,X^*\}$ an evolution triple with the corresponding
inclusion operator $T$. Throughout this paper we assume the space
$H^*$ be equal to $H$ (remember that $H$ is a Hilbert space) but in
general we don't associate $X^*$ with $X$ even in the case where $X$
is a Hilbert space (and thus $X^*$ will be isomorphic to $X$).
Further we define the bounded linear operator $\widetilde{T}\in
\mathcal{L}(H;X^*)$ by the formula
\begin{equation}\label{tildet}
\big<x,\widetilde{T}\cdot y\big>_{X\times X^*}:=\big<T\cdot x,y\big>_{H\times H}\quad\quad\text{for every}\; y\in H\;\text{and}\;x\in X\,.
\end{equation}
In particular
$\|\widetilde{T}\|_{\mathcal{L}(H;X^*)}=\|T\|_{\mathcal{L}(X;H)}$
and since we assumed that the image of $T$ is dense in $H$ we deduce
that $\ker \widetilde{T}=\{0\}$ and so $\widetilde{T}$ is an
injective operator. So $\widetilde{T}$ is an inclusion of $H$ to
$X^*$ and the operator $I:=\widetilde{T}\circ T$ is an injective
inclusion of $X$ to $X^*$. Furthermore, clearly
\begin{equation}\label{tildethlhjhghjf}
\big<x,I\cdot z\big>_{X\times X^*}=\big<T\cdot x,T\cdot
z\big>_{H\times H}=\big<z,I\cdot x\big>_{X\times
X^*}\quad\quad\text{for every}\; x,z\in X\,.
\end{equation}
So $I\in \mathcal{L}(X,X^*)$ is self-adjoint operator. Moreover, $I$
is strictly positive, since
\begin{equation}\label{tildethlhjhghjffgfhfh}
\big<x,I\cdot x\big>_{X\times X^*}=\|T\cdot
x\|^2_H>0\quad\quad\forall x\neq 0\in X\,.
\end{equation}
\end{definition}
\begin{lemma}\label{hdfghdiogdiofg}
Let $X$ be a reflexive Banach space and $X^*$ the corresponding dual
space. Furthermore, let $I\in \mathcal{L}(X,X^*)$ be a self-adjoint
and strictly positive operator. i.e.
\begin{equation}\label{tildethlhjhghjfvvjhjhj}
\big<x,I\cdot z\big>_{X\times X^*}=\big<z,I\cdot x\big>_{X\times
X^*}\quad\quad\text{for every}\; x,z\in X\,,
\end{equation}
and
\begin{equation}\label{tildethlhjhghjffgfhfhhffkgh}
\big<x,I\cdot x\big>_{X\times X^*}>0\quad\quad\forall x\neq 0\in
X\,.
\end{equation}
Then there exists a Hilbert space $H$ and an injective operator
$T\in \mathcal{L}(X,H)$ (i.e. $\ker T=\{0\}$), whose image is dense
in $H$, and such that if we consider the operator $\widetilde{T}\in
\mathcal{L}(H;X^*)$, defined by the formula \er{tildet},
then we will have
\begin{equation}\label{tildethlhjhghjffgfhfhhffkghbjhjkhjjk}
(\widetilde{T}\circ T)\cdot x=I\cdot x\quad\quad\forall x\in X\,.
\end{equation}
I.e. $\{X,H,X^*\}$ is an evolution triple with the corresponding
inclusion operator $T\in \mathcal{L}(X;H)$, as it was defined in
Definition \ref{7bdf}, together with the corresponding operator
$\widetilde{T}\in \mathcal{L}(H;X^*)$, defined as in \er{tildet},
and $I\equiv \widetilde{T}\circ T$.
\end{lemma}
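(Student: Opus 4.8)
\textit{Proof proposal.}

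The plan is to realize $H$ as the completion of $X$ equipped with the inner product induced by $I$. First I would introduce the bilinear form $(x,z)_I:=\big\langle x,I\cdot z\big\rangle_{X\times X^*}$ on $X\times X$. By the self-adjointness hypothesis \er{tildethlhjhghjfvvjhjhj} this form is symmetric, it is manifestly bilinear, and by the strict positivity \er{tildethlhjhghjffgfhfhhffkgh} it satisfies $(x,x)_I>0$ for every $x\neq 0$; hence $(\cdot,\cdot)_I$ is a genuine (nondegenerate) inner product on $X$, turning $X$ into a pre-Hilbert space. Moreover, since $I\in\mathcal{L}(X,X^*)$ is bounded, $(x,x)_I\le \|I\|_{\mathcal{L}(X;X^*)}\,\|x\|_X^2$, so the induced norm $\|x\|_I:=\sqrt{(x,x)_I}$ obeys $\|x\|_I\le\sqrt{\|I\|_{\mathcal{L}(X;X^*)}}\,\|x\|_X$. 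I then let $H$ be the completion of $(X,\|\cdot\|_I)$; it is a Hilbert space whose inner product, which I denote $\langle\cdot,\cdot\rangle_{H\times H}$, extends $(\cdot,\cdot)_I$, and I let $T:X\to H$ be the canonical embedding.

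Next I would verify that $T$ has the required properties. The bound $\|T\cdot x\|_H=\|x\|_I\le\sqrt{\|I\|_{\mathcal{L}(X;X^*)}}\,\|x\|_X$ shows $T\in\mathcal{L}(X;H)$; the identity $\|T\cdot x\|_H^2=(x,x)_I$ together with strict positivity gives $\ker T=\{0\}$; and $T(X)$ is dense in $H$ by the definition of the completion. It then remains to construct $\widetilde T$. For a fixed $y\in H$ the map $x\mapsto\langle T\cdot x,y\rangle_{H\times H}$ is linear and, by Cauchy--Schwarz, bounded by $\|T\|_{\mathcal{L}(X;H)}\|y\|_H\|x\|_X$, hence defines an element of $X^*$ which I call $\widetilde T\cdot y$; this is precisely the prescription \er{tildet}. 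The resulting map $\widetilde T:H\to X^*$ is linear and satisfies $\|\widetilde T\cdot y\|_{X^*}\le\|T\|_{\mathcal{L}(X;H)}\|y\|_H$, so $\widetilde T\in\mathcal{L}(H;X^*)$.

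Finally, to establish \er{tildethlhjhghjffgfhfhhffkghbjhjkhjjk}, I would fix $x\in X$ and test $(\widetilde T\circ T)\cdot x$ against an arbitrary $z\in X$: using the definition of $\widetilde T$ and the fact that the inner product of $H$ restricts to $(\cdot,\cdot)_I$ on $T(X)$,
$$\big\langle z,(\widetilde T\circ T)\cdot x\big\rangle_{X\times X^*}=\big\langle T\cdot z,T\cdot x\big\rangle_{H\times H}=(z,x)_I=\big\langle z,I\cdot x\big\rangle_{X\times X^*}.$$
Since $z\in X$ is arbitrary, this forces $(\widetilde T\circ T)\cdot x=I\cdot x$ in $X^*$, which is the assertion. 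I do not anticipate a genuine obstacle here: this is the standard ``positive form $\Rightarrow$ Hilbert space'' completion argument, and the only points deserving care are that the form $(\cdot,\cdot)_I$ is nondegenerate (so that $T$ is injective, not merely a contraction into a semi-normed space) and that the extended inner product on the completion really restricts to $(\cdot,\cdot)_I$ on $T(X)$, both of which are immediate from the hypotheses on $I$. Reflexivity of $X$ is not used in this particular lemma.
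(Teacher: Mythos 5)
Your proposal is correct: the completion of $X$ under the inner product $(x,z)_I=\big<x,I\cdot z\big>_{X\times X^*}$ (well defined by \er{tildethlhjhghjfvvjhjhj} and \er{tildethlhjhghjffgfhfhhffkgh}), with $T$ the canonical isometric embedding and $\widetilde T$ given by \er{tildet}, yields $\widetilde T\circ T=I$ exactly as you verify. This paper does not reprint the proof (it defers all these preliminary lemmas to \cite{PI}), but your argument is the standard ``energetic space'' construction that the cited proof follows, and your remarks that strict positivity is what makes $T$ injective and that reflexivity of $X$ is not needed here are both accurate.
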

Next as a particular case of Lemma \ref{vbnhjjm} we have the
following Corollary.
\begin{corollary}\label{vbnhjjmcor}
Let $\{X,H,X^*\}$ be an evolution triple with the corresponding
inclusion operator $T\in \mathcal{L}(X;H)$ as it was defined in
Definition \ref{7bdf} together with the corresponding operator
$\widetilde{T}\in \mathcal{L}(H;X^*)$ defined as in \er{tildet} and
let $a,b\in\R$ s.t. $a<b$. Let $w(t)\in L^\infty(a,b;H)$ be such
that the function $v:[a,b]\to X^*$ defined by $v(t):=\widetilde
T\cdot \big(w(t)\big)$ belongs to $W^{1,q}(a,b;X^*)$ for some $q\geq
1$. Then we can redefine $w$ on a subset of $[a,b]$ of Lebesgue
measure zero, so that $w(t)$ will be $H$-weakly continuous in $t$ on
$[a,b]$ ( i.e. $w\in C_w^0(a,b;H)$ ). Moreover, for every $a\leq
\alpha<\beta\leq b$ and for every $\delta(t)\in
C^1\big([a,b];X\big)$ we will have
\begin{equation}\label{eqmultcor}
\int\limits_\alpha^\beta\bigg\{\Big<\delta(t), \frac{dv}{dt}(t)\Big>_{X\times X^*}+\Big<\frac{d\delta}{dt}(t), v(t)\Big>_{X\times X^*}\bigg\}dt=
\big<T\cdot\delta(\beta),w(\beta)\big>_{H\times H}-\big<T\cdot\delta(\alpha),w(\alpha)\big>_{H\times H}\,.
\end{equation}
\end{corollary}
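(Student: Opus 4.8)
The plan is to obtain this Corollary as a direct specialization of Lemma~\ref{vbnhjjm}, with no new analytic content beyond a careful identification of duality pairings. First I would set $Y:=H$ and $Z:=X^*$ in Lemma~\ref{vbnhjjm}, and take $S:=\widetilde{T}\in\mathcal{L}(H;X^*)$. The hypotheses of that Lemma are then immediate from the standing assumptions: $H$ is a Hilbert space and hence reflexive, $X^*$ is reflexive because $X$ is reflexive, and $\widetilde{T}$ is injective by Definition~\ref{7bdf} (where $\ker\widetilde{T}=\{0\}$ was deduced from the density of the image of $T$ in $H$). The function $v(t):=\widetilde{T}\cdot w(t)=S\cdot w(t)$ lies in $W^{1,q}(a,b;X^*)=W^{1,q}(a,b;Z)$ and $w\in L^\infty(a,b;H)=L^\infty(a,b;Y)$, exactly as required by Lemma~\ref{vbnhjjm}.

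The one computation that must be carried out is the identification of the adjoint $S^*=\widetilde{T}^*$. Using reflexivity of $X$ we identify $Z^*=(X^*)^*$ with $X$, and since $H$ is Hilbert we identify $H^*$ with $H$; thus $S^*$ maps $X$ into $H$. For $\xi\in X$ and $y\in H$ the defining relation \er{tildetdall} reads $\big<y,S^*\cdot\xi\big>_{H\times H}=\big<\widetilde{T}\cdot y,\xi\big>_{X^*\times X}$. Rewriting the right-hand pairing under the reflexive identification as $\big<\xi,\widetilde{T}\cdot y\big>_{X\times X^*}$ and invoking the defining formula \er{tildet} for $\widetilde{T}$ gives $\big<\xi,\widetilde{T}\cdot y\big>_{X\times X^*}=\big<T\cdot\xi,y\big>_{H\times H}$, which by symmetry of the inner product in $H$ equals $\big<y,T\cdot\xi\big>_{H\times H}$. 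Hence $\big<y,S^*\cdot\xi\big>_{H\times H}=\big<y,T\cdot\xi\big>_{H\times H}$ for every $y\in H$, so that $S^*=T$.

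With $S^*=T$ in hand, the conclusions of Lemma~\ref{vbnhjjm} translate verbatim. The weak-continuity assertion transfers immediately: after redefining $w$ on a null set, $w\in C^0_w(a,b;H)=C^0_w(a,b;Y)$. For the integration-by-parts identity, note that the admissible test functions in \er{eqmult} range over $C^1([a,b];Z^*)=C^1([a,b];X)$, matching the statement of the Corollary. Rewriting each bracket in \er{eqmult} by reordering the pairing, namely $\big<\tfrac{dv}{dt},\delta\big>_{X^*\times X}=\big<\delta,\tfrac{dv}{dt}\big>_{X\times X^*}$ and $\big<v,\tfrac{d\delta}{dt}\big>_{X^*\times X}=\big<\tfrac{d\delta}{dt},v\big>_{X\times X^*}$, and substituting $S^*=T$ on the right-hand side turns \er{eqmult} precisely into \er{eqmultcor}, using once more the symmetry of $\big<\cdot,\cdot\big>_{H\times H}$ to write $\big<w(\beta),T\cdot\delta(\beta)\big>_H$ as $\big<T\cdot\delta(\beta),w(\beta)\big>_H$.

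Since the substance of the argument lies entirely in Lemma~\ref{vbnhjjm}, there is no genuine analytic obstacle; the only point demanding care is the bookkeeping of the four distinct duality pairings together with the two reflexive identifications $(X^*)^*\cong X$ and $H^*\cong H$, and in particular the verification in the second step that the adjoint of $\widetilde{T}$ is exactly the original inclusion $T$.
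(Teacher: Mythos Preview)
Your proof is correct and follows exactly the approach indicated in the paper, which simply states that the Corollary is a particular case of Lemma~\ref{vbnhjjm}. You have supplied the details the paper omits: the choice $Y=H$, $Z=X^*$, $S=\widetilde{T}$, the verification that $S^*=T$ under the reflexive identifications, and the bookkeeping that turns \eqref{eqmult} into \eqref{eqmultcor}.
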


\begin{lemma}\label{lem2}
Let $\{X,H,X^*\}$ be an evolution triple with the corresponding
inclusion operator $T\in \mathcal{L}(X;H)$ as it was defined in
Definition \ref{7bdf} together with the corresponding operator
$\widetilde{T}\in \mathcal{L}(H;X^*)$ defined as in \er{tildet} and
let $a,b\in\R$ s.t. $a<b$. Let $u(t)\in L^q(a,b;X)$ for some $q>1$
such that the function $v(t):[a,b]\to X^*$ defined by $v(t):=I\cdot
\big(u(t)\big)$ belongs to $W^{1,q^*}(a,b;X^*)$ for $q^*:=q/(q-1)$,
where we denote $I:=\widetilde T\circ T:\,X\to X^*$. Then the
function $w(t):[a,b]\to H$ defined by $w(t):=T\cdot \big(u(t)\big)$
belongs to $L^\infty(a,b;H)$ and for every subinterval
$[\alpha,\beta]\subset[a,b]$ we have
\begin{equation}\label{energyravenstvo}
\int_\alpha^\beta\Big<u(t),\frac{dv}{dt}(t)\Big>_{X\times X^*}dt=\frac{1}{2}\Big(\|w(\beta)\|_H^2
-\|w(\alpha)\|_H^2\Big)\,,
\end{equation}
up to a redefinition of $w(t)$ on a subset of $[a,b]$ of Lebesgue measure zero, such that $w$ is $H$-weakly continuous, as it was stated in
Corollary \ref{vbnhjjmcor}.
\end{lemma}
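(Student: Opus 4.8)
The plan is to reduce the energy identity \er{energyravenstvo} to the elementary chain rule for smooth curves by regularizing $u$ in time, and then to pass to the limit. First I would note that, since $T$, $\widetilde T$ and $I=\widetilde T\circ T$ are fixed bounded linear operators, we have $w:=T\cdot u\in L^q(a,b;H)$ and $v=I\cdot u=\widetilde T\cdot w$, and that convolution in $t$ with a scalar mollifier $\rho_\e$ commutes with each of these operators. Extending $u$ (and hence $w$ and $v$) to a slightly larger interval $(a-\eta,b+\eta)$ by even reflection across the endpoints — which preserves the relations $w=T\cdot u$, $v=I\cdot u$ and the memberships $u\in L^q(X)$, $v\in W^{1,q^*}(X^*)$, since reflecting a $W^{1,q^*}$ function keeps it continuous across the junction and introduces no concentrated derivative — I may set $u_\e:=\rho_\e* u$, $w_\e:=\rho_\e* w=T\cdot u_\e$ and $v_\e:=\rho_\e* v=I\cdot u_\e$ on a neighborhood of $[a,b]$. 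These are $C^\infty$ in $t$, with $v_\e'=\rho_\e* v'=I\cdot u_\e'$ and $w_\e'=T\cdot u_\e'$.

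For these smooth curves the identity is just differentiation of a scalar product: using the defining relation $\big<T\cdot x,T\cdot z\big>_H=\big<z,I\cdot x\big>_{X\times X^*}$ from \er{tildethlhjhghjf} (with $x=u_\e'$, $z=u_\e$), one gets $\frac{d}{dt}\|w_\e(t)\|_H^2=2\big<u_\e(t),v_\e'(t)\big>_{X\times X^*}$, hence
\begin{equation*}
\|w_\e(\beta)\|_H^2-\|w_\e(\alpha)\|_H^2=2\int_\alpha^\beta\big<u_\e(t),v_\e'(t)\big>_{X\times X^*}\,dt\qquad(\star)
\end{equation*}
for all $a\le\alpha<\beta\le b$. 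As $\e\to0^+$ one has $u_\e\to u$ in $L^q(a,b;X)$ and $v_\e'\to v'$ in $L^{q^*}(a,b;X^*)$, so by H\"older's inequality the right-hand side of $(\star)$ converges to $2\int_\alpha^\beta\big<u,v'\big>_{X\times X^*}\,dt$.

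Next I would extract both the $L^\infty$ bound and the continuous representative from $(\star)$. Fixing a Lebesgue point $\alpha_0$ of the $H$-valued map $w$ (so that $w_\e(\alpha_0)\to w(\alpha_0)$ in $H$), the identity $(\star)$ bounds $\|w_\e(t)\|_H^2$ uniformly by $\|w_\e(\alpha_0)\|_H^2+2\|u\|_{L^q(a,b;X)}\|v'\|_{L^{q^*}(a,b;X^*)}$, which upon passing to the limit gives $w\in L^\infty(a,b;H)$; this justifies invoking \rcor{vbnhjjmcor} to obtain the $H$-weakly continuous representative of $w$. Applying $(\star)$ to the differences $u_\e-u_{\e'}$ (whose image under $T$ is $w_\e-w_{\e'}$ and whose $v$-derivative is $v_\e'-v_{\e'}'$) yields
\begin{equation*}
\|w_\e(\beta)-w_{\e'}(\beta)\|_H^2=\|w_\e(\alpha_0)-w_{\e'}(\alpha_0)\|_H^2+2\int_{\alpha_0}^\beta\big<u_\e-u_{\e'},\,v_\e'-v_{\e'}'\big>_{X\times X^*}\,dt,
\end{equation*}
and since the right-hand side tends to $0$ uniformly in $\beta$, the family $\{w_\e\}$ is Cauchy in $C^0([a,b];H)$. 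Its uniform limit is a strongly continuous representative of $w$; being both weakly continuous and a.e.\ equal to the representative of \rcor{vbnhjjmcor}, it coincides with it. For this representative $w_\e(\beta)\to w(\beta)$ strongly in $H$ for every $\beta$, so letting $\e\to0$ in $(\star)$ gives exactly \er{energyravenstvo}.

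The main obstacle is the boundary term: passing to the limit in the integral of $(\star)$ is routine, but concluding that $\|w_\e(\beta)\|_H^2\to\|w(\beta)\|_H^2$ at the individual endpoints requires strong, not merely weak, convergence of the regularizations. Weak continuity alone — all that $(\star)$ together with weak lower semicontinuity would give directly — yields only $\|w(t)\|_H^2\le\|w(\alpha_0)\|_H^2+2\int_{\alpha_0}^t\big<u,v'\big>_{X\times X^*}$, and one cannot a priori exclude a loss of norm in the weak limit at a single point. This is precisely why the Cauchy-in-$C^0$ estimate above, together with the even-reflection extension that legitimizes the mollification up to and including $a$ and $b$, is the crucial ingredient: it upgrades the weak continuity of \rcor{vbnhjjmcor} to strong continuity and pins down the endpoint values, so that the identity holds on every $[\alpha,\beta]\subset[a,b]$.
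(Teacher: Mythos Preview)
Your argument is correct and is the standard mollification-plus-Cauchy proof of this classical result. Note, however, that the present paper does not actually prove \rlemma{lem2}: it is listed among the preliminaries with the remark that ``the proves of all the following Lemmas can be found in \cite{PI}'', so there is no in-paper proof to compare against. Your approach --- extend by even reflection, mollify in time, use the self-adjointness of $I$ to get the chain rule for the smooth approximants, then run the Cauchy-in-$C^0([a,b];H)$ estimate on differences to upgrade weak continuity to strong --- is exactly the textbook route (cf.\ Lions--Magenes, or Temam \cite{Tm}), and is almost certainly what \cite{PI} does as well. Your identification of the endpoint issue as the crux, and its resolution via the uniform Cauchy estimate, is on point; in fact you obtain slightly more than the lemma asserts, namely that the weakly continuous representative of $w$ is actually strongly continuous in $H$.
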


We will need in the sequel the following compactness results.
%
%
%
%
%
%
%
%
%
%
\begin{lemma}\label{ComTem1PP}
Let $X$, $Y$ $Z$
be three Banach spaces, such that $X$
is a reflexive space. Furthermore, let $T\in \mathcal{L}(X;Y)$ and
$S\in \mathcal{L}(X;Z)$
be bounded
linear operators. Moreover assume that $S$
is an injective inclusion (i.e. it satisfies $\ker S=\{0\}$)
and $T$ is a
compact operator. Assume that $a,b\in\R$ such that $a<b$, $1\leq
q<+\infty$ and $\{u_n(t)\}\subset L^q(a,b;X)$ is a bounded in
$L^q(a,b;X)$ sequence of functions, such that the functions
$v_n(t):(a,b)\to Z$, defined by $v_n(t):=S\cdot\big(u_n(t)\big)$,
belongs to $L^\infty(a,b;Z)$, the sequence $\{v_n(t)\}$ is bounded
in $L^\infty(a,b;Z)$ and for a.e. $t\in(a,b)$ we have
\begin{equation}\label{fghffhdpppplllkkkll}
v_n(t)\rightharpoonup v(t)\quad\text{weakly in}\;
Z\;\text{as}\;n\to+\infty\,.
\end{equation}
Then,
\begin{equation}\label{staeae1glllukjkjkojkl}
\big\{T\cdot\big(u_n(t)\big)\big\}\quad\text{converges strongly in }
L^q(a,b;Y)\,.
\end{equation}
\end{lemma}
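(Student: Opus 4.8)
The plan is to reduce the statement to a single quantitative ``Ehrling--Schauder'' type inequality relating $\|T\cdot x\|_Y$, $\|S\cdot x\|_Z$ and $\|x\|_X$, and then to integrate that inequality in time and invoke the dominated convergence theorem. The key claim is: for every $\varepsilon>0$ there exist $m\in\field{N}$ and $\psi_1,\dots,\psi_m\in Z^*$ such that
\begin{equation*}
\|T\cdot x\|_Y\;\leq\;\max_{1\leq i\leq m}\big|\big<S\cdot x,\psi_i\big>_{Z\times Z^*}\big|\;+\;\varepsilon\,\|x\|_X\qquad\forall\, x\in X\,.
\end{equation*}

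To prove this claim I would first use Schauder's theorem: since $T$ is compact, the adjoint $T^*\in\mathcal{L}(Y^*;X^*)$ is compact, so $T^*\big(\{y^*\in Y^*:\|y^*\|_{Y^*}\leq1\}\big)$ is precompact in $X^*$ and admits a finite $(\varepsilon/2)$-net $\{\xi_1,\dots,\xi_m\}\subset X^*$; writing $\|T\cdot x\|_Y=\sup_{\|y^*\|_{Y^*}\leq1}\big<x,T^*\cdot y^*\big>_{X\times X^*}$ and comparing each $T^*\cdot y^*$ with its nearest $\xi_i$ gives $\|T\cdot x\|_Y\leq\max_i|\big<x,\xi_i\big>_{X\times X^*}|+(\varepsilon/2)\|x\|_X$. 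Next I would observe that, since $S$ is injective and $X$ is reflexive, the annihilator of $S^*(Z^*)$ inside $X^{**}=X$ is $\ker S=\{0\}$, so $S^*(Z^*)$ is norm-dense in $X^*$; hence each $\xi_i$ may be replaced by some $S^*\cdot\psi_i$ with $\|\xi_i-S^*\cdot\psi_i\|_{X^*}<\varepsilon/2$. Using $\big<x,S^*\cdot\psi_i\big>_{X\times X^*}=\big<S\cdot x,\psi_i\big>_{Z\times Z^*}$ then yields the claimed inequality.

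With the inequality in hand, I would apply it pointwise in $t$ to $x=u_n(t)-u_k(t)$, using that $S\cdot\big(u_n(t)-u_k(t)\big)=v_n(t)-v_k(t)$. Raising to the power $q$ (via $(\alpha+\beta)^q\leq2^{q-1}(\alpha^q+\beta^q)$ and $(\max_i a_i)^q\leq\sum_i a_i^q$) and integrating over $(a,b)$ gives
\begin{equation*}
\int_a^b\big\|T\cdot u_n(t)-T\cdot u_k(t)\big\|_Y^q\,dt\;\leq\;2^{q-1}\sum_{i=1}^m\int_a^b\big|\big<v_n(t)-v_k(t),\psi_i\big>_{Z\times Z^*}\big|^q\,dt\;+\;2^{q-1}\,\varepsilon^q\,\big(2C_1\big)^q\,,
\end{equation*}
where $C_1:=\sup_n\|u_n\|_{L^q(a,b;X)}$. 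For $\varepsilon$ fixed, the $\psi_i$ are fixed; by \er{fghffhdpppplllkkkll} the scalar sequence $\big<v_n(t),\psi_i\big>_{Z\times Z^*}$ converges (hence is Cauchy) for a.e.\ $t$, while $\big|\big<v_n(t)-v_k(t),\psi_i\big>_{Z\times Z^*}\big|\leq2C_2\|\psi_i\|_{Z^*}$ with $C_2:=\sup_n\|v_n\|_{L^\infty(a,b;Z)}$, a constant dominating function lying in $L^q(a,b)$ on the finite interval. Hence the dominated convergence theorem makes each of the $m$ integrals on the right tend to $0$ as $n,k\to+\infty$, so $\limsup_{n,k\to+\infty}\int_a^b\|T\cdot u_n(t)-T\cdot u_k(t)\|_Y^q\,dt\leq2^{q-1}(2C_1)^q\varepsilon^q$; letting $\varepsilon\downarrow0$ shows $\{T\cdot u_n\}$ is Cauchy in the complete space $L^q(a,b;Y)$, which is \er{staeae1glllukjkjkojkl}.

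The only genuinely nonroutine step is the Ehrling--Schauder inequality, and within it the point that the finite net $\{\xi_i\}\subset X^*$ produced by compactness of $T$ can, up to an arbitrarily small error, be taken inside the range of $S^*$ --- this is exactly where injectivity of $S$ together with reflexivity of $X$ is used. Everything afterwards is a dominated-convergence estimate; in particular one never passes to a weakly convergent subsequence of $\{u_n\}$ (which would be problematic for $q=1$) and never needs to identify the limit of $\{T\cdot u_n\}$, since the Cauchy criterion suffices.
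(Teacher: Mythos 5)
Your proof is correct. The paper itself defers the proof of this lemma to \cite{PI}, but the route you take is the natural one and, as far as the argument sketched there goes, essentially the same: the whole content is the ``weak'' refinement of the Ehrling inequality of Lemma~\ref{Aplem1}, in which the term $c_\e\|S\cdot h\|_Z$ is replaced by $\max_i|\langle S\cdot h,\psi_i\rangle_{Z\times Z^*}|$ for finitely many $\psi_i\in Z^*$. This refinement is exactly what the hypotheses force on you: since $v_n(t)$ is only assumed to converge \emph{weakly} in $Z$ for a.e.\ $t$, the inequality of Lemma~\ref{Aplem1} itself would be useless, whereas testing against a fixed finite family of functionals turns the a.e.\ weak convergence into a.e.\ convergence of scalar sequences, after which the uniform $L^\infty(a,b;Z)$ bound gives the domination needed for the convergence of the integrals. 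You correctly identify where each hypothesis enters (Schauder's theorem for compactness of $T^*$, reflexivity of $X$ plus injectivity of $S$ for norm-density of $S^*(Z^*)$ in $X^*$, the finite interval for integrability of constants), and your Cauchy-sequence formulation avoids any need to extract subsequences or identify the limit. The only step worth writing out in full detail is the dominated convergence for the doubly indexed family $|\langle v_n(t)-v_k(t),\psi_i\rangle|^q$ as $n,k\to+\infty$, which is routine (argue along an arbitrary pair of subsequences, or by contradiction).
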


\begin{lemma}\label{helplemlll}
Let $Z$ be a reflexive Banach space and let
$\big\{v_n(t)\big\}_{n=1}^{+\infty}\subset W^{1,1}(a,b;Z)$ be a
sequence of functions, bounded in $W^{1,1}(a,b;Z)$.
Then, $\big\{v_n(t)\big\}_{n=1}^{+\infty}$ is bounded in
$L^\infty(a,b;Z)$ and, up to a subsequence, we have
\begin{equation}\label{fghffhdpppplllkkkllkklkl}
v_n(t)\rightharpoonup v(t)\quad\text{weakly in}\;
Z\;\text{as}\;\;n\to+\infty\,,\quad\text{for a.e}\;\,t\in(a,b)\,.
\end{equation}
\end{lemma}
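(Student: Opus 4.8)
The plan is to prove the two assertions in turn: first the uniform bound in $L^\infty(a,b;Z)$, then the a.e.\ weak convergence, the latter by reducing to a separable reflexive subspace and running a diagonal/Helly extraction against a countable dense family of functionals.

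First I would establish the $L^\infty$ bound. Fix a constant $M$ with $\|v_n\|_{L^1(a,b;Z)}+\|v_n'\|_{L^1(a,b;Z)}\le M$ for all $n$. By Definition \ref{4bdf} each $v_n$ has a continuous representative and satisfies the integration-by-parts identity \er{sobltr}; equivalently $v_n(t)=v_n(\tau)+\int_\tau^t v_n'(s)\,ds$ in $Z$ for all $t,\tau\in[a,b]$. For each $n$ the average over $(a,b)$ of $\|v_n(\cdot)\|_Z$ is at most $M/(b-a)$, so there is $\tau_n$ with $\|v_n(\tau_n)\|_Z\le M/(b-a)$; the identity then yields $\|v_n(t)\|_Z\le M/(b-a)+\|v_n'\|_{L^1(a,b;Z)}\le M/(b-a)+M$ for every $t$, uniformly in $n$. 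This gives boundedness in $L^\infty(a,b;Z)$, in fact uniform boundedness in the sup-norm.

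For the second part I would first pass to a separable reflexive space. Since each $v_n$ is strongly measurable it is (a.e.) separably valued, so the closed linear span $Z_0\subset Z$ of the countably many essential ranges of the $v_n$ is a separable closed subspace with $v_n(t)\in Z_0$ for a.e.\ $t$ and all $n$; as a closed subspace of a reflexive space, $Z_0$ is reflexive, and being separable and reflexive its dual $Z_0^*$ is separable. Fix a countable dense set $\{z_k^*\}_{k\ge1}\subset Z_0^*$. For fixed $k$ the scalar function $g_{n,k}(t):=\big<v_n(t),z_k^*\big>_{Z\times Z^*}$ lies in $W^{1,1}(a,b;\R)$ with $g_{n,k}'=\big<v_n',z_k^*\big>$; the sup-bound from part one together with $\|v_n'\|_{L^1(a,b;Z)}\le M$ make $\{g_{n,k}\}_n$ uniformly bounded and of uniformly bounded total variation. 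By Helly's selection theorem (or, equivalently, the compactness of the $W^{1,1}(a,b)$-ball in $L^1(a,b)$) each family $\{g_{n,k}\}_n$ has a pointwise a.e.\ convergent subsequence, and a Cantor diagonal extraction over $k$ produces a single subsequence, still denoted $v_n$, along which $g_{n,k}(t)\to h_k(t)$ for a.e.\ $t$ and every $k$.

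Finally I would upgrade this to a.e.\ weak convergence. Fix $t$ in the full-measure set where all $v_n(t)\in Z_0$, $\|v_n(t)\|_Z\le M/(b-a)+M$, and $g_{n,k}(t)\to h_k(t)$ for every $k$. Since $\{v_n(t)\}$ is bounded in $Z_0$ and $\{z_k^*\}$ is dense in $Z_0^*$, a standard three-$\varepsilon$ argument shows that $\big<v_n(t),z^*\big>$ converges for every $z^*\in Z_0^*$; the limit is a bounded linear functional on $Z_0^*$, hence by reflexivity of $Z_0$ it equals $\big<v(t),z^*\big>$ for a unique $v(t)\in Z_0$ with $\|v(t)\|_Z\le M/(b-a)+M$. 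As every $z^*\in Z^*$ restricts to an element of $Z_0^*$ and $v(t)\in Z_0\subset Z$, this gives $v_n(t)\rightharpoonup v(t)$ weakly in $Z$, which is \er{fghffhdpppplllkkkllkklkl}; weak measurability with values in the separable $Z_0$ makes $v$ strongly measurable and norm lower semicontinuity gives $v\in L^\infty(a,b;Z)$, though the statement requires only the a.e.\ weak limit. The main subtlety is exactly this last step: the reduction to $Z_0$ is what permits the countable diagonal argument, reflexivity of $Z_0$ is needed both to know the bounded slices $\{v_n(t)\}$ have weak limits and to realize the limit functional as a genuine element $v(t)\in Z$, and the uniform $L^\infty$ bound from part one is precisely what converts dense-testing convergence into full weak convergence and forces a single limit along the whole extracted subsequence.
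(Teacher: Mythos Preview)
Your proof is correct. The paper does not actually prove this lemma; it states at the beginning of Section~2 that the proofs of these preliminary lemmas ``can be found in \cite{PI},'' so there is no in-paper proof to compare against. Your argument---the uniform $L^\infty$ bound via the fundamental theorem of calculus plus an average-value point, followed by reduction to a separable reflexive subspace $Z_0$, Helly's selection theorem against a countable dense family in $Z_0^*$, a diagonal extraction, and finally reflexivity to realize the limiting functional as an element of $Z_0$---is the standard route and almost certainly coincides with the one in \cite{PI}.
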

As a direct consequence of Lemma \ref{ComTem1PP} and Lemma
\ref{helplemlll} we have the following Lemma.
\begin{lemma}\label{ComTem1}
Let $X$, $Y$ and $Z$ be three Banach spaces, such that $X$ and $Z$
are reflexive.
Furthermore, let $T\in
\mathcal{L}(X;Y)$ and $S\in \mathcal{L}(X;Z)$ be bounded linear
operators. Moreover assume that $S$ is an injective inclusion (i.e.
it satisfies $\ker S=\{0\}$) and $T$
is a compact operator. Assume that $a,b\in\R$ such that $a<b$,
$1\leq q<+\infty$ and $\{u_n(t)\}\subset L^q(a,b;X)$ is a bounded in
$L^q(a,b;X)$ sequence of functions, such that the functions
$v_n(t):(a,b)\to Z$, defined by $v_n(t):=S\cdot\big(u_n(t)\big)$,
belongs to $W^{1,1}(a,b;Z)$ and the sequence
$\big\{\frac{dv_n}{dt}(t)\big\}$ is bounded in $L^1(a,b;Z)$. Then,
up to a subsequence,
\begin{equation}\label{staeae1gllluk}
\big\{T\cdot\big(u_n(t)\big)\big\}\quad\text{converges strongly in }
L^q(a,b;Y)\,.
\end{equation}
\end{lemma}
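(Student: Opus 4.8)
The plan is to obtain the statement simply by chaining Lemma \ref{helplemlll} and Lemma \ref{ComTem1PP}; as the text indicates, no genuinely new argument is needed, only a check that the hypotheses match up.

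First I would verify that $\{v_n\}$ is bounded in $W^{1,1}(a,b;Z)$. Since $\{u_n\}$ is bounded in $L^q(a,b;X)$ and $S\in\mathcal{L}(X;Z)$, the sequence $\{v_n\}=\{S\cdot u_n\}$ is bounded in $L^q(a,b;Z)$; because $(a,b)$ is a finite interval and $q\geq 1$, Hölder's inequality gives that $\{v_n\}$ is also bounded in $L^1(a,b;Z)$. Together with the assumed bound on $\{\frac{dv_n}{dt}\}$ in $L^1(a,b;Z)$, this shows that $\{v_n\}$ is bounded in $W^{1,1}(a,b;Z)$. Since $Z$ is reflexive, Lemma \ref{helplemlll} then applies to $\{v_n\}$: after passing to a subsequence (not relabeled), $\{v_n\}$ is bounded in $L^\infty(a,b;Z)$ and $v_n(t)\rightharpoonup v(t)$ weakly in $Z$ for a.e. $t\in(a,b)$, for some limit $v$.

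Finally, the subsequence $\{u_n\}$ together with $\{v_n=S\cdot u_n\}$ now satisfies exactly the hypotheses of Lemma \ref{ComTem1PP}: $X$ is reflexive, $T\in\mathcal{L}(X;Y)$ is compact, $S\in\mathcal{L}(X;Z)$ is injective, $\{u_n\}$ is bounded in $L^q(a,b;X)$, $\{v_n\}$ is bounded in $L^\infty(a,b;Z)$, and $v_n(t)\rightharpoonup v(t)$ weakly in $Z$ for a.e. $t$. Lemma \ref{ComTem1PP} then yields that $\{T\cdot u_n\}$ converges strongly in $L^q(a,b;Y)$, which is the assertion \er{staeae1gllluk}. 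There is essentially no obstacle: the only subtle points are that the step from $L^q$- to $L^1$-boundedness of $\{v_n\}$ uses the finiteness of the interval, and that the single subsequence extracted via Lemma \ref{helplemlll} already suffices, because the conclusion of Lemma \ref{ComTem1PP} is a full-sequence statement once the pointwise weak convergence is in hand.
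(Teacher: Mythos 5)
Your proof is correct and is exactly the route the paper intends: the paper states Lemma \ref{ComTem1} as ``a direct consequence of Lemma \ref{ComTem1PP} and Lemma \ref{helplemlll}'' without further detail, and your chaining of the two (bounding $\{v_n\}$ in $W^{1,1}(a,b;Z)$ via the finiteness of the interval, extracting the subsequence with a.e.\ pointwise weak convergence and the $L^\infty$ bound, then invoking the compactness lemma) supplies precisely the hypothesis-matching check that is left implicit. Nothing to add.
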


The following simple embedding result was proven in the Appendix of
\cite{PI}:
\begin{lemma}\label{hilbcomban}
Let $X$ be a separable Banach space. Then there exists a separable
Hilbert space $Y$ and a bounded linear inclusion operator $S\in
\mathcal{L}(Y;X)$ such that $S$ is injective (i.e. $\ker S=\{0\}$),
the image of $S$ is dense in $X$ and moreover, $S$ is a compact
operator.
\end{lemma}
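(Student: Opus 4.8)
The plan is to first construct a compact operator with dense image out of the standard Hilbert space $\ell^2$, and only afterwards repair the (possible) failure of injectivity by passing to the orthogonal complement of its kernel. Since $X$ is separable, I would fix a sequence $\{x_n\}_{n=1}^\infty\subset X$ with $\|x_n\|_X\leq 1$ whose linear span is dense in $X$ (for instance a countable dense subset of the unit ball). Choosing positive weights $\lambda_n$ with $\sum_n\lambda_n^2<\infty$, say $\lambda_n=2^{-n}$, I define $S_0\in\mathcal{L}(\ell^2;X)$ by
$$
S_0(\{c_n\}):=\sum_{n=1}^{\infty}\lambda_n\,c_n\,x_n.
$$
Cauchy--Schwarz gives $\|S_0(\{c_n\})\|_X\leq\big(\sum_n\lambda_n^2\big)^{1/2}\,\|\{c_n\}\|_{\ell^2}$, so the series converges and $S_0$ is bounded. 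Approximating $S_0$ in operator norm by the finite-rank operators $\{c_n\}\mapsto\sum_{n\leq N}\lambda_n c_n x_n$, with tail estimate $\big(\sum_{n>N}\lambda_n^2\big)^{1/2}\to 0$, shows that $S_0$ is compact. Finally, the image of $S_0$ contains every $\lambda_n x_n$ (take $\{c_n\}=e_n$), hence the entire span of $\{x_n\}$, so it is dense in $X$.

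The only property that $S_0$ may lack is injectivity, and here is the decisive step: I set $N:=\ker S_0$, a closed subspace of $\ell^2$, and take $Y:=N^\perp$ with the inner product inherited from $\ell^2$, so that $Y$ is again a separable Hilbert space (a closed subspace of a separable Hilbert space). Define $S:=S_0|_Y$. Then $S$ is injective, since $S c=0$ with $c\in Y=N^\perp$ forces $c\in N\cap N^\perp=\{0\}$; it is compact, being the restriction of the compact operator $S_0$ to a closed subspace; and it has the same image as $S_0$, because for any $c\in\ell^2$ one may write $c=Pc+(I-P)c$, where $P$ is the orthogonal projection onto $N^\perp$ and $(I-P)c\in N$, whence $S_0 c=S_0(Pc)=S(Pc)$. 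Thus the range of $S$ equals the range of $S_0$, which is dense in $X$. This delivers $Y$ and $S$ with all the required properties.

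The construction is essentially forced once one recognizes that injectivity should \emph{not} be arranged by hand. In a general separable Banach space, producing a spanning \emph{minimal} system, i.e.\ one equipped with biorthogonal functionals that would let one read off the coefficients $c_n$ from $S_0(\{c_n\})=0$, is a delicate matter; by contrast, quotienting out the kernel is harmless precisely because the orthogonal complement inside (equivalently, a quotient of) a separable Hilbert space is again a separable Hilbert space. I therefore expect no genuine obstacle: the only points requiring care are the routine verifications that $S_0$ is bounded, compact and has dense image, together with the elementary observation that restricting to $N^\perp$ preserves compactness and the range while enforcing injectivity.
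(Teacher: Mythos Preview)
Your argument is correct. The construction of $S_0:\ell^2\to X$ via a weighted summation over a dense sequence in the unit ball is the standard route, and each of the three verifications (boundedness, compactness via finite-rank approximation, density of the image) is carried out cleanly. The injectivity repair by passing to $Y=(\ker S_0)^\perp$ is the natural move and is justified exactly as you say: $Y$ remains a separable Hilbert space, restriction preserves compactness, and the orthogonal decomposition $c=Pc+(I-P)c$ shows the range is unchanged.

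As to comparison with the paper: this paper does not actually supply a proof of the lemma here, but rather cites the Appendix of the companion paper \cite{PI}. So there is no in-text proof to compare against. Your construction is in any case the canonical one for this result, and I would expect the proof in \cite{PI} to follow essentially the same lines (dense sequence, $\ell^2$-weighted sum, then quotient by the kernel). The only stylistic variant one sometimes sees is to take $\{x_n\}$ to be a Markushevich basis (a minimal system with biorthogonal functionals) so that $S_0$ is injective from the outset; you correctly note that this is unnecessary and that the kernel-complement trick avoids invoking the existence of such bases.
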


We will need in the sequel the following simple well known result,
see the Appendix of \cite{PI}:
\begin{lemma}\label{Aplem1}
Let $X$, $Y$ and $Z$ be three Banach spaces, such that $X$
is a reflexive space. Furthermore, let $T\in \mathcal{L}(X;Y)$ and
$S\in \mathcal{L}(X;Z)$ be bounded linear operators. Moreover assume
that $S$ is an injective inclusion (i.e. it satisfies $\ker
S=\{0\}$) and $T$
is a compact operator. Then for each $\,\e>0$ there exists some
constant $c_\e>0$ depending on $\e$ (and on the spaces $X$, $Y$, $Z$
and on the operators $T$, $S$) such that
\begin{equation}\label{Eqetaenerap}
\big\|T\cdot h\big\|_Y\leq\e\big\|h\big\|_X+c_\e\big\|S\cdot
h\big\|_Z\quad\forall h\in X\,.
\end{equation}
\end{lemma}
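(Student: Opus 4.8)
The plan is to argue by contradiction, exploiting the three hypotheses in exactly the three places where they are needed: reflexivity of $X$ for weak compactness of a bounded sequence, compactness of $T$ to upgrade weak convergence to strong convergence, and injectivity of $S$ to pin down the resulting weak limit. First I would negate the conclusion: suppose that for some fixed $\e_0>0$ no finite constant $c_{\e_0}$ works. Taking the candidate constant to be $n$, this yields, for each $n\in\field{N}$, an element $h_n\in X$ with $\|T\cdot h_n\|_Y>\e_0\|h_n\|_X+n\|S\cdot h_n\|_Z$. Since the desired inequality is positively homogeneous of degree one in $h$, I may rescale each $h_n$ so that $\|h_n\|_X=1$, after which the inequality reads $\|T\cdot h_n\|_Y>\e_0+n\|S\cdot h_n\|_Z$.

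Next I would extract the two consequences that drive the argument. Boundedness of $T$ gives $\|T\cdot h_n\|_Y\leq\|T\|_{\mathcal{L}(X;Y)}$, so from the displayed inequality $n\|S\cdot h_n\|_Z<\|T\|_{\mathcal{L}(X;Y)}$, which forces $\|S\cdot h_n\|_Z\to 0$ as $n\to+\infty$; simultaneously, discarding the nonnegative term $n\|S\cdot h_n\|_Z$ gives $\|T\cdot h_n\|_Y>\e_0$ for every $n$. Because $\{h_n\}$ is bounded in the reflexive space $X$, after passing to a subsequence (not relabelled) I have $h_n\weakly h_0$ weakly in $X$. Compactness of $T$ then yields $T\cdot h_n\to T\cdot h_0$ strongly in $Y$, while boundedness of $S$ (and hence its weak continuity) yields $S\cdot h_n\weakly S\cdot h_0$ weakly in $Z$.

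Finally I would identify the limit and close the contradiction. The weak limit $S\cdot h_0$ in $Z$ must agree with the strong limit of $S\cdot h_n$, which we already showed is $0$; by uniqueness of weak limits $S\cdot h_0=0$, and injectivity of $S$ (the hypothesis $\ker S=\{0\}$) forces $h_0=0$. Consequently $T\cdot h_n\to T\cdot h_0=T\cdot 0=0$ strongly in $Y$, i.e. $\|T\cdot h_n\|_Y\to 0$, contradicting $\|T\cdot h_n\|_Y>\e_0>0$ for all $n$. This contradiction establishes the inequality, with $c_\e$ the finite constant produced by the negated hypothesis failing.

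There is no genuinely deep step here, since this is the classical Ehrling/Lions interpolation argument; the only points requiring care are purely logical bookkeeping. The first is the correct quantifier setup in the contradiction, namely reading the failure of the estimate as a failure for every candidate constant $n$ and then normalizing via homogeneity to $\|h_n\|_X=1$. The second, and the genuine crux, is the limit identification: it is precisely the injectivity of $S$ that upgrades the conclusion $S\cdot h_0=0$ to $h_0=0$, so that the strong limit $T\cdot h_0$ vanishes and contradicts the uniform lower bound $\e_0$. Everything else follows from the elementary facts that bounded operators are weakly continuous and that weak and strong limits coincide when both exist.
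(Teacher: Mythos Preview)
Your argument is correct and is precisely the standard Ehrling--Lions contradiction proof. The paper does not actually prove this lemma here---it defers to the Appendix of \cite{PI}, calling it a ``simple well known result''---so there is nothing to compare against in this document, but your approach is the canonical one and almost certainly matches what appears there.
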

%
%
%
%
%
%
In the future we also need the following simple Lemma (see
\cite{PI}):
\begin{lemma}\label{Legendre}
Let $X$ be a reflexive Banach space and let
$\Psi(x):X\to[0,+\infty)$ be a convex function which is G\^{a}teaux
differentiable on every $x\in X$, satisfies $\Psi(0)=0$ and
satisfies
\begin{equation}\label{rost}
0\leq \Psi(x)\leq C_0\,\|x\|_X^q+C_0\quad\forall x\in X\,,
\end{equation}
for some $q>1$ and $C_0>0$. Then for some $\bar C_0$, that depends
only on $C_0$ and $q$ from \er{rost}, we have
\begin{equation}\label{rostgrad}
\|D\Psi(x)\|_{X^*}\leq \bar C_0\|x\|_X^{q-1}+\bar C_0\quad\forall
x\in X\,.
\end{equation}
\end{lemma}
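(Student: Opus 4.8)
The plan is to derive the gradient bound directly from the convexity inequality \er{conprmin} together with the upper growth bound \er{rost}, by testing convexity against increments whose size scales with $\|x\|_X$. First I would fix $x\in X$ and recall that, since $D\Psi(x)\in X^*$, the dual norm is realized as $\|D\Psi(x)\|_{X^*}=\sup\big\{\big<h,D\Psi(x)\big>_{X\times X^*}:\,h\in X,\ \|h\|_X\leq 1\big\}$. For an arbitrary such $h$ and an arbitrary $r>0$, applying \er{conprmin} with the point $y:=x+rh$ gives
$$\Psi(x+rh)\;\geq\;\Psi(x)+r\,\big<h,D\Psi(x)\big>_{X\times X^*},$$
and hence $\big<h,D\Psi(x)\big>_{X\times X^*}\leq \frac{1}{r}\big(\Psi(x+rh)-\Psi(x)\big)$.

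Next I would discard the term $\Psi(x)$ using $\Psi\geq 0$, then insert the upper estimate of \er{rost} together with $\|x+rh\|_X\leq \|x\|_X+r$ (valid since $\|h\|_X\leq 1$), to obtain
$$\big<h,D\Psi(x)\big>_{X\times X^*}\;\leq\;\frac{1}{r}\,\Psi(x+rh)\;\leq\;\frac{1}{r}\Big(C_0\big(\|x\|_X+r\big)^q+C_0\Big).$$
Taking the supremum over all $h$ with $\|h\|_X\leq 1$ then yields, for every $r>0$,
$$\|D\Psi(x)\|_{X^*}\;\leq\;\frac{1}{r}\Big(C_0\big(\|x\|_X+r\big)^q+C_0\Big).$$

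The crucial step, which I expect to be the only genuine point of the argument, is to optimize in $r$ by letting it scale with $\|x\|_X$; a fixed step size would only produce the weaker exponent $q$. If $\|x\|_X\geq 1$, I would choose $r:=\|x\|_X$, so that $\|x\|_X+r=2\|x\|_X$ and the bound becomes $2^qC_0\|x\|_X^{q-1}+C_0/\|x\|_X\leq (2^q+1)C_0\,\|x\|_X^{q-1}$, using $1/\|x\|_X\leq 1\leq\|x\|_X^{q-1}$. If $\|x\|_X<1$, I would choose $r:=1$, giving $\|x\|_X+r<2$ and the bound $(2^q+1)C_0$. Setting $\bar C_0:=(2^q+1)C_0$, which depends only on $C_0$ and $q$, both cases combine into $\|D\Psi(x)\|_{X^*}\leq \bar C_0\,\|x\|_X^{q-1}+\bar C_0$, as claimed. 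The whole argument is elementary; the single thing to get right is the $\|x\|_X$-dependent choice of increment that converts the exponent $q$ coming from \er{rost} into the required $q-1$.
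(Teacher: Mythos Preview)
Your proof is correct and follows essentially the same approach as the paper's (commented) proof: both exploit the convexity inequality $\big<h,D\Psi(x)\big>_{X\times X^*}\leq\Psi(x+h)-\Psi(x)$ and then scale the increment by a factor comparable to $\|x\|_X$ (your parameter $r=\|x\|_X$ for $\|x\|_X\geq 1$, a unit increment otherwise) so that the growth bound \er{rost} yields the exponent $q-1$ instead of $q$. The only difference is cosmetic---you parameterize by $r$ and optimize, while the paper writes the rescaled increment directly---and your explicit constant $\bar C_0=(2^q+1)C_0$ is a small bonus.
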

\section{The Existence results}
%
%
%
%
%
%
\begin{lemma}\label{thhypppggghhhjgggnew}
Let $X$ and $Z$ be reflexive Banach spaces and $X^*$ and $Z^*$ be
the corresponding dual spaces. Furthermore let $H$ be a Hilbert
space. Suppose that $Q\in \mathcal{L}(X,Z)$ is an injective
inclusion operator (i.e. it satisfies $\ker Q=\{0\}$) such that its
image is dense on $Z$. Furthermore, suppose that $P\in
\mathcal{L}(Z,H)$ is an injective inclusion operator such that its
image is dense on $H$. Let $T\in \mathcal{L}(X,H)$ be defined by
$T:=P\circ Q$. So $\{X,H,X^*\}$ is an evolution triple with the
corresponding inclusion operator $T\in \mathcal{L}(X;H)$ as it was
defined in Definition \ref{7bdf} together with the corresponding
operator $\widetilde{T}\in \mathcal{L}(H;X^*)$ defined as in
\er{tildet}.
Next let $a,b\in\R$ be such that $a<b$ and $q\geq 2$. Furthermore,
for every $t\in[a,b]$ let $\Psi_t(x):X\to[0,+\infty)$ be a convex
function which is G\^{a}teaux differentiable at every $x\in X$,
satisfies $\Psi_t(0)=0$ and satisfies the growth condition
\begin{equation}\label{roststglkagagsldnew}
(1/C)\,\|x\|_X^q-C\leq 0\leq\Psi_t(x)\leq C\,\|x\|_X^q+C\quad\forall
x\in X,\;\forall t\in[a,b]\,,
\end{equation}
for some $C>0$. We also assume that $\Psi_t(x)$ is Borel on the pair
of variables $(x,t)$. Next, for every $t\in[a,b]$ let
$\Lambda_t(z):Z\to X^*$ be a function which is G\^{a}teaux
differentiable on every $z\in Z$. Assume that
it satisfies the following bound
\begin{equation}\label{bghffgghgkoooojkvhgjgjgfffhnew}
\big\|\Lambda_t(z)\big\|_{X^*}\leq g\big(\|P\cdot
z\|_H\big)\cdot\Big(\|z\|_{Z}^{q-1}+\mu^{\frac{q-1}{q}}(t)\Big)\quad\quad\forall
z\in Z,\;\forall t\in[a,b]\,,
\end{equation}
where $g(s):[0,+\infty)\to(0,+\infty)$ is some nondecreasing
function and $\mu(t)\in L^1(a,b;\R)$ is some nonnegative function.
Moreover, assume that $\Lambda_t$ satisfies the following positivity
condition
\begin{multline}
\label{roststlambdglkFFyhuhtyui99999999newzzz} \Big<h,\Lambda_t
\big(Q\cdot h\big)\Big>_{X\times X^*}\geq \big(1/\bar
C\big)\big\|Q\cdot h\big\|^q_Z-\bar C\big\|L\cdot(Q\cdot
h)\big\|^2_V-\tilde \mu(t)
\Big(\big\|T\cdot h\big\|^{2}_H+1\Big)\quad\quad \forall h\in
X\,,\;\forall t\in[a,b]\,,
\end{multline}
where $V$ is a given Banach space, $L\in \mathcal{L}(Z,V)$ is a
given compact linear operator, $\bar C>0$ is some constant and
$\tilde\mu(t)\in L^1(a,b;\R)$ is some nonnegative function. We also
assume that $\Lambda_t(z)$ is strongly Borel on the pair of
variables $(z,t)$. Furthermore, let
$\{w^{(0)}_n\}_{n=1}^{\infty}\subset H$ be such that $w^{(0)}_n\to
w_0$ strongly in $H$ and let $\e_n>0$ be such that $\e_n\to 0$ as
$n\to+\infty$. Moreover, assume that $u_n(t)\in L^q(a,b;X)$ be such
that $v_n(t):=(\widetilde T\circ T)\cdot u_n(t)\in
W^{1,q^*}(a,b;X^*)$, where $q^*=q/(q-1)$, and $u_n(t)$ is a solution
to
\begin{equation}\label{uravnllllglkhghkgkghfsldnew}
\begin{cases}
\frac{d v_n}{dt}(t)+\Lambda_t\big(z_n(t)\big)+\e_n
D\Psi_t\big(u_n(t)\big)\quad \text{for a.e.}\;
t\in(a,b)
\\w_n(a)=w^{(0)}_n,
\end{cases}
\end{equation}
where $w_n(t):=T\cdot u_n(t)$, $z_n(t):=Q\cdot u_n(t)$ and we assume
that $w_n(t)$ is $H$-weakly continuous on $[a,b]$, as it was stated
in Corollary \ref{vbnhjjmcor}. Then there exists $z(t)\in
L^q(a,b;Z)$ and $\bar\Lambda(t)\in L^{q^*}(a,b;X^*)$ such that
$w(t):=P\cdot z(t)\in L^\infty(a,b;H)$, $v(t):=\widetilde T\cdot
w(t)\in W^{1,q^*}(a,b;X^*)$, $w(t)$ is $H$-weakly continuous on
$[a,b]$, up to a subsequence, we have
\begin{equation}\label{uravnllllgsnachlkagagsldnewgiukhoikljjlk}
\begin{cases}
z_n(t)\rightharpoonup z(t)\quad\text{weakly in}\;\;L^q(a,b;Z)\\
\frac{dv_n}{dt}(t)\rightharpoonup \frac{dv}{dt}(t)\quad\text{weakly in}\;\;L^{q^*}(a,b;X^*)\\
\Lambda_t \big(z_n(t)\big)\rightharpoonup \bar\Lambda(t)\quad\text{weakly in}\;\;L^{q^*}(a,b;X^*)\\
w_n(t)\rightharpoonup w(t)\quad\text{weakly in}\;\;H\quad\text{for
every fixed}\;\;t\in[a,b],\\
\big\{w_n(t)\big\}_{n=1}^{+\infty}\;\;\text{is bounded
in}\;\;L^\infty(a,b;H),
\end{cases}
\end{equation}
and $z(t)$ satisfies the following equation
\begin{equation}\label{uravnllllgsnachlklimhjhghnew}
\begin{cases}\frac{d v}{dt}(t)+\bar\Lambda(t)
=0\quad\text{for a.e.}\; t\in(a,b)\,,\\
w(a)=w_0\,.
\end{cases}
\end{equation}
Moreover,
\begin{equation}\label{vyifyurturfurfuyrfgyukkknew}
\frac{1}{2}\big\|w(t)\big\|^2_H+\limsup_{n\to+\infty}\bigg(\int_a^t\Big<u_n(s),\Lambda_s
\big(z_n(s)\big)\Big>_{X\times
X^*}ds\bigg)\leq\frac{1}{2}\big\|w_0\big\|^2_H\quad\forall
t\in[a,b].
\end{equation}
\end{lemma}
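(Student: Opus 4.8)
The plan is to run a vanishing--penalization limit: derive $n$--uniform bounds by testing the approximate equation against $u_n$, extract weak limits, pass to the limit in the equation (the term $\e_n D\Psi_t(u_n)$ dropping out), and finally read off \er{vyifyurturfurfuyrfgyukkknew} from weak lower semicontinuity.

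\emph{A priori bounds.} First I would pair \er{uravnllllglkhghkgkghfsldnew} with $u_n(t)$ and integrate on $[a,\tau]$. Since $v_n=I\cdot u_n\in W^{1,q^*}(a,b;X^*)$, Lemma \ref{lem2} gives $w_n(\cdot)\in L^\infty(a,b;H)$ and $\int_a^\tau\langle u_n,\tfrac{dv_n}{dt}\rangle_{X\times X^*}dt=\tfrac12\|w_n(\tau)\|_H^2-\tfrac12\|w^{(0)}_n\|_H^2$; convexity of $\Psi_t$ (apply \er{conprmin} with $y=0$) gives $\langle u_n,D\Psi_t(u_n)\rangle_{X\times X^*}\ge\Psi_t(u_n)\ge0$, so that term is nonnegative, and \er{roststlambdglkFFyhuhtyui99999999newzzz} with $h=u_n$ bounds $\langle u_n,\Lambda_t(z_n)\rangle_{X\times X^*}$ below by $\tfrac1{\bar C}\|z_n\|_Z^q-\bar C\|L\cdot z_n\|_V^2-\tilde\mu(t)(\|w_n\|_H^2+1)$. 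Hence
\[
\tfrac12\|w_n(\tau)\|_H^2+\tfrac1{\bar C}\int_a^\tau\|z_n\|_Z^q\,dt\ \le\ \tfrac12\|w^{(0)}_n\|_H^2+\bar C\int_a^\tau\|L\cdot z_n\|_V^2\,dt+\int_a^\tau\tilde\mu(t)\big(\|w_n\|_H^2+1\big)\,dt .
\]
Because $L\in\mathcal L(Z,V)$ is compact and $P\in\mathcal L(Z,H)$ is injective, Lemma \ref{Aplem1} gives $\|L\cdot z\|_V\le\eta\|z\|_Z+c_\eta\|P\cdot z\|_H$ for every $\eta>0$; since $\|P\cdot z_n(t)\|_H=\|w_n(t)\|_H$ and $q\ge2$, choosing $\eta$ small and using Young's inequality absorbs $\bar C\int\|L\cdot z_n\|_V^2$ into $\tfrac1{\bar C}\int\|z_n\|_Z^q$ at the cost of a term $C_1\int_a^\tau\|w_n\|_H^2\,dt+C_2$ with $C_1,C_2$ independent of $n$. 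As $\|w^{(0)}_n\|_H\to\|w_0\|_H$ and $\|w_n(\cdot)\|_H^2\in L^\infty(a,b)$, Gronwall's inequality applied to $\tau\mapsto\|w_n(\tau)\|_H^2$ yields a constant $M$ with $\|w_n(\tau)\|_H\le M$ for all $\tau\in[a,b]$ and all $n$; feeding this back shows $\{z_n\}$ is bounded in $L^q(a,b;Z)$ and $\e_n\int_a^b\langle u_n,D\Psi_t(u_n)\rangle_{X\times X^*}dt\le M$, so by the coercive lower bound for $\Psi_t$ in \er{roststglkagagsldnew} also $\e_n\int_a^b\|u_n\|_X^q\,dt\le M'$.

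\emph{Passage to the limit.} Passing to subsequences, $z_n\rightharpoonup z$ weakly in $L^q(a,b;Z)$; using $g(\|P\cdot z_n\|_H)\le g(M)$ in \er{bghffgghgkoooojkvhgjgjgfffhnew}, the sequence $\{\Lambda_t(z_n)\}$ is bounded in $L^{q^*}(a,b;X^*)$, so $\Lambda_t(z_n)\rightharpoonup\bar\Lambda$ weakly there; by Lemma \ref{Legendre}, $\big\|\e_n D\Psi_t(u_n)\big\|_{L^{q^*}(a,b;X^*)}^{q^*}\le C\big(\e_n^{\,q^*-1}(\e_n\int_a^b\|u_n\|_X^q\,dt)+\e_n^{\,q^*}\big)\to0$ since $q^*>1$, hence $\e_n D\Psi_t(u_n)\to0$ strongly in $L^{q^*}(a,b;X^*)$. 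Therefore $\tfrac{dv_n}{dt}=-\Lambda_t(z_n)-\e_n D\Psi_t(u_n)$ is bounded in $L^{q^*}(a,b;X^*)$ and converges weakly to $-\bar\Lambda$. Since $v_n=(\widetilde T\circ P)\cdot z_n$ (as $T\cdot u_n=P\cdot z_n=w_n$), setting $v:=(\widetilde T\circ P)\cdot z$ and $w:=P\cdot z$, boundedness of $\widetilde T\circ P$ gives $v_n\rightharpoonup v$ weakly in $L^{q^*}(a,b;X^*)$, and a standard test-function argument yields $v\in W^{1,q^*}(a,b;X^*)$ with $\tfrac{dv}{dt}=-\bar\Lambda$, i.e. the equation in \er{uravnllllgsnachlklimhjhghnew} up to the initial datum. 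From $\{w_n\}$ bounded in $L^\infty(a,b;H)$ and $w_n=P\cdot z_n\rightharpoonup w$ weakly in $L^q(a,b;H)$ one gets, by averaging over shrinking intervals and using the Lebesgue-point property of the strongly measurable $w$, that $w\in L^\infty(a,b;H)$ with $\|w(t)\|_H\le M$ a.e.; then $v=\widetilde T\cdot w$ with $v\in W^{1,q^*}(a,b;X^*)$, and Corollary \ref{vbnhjjmcor} makes $w$ $H$-weakly continuous on $[a,b]$. Finally, writing \er{eqmultcor} for $v_n$ and for $v$ on $[a,\beta]$ with $\delta\in C^1([a,b];X)$ and letting $n\to\infty$ (using $v_n\rightharpoonup v$, $\tfrac{dv_n}{dt}\rightharpoonup\tfrac{dv}{dt}$ and $w_n(a)=w^{(0)}_n\to w_0$): choosing $\delta$ with $\delta(\beta)=0$ and using density of $T(X)$ in $H$ forces $w(a)=w_0$, and then for arbitrary $\delta$ one gets $\langle T\cdot\xi,w_n(\beta)\rangle_H\to\langle T\cdot\xi,w(\beta)\rangle_H$ for every $\xi\in X$ and $\beta\in[a,b]$; since $\{w_n(\beta)\}$ is bounded in $H$, $w_n(\beta)\rightharpoonup w(\beta)$ weakly in $H$ for every $\beta$. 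This establishes \er{uravnllllgsnachlkagagsldnewgiukhoikljjlk} and \er{uravnllllgsnachlklimhjhghnew}.

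\emph{Energy inequality and main obstacle.} For fixed $\tau$, the same testing as above but now \emph{retaining} the nonnegative term $\e_n\int_a^\tau\langle u_n,D\Psi_s(u_n)\rangle_{X\times X^*}ds\ge0$ gives $\tfrac12\|w_n(\tau)\|_H^2+\int_a^\tau\langle u_n,\Lambda_s(z_n)\rangle_{X\times X^*}ds\le\tfrac12\|w^{(0)}_n\|_H^2$; passing to a subsequence realizing $\limsup_n\int_a^\tau\langle u_n,\Lambda_s(z_n)\rangle ds$ (finite, by the a priori bounds) and using $\|w(\tau)\|_H^2\le\liminf_n\|w_n(\tau)\|_H^2$ together with $\|w^{(0)}_n\|_H\to\|w_0\|_H$ yields \er{vyifyurturfurfuyrfgyukkknew}. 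The delicate point is the a priori estimate: the energy identity produces the genuinely non-coercive contributions $\bar C\|L\cdot z_n\|_V^2$ and $\tilde\mu(t)(\|w_n\|_H^2+1)$, which must be handled by the compact-embedding inequality (Lemma \ref{Aplem1}) followed by a Gronwall argument in $\|w_n(t)\|_H^2$; and the coercivity of $\Psi_t$ is needed precisely to force $\e_n D\Psi_t(u_n)\to0$ in $L^{q^*}$, which is what lets the penalization disappear from the limit equation. Everything else is routine weak compactness and lower semicontinuity; note in particular that no identification of $\bar\Lambda$ with $\Lambda_t(z)$ is claimed here (that requires the extra weak-continuity hypotheses present only in the full theorem).
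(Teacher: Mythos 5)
Your proposal is correct and follows essentially the same route as the paper's own proof: test against $u_n$, use Lemma \ref{lem2} for the time-derivative term, absorb $\bar C\|L\cdot z_n\|_V^2$ via Lemma \ref{Aplem1} and Young's inequality, run a Gronwall argument with the $L^1$ weight to bound $\|w_n\|_{L^\infty(a,b;H)}$, kill the penalization via Lemma \ref{Legendre} and the coercivity of $\Psi_t$, pass to the limit in the weak formulation of Corollary \ref{vbnhjjmcor} to get the limit equation, the initial datum, and the pointwise weak convergence $w_n(\beta)\rightharpoonup w(\beta)$, and finally obtain \er{vyifyurturfurfuyrfgyukkknew} from weak lower semicontinuity of the $H$-norm. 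No gaps.
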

\begin{proof}
By Lemma \ref{Aplem1}, there exists a constant $K>0$ such that
$$\big\|L\cdot z\big\|^2_V\leq \frac{1}{2\bar C^2}\,\|z\|^2_Z+K\big\|P\cdot z\big\|^2_H
\quad\forall z\in Z.$$ Plugging it into
\er{roststlambdglkFFyhuhtyui99999999newzzz} we obtain
\begin{multline}\label{MonotonegnewhghghghghSLDhhh}
\Big<h,\Lambda_t \big(Q\cdot h\big)\Big>_{X\times X^*}\geq
\frac{1}{2\bar C}\Big(2\|Q\cdot h\|^q_Z -\|Q\cdot h\|^2_Z\Big)
-
\big(\tilde\mu(t)+\bar C K
\big)
\Big(\|T\cdot h\|^{2}_H+1\Big)\\
\geq\frac{1}{2\bar C}\,\|Q\cdot h\|^q_Z
-
\big(\tilde\mu(t)+\tilde K\big)
\Big(\|T\cdot h\|^{2}_H+1\Big)
\quad\forall h\in X,\;\forall t\in[a,b],
\end{multline}
where $\tilde K>0$ is a constant. Thus, denoting
$\bar\mu(t):=\big(\tilde\mu(t)+\tilde K\big)\in L^1(a,b;\R)$, we
obtain:
\begin{multline}
\label{roststlambdglkFFyhuhtyui99999999new} \Big<h,\Lambda_t
\big(Q\cdot h\big)\Big>_{X\times X^*}\geq \big(1/2\bar
C\big)\big\|Q\cdot h\big\|^q_Z-\bar \mu(t)
\Big(\big\|T\cdot h\big\|^{2}_H+1\Big)\quad\quad \forall h\in
X\,,\;\forall t\in[a,b].
\end{multline}
On the other hand, by \er{uravnllllglkhghkgkghfsldnew} we deduce
\begin{multline}\label{uravnllllgsnachlkhkkkksldnew}
\int_a^t\Big<u_n(s),\frac{d v_n}{dt}(s)\Big>_{X\times X^*}ds
+\int_a^t\bigg<u_n(s),\Lambda_s\big(z_n(s)\big)\bigg>_{X\times
X^*}ds\\+\e_n
\int_a^t\Big<u_n(s),D\Psi_t\big(u_n(s)\big)\Big>_{X\times
X^*}ds=0\quad\quad\forall t\in[a,b]\,.
\end{multline}
However, since by Lemma \ref{lem2} we have
\begin{equation*}
\int_a^t\Big<u_n(s),\frac{dv_n}{dt}(s)\Big>_{X\times
X^*}ds=\frac{1}{2}\Big(\big\|w_n(t)\big\|_H^2-\big\|w^{(0)}_n\big\|_H^2\Big)\,,
\end{equation*}
using \er{uravnllllgsnachlkhkkkksldnew} we obtain
\begin{multline}\label{uravnllllgsnachlkhkkkksledsldnew}
\frac{1}{2}\big\|w_n(t)\big\|_H^2
+\int_a^t\bigg<u_n(s),\Lambda_s\big(z_n(s)\big)\bigg>_{X\times
X^*}ds\\+\e_n
\int_a^t\Big<u_n(s),D\Psi_t\big(u_n(s)\big)\Big>_{X\times
X^*}ds=\frac{1}{2}\big\|w^{(0)}_n\big\|_H^2\quad\quad\forall
t\in[a,b]\,.
\end{multline}
However, since $\Psi_t(\cdot)$ is convex and since
$\Psi_t(\cdot)\geq 0$, $\Psi_t(0)=0$ and then also $D\Psi_t(0)=0$,
we have
\begin{equation}\label{dkgvhhhlhsldnew}
\Big<u_n(t),D\Psi_t\big(u_n(t)\big)\Big>_{X\times
X^*}\geq\Psi_t\big(u_n(t)\big)\geq 0\quad\quad\forall t\in(a,b)\,.
\end{equation}
Therefore, using \er{dkgvhhhlhsldnew},
from \er{uravnllllgsnachlkhkkkksledsldnew} we deduce
\begin{equation}
\label{neravenstvopolsldnewht}
\e_n\int_a^t\Psi_s\big(u_n(s)\big)ds+\frac{1}{2}\big\|w_n(t)\big\|_H^2+\int_a^t\bigg<u_n(s),\Lambda_s\big(z_n(s)\big)\bigg>_{X\times
X^*}ds\leq\frac{1}{2}\big\|w^{(0)}_n\big\|_H^2\quad\quad\forall
t\in[a,b]\,.
\end{equation}
and in particular,
\begin{equation}
\label{neravenstvopolsldnew}
\frac{1}{2}\big\|w_n(t)\big\|_H^2+\int_a^t\bigg<u_n(s),\Lambda_s\big(z_n(s)\big)\bigg>_{X\times
X^*}ds\leq\frac{1}{2}\big\|w^{(0)}_n\big\|_H^2\quad\quad\forall
t\in[a,b]\,.
\end{equation}
Thus, inserting \er{roststlambdglkFFyhuhtyui99999999new}  into
\er{neravenstvopolsldnewht} we deduce that:
\begin{equation}\label{neravenstvopoldffkllgfddsdrdtydtsldhkhkllnew}
\big\|w_n(t)\big\|_H^2+\e_n\int_a^t\Psi_s\big(u_n(s)\big)ds+\int_a^t\big\|z_n(s)\big\|_Z^q
ds\leq C_2\int_a^t \bar\mu(s)\big\|w_n(s)\big\|^2_Hds+C_2
\quad\forall t\in[a,b],
\end{equation}
where $C_2>0$ is a constant. In particular,
\begin{equation}\label{neravenstvopoldffkllgfddsdrdtydtsldnew}
\big\|w_n(t)\big\|_H^2\leq
C_2\int_a^t\bar\mu(s)\big\|w_n(s)\big\|^2_Hds+C_2\quad\quad\forall
t\in[a,b]
\end{equation}
Thus,
\begin{multline}\label{bjgjggg88888888889999dgvg99newsldnew}
\frac{d}{dt}\Bigg\{\exp{\bigg(-C_2\int_a^t
\bar\mu(s)ds\bigg)}\int_a^t\bar\mu(s)\|w_n(s)\|^{2}_H ds\Bigg\}\leq
C_2\bar\mu(t)\exp{\bigg(-C_2\int_a^t \bar\mu(s)ds\bigg)}\\
\leq C_2\bar\mu(t)\quad\quad\text{for a.e. } t\in[a,b]\quad\forall
n\in\mathbb{N}\,,
\end{multline}
and thus
\begin{multline}\label{bjgjggg88888888889999dgvg99mcv9999newsldnew}
\int_a^t\bar\mu(s)\|w_n(s)\|^{2}_H ds\leq  C_2\exp{\bigg(C_2\int_a^t
\bar\mu(s)ds\bigg)}\cdot\int_a^t\bar\mu(s)ds\leq\\
C_2\exp{\bigg(C_2\int_a^b
\bar\mu(s)ds\bigg)}\cdot\int_a^b\bar\mu(s)ds\quad\quad\forall
t\in[a,b]\quad\forall n\in\mathbb{N}\,.
\end{multline}
Then by \er{bjgjggg88888888889999dgvg99mcv9999newsldnew} from
\er{neravenstvopoldffkllgfddsdrdtydtsldnew} we obtain that the
sequence $\{w_n(t)\}$ is bounded in $L^\infty(a,b;H)$. Then by
\er{neravenstvopoldffkllgfddsdrdtydtsldhkhkllnew} we deduce that
that sequence $\{z_n(t)\}$ is bounded in $L^q(a,b;Z)$. Moreover, by
\er{bghffgghgkoooojkvhgjgjgfffhnew} we obtain that
$\Lambda_t\big(z_n(t)\big)$ is bounded in $L^{q^*}(a,b;X^*)$.
Therefore in particular, up to a subsequence we have
\begin{equation}\label{boundnes77889999hhhhhh999ghg78999sldnew}
\begin{cases}
z_n(t)\rightharpoonup z(t)\quad\text{weakly in}\;\;L^q(a,b;Z)\,,\\
w_n(t)\rightharpoonup w(t)\quad\text{weakly in}\;\;L^q(a,b;H)\,,\\
v_n(t)\rightharpoonup v(t)\quad\text{weakly in}\;\;L^q(a,b;X^*)\,,\\
\Lambda_t\big(z_n(t)\big)\rightharpoonup
\bar\Lambda(t)\quad\text{weakly in}\;\;L^{q^*}(a,b;X^*)\,,
\end{cases}
\end{equation}
where
$w(t):=P\cdot z(t)$, $v(t):=\widetilde{T}\cdot w(t)$. Next plugging
\er{boundnes77889999hhhhhh999ghg78999sldnew} into
\er{neravenstvopoldffkllgfddsdrdtydtsldhkhkllnew} and using
the fact that $\{w_n(t)\}$ is bounded in $L^\infty(a,b;H)$,
we deduce
\begin{equation}\label{ffghk9999999jjjjjj9999sldnew}
\e_n\int_a^t\Psi_s\big(u_n(s)\big)ds\leq C_4\,,
\end{equation}
where $C_4$ is a constant. Then using
\er{roststglkagagsldnew}
we deduce from \er{ffghk9999999jjjjjj9999sldnew},
\begin{equation}\label{ffghk9999999jjjjjj9999hhjh9999sldnew}
\e_n \int_a^b\big\|u_n(s)\big\|^q_X ds\leq C_5\,.
\end{equation}
Next by \er{rostgrad} in Lemma \ref{Legendre}, for some $\bar C>0$
we have
\begin{equation}\label{roststthigjkjkljkljkljkljklsdsldnew}
\Big\|D\Psi_t\big(u_n(t)\big)\Big\|_{X^*}\leq \bar
C\big\|u_n(t)\big\|^{q-1}_X+\bar C\quad\quad\forall t\in(a,b)\,,
\end{equation}
and then
\begin{equation}\label{roststthigjkjkljkljkljkljklsdnhjsldnew}
\Big\|D\Psi_t\big(u_n(t)\big)\Big\|^{q^*}_{X^*}\leq \bar
C_0\big\|u_n(t)\big\|^{q}_X+\bar C_0\quad\quad\forall t\in(a,b)\,,
\end{equation}
Thus plugging \er{roststthigjkjkljkljkljkljklsdnhjsldnew} into
\er{ffghk9999999jjjjjj9999hhjh9999sldnew} we deduce
\begin{equation}\label{fdfddsreaeuyuioi9999jojk99sldnew}
\int_a^b\Big\|\e_n D\Psi_t\big(u_n(s)\big)\Big\|^{q^*}_{X^*}ds\leq
\hat C\e^{1/(q-1)}_n\,.
\end{equation}
So,
\begin{equation}\label{bjhdvghfhkgthiojosa999999sldnew}
\lim\limits_{n\to+\infty}\Big\|\e_n
D\Psi_t\big(u_n(t)\big)\Big\|_{L^{q^*}(a,b;X^*)}=0\,.
\end{equation}
On the other hand by \er{uravnllllglkhghkgkghfsldnew} and by
Corollary \ref{vbnhjjmcor} for any $\beta\in[a,b]$ and every
$\delta(t)\in C^1\big([a,b];X\big)$ we have
\begin{multline}\label{eqmultcorjjjjkkkkklllllprodddkkksldnew}
\Big<T\cdot\delta(\beta),w_n(\beta)\Big>_{H\times
H}-\Big<T\cdot\delta(a),w^{(0)}_n\Big>_{H\times
H}-\int\limits_a^\beta\bigg<\frac{d\delta}{dt}(t),
v_n(t)\bigg>_{X\times X^*}dt+\\ \int\limits_a^\beta\bigg<\delta(t),
\e_n D\Psi_t\big(u_n(t)\big)\bigg>_{X\times X^*}dt
+\int\limits_a^\beta\bigg<\delta(t),
\Lambda_t\big(z_n(t)\big)
\bigg>_{X\times X^*}dt=0\,,
\end{multline}
Letting $n$ tend to $+\infty$ in
\er{eqmultcorjjjjkkkkklllllprodddkkksldnew} and using
\er{boundnes77889999hhhhhh999ghg78999sldnew},
\er{bjhdvghfhkgthiojosa999999sldnew} and the fact that $w^{(0)}_n\to
w_0$ in $H$ we obtain
\begin{multline}\label{eqmultcorjjjjkkkkklllllprodddhgggjgjsldnew}
\lim\limits_{n\to+\infty}\Big<T\cdot\delta(\beta),w_n(\beta)\Big>_{H\times
H}-\Big<T\cdot\delta(a),w_0\Big>_{H\times
H}\\-\int\limits_a^\beta\bigg<\frac{d\delta}{dt}(t),
v(t)\bigg>_{X\times X^*}dt +\int\limits_a^\beta\Big<\delta(t),
\bar\Lambda(t)
\Big>_{X\times X^*}dt=0\,,
\end{multline}
for every $\delta(t)\in C^1\big([a,b];X\big)$.
In particular, for every $\delta(t)\in C^1\big([a,b];X\big)$ such
that $\delta(b)=0$ we have
\begin{equation}\label{eqmultcorjjjjkkkkklllllprodddhgggjgjstsldnew}
-\Big<T\cdot\delta(a),w_0\Big>_{H\times
H}-\int\limits_a^b\bigg<\frac{d\delta}{dt}(t), v(t)\bigg>_{X\times
X^*}dt +\int\limits_a^b\Big<\delta(t),
\bar\Lambda(t)
\Big>_{X\times X^*}dt=0\,.
\end{equation}
Thus in particular $\frac{d v}{dt}(t)=-\bar\Lambda(t)\in
L^{q^*}(a,b;X^*)$ and so $v(t)\in W^{1,q^*}(a,b;X^*)$. Then, since
$\{w_n(t)\}$ is bounded in $L^\infty(a,b;H)$, we have $w(t)\in
L^\infty(a,b;H)$ and thus, as before, we can redefine $w$ on a
subset of $[a,b]$ of Lebesgue measure zero, so that $w(t)$ will be
$H$-weakly continuous in $t$ on $[a,b]$ and by
\er{eqmultcorjjjjkkkkklllllprodddhgggjgjstsldnew} we will have
$w(a)=w_0$.
So $w(t)$ is a solution to the following equation
\begin{equation}\label{uravnllllgsnachlklimhjhghghtynew}
\begin{cases}\frac{d v}{dt}(t)+
\bar\Lambda(t)=0\quad\text{for a.e.}\; t\in(a,b)\,.\\
w(a)=w_0\,,
\end{cases}
\end{equation}
Thus in particular for any $\beta\in[a,b]$ and every $\delta(t)\in
C^1\big([a,b];X\big)$ we have
\begin{multline}\label{eqmultcorjjjjkkkkklllllprodddhgggjgjsldkkhnew}
\Big<T\cdot\delta(\beta),w(\beta)\Big>_{H\times
H}-\Big<T\cdot\delta(a),w_0\Big>_{H\times
H}\\-\int\limits_a^\beta\bigg<\frac{d\delta}{dt}(t),
v(t)\bigg>_{X\times X^*}dt+\int\limits_a^\beta\Big<\delta(t),
\bar\Lambda(t)
\Big>_{X\times X^*}dt=0\,.
\end{multline}
Plugging \er{eqmultcorjjjjkkkkklllllprodddhgggjgjsldkkhnew} into
\er{eqmultcorjjjjkkkkklllllprodddhgggjgjsldnew} we deduce
\begin{equation}\label{eqmultcorjjjjkkkkklllllprodddhgggjgjsldukulnew}
\lim\limits_{n\to+\infty}\Big<T\cdot x,w_n(\beta)\Big>_{H\times
H}=\Big<T\cdot x,w(\beta)\Big>_{H\times H}\quad\forall x\in
X\;\forall\beta\in[a,b]\,.
\end{equation}
Therefore, since the image of $T$ has dense range in $H$ and
$\{w_n(t)\}$ is bounded in $L^\infty(a,b;H)$ we deduce that
\begin{equation}\label{tjhuyjkuiuliluipoiopnew}
w_n(t)\rightharpoonup w(t)\quad\text{weakly
in}\;\;H\quad\quad\forall t\in[a,b]\,.
\end{equation}
Next by \er{boundnes77889999hhhhhh999ghg78999sldnew},
\er{bjhdvghfhkgthiojosa999999sldnew},
\er{uravnllllglkhghkgkghfsldnew} and
\er{uravnllllgsnachlklimhjhghghtynew} we obtain
\begin{equation}\label{uravnllllgsnachlkagagsldnewgiukhoikljjlkgjkgkj}
\frac{dv_n}{dt}(t)\rightharpoonup \frac{dv}{dt}(t)\quad\text{weakly
in}\;\;L^{q^*}(a,b;X^*).
\end{equation}
So we established \er{uravnllllgsnachlkagagsldnewgiukhoikljjlk} and
\er{uravnllllgsnachlklimhjhghnew}. Finally, since $w^{(0)}_n\to w_0$
strongly in $H$, by plugging \er{tjhuyjkuiuliluipoiopnew} into
\er{neravenstvopolsldnew} we obtain
\er{vyifyurturfurfuyrfgyukkknew}.
\end{proof}
%
%
%
%
%
%
As a consequence of Lemma \ref{thhypppggghhhjgggnew} in some
particular case we have the following:
\begin{corollary}\label{thhypppggghhhjgggnewnnn}
Let $X$ and $Z$ be reflexive Banach spaces and $X^*$ and $Z^*$ be
the corresponding dual spaces. Furthermore let $H$ be a Hilbert
space. Suppose that $Q\in \mathcal{L}(X,Z)$ is an injective
inclusion operator (i.e. it satisfies $\ker Q=\{0\}$) such that its
image is dense on $Z$. Furthermore, suppose that $P\in
\mathcal{L}(Z,H)$ is an injective inclusion operator such that its
image is dense on $H$. Let $T\in \mathcal{L}(X,H)$ be defined by
$T:=P\circ Q$ and let $\widetilde P\in\mathcal{L}(H;Z^*)$ be defined
by
\begin{equation}\label{tildetPfhh}
\big<z,\widetilde{P}\cdot y\big>_{Z\times Z^*}:=\big<P\cdot
z,y\big>_{H\times H}\quad\quad\text{for every}\; y\in
H\;\text{and}\;z\in Z\,.
\end{equation} So $\{X,H,X^*\}$ is an evolution triple with the corresponding
inclusion operator $T\in \mathcal{L}(X;H)$ as it was defined in
Definition \ref{7bdf} together with the corresponding operator
$\widetilde{T}\in \mathcal{L}(H;X^*)$ defined as in \er{tildet}.
Moreover, $\{Z,H,Z^*\}$ is another evolution triple with the
corresponding inclusion operator $P\in \mathcal{L}(Z;H)$ together
with the corresponding operator $\widetilde{P}\in
\mathcal{L}(H;Z^*)$. Next let $a,b\in\R$ be such that $a<b$ and
$q\geq 2$. Furthermore, for every $t\in[a,b]$ let
$\Psi_t(x):X\to[0,+\infty)$ be a convex function which is
G\^{a}teaux differentiable at every $x\in X$, satisfies
$\Psi_t(0)=0$ and satisfies the growth condition
\begin{equation}\label{roststglkagagsldnewnnn}
(1/C)\,\|x\|_X^q-C\leq\Psi_t(x)\leq C\,\|x\|_X^q+C\quad\forall x\in
X,\;\forall t\in[a,b]\,,
\end{equation}
for some $C>0$. We also assume that $\Psi_t(x)$ is Borel on the pair
of variables $(x,t)$. Furthermore, for every $t\in[a,b]$ let
$\Lambda_t(z):Z\to Z^*$ be a function which is G\^{a}teaux
differentiable on every $z\in Z$. Assume that
it satisfies the following bound
\begin{equation}\label{bghffgghgkoooojkvhgjgjgfffhnewnnn}
\big\|\Lambda_t(z)\big\|_{Z^*}\leq g\big(\|P\cdot
z\|_H\big)\cdot\Big(\|z\|_{Z}^{q-1}+\mu^{\frac{q-1}{q}}(t)\Big)\quad\quad\forall
z\in Z,\;\forall t\in[a,b]\,,
\end{equation}
where $g(s):[0,+\infty)\to(0,+\infty)$ is some nondecreasing
function and $\mu(t)\in L^1(a,b;\R)$ is some nonnegative function.
Moreover, assume that $\Lambda_t$ satisfies the following positivity
condition
\begin{multline}
\label{roststlambdglkFFyhuhtyui99999999newnnn} \Big< h,\Lambda_t
\big(h\big)\Big>_{Z\times Z^*}\geq \big(1/\bar C\big)\big\|
h\big\|^q_Z-\bar C\big\|L\cdot h\big\|^2_V-\bar \mu(t)
\Big(\big\|P\cdot h\big\|^{2}_H+1\Big)\quad\quad \forall h\in
Z\,,\;\forall t\in[a,b]\,,
\end{multline}
where $V$ is a given Banach space, $L\in \mathcal{L}(Z,V)$ is a
given compact linear operator, $\bar C>0$ is some constant and
$\bar\mu(t)\in L^1(a,b;\R)$ is some nonnegative function. We also
assume that $\Lambda_t(z)$ is strongly Borel on the pair of
variables $(z,t)$. Moreover, assume the following compactness
property: for every sequence
$\big\{\sigma_n(t)\big\}_{n=1}^{+\infty}\subset L^q(a,b;Z)$, such
that $\big\{P\cdot \sigma_n(t)\big\}_{n=1}^{+\infty}\subset
L^\infty(a,b;H)$, $\sigma_n(t)\rightharpoonup \sigma(t)$ weakly in
$L^q(a,b;Z)$, $\big\{P\cdot \sigma_n(t)\big\}_{n=1}^{+\infty}$ is
bounded in $L^\infty(a,b;H)$ and $P\cdot \sigma_n(t)\rightharpoonup
P\cdot \sigma(t)$ weakly in $H$ for a.e. $t\in(a,b)$, the inequality
\begin{equation}
\label{ftguhhhhikkjhjhjkjkkkkkkkcbcbncvccghghjnnnn}
\liminf_{n\to+\infty}\int_a^b\Big<\sigma_n(t)-\sigma(t),\Lambda_t\big(\sigma_n(t)\big)\Big>_{Z\times
Z^*}dt\leq 0,
\end{equation}
necessarily implies that, up to a subsequence, we have
$\Lambda_t\big(\sigma_n(t)\big)\rightharpoonup\Lambda_t\big(\sigma(t)\big)$
weakly in $L^{q^*}(a,b;Z^*)$.
Next, let $\{w^{(0)}_n\}_{n=1}^{\infty}\subset H$ be such that
$w^{(0)}_n\to w_0$ strongly in $H$ and let $\e_n>0$ be such that
$\e_n\to 0$ as $n\to+\infty$. Moreover, assume that $u_n(t)\in
L^q(a,b;X)$ be such that $v_n(t):=(\widetilde T\circ T)\cdot
u_n(t)\in W^{1,q^*}(a,b;X^*)$, where $q^*=q/(q-1)$, and $u_n(t)$ is
a solution to
\begin{equation}\label{uravnllllglkhghkgkghfsldnewnnn}
\begin{cases}
\frac{d v_n}{dt}(t)+Q^*\cdot\Lambda_t\big(z_n(t)\big)+\e_n
D\Psi_t\big(u_n(t)\big)\quad \text{for a.e.}\; t\in(a,b)
\\w_n(a)=w^{(0)}_n,
\end{cases}
\end{equation}
where $Q^*\in \mathcal{L}(Z^*;X^*)$ is the adjoint to $Q$ operator,
$w_n(t):=T\cdot u_n(t)$, $z_n(t):=Q\cdot u_n(t)$ and we assume that
$w_n(t)$ is $H$-weakly continuous on $[a,b]$, as it was stated in
Corollary \ref{vbnhjjmcor}. Then, there exists $z(t)\in L^q(a,b;Z)$,
such that $w(t):=P\cdot
z(t)\in L^\infty(a,b;H)$, $\zeta(t):=\widetilde P\cdot w(t)\in
W^{1,q^*}(a,b;Z^*)$, $v(t):=\widetilde T\cdot w(t)\in
W^{1,q^*}(a,b;X^*)$, $w(t)$ is $H$-weakly continuous on $[a,b]$, up
to a subsequence, we have
\begin{equation}\label{uravnllllgsnachlkagagsldnewgiukhoikljjlknnnzzz}
\begin{cases}
z_n(t)\rightharpoonup z(t)\quad\text{weakly in}\;\;L^q(a,b;Z)\\
\frac{dv_n}{dt}(t)\rightharpoonup \frac{dv}{dt}(t)\quad\text{weakly in}\;\;L^{q^*}(a,b;X^*)\\
\Lambda_t \big(z_n(t)\big)\rightharpoonup \Lambda_t\big(z(t)\big)\quad\text{weakly in}\;\;L^{q^*}(a,b;Z^*)\\
w_n(t)\rightharpoonup w(t)\quad\text{weakly in}\;\;H\quad\text{for
every fixed}\;\;t\in[a,b],\\
\big\{w_n(t)\big\}_{n=1}^{+\infty}\;\;\text{is bounded
in}\;\;L^\infty(a,b;H),
\end{cases}
\end{equation}
and $z(t)$ satisfies the following equation
\begin{equation}\label{uravnllllgsnachlklimhjhghnewnnn}
\begin{cases}\frac{d \zeta}{dt}(t)+\Lambda_t\big(z(t)\big)
=0\quad\text{for a.e.}\; t\in(a,b)\,,\\
w(a)=w_0\,.
\end{cases}
\end{equation}
Moreover,
\begin{equation}\label{vyifyurturfurfuyrfgyukkknewnnn}
\frac{1}{2}\big\|w(t)\big\|^2_H+\int_a^t\Big<z(s),\Lambda_s
\big(z(s)\big)\Big>_{Z\times
Z^*}ds=\frac{1}{2}\big\|w_0\big\|^2_H\quad\forall t\in[a,b].
\end{equation}
\end{corollary}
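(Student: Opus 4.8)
The plan is to deduce the statement from \rlemma{thhypppggghhhjgggnew} by composing $\Lambda_t$ with the adjoint $Q^*$, and then to identify the weak limit of the nonlinear term via the assumed compactness property. First I would set $\widehat\Lambda_t(z):=Q^*\cdot\Lambda_t(z)$, where $Q^*\in\mathcal{L}(Z^*;X^*)$ is the adjoint of $Q$, so that $\widehat\Lambda_t:Z\to X^*$; note $Q^*$ is injective since $Q$ has dense image. The bound \er{bghffgghgkoooojkvhgjgjgfffhnewnnn} gives $\|\widehat\Lambda_t(z)\|_{X^*}\le\|Q\|_{\mathcal{L}(X;Z)}\,g(\|P\cdot z\|_H)(\|z\|_Z^{q-1}+\mu^{(q-1)/q}(t))$, which is the growth bound required in \rlemma{thhypppggghhhjgggnew} (with $g$ replaced by $\|Q\|_{\mathcal{L}(X;Z)}g$); the identity $\langle h,\widehat\Lambda_t(Q\cdot h)\rangle_{X\times X^*}=\langle Q\cdot h,\Lambda_t(Q\cdot h)\rangle_{Z\times Z^*}$ together with $T=P\circ Q$ turns the positivity condition \er{roststlambdglkFFyhuhtyui99999999newnnn} (applied to $Q\cdot h\in Z$) into exactly \er{roststlambdglkFFyhuhtyui99999999newzzz} for $\widehat\Lambda_t$; and G\^ateaux differentiability, the strong Borel property, and $\widehat\Lambda_t(0)\in L^{q^*}(a,b;X^*)$ are inherited from $\Lambda_t$ ($\Psi_t$ being unchanged). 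Since $z_n=Q\cdot u_n$ gives $\widehat\Lambda_t(z_n(t))=Q^*\Lambda_t(z_n(t))$, equation \er{uravnllllglkhghkgkghfsldnewnnn} is precisely \er{uravnllllglkhghkgkghfsldnew} for $\widehat\Lambda_t$. Thus \rlemma{thhypppggghhhjgggnew} furnishes $z(t)\in L^q(a,b;Z)$ and $\bar\Lambda(t)\in L^{q^*}(a,b;X^*)$ such that, writing $w:=P\cdot z$ and $v:=\widetilde T\cdot w$, all convergences in \er{uravnllllgsnachlkagagsldnewgiukhoikljjlk} hold, $v\in W^{1,q^*}(a,b;X^*)$ with $\frac{dv}{dt}=-\bar\Lambda$ and $w(a)=w_0$, and, using $\langle u_n,\widehat\Lambda_s(z_n)\rangle_{X\times X^*}=\langle z_n,\Lambda_s(z_n)\rangle_{Z\times Z^*}$ in \er{vyifyurturfurfuyrfgyukkknew} at $t=b$, the inequality $\frac12\|w(b)\|_H^2+\limsup_n\int_a^b\langle z_n(s),\Lambda_s(z_n(s))\rangle_{Z\times Z^*}\,ds\le\frac12\|w_0\|_H^2$.

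Next I would transfer the objects to the triple $\{Z,H,Z^*\}$. Since $\{z_n\}$ is bounded in $L^q(a,b;Z)$, \er{bghffgghgkoooojkvhgjgjgfffhnewnnn} makes $\{\Lambda_t(z_n(t))\}$ bounded in $L^{q^*}(a,b;Z^*)$, so along a subsequence $\Lambda_t(z_n(t))\rightharpoonup\bar L(t)$ weakly in $L^{q^*}(a,b;Z^*)$ for some $\bar L$; weak continuity of the map induced by $Q^*$, together with $\widehat\Lambda_t(z_n)=Q^*\Lambda_t(z_n)\rightharpoonup\bar\Lambda$, forces $Q^*\bar L=\bar\Lambda$. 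Put $\zeta:=\widetilde P\cdot w$, which lies in $L^\infty(a,b;Z^*)$ because $w\in L^\infty(a,b;H)$. A short computation with \er{tildetPfhh}, \er{tildet} and $T=P\circ Q$ gives $v=Q^*\zeta$; since $Q^*:Z^*\to X^*$ is an injective inclusion and $\frac{dv}{dt}=Q^*(-\bar L)$ with $-\bar L\in L^{q^*}(a,b;Z^*)$, \rlemma{vlozhenie} yields $\zeta\in W^{1,q^*}(a,b;Z^*)$ with $\frac{d\zeta}{dt}=-\bar L$. Applying \rlemma{lem2} to the evolution triple $\{Z,H,Z^*\}$ (inclusion $P$, and $\zeta=(\widetilde P\circ P)\cdot z$) then gives, for every $t\in[a,b]$, the identity $\int_a^t\langle z(s),\bar L(s)\rangle_{Z\times Z^*}\,ds=\frac12\|w_0\|_H^2-\frac12\|w(t)\|_H^2$.

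The crucial step is passing to the limit in the nonlinear term. Combining the $\limsup$ inequality from the first paragraph (at $t=b$) with the $\rlemma{lem2}$ identity just obtained (at $t=b$) and with $\int_a^b\langle z,\Lambda_t(z_n)\rangle_{Z\times Z^*}\,dt\to\int_a^b\langle z,\bar L\rangle_{Z\times Z^*}\,dt$ (weak convergence $\Lambda_t(z_n)\rightharpoonup\bar L$ in $L^{q^*}(a,b;Z^*)$ tested against $z\in L^q(a,b;Z)$), I obtain $\liminf_n\int_a^b\langle z_n-z,\Lambda_t(z_n)\rangle_{Z\times Z^*}\,dt\le\big(\frac12\|w_0\|_H^2-\frac12\|w(b)\|_H^2\big)-\int_a^b\langle z,\bar L\rangle_{Z\times Z^*}\,dt=0$. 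Since $z_n\rightharpoonup z$ weakly in $L^q(a,b;Z)$, $\{P\cdot z_n\}=\{w_n\}$ is bounded in $L^\infty(a,b;H)$ and $w_n(t)\rightharpoonup P\cdot z(t)$ weakly in $H$ for a.e. $t\in(a,b)$, the assumed compactness property \er{ftguhhhhikkjhjhjkjkkkkkkkcbcbncvccghghjnnnn} applies and yields, along a further subsequence, $\Lambda_t(z_n(t))\rightharpoonup\Lambda_t(z(t))$ weakly in $L^{q^*}(a,b;Z^*)$; hence $\bar L(t)=\Lambda_t(z(t))$ for a.e. $t\in(a,b)$. Feeding this back, the $\rlemma{lem2}$ identity becomes the energy equality \er{vyifyurturfurfuyrfgyukkknewnnn}, the relation $\frac{d\zeta}{dt}=-\bar L$ with $w(a)=w_0$ becomes \er{uravnllllgsnachlklimhjhghnewnnn}, and the convergences \er{uravnllllgsnachlkagagsldnewgiukhoikljjlknnnzzz} follow from \er{uravnllllgsnachlkagagsldnewgiukhoikljjlk} supplemented by $\Lambda_t(z_n)\rightharpoonup\Lambda_t(z)$; the regularity of $w$, $v$ and $\zeta$ has been recorded along the way.

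I expect the main obstacle to be exactly this limit identification. One must squeeze out of the approximate equation \er{uravnllllglkhghkgkghfsldnewnnn} — through the energy identity of \rlemma{lem2} for $\{X,H,X^*\}$ that underlies \er{vyifyurturfurfuyrfgyukkknew}, the nonnegativity $\langle u_n,D\Psi_t(u_n)\rangle\ge\Psi_t(u_n)\ge0$, and weak lower semicontinuity of $\|\cdot\|_H$ at $t=b$ — an upper bound on $\limsup_n\int_a^b\langle z_n,\Lambda_t(z_n)\rangle$ sharp enough to trigger the compactness hypothesis, while keeping track that the various weak limits $v$, $w$, $\zeta$, $\bar L$ are mutually consistent; in particular that $\bar L$ itself (not merely $\bar\Lambda=Q^*\bar L$) is the genuine weak limit of $\Lambda_t(z_n)$ in $L^{q^*}(a,b;Z^*)$, which is where injectivity of $Q^*$ and \rlemma{vlozhenie} enter.
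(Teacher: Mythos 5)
Your proposal is correct and follows essentially the same route as the paper: apply Lemma \ref{thhypppggghhhjgggnew} to $Q^*\circ\Lambda_t$, upgrade to the triple $\{Z,H,Z^*\}$ via Lemma \ref{vlozhenie}, use Lemma \ref{lem2} to turn the $\limsup$ inequality into $\limsup_n\int_a^b\langle z_n-z,\Lambda_t(z_n)\rangle\,dt\le 0$, and invoke the assumed compactness property to identify $\bar L=\Lambda_t(z)$. If anything, you are slightly more careful than the paper in first extracting the $Z^*$-valued weak limit $\bar L$ of $\Lambda_t(z_n)$ and relating it to the $X^*$-valued limit through the injectivity of $Q^*$, a step the paper's write-up elides.
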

\begin{proof}
Using Lemma \ref{thhypppggghhhjgggnew}, we deduce that there exists
$z(t)\in L^q(a,b;Z)$ and $\bar\Lambda(t)\in L^{q^*}(a,b;Z^*)$ such
that $w(t):=P\cdot z(t)\in L^\infty(a,b;H)$, $v(t):=\widetilde
T\cdot w(t)\in W^{1,q^*}(a,b;X^*)$, $w(t)$ is $H$-weakly continuous
on $[a,b]$, up to a subsequence, we have
\begin{equation}\label{uravnllllgsnachlkagagsldnewgiukhoikljjlknnn}
\begin{cases}
z_n(t)\rightharpoonup z(t)\quad\text{weakly in}\;\;L^q(a,b;Z)\\
\frac{dv_n}{dt}(t)\rightharpoonup \frac{dv}{dt}(t)\quad\text{weakly in}\;\;L^{q^*}(a,b;X^*)\\
\Lambda_t \big(z_n(t)\big)\rightharpoonup \bar\Lambda(t)\quad\text{weakly in}\;\;L^{q^*}(a,b;Z^*)\\
w_n(t)\rightharpoonup w(t)\quad\text{weakly in}\;\;H\quad\text{for
every fixed}\;\;t\in[a,b],\\
\big\{w_n(t)\big\}_{n=1}^{+\infty}\;\;\text{is bounded
in}\;\;L^\infty(a,b;H),
\end{cases}
\end{equation}
and $z(t)$ satisfies the following equation
\begin{equation}\label{uravnllllgsnachlklimhjhghnewnnnkkk}
\begin{cases}\frac{d v}{dt}(t)+Q^*\cdot\bar\Lambda(t)
=0\quad\text{for a.e.}\; t\in(a,b)\,,\\
w(a)=w_0\,.
\end{cases}
\end{equation}
Moreover,
\begin{equation}\label{vyifyurturfurfuyrfgyukkknewnnnkkk}
\frac{1}{2}\big\|w(t)\big\|^2_H+\limsup_{n\to+\infty}\bigg(\int_a^t\Big<z_n(s),\Lambda_s
\big(z_n(s)\big)\Big>_{Z\times
Z^*}ds\bigg)\leq\frac{1}{2}\big\|w_0\big\|^2_H\quad\forall
t\in[a,b].
\end{equation}
Next using \er{uravnllllgsnachlklimhjhghnewnnnkkk} with Lemma
\ref{vlozhenie} we deduce that  $\zeta(t):=\widetilde P\cdot w(t)\in
W^{1,q^*}(a,b;Z^*)$. Moreover, by Lemma \ref{lem2} we have
\begin{equation}\label{vyifyurturfurfuyrfgyukkknewnnnkkkhlhhlhhggiu}
\frac{1}{2}\big\|w(t)\big\|^2_H+\int_a^t\Big<z(s),\bar\Lambda(s)\Big>_{Z\times
Z^*}ds=\frac{1}{2}\big\|w_0\big\|^2_H\quad\forall t\in[a,b].
\end{equation}
Thus plugging \er{vyifyurturfurfuyrfgyukkknewnnnkkkhlhhlhhggiu} into
\er{vyifyurturfurfuyrfgyukkknewnnnkkk} and using
\er{uravnllllgsnachlkagagsldnewgiukhoikljjlknnn} gives
\begin{multline}\label{vyifyurturfurfuyrfgyukkknewnnnkkkjhjk}
\limsup_{n\to+\infty}\bigg(\int_a^b\Big<z_n(t),\Lambda_t
\big(z_n(t)\big)\Big>_{Z\times
Z^*}dt\bigg)\leq\int_a^b\Big<z(t),\bar\Lambda(t)\Big>_{Z\times
Z^*}dt\\=\lim_{n\to+\infty}\bigg(\int_a^b\Big<z(t),\Lambda_t
\big(z_n(t)\big)\Big>_{Z\times Z^*}dt\bigg).
\end{multline}
So
\begin{equation*}
\limsup_{n\to+\infty}\int_a^b\Big<z_n(t)-z(t),\Lambda_t\big(z_n(t)\big)\Big>_{Z\times
Z^*}dt\leq 0,
\end{equation*}
that implies $\bar\Lambda(t)=\Lambda_t\big(z(t)\big)$. This
completes the proof.
\end{proof}

\begin{definition}\label{hfguigiugyuyhkjjhjjjk}
Let $\{X,H,X^*\}$ be an evolution triple with the corresponding
inclusion operator $T\in \mathcal{L}(X;H)$ as it was defined in
Definition \ref{7bdf}. Furthermore let $(a,b)$ be a real interval,
$q>1$ and $q^*:=q/(q-1)$.
We say that the mapping $\Gamma(u):\big\{u\in L^q(a,b;X):\;T\cdot
u\in L^\infty(a,b;H)\big\}\to L^{q^*}(a,b;X^*)\equiv
\big\{L^q(a,b;X)\}^*$ is weakly pseudo-monotone if for every
sequence $\big\{u_n(t)\big\}_{n=1}^{+\infty}\subset L^q(a,b;X)$,
such that $\big\{T\cdot u_n(t)\big\}_{n=1}^{+\infty}\subset
L^\infty(a,b;H)$, $u_n(t)\rightharpoonup u(t)$ weakly in
$L^q(a,b;X)$, $\big\{T\cdot u_n(t)\big\}_{n=1}^{+\infty}$ is bounded
in $L^\infty(a,b;H)$ and for a.e. $t\in(a,b)$ $T\cdot
u_n(t)\rightharpoonup T\cdot u(t)$ weakly in $H$, the following
conditions are satisfied:
\begin{itemize}
\item
\begin{equation}
\label{ftguhhhhikkjhjhjkjkkkkkkkcbcbncvccghghj}
\liminf_{n\to+\infty}\Big<u_n-u,\Gamma(u_n)\Big>_{L^q(a,b;X)\times
L^{q^*}(a,b;X^*)}\geq 0\,.
\end{equation}
\item If we have
\begin{equation}
\label{ftguhhhhikkjhjhjkjkkkkkkkjjjjhgghhhghgggh}
\lim_{n\to+\infty}\Big<u_n-u,\Gamma(u_n)\Big>_{L^q(a,b;X)\times
L^{q^*}(a,b;X^*)}=0\,,
\end{equation}
then $\Gamma(u_n)\rightharpoonup \Gamma(u)$ weakly in
$L^{q^*}(a,b;X^*)$.
\end{itemize}
\end{definition}
%
%
%
%
\begin{remark}\label{fyyjfjyhfgjggh}
By Lemmas \ref{hhhhhhhhhhhhhhhhhhiogfydtdtyd} and
\ref{hhhhhhhhhhhhhhhhhhiogfydtdtydjkgkgk} we know that if the
mapping $\Gamma(u):L^q(a,b;X)\to L^{q^*}(a,b;X^*)$ is
pseudo-monotone, then $\Gamma(u)$ is weakly pseudo-monotone.
\end{remark}
\begin{remark}\label{fyyjfjyhfgjgghgjkgkjgkgggfhfh}
It is trivially follows from the definition that if
$\Gamma_1(u),\Gamma_2(u):\big\{u\in L^q(a,b;X):\;T\cdot u\in
L^\infty(a,b;H)\big\}\to L^{q^*}(a,b;X^*)$ are two weakly
pseudo-monotone mappings, then the sum of them,
$\Gamma_1(u)+\Gamma_2(u)$ is also a weakly pseudo-monotone mapping.
\end{remark}
%
%
%
%
\begin{lemma}\label{hkjghiohioujpohgkgk}
Let $\{X,H,X^*\}$ be an evolution triple with the corresponding
inclusion operator $T\in \mathcal{L}(X;H)$ as it was defined in
Definition \ref{7bdf} together with the corresponding operator
$\widetilde{T}\in \mathcal{L}(H;X^*)$ defined as in \er{tildet}.
Furthermore, let $q\geq 2$ and for every $t\in[a,b]$ let
$\Theta_t(x):X\to X^*$ be a function which
satisfies the growth condition:
\begin{equation}\label{roststlambdglkagagfgfffhkjlh}
\|\Theta_t(x)\|_{X^*}\leq g\big(\|T\cdot
x\|_H\big)\,\big(\|x\|_X^{q-1}+\mu^{\frac{q-1}{q}}(t)\big)\quad\forall
x\in X,\;\forall t\in[a,b]\,,
\end{equation}
for some nondecreasing function $g(s):[0,+\infty)\to(0,+\infty)$ and
some nonnegative function $\mu(t)\in L^{1}\big(a,b;\R\big)$.
We
also assume that $\Theta_t(x)$
is strongly Borel on the pair of variables $(x,t)$ and satisfies the
following monotonicity
condition
\begin{multline}\label{Monotonegnewagagkjljhjhghjfjfghfhhjghgj}
\Big<x,\Theta_t(x)\Big>_{X\times X^*}\geq\frac{1}{\hat C}\,\|x\|_X^q
-
\Big(\|x\|^p_X+\tilde\mu^{\frac{p}{2}}(t)\Big)\cdot\tilde\mu^{\frac{2-p}{2}}(t)\Big(\|T\cdot
x\|^{(2-p)}_H+1\Big)
\quad\forall x\in X,\;\forall
t\in[a,b],
\end{multline}
where $p\in[0,2)$ and $\hat C>0$ are some constants
and $\tilde\mu(t)\in L^1\big(a,b;\R\big)$ is some nonnegative
function. Finally, assume that for a.e. fixed $t\in(a,b)$ the
function $\Theta_t(x):X\to X^*$ is pseudo-monotone (see Definition
\ref{hfguigiugyuyhkjjh}). Then, the mapping $\Gamma(u):\big\{u\in
L^q(a,b;X):\;T\cdot u\in L^\infty(a,b;H)\big\}\to L^{q^*}(a,b;X^*)$,
defined by
\begin{multline}\label{fuduftfdftffyvjhvhvghghhjghjg}
\bigg<h(t),\Gamma\big(u(t)\big)\bigg>_{L^q(a,b;X)\times
L^{q^*}(a,b;X^*)}:=\int_a^b\Big<h(t),\Theta_t\big(u(t)\big)\Big>_{X\times
X^*}dt\\ \forall\, u(t)\in \big\{\bar u(t)\in L^q(a,b;X):\;T\cdot
\bar u(t)\in L^\infty(a,b;H)\big\},\;\forall\, h(t)\in
L^{q}(a,b;X)\,,
\end{multline}
is weakly pseudo-monotone (see Definition
\ref{hfguigiugyuyhkjjhjjjk}).
\end{lemma}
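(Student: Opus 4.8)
The plan is to verify the two conditions of Definition~\ref{hfguigiugyuyhkjjhjjjk} for $\Gamma$ by reducing them, through a Fatou-type argument in the time variable, to the pointwise pseudo-monotonicity of $\Theta_t$ and its reformulation in Lemma~\ref{hhhhhhhhhhhhhhhhhhiogfydtdtyd}. Fix a sequence $\{u_n(t)\}$ as in Definition~\ref{hfguigiugyuyhkjjhjjjk}: $u_n\weakly u$ in $L^q(a,b;X)$, $\|T\cdot u_n(t)\|_H\le M$ for a.e.\ $t$ and all $n$, and $T\cdot u_n(t)\weakly T\cdot u(t)$ in $H$ for a.e.\ $t$ (so also $\|T\cdot u(t)\|_H\le M$, i.e.\ $u$ lies in the domain of $\Gamma$). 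First I would record the elementary bounds from \er{roststlambdglkagagfgfffhkjlh} and the monotonicity of $g$: $\|\Theta_t(u_n(t))\|_{X^*}\le g(M)\big(\|u_n(t)\|_X^{q-1}+\mu^{(q-1)/q}(t)\big)$, and likewise with $u$ in place of $u_n$; combined with the fact that $\Theta$ being strongly Borel makes $t\mapsto\Theta_t(u_n(t))$ and $t\mapsto\Theta_t(u(t))$ strongly measurable, this shows $\Gamma(u_n)=\Theta_{\cdot}(u_n(\cdot))$ and $\Gamma(u)$ are well defined in $L^{q^*}(a,b;X^*)$ and $\{\Gamma(u_n)\}$ is bounded there. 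Then, writing $e_n(t):=\big<u_n(t)-u(t),\Theta_t(u_n(t))\big>_{X\times X^*}$, I would combine \er{Monotonegnewagagkjljhjhghjfjfghfhhjghgj} (using $\|T\cdot u_n(t)\|_H\le M$ and $\tilde\mu^{p/2}\tilde\mu^{(2-p)/2}=\tilde\mu$), Young's inequality — legitimate since $p<2\le q$, with the resulting powers of $\tilde\mu$ staying in $L^1$ because $p(q-2)\ge0$ — and a Young estimate for $\big<u(t),\Theta_t(u_n(t))\big>$, to obtain
\begin{equation*}
e_n(t)\ \ge\ \frac1{4\hat C}\,\|u_n(t)\|_X^q-\rho(t)\qquad\text{for a.e. }t,\text{ all }n,
\end{equation*}
with $\rho\in L^1(a,b)$ independent of $n$; in particular $e_n(t)\ge-\rho(t)$.

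The heart of the matter is the pointwise claim $\liminf_{n\to\infty}e_n(t)\ge0$ for a.e.\ $t$. To prove it, fix a good point $t$ (where all a.e.\ statements hold and $\Theta_t$ is pseudo-monotone) and suppose some subsequence has $e_n(t)\to L<0$. The displayed lower bound forces $\|u_n(t)\|_X$ to be bounded along that subsequence, so after passing to a further one $u_n(t)\weakly\xi$ in $X$ by reflexivity; then $T\cdot u_n(t)\weakly T\cdot\xi$ in $H$, while also $T\cdot u_n(t)\weakly T\cdot u(t)$, and injectivity of $T$ yields $\xi=u(t)$. But then part {\bf(i)} of Lemma~\ref{hhhhhhhhhhhhhhhhhhiogfydtdtyd}, applied to the pseudo-monotone $\Theta_t$, gives $\liminf_n\big<u_n(t)-u(t),\Theta_t(u_n(t))\big>_{X\times X^*}\ge0$ along that subsequence, i.e.\ $L\ge0$ — a contradiction. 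Hence $\liminf_n e_n(t)\ge0$ a.e., and Fatou's lemma applied to $e_n+\rho\ge0$ gives
\begin{equation*}
\liminf_{n\to\infty}\big<u_n-u,\Gamma(u_n)\big>_{L^q(a,b;X)\times L^{q^*}(a,b;X^*)}=\liminf_{n\to\infty}\int_a^b e_n(t)\,dt\ \ge\ \int_a^b\liminf_{n\to\infty}e_n(t)\,dt\ \ge\ 0,
\end{equation*}
which is \er{ftguhhhhikkjhjhjkjkkkkkkkcbcbncvccghghj}.

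For the second condition suppose $\int_a^b e_n(t)\,dt\to0$. Then $\int_a^b\liminf_n e_n\le0$, which together with $\liminf_n e_n\ge0$ a.e.\ forces $\liminf_n e_n(t)=0$ a.e. A truncation argument — set $g_n:=e_n+\rho\ge0$, observe $\min(g_n,\rho)\to\rho$ a.e.\ hence, by dominated convergence, in $L^1$, so that $g_n-\min(g_n,\rho)\ge0$ has integral tending to $0$ and therefore a subsequence tending to $0$ a.e. — produces a subsequence (not relabeled) along which $e_n(t)\to0$ a.e. On the good set the lower bound again makes $\{u_n(t)\}$ bounded in $X$ and, exactly as above, $u_n(t)\weakly u(t)$ weakly in $X$; since now $\big<u_n(t)-u(t),\Theta_t(u_n(t))\big>_{X\times X^*}\to0$, part {\bf(ii)} of Lemma~\ref{hhhhhhhhhhhhhhhhhhiogfydtdtyd} gives $\Theta_t(u_n(t))\weakly\Theta_t(u(t))$ weakly$^*$, hence weakly by reflexivity, in $X^*$ for a.e.\ $t$. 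To promote this to $\Gamma(u_n)\weakly\Gamma(u)$ in $L^{q^*}(a,b;X^*)$ I would extract any further weakly convergent subsequence $\Theta_{\cdot}(u_n(\cdot))\weakly G$ in the reflexive space $L^{q^*}(a,b;X^*)$, test against $\eta\in L^q(a,b;X)$, use that $\big<\eta(t),\Theta_t(u_n(t))\big>_{X\times X^*}\to\big<\eta(t),\Theta_t(u(t))\big>_{X\times X^*}$ a.e.\ and that this family is uniformly integrable — it is bounded by $\|\eta(t)\|_X\,g(M)\big(\|u_n(t)\|_X^{q-1}+\mu^{(q-1)/q}(t)\big)$, the product of the fixed $L^q$ function $\|\eta\|_X$ with a bounded-in-$L^{q^*}$ sequence plus an $L^1$ term — so Vitali's theorem identifies $G=\Gamma(u)$; uniqueness of limit points gives the claim along this subsequence, and running the whole argument on an arbitrary subsequence of $\{u_n\}$ (which again satisfies all hypotheses and $\int_a^b e_{n_k}\to0$) gives it for the whole sequence, i.e.\ \er{ftguhhhhikkjhjhjkjkkkkkkkjjjjhgghhhghgggh}.

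The main obstacle is that pointwise weak convergence $u_n(t)\weakly u(t)$ in $X$ is \emph{not} among the hypotheses and must be reconstructed, for a.e.\ $t$, from $T\cdot u_n(t)\weakly T\cdot u(t)$, the injectivity of $T$, and the pointwise boundedness of $\{u_n(t)\}_n$ in $X$ — and this last boundedness is only available where $e_n(t)$ is under control. Obtaining such control a.e.\ (along a subsequence) out of the merely integral information $\int_a^b e_n\to0$ is the delicate measure-theoretic point, and it is precisely what the Fatou-plus-truncation step achieves. Everything else — the growth/coercivity bookkeeping with the $L^1$ weights and the Vitali upgrade of pointwise weak convergence — is routine.
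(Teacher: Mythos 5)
Your proof is correct and follows essentially the same route as the paper's: the coercivity-based pointwise lower bound $\big<u_n(t)-u(t),\Theta_t(u_n(t))\big>\ge c\|u_n(t)\|_X^q-\rho(t)$, the recovery of $u_n(t)\rightharpoonup u(t)$ weakly in $X$ for a.e.\ $t$ from pointwise boundedness plus injectivity of $T$, the pointwise application of Lemma \ref{hhhhhhhhhhhhhhhhhhiogfydtdtyd}, and Fatou plus a truncation step to pass between pointwise and integral statements. The only (harmless) deviation is the final identification of the weak $L^{q^*}(a,b;X^*)$ limit, where you use pointwise weak convergence of $\Theta_t(u_n(t))$ together with uniform integrability and Vitali's theorem, whereas the paper tests the candidate limit against an arbitrary $h$ via Fatou and the $h\mapsto -h$ sign swap; both arguments are valid.
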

\begin{proof}
Consider a sequence $\big\{u_n(t)\big\}_{n=1}^{+\infty}\subset
L^q(a,b;X)$, such that $\big\{T\cdot
u_n(t)\big\}_{n=1}^{+\infty}\subset L^\infty(a,b;H)$,
$u_n(t)\rightharpoonup u(t)$ weakly in $L^q(a,b;X)$, $\big\{T\cdot
u_n(t)\big\}_{n=1}^{+\infty}$ is bounded in $L^\infty(a,b;H)$ and
$T\cdot u_n(t)\rightharpoonup T\cdot u(t)$ weakly in $H$ for a.e.
$t\in(a,b)$. Then, by \er{roststlambdglkagagfgfffhkjlh} and
\er{Monotonegnewagagkjljhjhghjfjfghfhhjghgj}, for every $h(t)\in
L^{q}(a,b;X)$ there exists $\eta_h(t)\in L^1\big(a,b;\R\big)$, such
that
\begin{equation}\label{fuduftfdftffyvjhvhvghghhjghjgjkkjhkj}
\Big<u_n(t)-h(t),\Theta_t\big(u_n(t)\big)\Big>_{X\times X^*}\geq
\frac{1}{2\hat C}\,\big\|u_n(t)\big\|_X^q+\eta_h(t)\quad\quad\forall
t\in[a,b].
\end{equation}
Therefore, by Fatou's Lemma
\begin{multline}\label{fuduftfdftffyvjhvhvghghhjghjgjkkjhkjiuitjjf}
\liminf_{n\to+\infty}\int_a^b\Big<u_n(t)-h(t),\Theta_t\big(u_n(t)\big)\Big>_{X\times
X^*}dt\geq\\
\int_a^b\bigg(\liminf_{n\to+\infty}\Big<u_n(t)-h(t),\Theta_t\big(u_n(t)\big)\Big>_{X\times
X^*}\bigg)dt
\quad\quad\forall\,h(t)\in L^{q}(a,b;X).
\end{multline}
Then,  assuming
$\liminf_{n\to+\infty}\int_a^b\Big<u_n(t)-u(t),\Theta_t\big(u_n(t)\big)\Big>_{X\times
X^*}dt<+\infty$, by taking $h(t)=u(t)$ in
\er{fuduftfdftffyvjhvhvghghhjghjgjkkjhkjiuitjjf}, we deduce:
\begin{multline}\label{fuduftfdftffyvjhvhvghghhjghjgjkkjhkjiuitjjfjkkjggkkggg}
\int_a^b\bigg(\liminf_{n\to+\infty}\Big<u_n(t)-u(t),\Theta_t\big(u_n(t)\big)\Big>_{X\times
X^*}\bigg)dt\leq\\
\liminf_{n\to+\infty}\int_a^b\Big<u_n(t)-u(t),\Theta_t\big(u_n(t)\big)\Big>_{X\times
X^*}dt<+\infty.
\end{multline}
In particular, for a.e. $t\in(a,b)$ there exists a strictly
increasing subsequence
$\big\{n^{(t)}_k\}_{k=1}^{+\infty}\subset\mathbb{N}$ such that we
have
\begin{equation}\label{fuduftfdftffyvjhvhvghghhjghjgjkkjhkjjhhffjhfjfjfj}
\lim_{k\to+\infty}\Big<u_{n^{(t)}_k}(t)-u(t),\Theta_t\big(u_{n^{(t)}_k}(t)\big)\Big>_{X\times
X^*}=\liminf_{n\to+\infty}\Big<u_n(t)-u(t),\Theta_t\big(u_n(t)\big)\Big>_{X\times
X^*}<+\infty.
\end{equation}
Therefore, by \er{fuduftfdftffyvjhvhvghghhjghjgjkkjhkj}, for a.e.
fixed $t\in(a,b)$ the sequence
$\{u_{n^{(t)}_k}(t)\}_{k=1}^{+\infty}$ is bounded in $X$. On the
other hand, $T\cdot u_n(t)\rightharpoonup T\cdot u(t)$ weakly in $H$
for a.e. $t\in(a,b)$. Thus, since $T$ is an injective operator, we
obtain that, for a.e. fixed $t\in(a,b)$
$u_{n^{(t)}_k}(t)\rightharpoonup u(t)$ weakly in $X$. Therefore,
since for a.e. fixed $t\in(a,b)$ the function $\Theta_t(x):X\to X^*$
is pseudo-monotone, using
\er{fuduftfdftffyvjhvhvghghhjghjgjkkjhkjjhhffjhfjfjfj} and Lemma
\ref{hhhhhhhhhhhhhhhhhhiogfydtdtyd}, for a.e. $t\in(a,b)$ we deduce:
\begin{equation}\label{fuduftfdftffyvjhvhvghghhjghjgjkkjhkjjhhffjhfjfjfjkghggghhhf}
\liminf_{n\to+\infty}\Big<u_n(t)-u(t),\Theta_t\big(u_n(t)\big)\Big>_{X\times
X^*}=\lim_{k\to+\infty}\Big<u_{n^{(t)}_k}(t)-u(t),\Theta_t\big(u_{n^{(t)}_k}(t)\big)\Big>_{X\times
X^*}\geq 0.
\end{equation}
Plugging it into
\er{fuduftfdftffyvjhvhvghghhjghjgjkkjhkjiuitjjfjkkjggkkggg}, we
infer
\begin{multline}\label{fuduftfdftffyvjhvhvghghhjghjgjkkjhkjiuitjjfjkkjggkkggghklhlhhjhk}
\liminf_{n\to+\infty}\int_a^b\Big<u_n(t)-u(t),\Theta_t\big(u_n(t)\big)\Big>_{X\times
X^*}dt\geq\\
\int_a^b\bigg(\liminf_{n\to+\infty}\Big<u_n(t)-u(t),\Theta_t\big(u_n(t)\big)\Big>_{X\times
X^*}\bigg)dt\geq 0.
\end{multline}
Moreover, obviously in the case that
$\liminf_{n\to+\infty}\int_a^b\Big<u_n(t)-u(t),\Theta_t\big(u_n(t)\big)\Big>_{X\times
X^*}dt=+\infty$, the first inequality in
\er{fuduftfdftffyvjhvhvghghhjghjgjkkjhkjiuitjjfjkkjggkkggghklhlhhjhk}
still holds. So
\begin{equation}
\label{ftguhhhhikkjhjhjkjkkkkkkkcbcbncvccghghjghkjjh}
\liminf_{n\to+\infty}\Big<u_n-u,\Gamma(u_n)\Big>_{L^q(a,b;X)\times
L^{q^*}(a,b;X^*)}\geq 0\,.
\end{equation}

 Next assume that,
\begin{equation*}
\lim_{n\to+\infty}\Big<u_n-u,\Gamma(u_n)\Big>_{L^q(a,b;X)\times
L^{q^*}(a,b;X^*)}=0\,.
\end{equation*}
Then, plugging it into
\er{fuduftfdftffyvjhvhvghghhjghjgjkkjhkjiuitjjfjkkjggkkggghklhlhhjhk}
we deduce,
\begin{multline}\label{gythjfjfvjhffhj}
\int_a^b\bigg(\liminf_{n\to+\infty}\Big<u_n(t)-u(t),\Theta_t\big(u_n(t)\big)\Big>_{X\times
X^*}\bigg)dt=\lim_{n\to+\infty}\int_a^b\Big<u_n(t)-u(t),\Theta_t\big(u_n(t)\big)\Big>_{X\times
X^*}dt=0\,.
\end{multline}
On the other hand, plugging \er{gythjfjfvjhffhj} into
\er{fuduftfdftffyvjhvhvghghhjghjgjkkjhkjjhhffjhfjfjfjkghggghhhf}, we
deduce:
\begin{equation}\label{fuduftfdftffyvjhvhvghghhjghjgjkkjhkjjhhffjhfjfjfjkghggghhhfjgggj}
\liminf_{n\to+\infty}\Big<u_n(t)-u(t),\Theta_t\big(u_n(t)\big)\Big>_{X\times
X^*}=0\quad\text{for a.e.}\;\,t\in(a,b).
\end{equation}
Therefore,
\begin{equation}\label{fuduftfdftffyvjhvhvghghhjghjgjkkjhkjjhhffjhfjfjfjkghggghhhfjgggjjfff}
\lim_{n\to+\infty}\Bigg(\min\bigg\{\;0\;,\;\Big<u_n(t)-u(t),\Theta_t\big(u_n(t)\big)\Big>_{X\times
X^*}\bigg\}\Bigg)=0\quad\text{for a.e.}\;\,t\in(a,b).
\end{equation}
Then using \er{fuduftfdftffyvjhvhvghghhjghjgjkkjhkj} and the
dominated convergence theorem, by
\er{fuduftfdftffyvjhvhvghghhjghjgjkkjhkjjhhffjhfjfjfjkghggghhhfjgggjjfff}
we deduce
\begin{equation}\label{fuduftfdftffyvjhvhvghghhjghjgjkkjhkjjhhffjhfjfjfjkghggghhhfjgggjjfffhdhdddghjffjfj}
\lim_{n\to+\infty}\int_a^b\Bigg(\min\bigg\{\;0\;,\;\Big<u_n(t)-u(t),\Theta_t\big(u_n(t)\big)\Big>_{X\times
X^*}\bigg\}\Bigg)dt=0.
\end{equation}
Thus plugging
\er{fuduftfdftffyvjhvhvghghhjghjgjkkjhkjjhhffjhfjfjfjkghggghhhfjgggjjfffhdhdddghjffjfj}
into \er{gythjfjfvjhffhj} we obtain
\begin{equation}\label{fuduftfdftffyvjhvhvghghhjghjgjkkjhkjjhhffjhfjfjfjkghggghhhfjgggjjfffhdhdddghjffjfjfjfj}
\lim_{n\to+\infty}\int_a^b\Bigg(\max\bigg\{\;0\;,\;\Big<u_n(t)-u(t),\Theta_t\big(u_n(t)\big)\Big>_{X\times
X^*}\bigg\}\Bigg)dt=0.
\end{equation}
So by
\er{fuduftfdftffyvjhvhvghghhjghjgjkkjhkjjhhffjhfjfjfjkghggghhhfjgggjjfffhdhdddghjffjfjfjfj}
and
\er{fuduftfdftffyvjhvhvghghhjghjgjkkjhkjjhhffjhfjfjfjkghggghhhfjgggjjfffhdhdddghjffjfj}
we deduce
\begin{equation}\label{fuduftfdftffyvjhvhvghghhjghjgjkkjhkjjhhffjhfjfjfjkghggghhhfjgggjjfffhdhdddghjffjfjfjfjlkhkl}
\lim_{n\to+\infty}\int_a^b\bigg|\Big<u_n(t)-u(t),\Theta_t\big(u_n(t)\big)\Big>_{X\times
X^*}\bigg|dt=0.
\end{equation}
Therefore, up to a subsequence, we have
\begin{equation}\label{fuduftfdftffyvjhvhvghghhjghjgjkkjhkjjhhffjhfjfjfjkghggghhhfjgggjjfffgghghgkhggkj}
\lim_{n\to+\infty}\Big<u_n(t)-u(t),\Theta_t\big(u_n(t)\big)\Big>_{X\times
X^*}=0\quad\text{for a.e.}\;\,t\in(a,b).
\end{equation}
Furthermore, using the fact that $u_n(t)\rightharpoonup u(t)$ weakly
in $L^q(a,b;X)$ and \er{roststlambdglkagagfgfffhkjlh}, we obtain
that there exists $\widetilde\Theta(t)\in L^{q^*}\big(a,b;X^*\big)$,
such that up to a further subsequence,
$\Theta_t\big(u_n(t)\big)\rightharpoonup \widetilde\Theta(t)$ weakly
in $L^{q^*}\big(a,b;X^*\big)$. Using this fact and
\er{fuduftfdftffyvjhvhvghghhjghjgjkkjhkjjhhffjhfjfjfjkghggghhhfjgggjjfffhdhdddghjffjfjfjfjlkhkl},
we deduce that for every $h(t)\in L^{q}(a,b;X)$, that we now fix, we
have:
\begin{equation}\label{fuduftfdftffyvjhvhvghghhjghjgjkkjhkjjhhffjhfjfjfjkghggghhhfjgggjjfffhdhdddghjffjfjfjfjlkhklghjjhhk}
\int_a^b\Big<h(t),\widetilde\Theta(t)\Big>_{X\times
X^*}dt=\lim_{n\to+\infty}\int_a^b\Big<u_n(t)-u(t)+h(t),\Theta_t\big(u_n(t)\big)\Big>_{X\times
X^*}dt.
\end{equation}
Thus, using \er{fuduftfdftffyvjhvhvghghhjghjgjkkjhkj} and Fatou's
Lemma, by
\er{fuduftfdftffyvjhvhvghghhjghjgjkkjhkjjhhffjhfjfjfjkghggghhhfjgggjjfffhdhdddghjffjfjfjfjlkhklghjjhhk}
and
\er{fuduftfdftffyvjhvhvghghhjghjgjkkjhkjjhhffjhfjfjfjkghggghhhfjgggjjfffgghghgkhggkj}
we infer
\begin{multline}\label{fuduftfdftffyvjhvhvghghhjghjgjkkjhkjiuitjjfkggkjggigjkljgkjgifyuf}
\int_a^b\Big<h(t),\widetilde\Theta(t)\Big>_{X\times X^*}dt\geq
\int_a^b\bigg(\liminf_{n\to+\infty}\Big<u_n(t)-u(t)+h(t),\Theta_t\big(u_n(t)\big)\Big>_{X\times
X^*}\bigg)dt\\=\int_a^b\bigg(\liminf_{n\to+\infty}\Big<h(t),\Theta_t\big(u_n(t)\big)\Big>_{X\times
X^*}\bigg)dt.
\end{multline}
On the other hand, by
\er{fuduftfdftffyvjhvhvghghhjghjgjkkjhkjjhhffjhfjfjfjkghggghhhfjgggjjfffgghghgkhggkj}
for a.e. $t\in(a,b)$ there exists a strictly increasing subsequence
$\big\{\bar n^{(t)}_k\}_{k=1}^{+\infty}\subset\mathbb{N}$ such that
we have
\begin{multline}\label{fuduftfdftffyvjhvhvghghhjghjgjkkjhkjjhhffjhfjfjfjgjgjgjyui}
\lim_{k\to+\infty}\Big<u_{\bar
n^{(t)}_k}(t)-u(t)+h(t),\Theta_t\big(u_{\bar
n^{(t)}_k}(t)\big)\Big>_{X\times X^*}=\\
\lim_{k\to+\infty}\Big<h(t),\Theta_t\big(u_{\bar
n^{(t)}_k}(t)\big)\Big>_{X\times
X^*}=\liminf_{n\to+\infty}\Big<h(t),\Theta_t\big(u_n(t)\big)\Big>_{X\times
X^*}<+\infty.
\end{multline}
Therefore, by \er{fuduftfdftffyvjhvhvghghhjghjgjkkjhkj}, for a.e.
fixed $t\in(a,b)$ the sequence $\{u_{\bar
n^{(t)}_k}(t)\}_{k=1}^{+\infty}$ is bounded in $X$. On the other
hand, $T\cdot u_n(t)\rightharpoonup T\cdot u(t)$ weakly in $H$ for
a.e. $t\in(a,b)$. Thus, since, $T$ is injective operator, we obtain
that, for a.e. fixed $t\in(a,b)$ $u_{\bar
n^{(t)}_k}(t)\rightharpoonup u(t)$ weakly in $X$. Therefore, since
for a.e. fixed $t\in(a,b)$ the function $\Theta_t(x):X\to X^*$ is
pseudo-monotone, using
\er{fuduftfdftffyvjhvhvghghhjghjgjkkjhkjjhhffjhfjfjfjkghggghhhfjgggjjfffgghghgkhggkj}
and Lemma \ref{hhhhhhhhhhhhhhhhhhiogfydtdtyd}, for a.e. $t\in(a,b)$
we deduce:
\begin{equation}\label{fuduftfdftffyvjhvhvghghhjghjgjkkjhkjjhhffjhfjfjfjkghggghhhfdfhdhgjuy}
\Theta_t\big(u_{\bar
n^{(t)}_k}(t)\big)\rightharpoonup\Theta_t\big(u(t)\big)\quad\text{weakly
in}\;\, X^*.
\end{equation}
Plugging it into
\er{fuduftfdftffyvjhvhvghghhjghjgjkkjhkjjhhffjhfjfjfjgjgjgjyui}, we
deduce
\begin{equation}\label{fuduftfdftffyvjhvhvghghhjghjgjkkjhkjjhhffjhfjfjfjgjgjgjyuiyyhuikkk}
\liminf_{n\to+\infty}\Big<h(t),\Theta_t\big(u_n(t)\big)\Big>_{X\times
X^*}=\Big<h(t),\Theta_t\big(u(t)\big)\Big>_{X\times
X^*}\quad\text{for a.e.}\;\,t\in(a,b).
\end{equation}
Thus, plugging
\er{fuduftfdftffyvjhvhvghghhjghjgjkkjhkjjhhffjhfjfjfjgjgjgjyuiyyhuikkk}
into
\er{fuduftfdftffyvjhvhvghghhjghjgjkkjhkjiuitjjfkggkjggigjkljgkjgifyuf}
gives
\begin{equation}\label{fuduftfdftffyvjhvhvghghhjghjgjkkjhkjiuitjjfkggkjggigjkljgkjgifyufjhg}
\int_a^b\Big<h(t),\widetilde\Theta(t)\Big>_{X\times X^*}dt\geq
\int_a^b\Big<h(t),\Theta_t\big(u(t)\big)\Big>_{X\times X^*}dt.
\end{equation}
Thus, since $h(t)\in L^{q}(a,b;X)$ was chosen arbitrary,
interchanging between the roles of $h(t)$ and $-h(t)$ gives
\begin{equation}\label{fuduftfdftffyvjhvhvghghhjghjgjkkjhkjiuitjjfkggkjggigjkljgkjgifyufkjgkjkj}
\int_a^b\Big<h(t),\Theta_t\big(u(t)\big)\Big>_{X\times
X^*}dt\leq\int_a^b\Big<h(t),\widetilde\Theta(t)\Big>_{X\times
X^*}dt.
\end{equation}
Therefore, plugging
\er{fuduftfdftffyvjhvhvghghhjghjgjkkjhkjiuitjjfkggkjggigjkljgkjgifyufjhg}
and
\er{fuduftfdftffyvjhvhvghghhjghjgjkkjhkjiuitjjfkggkjggigjkljgkjgifyufkjgkjkj}
gives
\begin{equation}\label{fuduftfdftffyvjhvhvghghhjghjgjkkjhkjiuitjjfkggkjggigjkljgkjgifyufkjgkjkjjfffjfjf}
\int_a^b\Big<h(t),\Theta_t\big(u(t)\big)\Big>_{X\times
X^*}dt=\int_a^b\Big<h(t),\widetilde\Theta(t)\Big>_{X\times X^*}dt,
\end{equation}
and, since $h(t)\in L^{q}(a,b;X)$ was arbitrarily chosen, we deduce
$\Theta_t\big(u(t)\big)=\widetilde\Theta(t)$ for a.e. $t\in(a,b)$.
So $\Theta_t\big(u_n(t)\big)\rightharpoonup \Theta\big(u(t)\big)$
weakly in $L^{q^*}\big(a,b;X^*\big)$. This completes the proof.
\end{proof}

\begin{theorem}\label{defHkkkkglkjjjgkjgkjgggk}
Let $\{X,H,X^*\}$ be an evolution triple with the corresponding
inclusion operator $T\in \mathcal{L}(X;H)$ as it was defined in
Definition \ref{7bdf} together with the corresponding operator
$\widetilde{T}\in \mathcal{L}(H;X^*)$ defined as in \er{tildet}.
Assume also that the Banach space $X$ is separable.
Furthermore, let $a,b,q\in\R$ s.t. $a<b$ and $q\geq 2$.  Next, for
every $t\in[a,b]$ let $\Phi_t(x):X\to[0,+\infty)$ be a convex
function which is G\^{a}teaux differentiable at every $x\in X$,
satisfies $\Phi_t(0)=0$ and satisfies the growth condition
\begin{equation}\label{roststglkjjjuhuh}
0\leq \Phi_t(x)\leq C\,\|x\|_X^q+C\quad\forall
x\in X,\;\forall t\in[a,b]\,,
\end{equation}
for some $C>0$. We also assume that $\Phi_t(x)$
is Borel on the pair of variables $(x,t)$.
Furthermore, for every $t\in[a,b]$ let $\Lambda_t(x):X\to X^*$ be a
function which is G\^{a}teaux differentiable at every $x\in X$,
$\Lambda_t(0)\in L^{q^*}(a,b;X^*)$ and the derivative of $\Lambda_t$
satisfies the growth condition
\begin{equation}\label{roststlambdglkjjjuhuh}
\|D\Lambda_t(x)\|_{\mathcal{L}(X;X^*)}\leq g\big(\|T\cdot
x\|_H\big)\,\big(\|x\|_X^{q-2}+1\big)\quad\forall x\in X,\;\forall
t\in[a,b]\,,
\end{equation}
for some nondecreasing function $g(s):[0,+\infty)\to(0,+\infty)$. We
also assume that $\Lambda_t(x)$
is Borel on the pair of variables $(x,t)$ (see Definition
\ref{fdfjlkjjkkkkkllllkkkjjjhhhkkk}). Assume also that $\Lambda_t$
and $\Phi_t$ satisfy the following monotonicity
condition:
\begin{multline}\label{Monotoneglkjjjuhuhgutuityiyikkk}
\Big<x,D\Phi_t(x)+\Lambda_t(x)\Big>_{X\times X^*}\geq\frac{1}{\hat
C}\,\|x\|_X^q
-
\Big(\|x\|^p_X+\mu^{\frac{p}{2}}(t)\Big)\Bigg(\hat C\|L\cdot
x\|^{(2-p)}_V+\mu^{\frac{2-p}{2}}(t)\Big(\|T\cdot
x\|^{(2-p)}_H+1\Big)\Bigg)\\
\quad\forall x\in
X,\;\forall t\in[a,b],
\end{multline}
where $V$ is a given Banach space, $L\in\mathcal{L}(X,V)$ is a given
compact operator, $p\in[0,2)$,
$\mu(t)\in L^1(a,b;\R)$ is a nonnegative
function and $\hat C>0$ is a constant.
%
%
%
%
%
%
%
%
%
%
%
%
%
%
%
%
Finally, assume that for every $t\in[a,b]$ the mapping
$\big(D\Phi_t+\Lambda_t\big)(x):X\to X^*$ is pseudo-monotone (see
Definition \ref{hfguigiugyuyhkjjh}).
%
%
%
%
%
%
%
%
Then for every $w_0\in H$ there exists $u(t)\in L^q(a,b;X)$, such
that $w(t):=T\cdot\big(u(t)\big)\in L^\infty(a,b;H)$,
$v(t):=\widetilde T\cdot\big(w(t)\big)=\widetilde T\circ
T\big(u(t)\big)\in W^{1,q^*}(a,b; X^*)$
and $u(t)$ is a solution to
\begin{equation}\label{uravnllllgsnachlklimjjjuhuh}
\begin{cases}\frac{d v}{dt}(t)+\Lambda_t\big(u(t)\big)+D\Phi_t\big(u(t)\big)=0\quad\text{for a.e.}\; t\in(a,b)\,,\\
w(a)=w_0\,,
\end{cases}
\end{equation}
where we assume that $w(t)$ is $H$-weakly continuous on $[a,b]$, as
it was stated in Corollary \ref{vbnhjjmcor}. Moreover, if
$\Lambda_t$ and $\Phi_t$ satisfy the following monotonicity
condition
\begin{multline}\label{roststghhh77889lkhjhfvffuhuh}
\Big<h,\big\{D\Phi_t(x+h)-D\Phi_t(x)\big\}+D\Lambda_t(x)\cdot
h\Big>_{X\times X^*}\geq\\
\frac{k_0\big|f(h,t)\big|^2}{\hat g(\|T\cdot x\|_H)} -\hat
g\big(\|T\cdot
x\|_H\big)\cdot\Big(\|x\|_X^q+\mu(t)\Big)^{(2-p)/2}\cdot\big|f(h,t)\big|^p\cdot\|T\cdot
h\|^{(2-p)}_H\quad\forall x\in X,\,\forall h\in X\;\forall
t\in[a,b],
\end{multline}
%
for some constant $k_0\geq 0$ such that $k_0\neq 0$ if $p>0$, some
function $f(h,t):X\times[a,b]\to\R$ and some nondecreasing function
$\hat g(s):[0,+\infty)\to(0,+\infty)$, then such a solution to
\er{uravnllllgsnachlklimjjjuhuh} is unique.
\end{theorem}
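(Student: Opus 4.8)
Here is my plan for the final statement (existence plus, under \er{roststghhh77889lkhjhfvffuhuh}, uniqueness of the solution of \er{uravnllllgsnachlklimjjjuhuh}).

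\medskip
\emph{Strategy and a preliminary reduction.} I would obtain existence by a vanishing uniformly convex regularisation reducing matters to \rcor{CorCorCor} of \cite{PI}, followed by a compactness/pseudo-monotonicity limit, and then prove uniqueness by a Gronwall estimate. First I would rewrite the coercivity hypothesis: by \rlemma{Aplem1}, applied with the compact operator $L$ and the injective $T$, one has $\|L\cdot x\|_V\le\delta\|x\|_X+c_\delta\|T\cdot x\|_H$ for every $\delta>0$, and substituting this into \er{Monotoneglkjjjuhuhgutuityiyikkk} and applying Young's inequality (in the form of Remark~\ref{gdfgdghfjkllkhoiklj}) one absorbs the extra $\|x\|_X^q$ contribution into $\tfrac1{\hat C}\|x\|_X^q$ and brings \er{Monotoneglkjjjuhuhgutuityiyikkk} to the form \er{Monotonegnewagagkjljhjhghjfjfghfhhjghgj} of \rlemma{hkjghiohioujpohgkgk}, with a new nonnegative $L^1$ coefficient (here $q\ge2$, $p\in[0,2)$ and Hölder's inequality are used to keep the time–dependent coefficients in $L^1$). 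Since $\|D\Phi_t(x)\|_{X^*}\le \bar C\|x\|_X^{q-1}+\bar C$ by \rlemma{Legendre}, the map $\Theta_t:=D\Phi_t+\Lambda_t:X\to X^*$ also satisfies the growth bound required in \rlemma{hkjghiohioujpohgkgk}; being pseudo-monotone by hypothesis, it follows from \rlemma{hkjghiohioujpohgkgk} that the time–integrated operator $\Gamma$, given by $\langle h(\cdot),\Gamma(u(\cdot))\rangle=\int_a^b\langle h(t),\Theta_t(u(t))\rangle\,dt$, is weakly pseudo-monotone in the sense of \rdef{hfguigiugyuyhkjjhjjjk}.

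\medskip
\emph{Regularised problems.} Since $X$ is separable, \rlemma{hilbcomban} provides a separable Hilbert space $Y$ and a compact injective $S\in\mathcal L(Y;X)$ with dense image; setting $P:=T\circ S$ and $\widetilde P:=S^*\circ\widetilde T$, the triple $\{Y,H,Y^*\}$ is again an evolution triple. On the Hilbert space $Y$ the functional $\Psi(y):=\|y\|_Y^q+\|y\|_Y^2$ is strictly convex, satisfies the growth \er{roststrrr} and the uniform convexity $\langle h,D\Psi(y+h)-D\Psi(y)\rangle\ge c_q(\|y\|_Y^{q-2}+1)\|h\|_Y^2$. For $\e_n\downarrow0$ and $w^{(0)}_n:=P\cdot\psi^{(0)}_n\to w_0$ strongly in $H$, I would solve
\begin{equation*}
\tfrac{d}{dt}\big\{(\widetilde P\circ P)\cdot\psi_n\big\}+S^*\!\cdot\big(\Lambda_t(S\cdot\psi_n)+D\Phi_t(S\cdot\psi_n)\big)+\e_n\,D\Psi(\psi_n)=0,\qquad (P\cdot\psi_n)(a)=w^{(0)}_n ,
\end{equation*}
by \rcor{CorCorCor} of \cite{PI} on $\{Y,H,Y^*\}$, with the strictly convex potential $\e_n\Psi+\Phi_t\circ S$ and the nonlinearity $S^*\cdot\Lambda_t(S\cdot)$, which factors through the compact operator $S$. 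For each fixed $n$ the hypotheses of \rcor{CorCorCor} follow from \er{roststglkjjjuhuhgbhghgjkhjk}, \er{roststlambdglkjjjuhuhhuiyhioujh} and \rlemma{Legendre} (growth), from the uniform convexity of $\Psi$ and \rlemma{Aplem1} (condition \er{Monotonerrr}), and from the reduced coercivity together with \rlemma{Aplem1} and $\e_n\Psi(\psi)\ge\e_n\|\psi\|_Y^q$ (condition \er{MonotonegnewhghghghghEx}); if the non-smoothness of $D\Phi_t$ obstructs this, one first replaces $\Phi_t$ by its Moreau–Yosida regularisation on $Y$ and removes it in the limit. This gives $u_n:=S\cdot\psi_n\in L^q(a,b;X)$ with $w_n:=T\cdot u_n\in L^\infty(a,b;H)$ and $v_n:=(\widetilde T\circ T)\cdot u_n\in W^{1,q^*}(a,b;X^*)$.

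\medskip
\emph{Passing to the limit.} Testing the regularised equation with $\psi_n$, using \rlemma{lem2} and the reduced coercivity, I would obtain an inequality of the type $\tfrac12\|w_n(t)\|_H^2+\tfrac1{\hat C}\int_a^t\|u_n\|_X^q\,ds\le C\int_a^t\bar\mu(s)(1+\|w_n\|_H^2)\,ds+C$; Gronwall's inequality ($\bar\mu\in L^1$) then bounds $\{w_n\}$ in $L^\infty(a,b;H)$, $\{u_n\}$ in $L^q(a,b;X)$, $\{v_n\}$ in $W^{1,q^*}(a,b;X^*)$, and $\|\e_n D\Psi(\psi_n)\|_{L^{q^*}}=O(\e_n^{1/(q-1)})\to0$ by \rlemma{Legendre}. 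Running the argument of \rlemma{thhypppggghhhjgggnew} (extract weak limits $u_n\rightharpoonup u$, $v_n\rightharpoonup v$, $\Theta_t(u_n)\rightharpoonup\bar\Theta$; use \rlemma{helplemlll} for $w_n(t)\rightharpoonup w(t)$ weakly in $H$ a.e.\ $t$) one passes to the limit in the weak formulation to get $\tfrac{dv}{dt}+\bar\Theta=0$, $w(a)=w_0$, together with an energy inequality of type \er{vyifyurturfurfuyrfgyukkknew}, namely $\tfrac12\|w(t)\|_H^2+\limsup_n\int_a^t\langle u_n,\Theta_s(u_n)\rangle\,ds\le\tfrac12\|w_0\|_H^2$. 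Combining this with the energy identity of \rlemma{lem2} for the limit equation yields $\limsup_n\int_a^b\langle u_n,\Theta_t(u_n)\rangle\,dt\le\int_a^b\langle u,\bar\Theta\rangle\,dt$, i.e.\ $\limsup_n\langle u_n-u,\Gamma(u_n)\rangle\le0$; with the weak pseudo-monotonicity of $\Gamma$ this forces $\langle u_n-u,\Gamma(u_n)\rangle\to0$ and hence $\Gamma(u_n)\rightharpoonup\Gamma(u)$, so that $\bar\Theta=\Lambda_t(u)+D\Phi_t(u)$ and $u$ solves \er{uravnllllgsnachlklimjjjuhuh}. \textbf{The identification of $\bar\Theta$ is the only delicate point}; everything upstream is routine a priori estimation and weak compactness.

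\medskip
\emph{Uniqueness.} Let $u_1,u_2\in L^q(a,b;X)$ both solve \er{uravnllllgsnachlklimjjjuhuh} with the same $w_0$, assume \er{roststghhh77889lkhjhfvffuhuh}, and put $h:=u_1-u_2$. Subtracting the equations, testing with $h$ and using \rlemma{lem2} (the boundary terms at $a$ cancel since $w_1(a)=w_2(a)$) gives
\begin{equation*}
\tfrac12\|T\cdot h(t)\|_H^2+\int_a^t\Big\langle h,\{D\Phi_s(u_1)-D\Phi_s(u_2)\}+\{\Lambda_s(u_1)-\Lambda_s(u_2)\}\Big\rangle\,ds=0 .
\end{equation*}
For each $s$ I would apply \er{roststghhh77889lkhjhfvffuhuh} with base point $x=u_2+\tau h$ and increment $h$, and integrate over $\tau\in(0,1)$: then $\int_0^1\langle h,D\Lambda_s(u_2+\tau h)\cdot h\rangle\,d\tau=\langle h,\Lambda_s(u_1)-\Lambda_s(u_2)\rangle$, while $\int_0^1\langle h,D\Phi_s(u_2+(\tau+1)h)-D\Phi_s(u_2+\tau h)\rangle\,d\tau=\Phi_s(2u_1-u_2)-2\Phi_s(u_1)+\Phi_s(u_2)\ge0$ by convexity, and $\langle h,D\Phi_s(u_1)-D\Phi_s(u_2)\rangle\ge0$; hence the integrand above is at least $\int_0^1\big(k_0|f(h,s)|^2\,\hat g(\|T(u_2+\tau h)\|_H)^{-1}-\hat g(\|T(u_2+\tau h)\|_H)(\|u_2+\tau h\|_X^q+\mu(s))^{(2-p)/2}|f(h,s)|^p\|T\cdot h\|_H^{2-p}\big)\,d\tau$. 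Since $w_1,w_2\in L^\infty(a,b;H)$ there is $M$ with $\|T(u_2+\tau h)(s)\|_H\le M$ for a.e.\ $s$ and all $\tau$; using that $\hat g>0$ is non-decreasing, discarding the nonnegative $k_0$-term and applying Young's inequality (Remark~\ref{gdfgdghfjkllkhoiklj}, exponents $2/(2-p)$, $2/p$), the integrand is bounded below by $-C\rho(s)\|T\cdot h(s)\|_H^2$ with $\rho(s):=C(\|u_1(s)\|_X^q+\|u_2(s)\|_X^q+\mu(s)+1)^{(2-p)/2}$, and the resulting Gronwall coefficient — $\rho$ when $p=0$, $\rho^{2/(2-p)}$ when $p\in(0,2)$ — belongs to $L^1(a,b;\R)$ because $u_i\in L^q(a,b;X)$ and $\mu\in L^1$. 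Thus $g(t):=\tfrac12\|T\cdot h(t)\|_H^2$ satisfies $g(a)=0$ and $g'\le C\rho'g$ a.e.\ with $\rho'\in L^1$, so $g\equiv0$; since $T$ is injective, $u_1\equiv u_2$.

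\medskip
The hard part is the third step, where one must identify the weak limit $\bar\Theta$ of the nonlinear term with $\Lambda_t(u)+D\Phi_t(u)$; this is exactly where the pseudo-monotonicity hypothesis (via \rlemma{hkjghiohioujpohgkgk}) and the energy identity of \rlemma{lem2} are combined. The main bookkeeping issues are the non-smoothness of $D\Phi_t$ at the regularisation stage (circumvented by the Moreau–Yosida device or by keeping $\Phi_t$ inside the convex potential) and, throughout, the verification that every time-dependent coefficient produced by the various Young/Hölder manipulations remains in $L^1(a,b;\R)$.
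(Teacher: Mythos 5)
Your existence argument follows the paper's route essentially step for step: the auxiliary separable Hilbert space $Y$ with a compact, injective, densely valued $S\in\mathcal L(Y;X)$ from Lemma \ref{hilbcomban}; the regularising potential $\Psi(y)=\|y\|_Y^q+\|y\|_Y^2$; solvability of the $\e_n$-problems by Theorem \ref{THSldInt} (the special case of Corollary \ref{CorCorCor} that the paper actually invokes); the reduction of \er{Monotoneglkjjjuhuhgutuityiyikkk} to a coercivity of the form \er{Monotoneglkjjjuhuhgutuityiyikkkllkjjlioiujh} via Young's inequality and Lemma \ref{Aplem1}; and the identification of the weak limit through Lemma \ref{hkjghiohioujpohgkgk}, the energy inequality and weak pseudo-monotonicity, exactly as in Lemma \ref{thhypppggghhhjgggnew} and Corollary \ref{thhypppggghhhjgggnewnnn}. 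The Moreau--Yosida hedge is unnecessary: $\Phi_t$ is G\^{a}teaux differentiable by hypothesis and $\e_n\Psi(\psi)+\Phi_t(S\cdot\psi)$ already has the strict and uniform convexity required by Theorem \ref{THSldInt}.

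The uniqueness step, however, contains a genuine gap. Testing the difference of the two equations with $h=u_1-u_2$ you must bound $\big<h,\{D\Phi_s(u_1)-D\Phi_s(u_2)\}+\{\Lambda_s(u_1)-\Lambda_s(u_2)\}\big>_{X\times X^*}$ from below. Integrating \er{roststghhh77889lkhjhfvffuhuh} at the base points $x=u_2+\tau h$ over $\tau\in(0,1)$ gives
\[
A+\Big<h,\Lambda_s(u_1)-\Lambda_s(u_2)\Big>_{X\times X^*}\;\geq\;B,\qquad A:=\Phi_s(2u_1-u_2)-2\Phi_s(u_1)+\Phi_s(u_2)\;\geq\;0,
\]
with $B$ the integrated right-hand side, so you only obtain $\big<h,\Lambda_s(u_1)-\Lambda_s(u_2)\big>_{X\times X^*}\geq B-A$. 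Your conclusion that the full integrand is $\geq B$ therefore requires $\big<h,D\Phi_s(u_1)-D\Phi_s(u_2)\big>_{X\times X^*}\geq A$, and this is not a consequence of convexity: setting $\phi(\tau):=\big<h,D\Phi_s(u_2+\tau h)\big>_{X\times X^*}$ (nondecreasing), the claim reads $\phi(1)-\phi(0)\geq\int_1^2\phi\,d\tau-\int_0^1\phi\,d\tau$, which already fails for $\phi(\tau)=\tau^3$ (i.e.\ for a power-type $\Phi$ with $q=4$, perfectly admissible under \er{roststglkjjjuhuh}). Nor can the surplus $A$ be absorbed into a Gronwall coefficient times $\|T\cdot h(s)\|_H^2$, since it involves $D\Phi_s$ at the exterior point $2u_1-u_2$. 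This mismatch between the finite difference of $D\Phi_t$ and the derivative of $\Lambda_t$ inside \er{roststghhh77889lkhjhfvffuhuh} is exactly what the uniqueness part of Theorem \ref{EulerLagrangeInt} (proved in \cite{PI}, after \er{roststghhh77889lkhjhfvffuhuh} is converted into \er{Monotonerrr} by Remark \ref{gdfgdghfjkllkhoiklj}) is designed to handle; the paper's Step 2 simply invokes that theorem. As written, your direct Gronwall argument does not go through and should be replaced by that citation or by a correct treatment of the $\Phi$-term.
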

\begin{proof}
\noindent\underline{Step 1:} Existence of the solution. Since the
Banach space is separable, by Lemma \ref{hilbcomban}
we deduce that there exists a separable Hilbert space $Y$ and a
bounded linear inclusion operator $S\in \mathcal{L}(Y;X)$ such that
$S$ is injective, the image of $S$ is dense in $X$ and moreover, $S$
is a compact operator. Moreover, let $S^*\in \mathcal{L}(X^*;Y^*)$
be the corresponding adjoint operator, which satisfies
\begin{equation}\label{tildetdallhjhjh778889kkk}
\big<y,S^*\cdot x^*\big>_{Y\times Y^*}:=\big<S\cdot
 y,x^*\big>_{X\times X^*}\quad\quad\text{for every}\; x^*\in
X^*\;\text{and}\;y\in Y\,.
\end{equation}
Set $P\in \mathcal{L}(Y;H)$, defined by $P:=T\circ S$ and
$\widetilde{P}\in \mathcal{L}(H;Y^*)$, defined by
$\widetilde{P}:=S^*\circ \widetilde{T}$. Then it is clear that
$\{Y,H,Y^*\}$ is another evolution triple with the corresponding
inclusion operator $P\in \mathcal{L}(Y;H)$ as it was defined in
Definition \ref{7bdf} together with the corresponding adjoint
operator $\widetilde{P}\in \mathcal{L}(H;Y^*)$ defined as in
\er{tildet}.
%
%
%

 Furthermore, let $\psi(t)\in L^q(a,b;Y)$ be such that
the function $\varphi(t):(a,b)\to X^*$ defined by
$\varphi(t):=I_Y\cdot \big(\psi(t)\big)$ belongs to
$W^{1,q^*}(a,b;Y^*)$,
where $I_Y:=\widetilde
P\circ P:\,Y\to Y^*$. Denote the set of all such functions $\psi$ by
$\mathcal{R}_{Y,q}(a,b)$.
As before, by Lemma \ref{lem2}, for every
$\psi(t)\in\mathcal{R}_{q}(a,b)$ the function $w(t):[a,b]\to H$
defined by $w(t):=P\cdot \big(\psi(t)\big)$ belongs to
$L^\infty(a,b;H)$ and, up to a redefinition of $w(t)$ on a subset of
$[a,b]$ of Lebesgue measure zero,  $w$ is $H$-weakly continuous, as
it was stated in Corollary \ref{vbnhjjmcor}.

Next let
$\Psi(y):Y\to[0,+\infty)$ be a function defined by
\begin{equation}\label{hjfxsphiscalddduhuh}
\Psi(y):=
\|y\|_Y^q+\|y\|_Y^2\quad\quad\forall y\in Y.
\end{equation}
Then $\Psi(y)$ is a convex function which is G\^{a}teaux
differentiable on every $y\in Y$, satisfies $\Psi(0)=0$ and
satisfies the growth condition
\begin{equation}\label{roststglkkljjjuhuh}
(1/C_0)\,\|y\|_Y^q-C_0\leq \Psi(y)\leq
C_0\,\|y\|_Y^q+C_0\quad\forall y\in Y\,,
\end{equation}
and the uniform convexity condition:
\begin{equation}\label{roststghhh77889lkjjjuhuh}
\Big<h,D\Psi(y+h)-D\Psi(y)\Big>_{Y\times Y^*}\geq
\frac{1}{C_0}\big(\| y\|^{q-2}_Y+1\big)\cdot\|h\|_Y^2\quad\forall
y,h\in Y
\end{equation}
for some $C_0>0$.

Next let $w_0\in H$. Then, since the image of the operator $T\circ
S$ is dense in $H$, there exists a sequence $\{\psi^{(0)}_n\}\subset
Y$ such that $w^{(0)}_n:=(T\circ S)\cdot\psi^{(0)}_n\to w_0$
strongly in $H$ as $n\to +\infty$. Furthermore, let $\e_n\to 0^+$ as
$n\to +\infty$. By Theorem \ref{THSldInt}, for every $n$ there
exists $\psi_n(t)\in\mathcal{R}_{Y,q}(a,b)$, such that
\begin{multline}\label{uravnllllgsnachlkagagnew}
\frac{d\varphi_n}{dt}(t)+S^*\cdot
\Big(\Lambda_t\big(u_n(t)\big)+D\Phi_t\big(u_n(t)\big)\Big)+\e_n
D\Psi\big(\psi_n(t)\big)=0\;\; \text{for a.e.}\;
t\in(a,b)\quad\text{and}\;\; w_n(a)=w^{(0)}_n,
\end{multline}
where $u_n(t):=S\cdot\big(\psi_n(t)\big)$,
$w_n(t):=P\cdot\big(\psi_n(t)\big)$, $\varphi_n(t):=
\widetilde P\cdot \big(w_n(t)\big)$
and we assume that $w_n(t)$ is $H$-weakly continuous on $[a,b]$, as
it was stated in Corollary \ref{vbnhjjmcor}.

%
%
%
%
%

 On the other hand, by the trivial inequality $(p/2)\,a^2+\big((2-p)/2\big)\,
b^2\geq a^p \,b^{2-p}$ using \er{Monotoneglkjjjuhuhgutuityiyikkk} be
deduce:
\begin{multline}\label{Monotoneglkjjjuhuhgutuityiyikkkllk}
\Big<x,D\Phi_t(x)+\Lambda_t(x)\Big>_{X\times X^*}\geq\frac{1}{
C_1}\,\|x\|_X^q -C_1\|L\cdot x\|^{2}_V-C_1\mu(t)\Big(\|T\cdot
x\|^{2}_H+1\Big)\quad\forall x\in X,\;\forall t\in[a,b],
\end{multline}
for some constant $C_1>0$. Then, as before in
\er{roststlambdglkFFyhuhtyui99999999new}, we obtain:
\begin{multline}\label{Monotoneglkjjjuhuhgutuityiyikkkllkjjlioiujh}
\Big<x,D\Phi_t(x)+\Lambda_t(x)\Big>_{X\times X^*}\geq\frac{1}{
K}\,\|x\|_X^q -\tilde\mu(t)\Big(\|T\cdot
x\|^{2}_H+1\Big)\quad\forall x\in X,\;\forall t\in[a,b],
\end{multline}
for some constant $K>0$ and $\tilde\mu(t)\in L^1(a,b;\R)$. Thus,
since for every $t\in[a,b]$ the mapping
$\big(D\Phi_t+\Lambda_t\big)(x):X\to X^*$ is pseudo-monotone,
applying Lemma \ref{hkjghiohioujpohgkgk} implies that the mapping
$\Gamma\big(x(t)\big):\big\{\bar x(t)\in L^q(a,b;X):\;T\cdot \bar
x(t)\in L^\infty(a,b;H)\big\}\to L^{q^*}(a,b;X^*)$,
defined by
\begin{multline}\label{fuduftfdftffyvjhvhvghghh}
\bigg<h(t),\Gamma\big(x(t)\big)\bigg>_{L^q(a,b;X)\times
L^{q^*}(a,b;X^*)}:=\int_a^b\Big<h(t),\Lambda_t\big(x(t)\big)+D\Phi_t\big(x(t)\big)\Big>_{X\times
X^*}dt\\ \forall\, x(t)\in \big\{\bar x(t)\in L^q(a,b;X):\;T\cdot
\bar x(t)\in L^\infty(a,b;H)\big\},\;\forall\, h(t)\in
L^{q}(a,b;X)\,,
\end{multline}
is weakly pseudo-monotone with respect to the evolution triple
$\{X,H,X^*\}$ (see Definition \ref{hfguigiugyuyhkjjhjjjk}).

So all the conditions of Corollary \ref{thhypppggghhhjgggnew}
satisfied and therefore, by this Corollary, up to a subsequence, we
have $u_n(t)\rightharpoonup u(t)$ weakly in $L^q(a,b;X)$ where
$u(t)\in L^q(a,b;X)$ is such that $w(t):=T\cdot\big(u(t)\big)\in
L^\infty(a,b;H)$, $v(t):=\widetilde T\cdot\big(w(t)\big)=\widetilde
T\circ T\big(u(t)\big)\in W^{1,q^*}(a,b; X^*)$
and $u(t)$ is a solution to \er{uravnllllgsnachlklimjjjuhuh},
where we assume that $w(t)$ is $H$-weakly continuous on $[a,b]$, as
it was stated in Corollary \ref{vbnhjjmcor}.

 \noindent\underline{Step 2:} Uniqueness of the solution. Assume
that $\Phi_t$ satisfies \er{roststghhh77889lkhjhfvffuhuh}. Then
applying Theorem \ref{EulerLagrangeInt} completes the proof.
\end{proof}
\begin{remark}\label{hjhuiguigu}
By Lemma \ref{lem2} the solution to \er{uravnllllgsnachlklimjjjuhuh}
from Theorem \ref{defHkkkkglkjjjgkjgkjgggk}
satisfies the following energy equality:
\begin{equation}\label{uravnllllgsnachlklimjjjuhuhhgghghfjfggjkghhgkg}
\frac{1}{2}\|w(t)\|^2_H+\int_a^t
\Big<u(s),\Lambda_s\big(u(s)\big)+D\Phi_s\big(u(s)\big)\Big>_{X\times
X^*}ds=\frac{1}{2}\|w_0\|^2_H\quad\quad\forall t\in[a,b]\,.
\end{equation}
\end{remark}
As a particular case, of Theorem \ref{defHkkkkglkjjjgkjgkjgggk} we
have the following Theorem:
\begin{theorem}\label{defHkkkkglkjjj}
Let $\{X,H,X^*\}$ be an evolution triple with the corresponding
inclusion operator $T\in \mathcal{L}(X;H)$ as it was defined in
Definition \ref{7bdf} together with the corresponding operator
$\widetilde{T}\in \mathcal{L}(H;X^*)$ defined as in \er{tildet}.
Assume also that the Banach space $X$ is separable.
Furthermore, let $a,b,q\in\R$ s.t. $a<b$ and $q\geq 2$.  Next, for
every $t\in[a,b]$ let $\Phi_t(x):X\to[0,+\infty)$ be a convex
function which is G\^{a}teaux differentiable at every $x\in X$,
satisfies $\Phi_t(0)=0$ and satisfies the growth condition
\begin{equation}\label{roststglkjjj}
0\leq \Phi_t(x)\leq C\,\|x\|_X^q+C\quad\forall
x\in X,\;\forall t\in[a,b]\,,
\end{equation}
for some $C>0$. We also assume that $\Phi_t(x)$
is Borel on the pair of variables $(x,t)$.
Furthermore, for every $t\in[a,b]$ let $\Lambda_t(x):X\to X^*$ be a
function which is G\^{a}teaux differentiable at every $x\in X$,
$\Lambda_t(0)\in L^{q^*}(a,b;X^*)$ and the derivative of $\Lambda_t$
satisfies the growth condition
\begin{equation}\label{roststlambdglkjjj}
\|D\Lambda_t(x)\|_{\mathcal{L}(X;X^*)}\leq g\big(\|T\cdot
x\|_H\big)\,\big(\|x\|_X^{q-2}+ 1\big)\quad\forall x\in X,\;\forall
t\in[a,b]\,,
\end{equation}
for some nondecreasing function $g(s):[0+\infty)\to(0+\infty)$. We
also assume that $\Lambda_t(x)$
is
Borel on the pair of variables $(x,t)$ (see Definition
\ref{fdfjlkjjkkkkkllllkkkjjjhhhkkk}). Assume also that $\Lambda_t$
satisfies the following monotonicity conditions
\begin{equation}\label{Monotoneglkjjj}
\Big<h,D\Lambda_t(x)\cdot h\Big>_{X\times X^*}\geq 0
\quad\quad\forall x,h\in X,\;\forall t\in[a,b]\,.
\end{equation}
%
%
%
%
%
%
%
%
Finally let $F_t(x):X\to X^*$ be a function which is G\^{a}teaux
differentiable on every $x\in X$, $F_t(0)\in L^{q^*}(a,b;X^*)$ and
the derivative of $F_t$ satisfies the condition
\begin{equation}\label{roststlambdglkFFjjj}
\|DF_t(x)\|_{\mathcal{L}(X;X^*)}\leq g\big(\|T\cdot
x\|_H\big)\big(\|x\|^{q-2}_X+1\big)\quad\forall x\in X,\;\forall
t\in[a,b]\,,
\end{equation}
for some nondecreasing function $g(s):[0+\infty)\to(0+\infty)$. We
also assume that $F_t(x)$ is
Borel on the pair of variables $(x,t)$.
Next assume that
\begin{multline}\label{Monotoneglkjjjuhuhgutuityiyikkkjggghghjgh}
\Big<x,D\Phi_t(x)+\Lambda_t(x)
+F_t(x)\Big>_{X\times X^*}\geq\\ \frac{1}{\hat C}\,\|x\|_X^q
-\hat C\big(\|x\|_X+1\big)\Big(\|L\cdot x\|_V+\|T\cdot
x\|_H+1\Big)-\mu(t)\quad\quad\forall x\in X,\;\forall t\in[a,b],
\end{multline}
where $V$ is a given Banach space, $L\in\mathcal{L}(X,V)$ is a given
compact operator, $\hat C>0$ is some constant and $\mu(t)\in
L^1(a,b;\R)$ is some nonnegative function. Finally, assume that
$F_t(x)$ is weak-to-strong continuous, i.e. for every fixed
$t\in[a,b]$ and every $x_n\rightharpoonup x$ weakly in $X$, we have
$F_t(x_n)\to F_t(x)$ strongly in $X^*$.
%
%
%
%
%
%
%
%
Then for every $w_0\in H$ there exists $u(t)\in L^q(a,b;X)$, such
that $w(t):=T\cdot\big(u(t)\big)\in L^\infty(a,b;H)$,
$v(t):=\widetilde T\cdot\big(w(t)\big)=\widetilde T\circ
T\big(u(t)\big)\in W^{1,q^*}(a,b; X^*)$
and
$u(t)$ is a solution to
\begin{equation}\label{uravnllllgsnachlklimjjj}
\begin{cases}\frac{d v}{dt}(t)+F_t\big(u(t)\big)+\Lambda_t\big(u(t)\big)
+D\Phi_t\big(u(t)\big)=0\quad\text{for a.e.}\; t\in(a,b)\,,\\
w(a)=w_0\,,
\end{cases}
\end{equation}
where we assume that $w(t)$ is $H$-weakly continuous on $[a,b]$, as
it was stated in Corollary \ref{vbnhjjmcor}.
\end{theorem}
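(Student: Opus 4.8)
The plan is to deduce this statement from Theorem~\ref{defHkkkkglkjjjgkjgkjgggk} by absorbing $F_t$ into the first order term. Set $\widetilde{\Lambda}_t(x):=\Lambda_t(x)+F_t(x)$ and apply Theorem~\ref{defHkkkkglkjjjgkjgkjgggk} with $\widetilde{\Lambda}_t$ in place of $\Lambda_t$ and with the same $\Phi_t$: the equation \er{uravnllllgsnachlklimjjjuhuh} produced by that theorem reads $\frac{d v}{dt}(t)+\widetilde{\Lambda}_t\big(u(t)\big)+D\Phi_t\big(u(t)\big)=0$, which is precisely \er{uravnllllgsnachlklimjjj}, and the function space conclusions coincide. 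So it suffices to check that $(\Phi_t,\widetilde{\Lambda}_t)$ satisfy the hypotheses of Theorem~\ref{defHkkkkglkjjjgkjgkjgggk}.

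Most of this is immediate. The map $\widetilde{\Lambda}_t$ is G\^ateaux differentiable with $\widetilde{\Lambda}_t(0)\in L^{q^*}(a,b;X^*)$, it is Borel on the pair $(x,t)$, and by \er{roststlambdglkjjj} and \er{roststlambdglkFFjjj} its derivative satisfies $\|D\widetilde{\Lambda}_t(x)\|_{\mathcal L(X;X^*)}\le 2g\big(\|T\cdot x\|_H\big)\big(\|x\|_X^{q-2}+1\big)$, which is again of the form \er{roststlambdglkjjjuhuh} (with $2g$ replacing $g$). The condition on $\Phi_t$ is \er{roststglkjjjuhuh} verbatim. For the pseudo-monotonicity required by Theorem~\ref{defHkkkkglkjjjgkjgkjgggk}: integrating \er{Monotoneglkjjj} shows $\Lambda_t$ is monotone, while $D\Phi_t$ is monotone by convexity of $\Phi_t$; since $\Lambda_t$ and $D\Phi_t$ are G\^ateaux differentiable they are hemicontinuous in the sense of \er{fguyfuyfugyuguhkg}, so by Lemma~\ref{hhhhhhhhhhhhhhhhhhiogfydtdtydjkgkgk} the sum $D\Phi_t+\Lambda_t$ is pseudo-monotone. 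The weak-to-strong continuity of $F_t$ gives, for $x_n\rightharpoonup x$ in $X$, both $\big<x_n-x,F_t(x_n)\big>_{X\times X^*}\to 0$ and $F_t(x_n)\rightharpoonup F_t(x)$, so $F_t$ is pseudo-monotone by Lemma~\ref{hhhhhhhhhhhhhhhhhhiogfydtdtyd}; by Remark~\ref{fyyjfjyhfgjgghgjkgkjgkgggfhfhkkk} the sum $D\Phi_t+\widetilde{\Lambda}_t=(D\Phi_t+\Lambda_t)+F_t$ is then pseudo-monotone for every $t\in[a,b]$.

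It remains to derive the coercivity inequality \er{Monotoneglkjjjuhuhgutuityiyikkk} from \er{Monotoneglkjjjuhuhgutuityiyikkkjggghghjgh}. I would take $p=0$ and estimate the right hand side of \er{Monotoneglkjjjuhuhgutuityiyikkkjggghghjgh} by Young's inequality: after expanding $(\|x\|_X+1)(\|L\cdot x\|_V+\|T\cdot x\|_H+1)$ one bounds $\|x\|_X\,\|L\cdot x\|_V\le\delta\|x\|_X^q+C_\delta(\|L\cdot x\|_V^2+1)$, using $q^*=q/(q-1)\le 2$ together with $a^{q^*}\le a^2+1$ for $a\ge 0$; one bounds $\|x\|_X\,\|T\cdot x\|_H\le\delta\|x\|_X^q+C_\delta+\frac12\|T\cdot x\|_H^2$, using $q\ge 2$ (so that $\|x\|_X^2\le\delta\|x\|_X^q+C_\delta$); and the remaining terms $\|x\|_X$, $\|L\cdot x\|_V$, $\|T\cdot x\|_H$ are absorbed in the same fashion. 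Choosing $\delta$ small and replacing $\mu(t)$ by $\mu(t)+K$ for a suitable constant $K$, this yields $\big<x,D\Phi_t(x)+\widetilde{\Lambda}_t(x)\big>_{X\times X^*}\ge(1/C)\|x\|_X^q-C\|L\cdot x\|_V^2-\bar\mu(t)\big(\|T\cdot x\|_H^2+1\big)$ with $\bar\mu\in L^1(a,b;\R)$, which is exactly \er{Monotoneglkjjjuhuhgutuityiyikkk} with $p=0$, the same compact operator $L:X\to V$ and a suitable constant. Once all hypotheses are verified, Theorem~\ref{defHkkkkglkjjjgkjgkjgggk} delivers the asserted $u(t)$; since no uniqueness statement is claimed here, the second half of that theorem is not needed. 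The only mild obstacle is the bookkeeping in this last estimate --- making sure the terms that must carry a time weight do so and that the powers of $\|L\cdot x\|_V$ come out equal to $2$ --- but it uses nothing beyond elementary convexity inequalities.
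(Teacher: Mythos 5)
Your proposal is correct and follows essentially the same route as the paper: the paper's proof likewise observes that $D\Phi_t+\Lambda_t+F_t$ is pseudo-monotone (monotonicity plus hemicontinuity for $D\Phi_t$ and $\Lambda_t$, weak-to-strong continuity for $F_t$, then Lemma~\ref{hhhhhhhhhhhhhhhhhhiogfydtdtyd} and Remark~\ref{fyyjfjyhfgjgghgjkgkjgkgggfhfhkkk}) and then applies Theorem~\ref{defHkkkkglkjjjgkjgkjgggk} with $\Lambda_t+F_t$ in place of $\Lambda_t$. Your Young-inequality bookkeeping verifying \er{Monotoneglkjjjuhuhgutuityiyikkk} with $p=0$ is a correct elaboration of a step the paper leaves implicit.
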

\begin{proof}
Since $F_t(x):X\to X^*$ is weak to strong continuous, it is
pseudo-monotone on $X$. Moreover, for every $t\in[a,b]$
$D\Phi_t(x):X\to X^*$ and $\Lambda_t(x):X\to X^*$ are monotone
mappings. Therefore, since $\Lambda_t$ is G\^{a}teaux differentiable
and $\Phi_t$ is convex, using Lemma
\ref{hhhhhhhhhhhhhhhhhhiogfydtdtyd} and Remark
\ref{fyyjfjyhfgjgghgjkgkjgkgggfhfhkkk} we deduce that
$(D\Phi_t+\Lambda_t+F_t)(x):X\to X^*$ is pseudo monotone.
Thus, applying
Theorem
\ref{defHkkkkglkjjjgkjgkjgggk} with $\Lambda_t+F_t$ instead of
$\Lambda_t$, gives the desired result.
\end{proof}

\begin{theorem}\label{thhypppggghhhjggg}
Let $X$ and $Z$ be reflexive Banach spaces and $X^*$ and $Z^*$ be
the corresponding dual spaces. Furthermore let $H$ be a Hilbert
space. Suppose that $Q\in \mathcal{L}(X,Z)$ is an injective
inclusion operator such that its image is
dense on $Z$. Furthermore, suppose that $P\in \mathcal{L}(Z,H)$ is
an injective inclusion operator such that its image is dense on $H$.
Let $T\in \mathcal{L}(X,H)$ be defined by $T:=P\circ Q$.
So $\{X,H,X^*\}$ is an evolution triple with the corresponding
inclusion operator $T\in \mathcal{L}(X;H)$ as it was defined in
Definition \ref{7bdf} together with the corresponding operator
$\widetilde{T}\in \mathcal{L}(H;X^*)$ defined as in \er{tildet}.
Assume also that the Banach space $X$ is separable. Next let
$a,b\in\R$ s.t. $a<b$ and $q\geq 2$. Furthermore, for every
$t\in[a,b]$ let $\Lambda_t(z):Z\to X^*$ and $A_t(z):Z\to X^*$  be
functions which are G\^{a}teaux differentiable at every $z\in Z$ and
$\Lambda_t(0),A_t(0)\in L^{q^*}\big(a,b;X^*\big)$. Assume that for
every $t\in[a,b]$, they satisfy the following bounds
\begin{equation}\label{bghffgghgkoooojkvhgjgjgfffh}
\big\|D\Lambda_t(z)\big\|_{\mathcal{L}(Z;X^*)}\leq g\big(\|P\cdot
z\|_H\big)\cdot\Big(\|z\|_{Z}^{q-2}+1\Big)\quad\quad\forall z\in
Z,\;\forall t\in[a,b]\,,
\end{equation}
\begin{equation}\label{bghffgghgkoooojkvhgj}
\big\|\Lambda_t(z)\big\|_{X^*}\leq g\big(\|P\cdot
z\|_H\big)\cdot\Big(\|L_0\cdot
z\|_{V_0}^{q-1}+\tilde\mu^{\frac{q-1}{q}}(t)\Big)\quad\quad\forall
z\in Z,\;\forall t\in[a,b]\,,
\end{equation}
and
\begin{equation}\label{bghffgghgkoooojkvhgjgjgfffhfgbfbh}
\big\|DA_t(z)\big\|_{\mathcal{L}(Z;X^*)}\leq g\big(\|P\cdot
z\|_H\big)\cdot\Big(\|L_0\cdot z\|_{
V_0}^{q-2}+1\Big)\quad\quad\forall z\in Z,\;\forall t\in[a,b]\,,
\end{equation}
where $\tilde\mu(t)\in L^1(a,b;\R)$ is some nonnegative function,
$g(s):[0,+\infty)\to(0,+\infty)$ is some nondecreasing function,
$V_0$ is some
Banach space and
$L_0\in\mathcal{L}(Z;V_0)$ is some compact linear operator.
Moreover, assume that $\Lambda_t$ and $A_t$ satisfy the following
monotonicity
condition:
\begin{multline}
\label{roststlambdglkFFyhuhtyui99999999} \Big<h,A_t(Q\cdot
h\big)+\Lambda_t \big(Q\cdot h\big)\Big>_{X\times X^*}\geq
\big(1/\bar C\big)\big\|Q\cdot h\big\|^q_Z\\-
\Big(\big\|Q\cdot h\big\|^p_Z+\mu^{\frac{p}{2}}(t)\Big)\Bigg(\bar
C\big\|L\cdot(Q\cdot
h)\big\|^{(2-p)}_V+\mu^{\frac{2-p}{2}}(t)\Big(\big\|T\cdot
h\big\|^{(2-p)}_H+1\Big)\Bigg)
\quad \forall h\in X\,,\;\forall t\in[a,b],
\end{multline}
where $V$ is a given Banach space, $L\in\mathcal{L}(Z,V)$ is a given
compact operator, $p\in[0,2)$,
$\mu(t)\in L^1(a,b;\R)$ is some nonnegative
function and $\bar C>0$ is some constant. We also assume that
$\Lambda_t(z)$ $A_t(z)$ are
Borel on the pair of variables $(z,t)$.
Finally assume that there exists a family of
Banach spaces
$\{V_j\}_{j=1}^{+\infty}$ and a family  of compact bounded linear
operators $\{L_j\}_{j=1}^{+\infty}$, where
$L_j\in\mathcal{L}(Z,V_j)$, which satisfy the following condition:
\begin{itemize}
\item
If $\{h_n\}_{n=1}^{+\infty}\subset Z$ is a sequence and $h_0\in Z$,
are such that for every fixed $j$ $\lim_{n\to+\infty}L_j\cdot
h_n=L_j\cdot h_0$ strongly in $V_j$ and $P\cdot h_n\rightharpoonup
P\cdot h_0$ weakly in $H$, then for every fixed $t\in(a,b)$ we have
$\Lambda_t(h_n)\rightharpoonup \Lambda_t(h_0)$ weakly in $X^*$ and
$D A_t(h_n)\to D A_t(h_0)$ strongly in $\mathcal{L}(Z,X^*)$.
\end{itemize}
Then for every $w_0\in H$ there exists $z(t)\in L^q(a,b;Z)$ such
that $w(t):=P\cdot z(t)\in L^\infty(a,b;H)$, $v(t):=\widetilde
T\cdot \big(w(t)\big)\in W^{1,q^*}(a,b;X^*)$ and $z(t)$ satisfies
the following equation
\begin{equation}\label{uravnllllgsnachlklimhjhgh}
\begin{cases}\frac{d v}{dt}(t)+A_t\big(z(t)\big) +\Lambda_t\big(z(t)\big)
=0\quad\text{for a.e.}\; t\in(a,b)\,,\\
w(a)=w_0\,,
\end{cases}
\end{equation}
where we assume that $w(t)$ is $H$-weakly continuous on $[a,b]$, as
it was stated in Corollary \ref{vbnhjjmcor}. Moreover, if in
addition we assume that there exist a Banach space $V$, a compact
operator $L\in\mathcal{L}(Z,V)$, a nondecreasing function
$\widetilde g(s):[0,+\infty)\to(0,+\infty)$ and for every
$t\in[a,b]$ a convex G\^{a}teaux differentiable functions
$\Phi_t:Z\to\R$, Borel measurable on $(z,t)$, and a G\^{a}teaux
differentiable mapping $F_t(\sigma):V\to Z^*$, Borel measurable on
$(\sigma,t)$, satisfying $F_t(0)\in L^{q^*}(a,b;Z^*)$ and such that
\begin{equation}\label{roststlambdglkFFhjhjhhjjkjkjkjkjkllhllhlhtgythtyjuui}
0\leq \Phi_t(z)\leq \widetilde g\big(\|P\cdot
z\|_H\big)\cdot\big(\|z\|^q_Z+1\big)\quad\forall z\in Z,\;\forall
t\in[a,b]\,,
\end{equation}
\begin{equation}\label{roststlambdglkFFhjhjhhjjkjkjkjkjkllhllhlhtgytht}
\big\|DF_t(L\cdot z)\big\|_{\mathcal{L}(V;Z^*)}\leq \widetilde
g\big(\|P\cdot z\|_H\big)\cdot\big(\|L\cdot
z\|^{q-2}_V+1\big)\quad\forall z\in Z,\;\forall t\in[a,b]\,,
\end{equation}
and
\begin{multline}
\label{roststlambdglkFFyhuhtyui99999999ghghghh} \Big<h,A_t(Q\cdot
h\big)+\Lambda_t \big(Q\cdot h\big)\Big>_{X\times X^*}\geq
\Phi_t\big(Q\cdot h\big)+\Big<Q\cdot h,F_t\big((L\circ Q)\cdot
h\big)\Big>_{Z\times Z^*}\quad\quad \forall h\in X\,,\;\forall
t\in[a,b]\,,
\end{multline}
then the function $z(t)$, as above, satisfies the following energy
inequality
\begin{equation}
\label{Monotonegkkkkdhtdtjgkgjfgdfxfdfdnf}
\frac{1}{2}\big\|w(t)\big\|^2_H+\int_a^t\bigg(\Phi_s\big(z(s)\big)+\Big<z(s),F_s\big(L\cdot
z(s)\big)\Big>_{Z\times Z^*}\bigg)ds\leq
\frac{1}{2}\big\|w_0\big\|^2_H\quad\quad \forall t\in[a,b]\,.
\end{equation}
\end{theorem}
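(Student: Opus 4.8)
The plan is to deduce Theorem~\ref{thhypppggghhhjggg} from Lemma~\ref{thhypppggghhhjgggnew} (in its ``with $A_t$'' form, combined with the auxiliary Theorem~\ref{THSldInt} used to produce the approximating solutions $u_n$) together with Corollary~\ref{thhypppggghhhjgggnewnnn}, exactly as the earlier Theorems in this section are proved. The first and main part is to set up a Galerkin-type approximation: since $X$ is separable, invoke Lemma~\ref{hilbcomban} to get a separable Hilbert space $Y$ and a compact injective $S\in\mathcal L(Y;X)$ with dense image, and define $\Psi(y):=\|y\|_Y^q+\|y\|_Y^2$, which satisfies the growth and uniform convexity hypotheses needed by Theorem~\ref{THSldInt}. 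For a sequence $\e_n\to 0^+$ and $w_n^{(0)}\to w_0$ strongly in $H$ (possible because the image of $T\circ Q\circ S$, or just $T\circ S$, is dense in $H$), apply Theorem~\ref{THSldInt} on the triple $\{Y,H,Y^*\}$ with the compact operator $Q\circ S$ (whose bounds are controlled by \er{bghffgghgkoooojkvhgjgjgfffh}, \er{bghffgghgkoooojkvhgj}, \er{bghffgghgkoooojkvhgjgjgfffhfgbfbh}) to obtain approximate solutions $\psi_n(t)$ of
\[
\frac{d\varphi_n}{dt}(t)+S^*\cdot\Big(A_t\big(z_n(t)\big)+\Lambda_t\big(z_n(t)\big)\Big)+\e_n D\Psi\big(\psi_n(t)\big)=0,\qquad w_n(a)=w_n^{(0)},
\]
where $u_n:=S\cdot\psi_n$, $z_n:=Q\cdot u_n$, $w_n:=T\cdot u_n$. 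The positivity hypothesis \er{roststlambdglkFFyhuhtyui99999999}, after applying the trivial inequality $(p/2)a^2+((2-p)/2)b^2\ge a^pb^{2-p}$ and Lemma~\ref{Aplem1} to absorb the $L$-term and the $\|Q\cdot h\|_Z^p$ factor, reduces to a bound of the form $\langle h, (A_t+\Lambda_t)(Q\cdot h)\rangle\ge (1/\bar C')\|Q\cdot h\|_Z^q-\tilde\mu(t)(\|T\cdot h\|_H^2+1)$, which is precisely the coercivity form \er{roststlambdglkFFyhuhtyui99999999newzzz} required by Lemma~\ref{thhypppggghhhjgggnew}.

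\textbf{Passage to the limit.} Having verified the coercivity condition, apply Lemma~\ref{thhypppggghhhjgggnew} with $\Lambda_t$ replaced by $A_t+\Lambda_t$ (its growth bound follows by integrating \er{bghffgghgkoooojkvhgjgjgfffhfgbfbh} and using \er{bghffgghgkoooojkvhgj}, together with $\Lambda_t(0),A_t(0)\in L^{q^*}$). This yields, up to a subsequence, $z_n\rightharpoonup z$ weakly in $L^q(a,b;Z)$, $\frac{dv_n}{dt}\rightharpoonup\frac{dv}{dt}$ weakly in $L^{q^*}(a,b;X^*)$, $(A_t+\Lambda_t)(z_n)\rightharpoonup\bar\Lambda$ weakly in $L^{q^*}(a,b;X^*)$, $w_n(t)\rightharpoonup w(t)$ weakly in $H$ for every fixed $t$, $\{w_n\}$ bounded in $L^\infty(a,b;H)$, with $\frac{dv}{dt}+\bar\Lambda=0$, $w(a)=w_0$, and the crucial energy inequality
\[
\tfrac12\|w(t)\|_H^2+\limsup_{n\to+\infty}\int_a^t\Big\langle u_n(s),(A_s+\Lambda_s)\big(z_n(s)\big)\Big\rangle_{X\times X^*}\,ds\le\tfrac12\|w_0\|_H^2.
\]
To identify $\bar\Lambda(t)=A_t(z(t))+\Lambda_t(z(t))$ I would run the pseudo-monotonicity argument: using the family $\{L_j\}$ and the compactness property, Lemma~\ref{ComTem1PP} (applied with $Q\circ S$ compact into $V_j$) gives $L_j\cdot z_n\to L_j\cdot z$ strongly in $L^q(a,b;V_j)$, hence, up to a subsequence, $L_j\cdot z_n(t)\to L_j\cdot z(t)$ in $V_j$ and $P\cdot z_n(t)\rightharpoonup P\cdot z(t)$ in $H$ for a.e.\ $t$; then the hypothesis forces $\Lambda_t(z_n(t))\rightharpoonup\Lambda_t(z(t))$ weakly in $X^*$ and $DA_t(z_n(t))\to DA_t(z_0(t))$ strongly, from which $A_t(z_n(t))\to A_t(z(t))$. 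Combining with the energy inequality and the equality $\tfrac12\|w(t)\|_H^2+\int_a^t\langle z(s),\bar\Lambda(s)\rangle\,ds=\tfrac12\|w_0\|_H^2$ from Lemma~\ref{lem2} (noting $\langle u_n,(A+\Lambda)(z_n)\rangle_{X\times X^*}=\langle z_n,(A+\Lambda)(z_n)\rangle_{Z\times Z^*}$ since $z_n=Q\cdot u_n$), one obtains $\limsup_n\int_a^b\langle z_n-z,(A_t+\Lambda_t)(z_n)\rangle\le 0$, and then the weak continuity just established upgrades this to the desired identification, giving \er{uravnllllgsnachlklimhjhgh}. (Alternatively, if a suitable family $\{L_j\}$ is packaged as a single compact $L_0$ with the right continuity, one invokes Corollary~\ref{thhypppggghhhjgggnewnnn} directly.)

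\textbf{The energy inequality \er{Monotonegkkkkdhtdtjgkgjfgdfxfdfdnf}.} Under the additional hypotheses, I would note that for each $n$, testing the approximate equation against $u_n$ and using Lemma~\ref{lem2} gives
\[
\tfrac12\|w_n(t)\|_H^2+\int_a^t\Big\langle z_n(s),(A_s+\Lambda_s)\big(z_n(s)\big)\Big\rangle_{Z\times Z^*}\,ds+\e_n\int_a^t\big\langle\psi_n,D\Psi(\psi_n)\big\rangle\,ds=\tfrac12\|w_n^{(0)}\|_H^2,
\]
and the last term is $\ge0$ by convexity of $\Psi$ with $\Psi(0)=0$. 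Now bound the middle integrand from below by \er{roststlambdglkFFyhuhtyui99999999ghghghh}: it is $\ge\Phi_s(z_n(s))+\langle z_n(s),F_s(L\cdot z_n(s))\rangle_{Z\times Z^*}$. Using Lemma~\ref{ComTem1PP}, $L\cdot z_n\to L\cdot z$ strongly in $L^q(a,b;V)$ (here I use that $L\circ Q\circ S$ is compact), so by \er{roststlambdglkFFhjhjhhjjkjkjkjkjkllhllhlhtgytht} and $F_s(0)\in L^{q^*}$, $F_s(L\cdot z_n)\to F_s(L\cdot z)$ strongly in $L^{q^*}(a,b;Z^*)$, whence $\int_a^t\langle z_n(s),F_s(L\cdot z_n(s))\rangle\,ds\to\int_a^t\langle z(s),F_s(L\cdot z(s))\rangle\,ds$; and $\Phi_s$, being convex and Borel with the growth \er{roststlambdglkFFhjhjhhjjkjkjkjkjkllhllhlhtgythtyjuui}, is weakly lower semicontinuous on $L^q(a,b;Z)$, so $\liminf_n\int_a^t\Phi_s(z_n(s))\,ds\ge\int_a^t\Phi_s(z(s))\,ds$. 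Passing to the $\liminf$ in the displayed identity, using $w_n^{(0)}\to w_0$ strongly in $H$ and the weak lower semicontinuity of $\|\cdot\|_H$ to handle $\tfrac12\|w(t)\|_H^2\le\liminf_n\tfrac12\|w_n(t)\|_H^2$ (valid from $w_n(t)\rightharpoonup w(t)$ weakly in $H$), yields \er{Monotonegkkkkdhtdtjgkgjfgdfxfdfdnf}.

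\textbf{Main obstacle.} The delicate point is the identification $\bar\Lambda(t)=A_t(z(t))+\Lambda_t(z(t))$: this is not pure weak convergence and requires combining the $\limsup$ energy estimate from Lemma~\ref{thhypppggghhhjgggnew} with the compactness-plus-continuity hypothesis on $\{L_j\}$ exactly in the right order, much as in the proof of Lemma~\ref{hkjghiohioujpohgkgk} and Corollary~\ref{thhypppggghhhjgggnewnnn} — one must be careful that the subsequence extraction for a.e.\ pointwise convergence is compatible across all $j$ and with the energy $\limsup$, and that the decomposition of $A_t+\Lambda_t$ respects the two distinct continuity modes (weak convergence of $\Lambda_t$, strong convergence of $DA_t$ hence of $A_t$). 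All coercivity manipulations (the $(p/2)a^2+\cdots$ trick plus Lemma~\ref{Aplem1}) are routine and parallel to \er{roststlambdglkFFyhuhtyui99999999new} and \er{Monotoneglkjjjuhuhgutuityiyikkkllkjjlioiujh}.
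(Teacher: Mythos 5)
Your proposal is correct and follows essentially the same route as the paper: penalized approximation via Theorem~\ref{THSldInt} on a compactly embedded separable Hilbert space $Y$ with $\Psi(y)=\|y\|_Y^q+\|y\|_Y^2$, reduction of \eqref{roststlambdglkFFyhuhtyui99999999} to the coercivity form of Lemma~\ref{thhypppggghhhjgggnew} via the Young-type inequality and Lemma~\ref{Aplem1}, passage to the limit through that Lemma, identification of the weak limits via the $\{L_j\}$-compactness hypothesis, and the energy inequality from the $\limsup$ estimate plus strong convergence of $F_s(L\cdot z_n)$ and weak lower semicontinuity of the convex $\Phi_s$-integral. The only cosmetic difference is that the paper identifies $\bar\Lambda$ and $\bar A$ directly from the a.e.\ pointwise convergences supplied by the compactness hypothesis together with the uniform growth bounds \eqref{bghffgghgkoooojkvhgj}--\eqref{bghffgghgkoooojkvhgjgjgfffhfgbfbh} (using the mean-value representation of $A_t(z_n)-A_t(z)$ through $DA_t$), so the pseudo-monotonicity/energy-$\limsup$ layer you wrap around that step is not actually needed there.
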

\begin{proof}
Since the Banach space is separable, as before, by Lemma
\ref{hilbcomban} we deduce that there exists a separable Hilbert
space $Y$ and a bounded linear inclusion operator $S\in
\mathcal{L}(Y;X)$ such that $S$ is injective, the image of $S$ is
dense in $X$ and moreover, $S$ is a compact operator. Moreover, let
$S^*\in \mathcal{L}(X^*;Y^*)$ be the corresponding adjoint operator,
which satisfy
\begin{equation}\label{tildetdallhjhjh778889kkknnn}
\big<y,S^*\cdot x^*\big>_{Y\times Y^*}:=\big<S\cdot
 y,x^*\big>_{X\times X^*}\quad\quad\text{for every}\; x^*\in
X^*\;\text{and}\;y\in Y\,.
\end{equation}
Set $P_0\in \mathcal{L}(Y;H)$, defined by $P_0:=T\circ S$ and
$\widetilde{P}_0\in \mathcal{L}(H;Y^*)$, defined by
$\widetilde{P}_0:=S^*\circ \widetilde{T}$. Then it is clear that
$\{Y,H,Y^*\}$ is another evolution triple with the corresponding
inclusion operator $P_0\in \mathcal{L}(Y;H)$ as it was defined in
Definition \ref{7bdf} together with the corresponding adjoint
operator $\widetilde{P}_0\in \mathcal{L}(H;Y^*)$ defined as in
\er{tildet}.
%
%
%

 Furthermore, let $\psi(t)\in L^q(a,b;Y)$ be such that the function
$\varphi(t):(a,b)\to Y^*$ defined by $\varphi(t):=I_Y\cdot
\big(\psi(t)\big)$ belongs to $W^{1,q^*}(a,b;Y^*)$,
where $I_Y:=\widetilde P_0\circ P_0:\,Y\to Y^*$.
Denote the set of all such functions $\psi$ by
$\mathcal{R}_{Y,q}(a,b)$.
As before, by Lemma \ref{lem2}, for every
$\psi(t)\in\mathcal{R}_{q}(a,b)$ the function $w(t):[a,b]\to H$
defined by $w(t):=P_0\cdot \big(\psi(t)\big)$ belongs to
$L^\infty(a,b;H)$ and, up to a redefinition of $w(t)$ on a subset of
$[a,b]$ of Lebesgue measure zero,  $w$ is $H$-weakly continuous, as
it was stated in Corollary \ref{vbnhjjmcor}.

Next let $\Psi(y):Y\to[0,+\infty)$ be a function defined by
\begin{equation}\label{hjfxsphiscalddduhuhnnn}
\Psi(y):=
\|y\|_Y^q+\|y\|_Y^2\quad\quad\forall y\in Y.
\end{equation}
Then $\Psi(y)$ is a convex function which is G\^{a}teaux
differentiable on every $y\in Y$, satisfies $\Psi(0)=0$ and
satisfies the growth condition
\begin{equation}\label{hjfxsphiscaldddjjhhhh888899999}
(1/C_0)\,\|y\|_Y^q-C_0\leq \Psi(y)\leq
C_0\,\|y\|_Y^q+C_0\quad\forall y\in Y\,,
\end{equation}
and the uniform convexity condition:
\begin{equation*}
\Big<h,D\Psi(y+h)-D\Psi(y)\Big>_{Y\times Y^*}\geq
\frac{1}{C_0}\big(\| y\|^{q-2}_Y+1\big)\cdot\|h\|_Y^2\quad\forall
y,h\in Y
\end{equation*}
for some $C_0>0$.

Next let $w_0\in H$. Then, since the image of the operator $T\circ
S$ is dense in $H$, there exists a sequence $\{\psi^{(0)}_n\}\subset
Y$ such that $w^{(0)}_n:=(T\circ S)\cdot\psi^{(0)}_n\to w_0$
strongly in $H$ as $n\to +\infty$. Furthermore, let $\e_n\to 0^+$ as
$n\to +\infty$. By Theorem \ref{THSldInt}, for every $n$ there
exists $\psi_n(t)\in\mathcal{R}_{Y,q}(a,b)$, such that
\begin{equation}\label{uravnllllglkhghkgkghfsld}
\begin{cases}\frac{d\varphi_n}{dt}(t)+S^*\cdot
\Big(A_t\big(z_n(t)\big)+\Lambda_t\big(z_n(t)\big)\Big)
+\e_n D\Psi\big(\psi_n(t)\big)=0\;\;\text{for}\; t\in(a,b)\,,\\
w_n(a)=(T\circ S)\cdot\psi^{(0)}_n\,,\end{cases}
\end{equation}
where $u_n(t):=S\cdot \big(\psi_n(t)\big)$, $z_n(t):=(Q\circ S)\cdot
\big(\psi_n(t)\big)=Q\cdot \big(u_n(t)\big)$, $w_n(t):=(T\circ
S)\cdot\big(\psi_n(t)\big)=P\cdot\big(z_n(t)\big)$,
$\varphi_n(t):=(S^*\circ\widetilde T\circ T\circ S)\cdot
\big(\psi_n(t)\big)=(S^*\circ\widetilde T)\cdot \big(w_n(t)\big)$
and we assume that $w_n(t)$ is $H$-weakly continuous on $[a,b]$.
%
%
%
%
%
Thus all the conditions of Lemma \ref{thhypppggghhhjgggnew}
satisfied, and by Lemma \ref{thhypppggghhhjgggnew} and Lemma
\ref{vlozhenie}, there exists $z(t)\in L^q(a,b;Z)$ and
$\bar\Lambda(t),\bar A(t)\in L^{q^*}(a,b;X^*)$ such that
$w(t):=P\cdot z(t)\in L^\infty(a,b;H)$, $v(t):=\widetilde T\cdot
w(t)\in W^{1,q^*}(a,b;X^*)$, $w(t)$ is $H$-weakly continuous on
$[a,b]$, up to a subsequence, we have
\begin{equation}\label{uravnllllgsnachlkagagsldnewgiukhoikljjlkjgjgg}
\begin{cases}
z_n(t)\rightharpoonup z(t)\quad\text{weakly in}\;\;L^q(a,b;Z)\\
\frac{d\varphi_n}{dt}(t)\rightharpoonup \frac{d\varphi}{dt}(t)\quad\text{weakly in}\;\;L^{q^*}(a,b;Y^*)\\
\Lambda_t \big(z_n(t)\big)\rightharpoonup \bar\Lambda(t)\quad\text{weakly in}\;\;L^{q^*}(a,b;X^*)\\
A_t \big(z_n(t)\big)\rightharpoonup \bar A(t)\quad\text{weakly in}\;\;L^{q^*}(a,b;X^*)\\
w_n(t)\rightharpoonup w(t)\quad\text{weakly in}\;\;H\quad\text{for
every fixed}\;\;t\in[a,b],\\
\big\{w_n(t)\big\}_{n=1}^{+\infty}\;\;\text{is bounded
in}\;\;L^\infty(a,b;H),
\end{cases}
\end{equation}
where $\varphi(t)=S^*\cdot v(t)$, and $z(t)$ satisfies the following
equation
\begin{equation}\label{uravnllllgsnachlklimhjhghghty}
\begin{cases}\frac{d v}{dt}(t)+\bar A(t)+\bar\Lambda(t)
=0\quad\text{for a.e.}\; t\in(a,b)\,,\\
w(a)=w_0\,.
\end{cases}
\end{equation}
Moreover,
\begin{equation}\label{vyifyurturfurfuyrfgyukkknewlhhhlljkh}
\frac{1}{2}\big\|w(t)\big\|^2_H+\limsup_{n\to+\infty}\bigg(\int_a^t\Big<u_n(s),A_s
\big(z_n(s)\big)+\Lambda_s \big(z_n(s)\big)\Big>_{X\times
X^*}ds\bigg)\leq\frac{1}{2}\big\|w_0\big\|^2_H\quad\forall
t\in[a,b].
\end{equation}
Next
there exists a family of reflexive Banach spaces
$\{V_j\}_{j=1}^{+\infty}$ and a family  of compact bounded linear
operators $\{L_j\}_{j=1}^{+\infty}$, where
$L_j\in\mathcal{L}(Z,V_j)$, which satisfy the following condition:
\begin{itemize}
\item
If $\{h_n\}_{n=1}^{+\infty}\subset Z$ is a sequence and $h_0\in Z$,
are such that for every fixed $j$ $\lim_{n\to+\infty}L_j\cdot
h_n=L_j\cdot h_0$ strongly in $V_j$ and $P\cdot h_n\rightharpoonup
P\cdot h_0$ weakly in $H$, then for every fixed $t\in(a,b)$
$\Lambda_t(h_n)\rightharpoonup \Lambda_t(h_0)$ weakly in $X^*$ and
$DA_t(h_n)\to DA_t(h_0)$ strongly in $\mathcal{L}(Z,X^*)$.
\end{itemize}
On the other hand, using
\er{uravnllllgsnachlkagagsldnewgiukhoikljjlkjgjgg}, and Lemma
\ref{ComTem1}, we deduce that for every $j$ we have $L_j\cdot
z_n(t)\to L_j \cdot z(t)$ strongly in $L^{q}(a,b;V_j)$ as $n\to
+\infty$. By the same way we obtain $L_0\cdot z_n(t)\to L_0 \cdot
z(t)$ strongly in $L^{q}(a,b;V_0)$ as $n\to +\infty$. Thus, up to a
further subsequence we will have $L_j\cdot z_n(t)\to L_j \cdot z(t)$
strongly in $V_j$ for almost every $t\in(a,b)$ and every $j$.
Therefore, by \er{uravnllllgsnachlkagagsldnewgiukhoikljjlkjgjgg} and
the above condition, we must have
$\Lambda_t\big(z_n(t)\big)\rightharpoonup \Lambda_t\big(z(t)\big)$
weakly in $X^*$ and $DA_t\big(sz_n(t)+(1-s)z(t)\big)\to
DA_t\big(z(t)\big)$ strongly in $\mathcal{L}(Z,X^*)$ for almost
every $t\in(a,b)$ and for every $s\in[0,1]$. Therefore, using
\er{bghffgghgkoooojkvhgj}, the fact that $\{w_n(t)\}$ is bounded in
$L^\infty(a,b;H)$ and the fact that $L_0\cdot z_n(t)\to L_0 \cdot
z(t)$ strongly in $L^{q}(a,b;V_0)$, we deduce that
$$\int_a^b\Big<h(t),\Lambda_t\big(z_n(t)\big)\Big>_{X\times X^*}dt\to \int_a^b\Big<h(t),\Lambda_t\big(z(t)\big)\Big>_{X\times X^*}dt\quad\quad\forall h(t)\in L^{q}(a,b,X)\,.$$
Thus
\begin{equation}\label{boundnes77889999hhhhhh999ghg78999hjhjhjh99kkkk}
\Lambda_t\big(z_n(t)\big)\rightharpoonup
\Lambda_t\big(z(t)\big)\quad\text{weakly in}\;\;L^{q^*}(a,b;X^*)\,.
\end{equation}
In the similar way, by \er{bghffgghgkoooojkvhgjgjgfffhfgbfbh}, the
fact that $\{w_n(t)\}$ is bounded in $L^\infty(a,b;H)$ and the fact
that $L_0\cdot z_n(t)\to L_0 \cdot z(t)$ strongly in
$L^{q}(a,b;V_0)$, we deduce that, for $q=2$ we have
\begin{multline}\label{boundnes77889999hhhhhh999ghg78999hjhjhjh99kkkkgjjfbvbvbghhgfjffddhgdh}
DA_t\big(sz_n(t)+(1-s)z(t)\big)\to
DA_t\big(z(t)\big)\quad\text{strongly
in}\;\mathcal{L}(Z,X^*)\;\text{for a.e.}\;t\in(a,b)\;\forall s\in[0,1]\,\\
\text{and}\quad DA_t\big(sz_n(t)+(1-s)z(t)\big)\;\text{is bounded
in}\; L^{\infty}\big(a,b;\mathcal{L}(Z,X^*)\big)\;\text{uniformly
by}\;s\,.
\end{multline}
and for $q>2$ we have
\begin{equation}\label{boundnes77889999hhhhhh999ghg78999hjhjhjh99kkkkgjjfbvbvb}
DA_t\big(sz_n(t)+(1-s)z(t)\big)\to
DA_t\big(z(t)\big)\;\;\text{strongly
in}\;\;L^{q/(q-2)}\big(a,b;\mathcal{L}(Z,X^*)\big)\;\;\forall
s\in[0,1]\,.
\end{equation}
In both cases
\begin{multline}\label{boundnes77889999hhhhhh999ghg78999hjhjhjh99kkkkgjjfbvbvbfgdddhdffgf}
\Big\{DA_t\big(sz_n(t)+(1-s)z(t)\big)\Big\}^*\cdot h(t)\to
\Big\{DA_t\big(z(t)\big)\Big\}^*\cdot h(t)\;\;\text{strongly
in}\;\;L^{q^*}(a,b,Z)\\ \forall h(t)\in L^{q}(a,b,X)\;\;\forall
s\in[0,1]\,,
\end{multline}
where $\big\{D A_t(\cdot)\big\}^*\in \mathcal{L}(X,Z^*)$ is the
adjoint to $D A_t(\cdot)\in \mathcal{L}(Z,X^*)$ operator. Thus, by
\er{bghffgghgkoooojkvhgjgjgfffhfgbfbh}, the fact that $\{w_n(t)\}$
is bounded in $L^\infty(a,b;H)$ and the fact that $L_0\cdot
z_n(t)\to L_0 \cdot z(t)$ strongly in $L^{q}(a,b;V_0)$, together
with
\er{boundnes77889999hhhhhh999ghg78999hjhjhjh99kkkkgjjfbvbvbfgdddhdffgf}
and
\er{uravnllllgsnachlkagagsldnewgiukhoikljjlkjgjgg}
we obtain
\begin{multline*}
\int_a^b\bigg<h(t),A_t\big(z_n(t)\big)-A_t\big(z(t)\big)\Big>_{X\times
X^*}dt=\\
\int_0^1\int_a^b\Big<h(t),DA_t\Big(sz_n(t)+(1-s)z(t)\Big)\cdot\Big(z_n(t)-z(t)\Big)\bigg>_{X\times
X^*}dtds=\\
\int_0^1\int_a^b\Big<\Big(z_n(t)-z(t)\Big),
\Big\{DA_t\big(sz_n(t)+(1-s)z(t)\big)\Big\}^*\cdot
h(t)\bigg>_{Z\times Z^*}dtds\to 0 \quad\quad\forall h(t)\in
L^{q}(a,b,X)\,.
\end{multline*}
So, by \er{uravnllllgsnachlkagagsldnewgiukhoikljjlkjgjgg},
and \er{boundnes77889999hhhhhh999ghg78999hjhjhjh99kkkk} we have
$\bar \Lambda(t)=\Lambda_t\big(z(t)\big)$ and $\bar
A(t)=A_t\big(z(t)\big)$,
and thus using \er{uravnllllgsnachlklimhjhghghty} we finally deduce
that $z(t)$ is a solution to \er{uravnllllgsnachlklimhjhgh}.

 Finally, assume that there exist a reflexive Banach space $V$, a
compact operator $L\in\mathcal{L}(Z,V)$, and for every $t\in[a,b]$ a
convex G\^{a}teaux differentiable functions $\Phi_t:Z\to\R$ and a
G\^{a}teaux differentiable mapping $F_t(\sigma):V\to Z^*$ satisfying
\er{roststlambdglkFFhjhjhhjjkjkjkjkjkllhllhlhtgythtyjuui},
\er{roststlambdglkFFhjhjhhjjkjkjkjkjkllhllhlhtgytht} and
\er{roststlambdglkFFyhuhtyui99999999ghghghh}. Then, since, as
before, we have $L\cdot z_n(t)\to L\cdot z(t)$ strongly in
$L^q(a,b;V)$, we deduce that, up to a subsequence, $F_t\big(L\cdot
z_n(t)\big)\to F_t\big(L\cdot z(t)\big)$ strongly in
$L^{q^*}(a,b;Z^*)$. On the other hand by
\er{roststlambdglkFFyhuhtyui99999999ghghghh} and
\er{vyifyurturfurfuyrfgyukkknewlhhhlljkh}
we
infer
\begin{equation}
\label{neravenstvopolsldjhjkgghkjjhg}
\frac{1}{2}\big\|w(t)\big\|_H^2+\limsup\limits_{n\to+\infty}\Bigg\{\int_a^t\bigg(\Phi_s\big(z_n(s)\big)+\Big<z_n(s),F_s\big(L\cdot
z_n(s)\big)\Big>_{Z\times
Z^*}\bigg)ds\Bigg\}\leq\frac{1}{2}\big\|w_0\big\|_H^2\quad\quad\forall
t\in[a,b]\,.
\end{equation}
Therefore, letting $n$ tend to $+\infty$ in
\er{neravenstvopolsldjhjkgghkjjhg} and using
\er{uravnllllgsnachlkagagsldnewgiukhoikljjlkjgjgg}
and the convexity of $\Phi_t$ we finally obtain
\er{Monotonegkkkkdhtdtjgkgjfgdfxfdfdnf}.
\end{proof}


As a particular case of Theorem \ref{thhypppggghhhjggg} we have the
following Theorem.
\begin{theorem}\label{thhypppggghhhj}
Let $X$ and $Z$ be reflexive Banach spaces and $X^*$ and $Z^*$ be
the corresponding dual spaces. Furthermore let $H$ be a Hilbert
space. Suppose that $Q\in \mathcal{L}(X,Z)$ is an injective
inclusion operator such that its image is
dense on $Z$. Furthermore, suppose that $P\in \mathcal{L}(Z,H)$ is
an injective inclusion operator such that its image is dense on $H$.
Let $T\in \mathcal{L}(X,H)$ be defined by $T:=P\circ Q$. So
$\{X,H,X^*\}$ is an evolution triple with the corresponding
inclusion operator $T\in \mathcal{L}(X;H)$ as it was defined in
Definition \ref{7bdf} together with the corresponding operator
$\widetilde{T}\in \mathcal{L}(H;X^*)$ defined as in \er{tildet}.
Assume also that the Banach space $X$ is separable. Next let
$a,b\in\R$ s.t. $a<b$. Furthermore, for every $t\in[a,b]$ let
$\Lambda_t\in L^\infty\big(a,b;\mathcal{L}(Z,X^*)\big)$.
%
%
%
Next let $F_t(z):Z\to X^*$ be a function which is G\^{a}teaux
differentiable at every $z\in Z$ for every $t\in[a,b]$, and
satisfies $F_t(0)\in L^{2}(a,b;X^*)$ and the Lipshitz condition
\begin{equation}\label{bghffgghgkoooojkvhgjggg}
\big\|DF_t(z)\big\|_{\mathcal{L}(Z;X^*)}\leq g\big(\|P\cdot
z\|_H\big)\quad\quad\forall z\in Z,\;\forall t\in[a,b]\,,
\end{equation}
for some nondecreasing function $g(s):[0,+\infty)\to(0,+\infty)$. We
also assume that $F_t(z)$ is
Borel on the pair of variables $(z,t)$.
Moreover, suppose that $\Lambda_t$
and $F_t$ satisfy the
following lower bound condition
\begin{multline}
\label{roststlambdglkFFyhuhtyui99999999ggg} \bigg<h,\Lambda_t\cdot
\big(Q\cdot h\big)
+F_t\big(Q\cdot h\big)\bigg>_{X\times X^*}\geq\\ \big(1/\bar
C\big)\big\|Q\cdot h\big\|^2_Z-
\Big(\big\|Q\cdot h\big\|^p_Z+\mu^{\frac{p}{2}}(t)\Big)\Bigg(\bar
C\big\|L\cdot(Q\cdot
h)\big\|^{(2-p)}_V+\mu^{\frac{2-p}{2}}(t)\Big(\big\|T\cdot
h\big\|^{(2-p)}_H+1\Big)\Bigg)
\quad\forall h\in X,\;\forall t\in[a,b],
\end{multline}
where $V$ is a given Banach space, $L\in\mathcal{L}(Z,V)$ is a given
compact operator, $p\in[0,2)$ and $\bar C>0$ are some constants and
 $\mu(t)\in L^1(a,b;\R)$ is a nonnegative function. Finally assume that
there exists a family of reflexive Banach spaces
$\{V_j\}_{j=1}^{+\infty}$ and a family of compact bounded linear
operators $\{L_j\}_{j=1}^{+\infty}$, where
$L_j\in\mathcal{L}(Z,V_j)$, which satisfy the following condition:
\begin{itemize}
\item
If $\{h_n\}_{n=1}^{+\infty}\subset Z$ is a sequence such that for
all fixed $j$ $\lim_{n\to+\infty}L_j\cdot h_n=L_j\cdot h_0$ strongly
in $V_j$ and $P\cdot h_n\rightharpoonup P\cdot h_0$ weakly in $H$,
then for every fixed $t\in(a,b)$ $F_t(h_n)\rightharpoonup F_t(h_0)$
weakly in $X^*$.
\end{itemize}
Then for every $w_0\in H$ there exists $z(t)\in L^2(a,b;Z)$ such
that $w(t):=P\cdot z(t)\in L^\infty(a,b;H)$, $v(t):=\widetilde
T\cdot \big(w(t)\big)\in W^{1,2}(a,b;X^*)$ and $z(t)$ satisfies the
following equation
\begin{equation}\label{uravnllllgsnachlklimhjhghggg}
\begin{cases}\frac{d v}{dt}(t)+\Lambda_t\cdot \big(z(t)\big)
+
F_t\big(z(t)\big)=0\quad\text{for a.e.}\; t\in(a,b)\,,\\
w(a)=w_0\,,
\end{cases}
\end{equation}
where we assume that $w(t)$ is $H$-weakly continuous on $[a,b]$, as
it was stated in Corollary \ref{vbnhjjmcor}. Moreover if we assume
in addition that there exist a reflexive Banach space $E$, a compact
operator $L_0\in\mathcal{L}(Z,E)$, and for every $t\in[a,b]$ a
G\^{a}teaux differentiable mapping $H_t(\zeta):E\to Z^*$, measurable
on $(\zeta,t)$, such that $H_t(0)\in L^2(a,b;Z^*)$ and satisfying
\begin{equation}\label{roststlambdglkFFhjhjhhjjkjkjkjkjkllhllhlhtgythtjhjhjhuytity}
\big\|DH_t(L_0\cdot z)\big\|_{\mathcal{L}(E;Z^*)}\leq \widetilde
g\big(\|P\cdot z\|_H\big)\quad\forall z\in Z,\;\forall t\in[a,b]
\end{equation}
for some nondecreasing function $\widetilde
g(s):[0,+\infty)\to(0,+\infty)$, and satisfying
\begin{equation}\label{Monotonegkkkkdhtdtggggjhfghgdfgh}
\Big<h,\Lambda_t\cdot (Q\cdot h)
+F_t (Q\cdot h)\Big>_{X\times X^*}\geq\Big<Q\cdot h,A_t\cdot (Q\cdot
h)+H_t\big((L_0\circ Q)\cdot h\big)\Big>_{Z\times
Z^*}\quad\quad\forall h\in X\,,\;\forall t\in[a,b]\,,
\end{equation}
where $A_t\in L^\infty\big(a,b;\mathcal{L}(Z,Z^*)\big)$ is such that
$\big<z,A_t\cdot z\big>_{Z\times Z^*}\geq 0$ $\forall z\in Z$,
then the function $z(t)$, as above, satisfies the following energy
inequality
\begin{equation}
\label{Monotonegkkkkdhtdtjgkgjfgdfxfdfdnfhjkjhjk}
\frac{1}{2}\big\|w(t)\big\|^2_H+\int_a^t\Big<z(s),A_s\cdot\big(z(s)\big)
+H_s\big(L_0\cdot z(s)\big)\Big>_{Z\times Z^*}ds\leq
\frac{1}{2}\big\|w_0\big\|^2_H\quad\quad \;\forall t\in[a,b]\,.
\end{equation}
\end{theorem}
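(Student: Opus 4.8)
The plan is to deduce Theorem~\ref{thhypppggghhhj} from Theorem~\ref{thhypppggghhhjggg} in the case $q=2$, by checking the hypotheses of the latter under the following correspondence: the operator called $A_t$ in Theorem~\ref{thhypppggghhhjggg} is taken to be the linear map $z\mapsto\Lambda_t\cdot z$; the operator called $\Lambda_t$ in Theorem~\ref{thhypppggghhhjggg} is taken to be $F_t$; and the compact operators $L_0,L$ and the family $\{V_j\},\{L_j\}$ required there are taken to be those of the present hypotheses, with $L_0:=L$ and $V_0:=V$. With this dictionary the structural checks are routine. Since $q=2$ we have $q-2=0$ and $q-1=1$, so (after enlarging $g$ if necessary) the derivative bounds \er{bghffgghgkoooojkvhgjgjgfffh} and \er{bghffgghgkoooojkvhgjgjgfffhfgbfbh} reduce to bounds of the form $2g(\|P\cdot z\|_H)$: for the linear map $z\mapsto\Lambda_t\cdot z$ this is immediate from $\Lambda_t\in L^\infty\big(a,b;\mathcal{L}(Z,X^*)\big)$, and for $F_t$ it is the Lipschitz hypothesis \er{bghffgghgkoooojkvhgjggg}. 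Moreover $\Lambda_t\cdot 0=0$ and $F_t(0)\in L^{2}(a,b;X^*)=L^{q^*}(a,b;X^*)$; Borel measurability of $(z,t)\mapsto\Lambda_t\cdot z$ follows from $\Lambda_t\in L^\infty\big(a,b;\mathcal{L}(Z,X^*)\big)$ while that of $F_t$ is assumed; and the monotonicity condition \er{roststlambdglkFFyhuhtyui99999999} becomes, for $q=2$, literally the assumed inequality \er{roststlambdglkFFyhuhtyui99999999ggg}. Finally, the family condition of Theorem~\ref{thhypppggghhhjggg} requires that $L_j\cdot h_n\to L_j\cdot h_0$ strongly in $V_j$ (all $j$) together with $P\cdot h_n\rightharpoonup P\cdot h_0$ weakly in $H$ imply both $F_t(h_n)\rightharpoonup F_t(h_0)$ weakly in $X^*$ (which is the assumed condition) and convergence of the derivative of $z\mapsto\Lambda_t\cdot z$ at $h_n$ to its derivative at $h_0$ strongly in $\mathcal{L}(Z,X^*)$; the latter is automatic, since that derivative equals $\Lambda_t$ at every point. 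Applying Theorem~\ref{thhypppggghhhjggg} then yields $z(t)\in L^2(a,b;Z)$ with $w:=P\cdot z\in L^\infty(a,b;H)$ and $v:=\widetilde T\cdot w\in W^{1,2}(a,b;X^*)$ solving \er{uravnllllgsnachlklimhjhgh}, which under the dictionary is exactly \er{uravnllllgsnachlklimhjhghggg}, with $w$ redefined so as to be $H$-weakly continuous by Corollary~\ref{vbnhjjmcor}.

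The one genuinely delicate point, and what I expect to be the main obstacle, is the value bound \er{bghffgghgkoooojkvhgj} that Theorem~\ref{thhypppggghhhjggg} imposes on its $\Lambda_t$: for $q=2$ it demands $\|F_t(z)\|_{X^*}\le g(\|P\cdot z\|_H)\big(\|L_0\cdot z\|_{V_0}+\tilde\mu^{1/2}(t)\big)$, i.e.\ growth governed by the \emph{compact} image $L_0\cdot z$, whereas \er{bghffgghgkoooojkvhgjggg} together with $F_t(0)\in L^2(a,b;X^*)$ only gives $\|F_t(z)\|_{X^*}\le\|F_t(0)\|_{X^*}+g(\|P\cdot z\|_H)\|z\|_Z$, i.e.\ growth in the full norm $\|z\|_Z$. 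The resolution is that this weaker bound already suffices for the argument of Theorem~\ref{thhypppggghhhjggg} when $q^*=2$: there \er{bghffgghgkoooojkvhgj} is used only to extract, from the approximating solutions $z_n$, a weakly convergent subsequence of $\{\Lambda_t(z_n(t))\}$ in $L^{q^*}(a,b;X^*)$, and in our situation $\{F_t(z_n(t))\}$ is already bounded in the reflexive space $L^2(a,b;X^*)$ — because $\{z_n\}$ is bounded in $L^2(a,b;Z)$ and $\{P\cdot z_n\}$ is bounded in $L^\infty(a,b;H)$, so $\|F_t(z_n(t))\|_{X^*}\le\|F_t(0)\|_{X^*}+g(R)\|z_n(t)\|_Z$ with $R$ independent of $n$ and $t$ — hence weakly precompact there; the pointwise weak convergence $F_t(z_n(t))\rightharpoonup F_t(z(t))$ supplied by the family condition, combined with the $L^1$-equi-integrability automatically enjoyed by $L^2$-bounded scalar sequences, pins down the weak limit as $F_t(z(\cdot))$. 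Thus the proof of Theorem~\ref{thhypppggghhhjggg} goes through in this case, run directly with $q=2$ and with \er{bghffgghgkoooojkvhgj} replaced by the weaker bound above.

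Finally, for the energy inequality \er{Monotonegkkkkdhtdtjgkgjfgdfxfdfdnfhjkjhjk} I would invoke the second part of Theorem~\ref{thhypppggghhhjggg}, taking: the $\Phi_t$ there to be the quadratic form $z\mapsto\big<z,A_t\cdot z\big>_{Z\times Z^*}$ — which is G\^{a}teaux differentiable, vanishes at $0$, is Borel in $(z,t)$ since $A_t\in L^\infty\big(a,b;\mathcal{L}(Z,Z^*)\big)$, is convex because $\big<z,A_t\cdot z\big>_{Z\times Z^*}\ge0$ for every $z\in Z$, and satisfies $0\le\big<z,A_t\cdot z\big>_{Z\times Z^*}\le\|A_t\|_{L^\infty}\|z\|_Z^2$, hence \er{roststlambdglkFFhjhjhhjjkjkjkjkjkllhllhlhtgythtyjuui} with $q=2$; the space $V$ and compact operator $L$ in those additional hypotheses to be $E$ and $L_0$; and the mapping $F_t$ in those hypotheses to be $H_t$, for which \er{roststlambdglkFFhjhjhhjjkjkjkjkjkllhllhlhtgytht} reduces (since $q-2=0$) to the assumed \er{roststlambdglkFFhjhjhhjjkjkjkjkjkllhllhlhtgythtjhjhjhuytity}, with $H_t(0)\in L^2(a,b;Z^*)=L^{q^*}(a,b;Z^*)$. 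Under the correspondence of the first paragraph, the lower bound \er{roststlambdglkFFyhuhtyui99999999ghghghh} required by Theorem~\ref{thhypppggghhhjggg} is exactly the assumed \er{Monotonegkkkkdhtdtggggjhfghgdfgh}, and its conclusion \er{Monotonegkkkkdhtdtjgkgjfgdfxfdfdnf} becomes precisely \er{Monotonegkkkkdhtdtjgkgjfgdfxfdfdnfhjkjhjk}. This completes the proof.
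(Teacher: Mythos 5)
Your reduction is exactly the one the paper intends: Theorem~\ref{thhypppggghhhj} is presented there simply as ``a particular case of Theorem~\ref{thhypppggghhhjggg}'' with no further argument, and your dictionary ($A_t:=\Lambda_t\cdot\,$, the $\Lambda_t$ of Theorem~\ref{thhypppggghhhjggg} $:=F_t$, $q=2$, $V_0:=V$, $L_0:=L$) is the right one, since the alternative assignments would require norm convergence of $DF_t(h_n)$, which is not assumed. The added value of your write-up is that you correctly identify the one hypothesis that does \emph{not} specialize: the value bound \er{bghffgghgkoooojkvhgj} demands growth of $\|F_t(z)\|_{X^*}$ in the compact image $\|L_0\cdot z\|_{V_0}$, which cannot follow from the Lipschitz bound \er{bghffgghgkoooojkvhgjggg} when $Z$ is infinite-dimensional, so the theorem is not literally a special case. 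Your repair is sound: \er{bghffgghgkoooojkvhgj} enters the proof of Theorem~\ref{thhypppggghhhjggg} only to upgrade the a.e.-in-$t$ weak convergence $F_t(z_n(t))\rightharpoonup F_t(z(t))$ to weak convergence in $L^{q^*}(a,b;X^*)$ (the paper does this by a generalized dominated convergence argument with the strongly convergent dominating sequence built from $\|L_0\cdot z_n\|_{V_0}^{q-1}$), and for $q=2$ the uniform bound $\|F_t(z_n(t))\|_{X^*}\le\|F_t(0)\|_{X^*}+g(R)\|z_n(t)\|_Z$, with $\{z_n\}$ bounded in $L^2(a,b;Z)$ and $R$ coming from the $L^\infty(a,b;H)$ bound on $\{w_n\}$, gives weak precompactness in the reflexive space $L^2(a,b;X^*)$ together with equi-integrability of $t\mapsto\langle h(t),F_t(z_n(t))\rangle$ for each fixed $h\in L^2(a,b;X)$ (via Young's inequality), so Vitali identifies the limit. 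The linear part causes no trouble since $z\mapsto\Lambda_\cdot\cdot z$ is weak-to-weak continuous on $L^2(a,b;Z)$ and its derivative is constant. Your verification of the energy-inequality hypotheses, in particular that $\Phi_t(z):=\big<z,A_t\cdot z\big>_{Z\times Z^*}$ is convex precisely because $\big<z,A_t\cdot z\big>_{Z\times Z^*}\ge0$ (the convexity defect equals $\lambda(1-\lambda)\big<x-y,A_t\cdot(x-y)\big>_{Z\times Z^*}$), is also correct. In short: same route as the paper, plus a necessary patch the paper omits.
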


As a particular case of Theorem \ref{thhypppggghhhj}, where $Z=H$,
we have the following statement, which is useful in the study of
Hyperbolic systems.
\begin{corollary}\label{thhypppggg}
Let $\{X,H,X^*\}$ be an evolution triple with the corresponding
inclusion operator $T\in \mathcal{L}(X;H)$ as it was defined in
Definition \ref{7bdf} together with the corresponding operator
$\widetilde{T}\in \mathcal{L}(H;X^*)$ defined as in \er{tildet}.
Assume also that the Banach space $X$ is separable. Next let
$a,b\in\R$ s.t. $a<b$. Furthermore, for every $t\in[a,b]$ let
$\Lambda_t\in L^\infty\big(a,b;\mathcal{L}(H,X^*)\big)$.
%
%
%
Next let $F_t(w):H\to X^*$ be a function which is G\^{a}teaux
differentiable on every $w\in H$ for every $t\in[a,b]$, and
satisfies $F_t(0)\in L^{2}(a,b;X^*)$ and the Lipshitz condition
\begin{equation}\label{bghffgghgkoooojkvhgjjjk}
\big\|DF_t(w)\big\|_{\mathcal{L}(H;X^*)}\leq
g\big(\|w\|_H\big)\quad\quad\forall w\in H,\;\forall t\in[a,b]\,,
\end{equation}
for some nondecreasing function $g(s):[0,+\infty)\to(0,+\infty)$. We
also assume that $F_t(w)$ is
Borel on the pair of variables $(w,t)$ (see Definition
\ref{fdfjlkjjkkkkkllllkkkjjjhhhkkk}).
Moreover, assume that $F_t$ is weak to weak continuous from $H$ to
$X^*$ for every fixed $t$ i.e.
for every sequence $\{h_n\}_{n=1}^{+\infty}\subset H$ such that
$h_n\rightharpoonup h_0$ weakly in $H$ and for every $t\in[a,b]$, we
have $F_t(h_n)\rightharpoonup F_t(h_0)$ weakly in $X^*$. Finally
suppose that $\Lambda_t$ and $F_t$ satisfy the following lower bound
condition
\begin{equation}
\label{roststlambdglkFFyhuhtyui99999999jjk} \Big<h,\Lambda_t\cdot
(T\cdot h)+F_t(T\cdot h)\Big>_{X\times X^*}\geq
-\mu(t)\Big(\big\|T\cdot h\big\|^2_H+1\Big)\quad\quad\forall h\in
X\,,\;\forall t\in[a,b]\,,
\end{equation}
for nonnegative function $\mu(t)\in L^1(a,b;\R)$.
Then for every $w_0\in H$ there exists $w(t)\in L^\infty(a,b;H)$,
such that $v(t):=\widetilde T\cdot \big(w(t)\big)\in
W^{1,2}(a,b;X^*)$ and $w(t)$ satisfies the following equation
\begin{equation}\label{uravnllllgsnachlklimhjhghjjk}
\begin{cases}\frac{d v}{dt}(t)+\Lambda_t\cdot \big(w(t)\big)+
F_t\big(w(t)\big)=0\quad\text{for a.e.}\; t\in(a,b)\,,\\
w(a)=w_0\,,
\end{cases}
\end{equation}
where we assume that $w(t)$ is $H$-weakly continuous on $[a,b]$, as
it was stated in Corollary \ref{vbnhjjmcor}.
\end{corollary}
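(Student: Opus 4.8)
The plan is to obtain Corollary \ref{thhypppggg} from Theorem \ref{thhypppggghhhj} by the specialization $Z=H$, $P=\mathrm{Id}_H$, $Q=T$, with $\Lambda_t$ understood as an element of $L^\infty(a,b;\mathcal L(H,X^*))$ and $F_t:H\to X^*$ exactly as given. First I would check that all hypotheses of Theorem \ref{thhypppggghhhj} are met. The operator $Q=T\in\mathcal L(X,H)$ is injective with dense image by the definition of an evolution triple, and $P=\mathrm{Id}_H\in\mathcal L(H,H)$ is trivially injective with dense (indeed full) image, so $T=P\circ Q$ as required. The growth bound \eqref{bghffgghgkoooojkvhgjggg} for $DF_t$ follows verbatim from \eqref{bghffgghgkoooojkvhgjjjk} (with $Z=H$), and $F_t(0)\in L^2(a,b;X^*)$ is assumed. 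Here $q=2$, so $q^*=2$, which is exactly the setting of Theorem \ref{thhypppggghhhj}.

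The main point to verify is the lower bound \eqref{roststlambdglkFFyhuhtyui99999999ggg}. Taking $p=0$, the right-hand side of \eqref{roststlambdglkFFyhuhtyui99999999ggg} becomes $(1/\bar C)\|Q\cdot h\|^2_Z-\big(1+\mu^{0}(t)\big)\big(\bar C\|L\cdot(Q\cdot h)\|^2_V+\mu(t)(\|T\cdot h\|^2_H+1)\big)$; choosing $\bar C$ large, $L$ to be the zero operator (or any fixed compact operator, e.g. the inclusion $H\hookrightarrow V$ for a space $V$ furnished by Lemma \ref{hilbcomban}) and absorbing constants into $\mu$, the hypothesis \eqref{roststlambdglkFFyhuhtyui99999999jjk}, namely $\langle h,\Lambda_t(T\cdot h)+F_t(T\cdot h)\rangle\geq-\mu(t)(\|T\cdot h\|^2_H+1)$, implies \eqref{roststlambdglkFFyhuhtyui99999999ggg}. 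Concretely, since the left-hand sides coincide (with $Q=T$ and the identification $Z=H$, $P=\mathrm{Id}$), and since the right-hand side of \eqref{roststlambdglkFFyhuhtyui99999999ggg} is $\leq -C'\mu(t)(\|T\cdot h\|^2_H+1)$ for a suitable constant when $\|Q\cdot h\|_Z=\|T\cdot h\|_H$ appears on both sides, one re-labels $\bar C$ and $\mu$ so that $(1/\bar C)\|T\cdot h\|^2_H-\mu(t)(\|T\cdot h\|^2_H+1)\le -\tilde\mu(t)(\|T\cdot h\|^2_H+1)$; since we may take $\bar C$ as large as we wish, this is immediate.

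The remaining structural hypothesis of Theorem \ref{thhypppggghhhj} is the existence of a family $\{V_j\}$, $\{L_j\in\mathcal L(Z,V_j)\}$ of compact operators such that $L_j\cdot h_n\to L_j\cdot h_0$ in $V_j$ for all $j$ together with $P\cdot h_n\rightharpoonup P\cdot h_0$ weakly in $H$ forces $F_t(h_n)\rightharpoonup F_t(h_0)$ weakly in $X^*$. Since $P=\mathrm{Id}_H$, the condition $P\cdot h_n\rightharpoonup P\cdot h_0$ is just $h_n\rightharpoonup h_0$ weakly in $H$; by the assumed weak-to-weak continuity of $F_t$ this already gives $F_t(h_n)\rightharpoonup F_t(h_0)$ weakly in $X^*$, so one may take the trivial family (e.g. a single $V_1$ with $L_1=0$, or indeed the empty family interpreted as vacuously satisfied). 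Thus all hypotheses of Theorem \ref{thhypppggghhhj} hold.

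Applying Theorem \ref{thhypppggghhhj} then yields, for every $w_0\in H$, a function $z(t)\in L^2(a,b;Z)=L^2(a,b;H)$ with $w(t):=P\cdot z(t)=z(t)\in L^\infty(a,b;H)$, $v(t):=\widetilde T\cdot w(t)\in W^{1,2}(a,b;X^*)$, $w$ $H$-weakly continuous, and $\tfrac{dv}{dt}(t)+\Lambda_t\cdot(z(t))+F_t(z(t))=0$ a.e.\ with $w(a)=w_0$. Since $z=w$ here, this is exactly \eqref{uravnllllgsnachlklimhjhghjjk}. The only subtlety — and the step I expect to require the most care — is the bookkeeping in matching \eqref{roststlambdglkFFyhuhtyui99999999jjk} to \eqref{roststlambdglkFFyhuhtyui99999999ggg}: one must make sure that the compact operator $L$ and space $V$ demanded by Theorem \ref{thhypppggghhhj} can be chosen harmlessly (the zero operator works, or any compact operator, since its contribution is nonnegative and can be absorbed, possibly at the cost of enlarging $\bar C$ via Lemma \ref{Aplem1}), and that the function $\mu$ in the two statements is consistently a nonnegative $L^1(a,b;\R)$ function. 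With $p=0$ this is routine, completing the proof.
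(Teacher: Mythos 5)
Your proposal is correct and follows exactly the paper's route: Corollary \ref{thhypppggg} is presented there precisely as the particular case $Z=H$, $P=\mathrm{Id}_H$, $Q=T$ of Theorem \ref{thhypppggghhhj}, with the compactness family taken trivially (all $L_j=0$) because weak-to-weak continuity of $F_t$ already yields the required conclusion once $P\cdot h_n\rightharpoonup P\cdot h_0$ reduces to $h_n\rightharpoonup h_0$ in $H$. The one step worth stating cleanly is the verification of \er{roststlambdglkFFyhuhtyui99999999ggg} with $p=0$ and $L=0$: the coercive term $(1/\bar C)\|T\cdot h\|^2_H$ cannot be dominated by $\mu(t)(\|T\cdot h\|^2_H+1)$ alone when $\mu(t)=0$, so one must replace $\mu$ by $\tilde\mu:=\mu+1/\bar C\in L^1(a,b;\R)$, which gives $(2\tilde\mu(t)-\mu(t))(\|T\cdot h\|_H^2+1)\ge(1/\bar C)\|T\cdot h\|_H^2$ --- this is exactly your ``absorbing constants into $\mu$'' move (your displayed version of the inequality has the roles of $\mu$ and $\tilde\mu$ interchanged, and enlarging $\bar C$ alone would not suffice, but the idea is right and the argument goes through).
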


\section{Applications}\label{dkgfkghfhkljl}
\subsection{Notations in the present section}
For a $p\times q$ matrix $A$ with $ij$-th entry $a_{ij}$ we denote
by $|A|=\bigl(\Sigma_{i=1}^{p}\Sigma_{j=1}^{q}a_{ij}^2\bigr)^{1/2}$
the
Frobenius norm of $A$.\\
For two matrices $A,B\in\R^{p\times q}$  with $ij$-th entries
$a_{ij}$ and $b_{ij}$ respectively, we write
$A:B\,:=\,\sum\limits_{i=1}^{p}\sum\limits_{j=1}^{q}a_{ij}b_{ij}$.\\
Given a vector valued function
$f(x)=\big(f_1(x),\ldots,f_k(x)\big):\O\to\R^k$ ($\O\subset\R^N$) we
denote by $\nabla_x f$ the $k\times N$ matrix with
$ij$-th entry $\frac{\partial f_i}{\partial x_j}$.\\
For a matrix valued function $F(x):=\{F_{ij}(x)\}:\R^N\to\R^{k\times
N}$ we denote by $div\,F$ the $\R^k$-valued vector field defined by
$div\,F:=(l_1,\ldots,l_k)$ where
$l_i=\sum\limits_{j=1}^{N}\frac{\partial F_{ij}}{\partial x_j}$.\\
For $u=(u_1,\ldots,u_p)\in\R^p$ and $v=(v_1,\ldots,v_q)\in\R^q$ we
denote by $\vec u\otimes\vec v$ the $p\times q$ matrix with $ij$-th
entry $u_i v_j$.
\subsection{A general parabolic system in a divergent form}
Let $\Psi(A,x,t):\R_A^{k\times N}\times\R_x^N\times\R_t\to\R$ be a
nonnegative measurable function. Moreover assume that $\Psi(A,x,t)$
is $C^1$ as a function of the first argument $A$ when $(x,t)$ are
fixed, which satisfies $\Psi(0,x,t)=0$ and it is convex by the first
argument $A$  when $(x,t)$ are fixed, i.e.
$$\Psi\big(\alpha A_1+(1-\alpha)A_2,x,t\big)\leq\alpha\Psi(A_1,x,t)+(1-\alpha)\Psi(A_2,x,t)$$
for every $\alpha\in[0,1]$, $A_1,A_2\in\R^{k\times N}$, $x\in\R^N$
and $t\in\R$. Moreover, we assume that $\Psi$ satisfies the
following growth condition
\begin{multline}\label{roststglkjjjaplneabst} (1/C)|A|^q-|g_0(x)|\leq
\Psi(A,x,t)\leq C|A|^q+|g_0(x)| \quad\quad\forall A\in\R^{k\times
N},\;\forall x\in\R^N,\;\forall t\in\R\,,
\end{multline}
where $C>0$ is some constant, $g_0(x)\in L^1(\R^N,\R)$
and $q\in [2,+\infty)$. Next let $\Gamma(A,x,t):\R_A^{k\times
N}\times\R_x^N\times\R_t\to\R^{k\times N}$ be a measurable function.
Moreover assume that $\Gamma(A,x,t)$ is $C^1$ as a function of the
first argument $A$ when $(x,t)$ are fixed, which satisfies,
\begin{equation}\label{jeheefkeplpl}
\Gamma(0,x,t)\in L^{q^*}\big(\R;L^{2}(\R^N,\R^{k\times N})\big)\,,
\end{equation}
the following monotonicity condition
\begin{equation}\label{Monotoneglkjjjaplneabst}
\sum\limits_{1\leq j,n\leq N}\sum\limits_{1\leq i,m\leq k}
H_{ij}H_{mn}\frac{\partial \Gamma_{mn}}{\partial A_{ij}}(A,x,t)\geq
0 \quad\quad\forall H,A\in \R^{k\times N},\;\forall
x\in\R^N,\;\forall t\in\R\,,
\end{equation}
and the following growth condition
\begin{multline}\label{roststlambdglkjjjaplneabst}
\bigg|\frac{\partial \Gamma}{\partial A_{ij}}(A,x,t)\bigg|\leq
C\,|A|^{q-2}+ C\\ \forall A\in \R^{k\times N},\;\forall x\in
\R^N,\;\forall t\in\R,\quad\forall i\in\{1,\ldots,k\},\;\forall
j\in\{1,\ldots,N\}\,,
\end{multline}
where $C>0$ is some constant. Finally let
$\Xi(B,x,t):\R_B^k\times\R_x^N\times\R_t\to\R^{k\times N}$ and
$\Theta(B,x,t):\R_B^k\times\R_x^N\times\R_t\to\R^{k}$ be two
measurable functions. Moreover, assume that $\Xi(B,x,t)$ and
$\Theta(B,x,t)$ are $C^1$ as a functions of the first argument $B$
when $(x,t)$ are fixed. We also assume that $\Xi(B,x,t)$ and
$\Theta(B,x,t)$ are globally Lipschitz by the first argument $B$ and
satisfy
\begin{equation}\label{jeheefkeplplgryrtu}
\Xi(0,x,t)\in L^{q^*}\big(\R;L^{2}(\R^N,\R^{k\times
N})\big),\;\;\Theta(0,x,t)\in
L^{q^*}\big(\R;L^{2}(\R^N,\R^k)\big)\,.
\end{equation}
\begin{proposition}\label{divform}
Let $\Psi,\Gamma,\Xi,\Theta$ be as above and let $\Omega\subset\R^N$
be a bounded open set, $2\leq q<+\infty$ and $T_0>0$.
Then for every $w_0(x)\in L^2(\O,\R^k)$ there exists $u(x,t)\in
L^q\big(0,T_0;W_0^{1,q}(\O,\R^k)\big)$, such that $u(x,t)\in
L^\infty\big(0,T_0;L^2(\O,\R^k)\big)\cap W^{1,q^*}\big(0,T_0;
W^{-1,q^*}(\O,\R^k)\big)$, where $q^*:=q/(q-1)$, $u(x,t)$ is
$L^2(\O,\R^k)$-weakly continuous on $[0,T_0]$, $u(x,0)=w_0(x)$ and
$u(x,t)$ is a solution to
\begin{multline}\label{uravnllllgsnachlklimjjjaplneabst}
\frac{d u}{dt}(x,t)=\Theta\big(u(x,t),x,t\big)+div_x
\Big(\Xi\big(u(x,t),x,t\big)\Big)+\\ div_x\Big(\Gamma\big(\nabla_x
u(x,t),x,t\big)\Big)+ div_x \Big(D_A\Psi\big(\nabla_x
u(x,t),x,t\big)\Big)\quad\quad\text{in}\;\;\O\times(0,T_0)\,,
\end{multline}
where
$$D_A\Psi(A,x,t):=\bigg\{\frac{\partial \Psi}{\partial A_{ij}}(A,x,t)\bigg\}_{1\leq i\leq k,1\leq j\leq N}\in\R^{k\times N}\,.$$
Moreover if $\Psi(A,x,t)$ is a uniformly convex function by the
first argument $A$ then such a solution $u$ is unique.
\end{proposition}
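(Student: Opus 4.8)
The plan is to recast \er{uravnllllgsnachlklimjjjaplneabst} as an instance of the abstract evolution \er{uravnllllgsnachlklimjjj} and then invoke \rth{defHkkkkglkjjj} for existence and \rth{defHkkkkglkjjjgkjgkjgggk} for uniqueness. First I would take $X:=W^{1,q}_0(\O,\R^k)$, $H:=L^2(\O,\R^k)$, so that $X^*=W^{-1,q^*}(\O,\R^k)$, and let $T\in\mathcal L(X;H)$ be the canonical inclusion: since $\O$ is bounded and $q\geq 2>2N/(N+2)$, the Rellich--Kondrachov theorem makes $T$ \emph{compact}, injective and with dense image, so $\{X,H,X^*\}$ is an evolution triple in the sense of \rdef{7bdf} (with $I=\widetilde T\circ T$ the usual inclusion $W^{1,q}_0\hookrightarrow W^{-1,q^*}$), and $X$ is separable. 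I would set $\Phi_t(u):=\int_\O\Psi(\gr_x u,x,t)\,dx$, $\Lambda_t(u):=-\Div_x\big(\Gamma(\gr_x u,x,t)\big)$ and $F_t(u):=-\Theta(u,x,t)-\Div_x\big(\Xi(u,x,t)\big)$, all regarded as $X^*$-valued maps. With these choices \er{uravnllllgsnachlklimjjj} becomes $\tfrac{d}{dt}(Iu)+F_t(u)+\Lambda_t(u)+D\Phi_t(u)=0$, which is exactly \er{uravnllllgsnachlklimjjjaplneabst}, the signs matching because $-D\Phi_t(u)=\Div_xD_A\Psi(\gr_x u,x,t)$, $-\Lambda_t(u)=\Div_x\Gamma(\gr_x u,x,t)$, $-F_t(u)=\Theta(u,x,t)+\Div_x\Xi(u,x,t)$.

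Next I would carry out the routine (but necessary) structural verifications. From convexity and \er{roststglkjjjaplneabst} one gets that $\Phi_t$ is convex, G\^ateaux differentiable with $D\Phi_t(u)=-\Div_xD_A\Psi(\gr_x u,x,t)\in X^*$, satisfies $0\leq\Phi_t(u)\leq C\|u\|_X^q+C$, $\Phi_t(0)=0$, is Borel in $(u,t)$, and $\|D\Phi_t(u)\|_{X^*}\leq\bar C\|u\|_X^{q-1}+\bar C$ by \rlemma{Legendre}. The growth \er{roststlambdglkjjjaplneabst} on $D_A\Gamma$ and H\"older's inequality (using $q^*(q-1)=q$) give the G\^ateaux differentiability of $\Lambda_t$ with $\|D\Lambda_t(u)\|_{\mathcal L(X;X^*)}\leq C\|u\|_X^{q-2}+C$, while \er{jeheefkeplpl} gives $\Lambda_t(0)\in L^{q^*}(0,T_0;X^*)$, and \er{Monotoneglkjjjaplneabst} yields $\langle h,D\Lambda_t(u)\cdot h\rangle_{X\times X^*}=\int_\O\sum_{j,n,i,m}H_{ij}H_{mn}\frac{\partial\Gamma_{mn}}{\partial A_{ij}}(\gr_x u)\,dx\geq0$ with $H=\gr_x h$, i.e. \er{Monotoneglkjjj}. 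Since $\Xi,\Theta$ are globally Lipschitz in $B$ with $\Xi(0,\cdot,\cdot),\Theta(0,\cdot,\cdot)\in L^{q^*}(\R;L^2)$, the map $F_t$ is G\^ateaux differentiable with $\|DF_t(u)\|_{\mathcal L(X;X^*)}\leq C$ and $F_t(0)\in L^{q^*}(0,T_0;X^*)$; and because $F_t$ depends on $u$ only through its image under the compact operator $T$, with $\Xi,\Theta$ Lipschitz, $F_t$ is weak-to-strong continuous from $X$ to $X^*$.

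The only estimate requiring real care is the coercivity \er{Monotoneglkjjjuhuhgutuityiyikkkjggghghjgh}, which I would establish with $V:=H$ and $L:=T$ (admissible since $T$ is compact). Convexity of $\Psi$ together with $\Psi(\cdot,x,t)\geq0$, $\Psi(0,x,t)=0$ gives $\langle u,D\Phi_t(u)\rangle\geq\Phi_t(u)\geq(1/C)\|u\|_X^q-\|g_0\|_{L^1}$; by \er{Monotoneglkjjjaplneabst}, $\langle u,\Lambda_t(u)\rangle=\int_\O\Gamma(\gr_x u,x,t):\gr_x u\geq\int_\O\Gamma(0,x,t):\gr_x u\geq-\|\Gamma(0,\cdot,t)\|_{L^{q^*}(\O)}\|u\|_X$; and the Lipschitz bounds on $\Theta,\Xi$, with Poincar\'e's inequality and the bounded-domain embedding $\|\cdot\|_{L^{q^*}(\O)}\leq C\|\cdot\|_{L^2(\O)}$, give $\langle u,F_t(u)\rangle\geq-C\|Tu\|_H\|u\|_X-C\|Tu\|_H^2-c(t)\|u\|_X$ with $c\in L^{q^*}(0,T_0)$. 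Using $\|Tu\|_H\leq C\|u\|_X$ and Young's inequality the pure $\|Tu\|_H^2$ contribution and all the terms linear in $\|u\|_X$ are absorbed into $(1/2C)\|u\|_X^q$ plus an $L^1(0,T_0)$ function, so the whole lower bound takes the admissible form $(1/\hat C)\|u\|_X^q-\hat C(\|u\|_X+1)(\|Tu\|_H+1)-\mu(t)$. Then \rth{defHkkkkglkjjj} produces $u\in L^q(0,T_0;X)$ with $Tu\in L^\infty(0,T_0;H)$, $Iu\in W^{1,q^*}(0,T_0;X^*)$, $Tu$ $H$-weakly continuous, $Tu(0)=w_0$, and $u$ solving \er{uravnllllgsnachlklimjjj}; unwinding the identifications (noting that $v=Iu$ is the same distribution as $u$ in $W^{-1,q^*}(\O,\R^k)$) yields all the regularity and the initial condition $u(x,0)=w_0(x)$ claimed for the solution of \er{uravnllllgsnachlklimjjjaplneabst}.

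For uniqueness when $\Psi$ is uniformly convex in $A$, I would apply the uniqueness assertion of \rth{defHkkkkglkjjjgkjgkjgggk} with $\Lambda_t+F_t$ playing the role of its $\Lambda_t$: pseudo-monotonicity of $D\Phi_t+\Lambda_t+F_t$ follows from monotonicity of $D\Phi_t$ and of $\Lambda_t$ together with the weak-to-strong continuity of $F_t$, via \rlemma{hhhhhhhhhhhhhhhhhhiogfydtdtydjkgkgk} and Remark~\ref{fyyjfjyhfgjgghgjkgkjgkgggfhfhkkk}, and the monotonicity-type condition \er{roststghhh77889lkhjhfvffuhuh} is checked by combining (i) the genuinely coercive lower bound for $\langle h,D\Phi_t(x+h)-D\Phi_t(x)\rangle$ coming from the uniform convexity of $\Psi$, (ii) the nonnegativity of $\langle h,D\Lambda_t(x)\cdot h\rangle$ from \er{Monotoneglkjjjaplneabst}, and (iii) the fact that $DF_t$ is bounded and factors through $T$, so that $\langle h,DF_t(x)\cdot h\rangle$ is an error of admissible type. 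The hard part of the whole argument is precisely this last step: one must choose the data $f$, $p$, $k_0$, $\hat g$ in \er{roststghhh77889lkhjhfvffuhuh} so that the coercivity furnished by the uniform convexity of $\Psi$ dominates the lower-order Lipschitz perturbation $\Theta,\Xi$ in the required power-weighted form; everything else is bookkeeping within the framework already developed in Sections 1 and 3.
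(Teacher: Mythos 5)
Your proposal is correct and follows essentially the same route as the paper: the same evolution triple $X=W^{1,q}_0(\O,\R^k)$, $H=L^2(\O,\R^k)$ with the compact inclusion $T$, the same identifications $\Phi_t(u)=\int_\O\Psi(\gr_x u,x,t)\,dx$, $\langle\delta,\Lambda_t(u)\rangle=\int_\O\Gamma(\gr_x u,x,t):\gr_x\delta\,dx$ and $F_t$ collecting the $\Xi,\Theta$ terms, followed by an application of Theorem~\ref{defHkkkkglkjjj} (hence of Theorem~\ref{defHkkkkglkjjjgkjgkjgggk} for the uniqueness clause). You are in fact more explicit than the paper about verifying the coercivity condition \eqref{Monotoneglkjjjuhuhgutuityiyikkkjggghghjgh} and the uniqueness hypothesis \eqref{roststghhh77889lkhjhfvffuhuh} (where taking $p=0$ makes the last step routine), which the paper leaves to the reader.
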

\begin{proof}
Let $X:=W_0^{1,q}(\O,\R^k)$
(a separable reflexive Banach space), $H:=L^2(\O,\R^k)$ (a Hilbert
space) and $T\in \mathcal{L}(X;H)$ be a usual embedding operator
from $W_0^{1,q}(\O,\R^k)$ into $L^2(\O,\R^k)$. Then $T$ is an
injective inclusion with dense image. Furthermore,
$X^*=W^{-1,q^*}(\O,\R^k)$ where $q^*=q/(q-1)$ and the corresponding
operator $\widetilde{T}\in \mathcal{L}(H;X^*)$, defined as in
\er{tildet}, is a usual inclusion of $L^2(\O,\R^k)$ into
$W^{-1,q^*}(\O,\R^k)$.
Then $\{X,H,X^*\}$ is an evolution triple with the
corresponding inclusion operators $T\in \mathcal{L}(X;H)$ and
$\widetilde{T}\in \mathcal{L}(H;X^*)$, as it was defined in
Definition \ref{7bdf}. Moreover, by the Theorem about the compact
embedding in Sobolev Spaces it is well known that
$T$ is a compact operator.

Next, for every $t\in[0,T_0]$ let $\Phi_t(x):X\to[0,+\infty)$ be
defined by
\begin{equation*}
\Phi_t(u):=\int_\O\Psi\big(\nabla u(x),x,t\big)dx
+\frac{k_\O}{2}\int_\O|u(x)|^2dx \quad\forall u\in
W^{1,q}(\O,\R^k)\equiv X\,.
\end{equation*}
where
\begin{equation}\label{ghkgjgiooioioppiljkjk}
k_\O:=
\begin{cases}
0\quad\quad\text{if}\;\;\O\;\;\text{is bounded}\,,\\
1\quad\quad\text{if}\;\;\O\;\;\text{is unbounded}\,.
\end{cases}
\end{equation}
Then
$\Phi_t(x)$ is G\^{a}teaux differentiable at every $x\in X$, satisfy
$\Phi_t(0)=0$ and by \er{roststglkjjjaplneabst} it satisfies the
growth condition
\begin{equation*}
(1/C)\,\|x\|_X^q-C\leq \Phi_t(x)\leq C\,\|x\|_X^q+C\quad\forall x\in
X,\;\forall t\in[0,T]\,,
\end{equation*}
Furthermore, for every $t\in[0,T_0]$ let $\Lambda_t(x):X\to X^*$ be
defined by
\begin{equation*}
\Big<\delta,\Lambda_t(u)\Big>_{X\times X^*}:=\int_\O
\Gamma\big(\nabla u(x),x,t\big):\nabla\delta(x)\,dx\quad\forall
u,\delta\in W^{1,q}(\O,\R^k)\equiv X\,.
\end{equation*}
Then
$\Lambda_t(x):X\to X^*$ is G\^{a}teaux differentiable at every $x\in
X$, and, by \er{roststlambdglkjjjaplneabst} its derivative satisfies
the growth condition
\begin{equation*}
\|D\Lambda_t(x)\|_{\mathcal{L}(X;X^*)}\leq C\,\|x\|_X^{q-2}+
C\quad\forall x\in X,\;\forall t\in[0,T_0]\,,
\end{equation*}
for some $C>0$. Moreover, by \er{Monotoneglkjjjaplneabst},
$\Lambda_t$ satisfy the following monotonicity conditions
\begin{equation*}
\Big<h,D\Lambda_t(x)\cdot h\Big>_{X\times X^*}\geq 0
\quad\quad\forall x,h\in X\;\forall t\in[0,T_0]\,.
\end{equation*}
Finally for every $t\in[0,T_0]$ let $F_t(w):H\to X^*$ be defined by
\begin{multline}\label{jhfjggkjkjhhkhkloopll}
\Big<\delta,F_t(w)\Big>_{X\times X^*}:=\int_\O
\bigg\{\Xi\big(w(x),x,t\big):\nabla\delta(x)- \Big(k_\O w(x)+
\Theta\big(w(x),x,t\big) \Big)
\cdot\delta(x)\bigg\}dx\\
\forall w\in L^2(\O,\R^k)\equiv H,\; \forall\delta\in
W^{1,q}(\O,\R^k)\equiv X\,.
\end{multline}
Then
$F_t(w)$ is G\^{a}teaux differentiable at every $w\in H$, and, since
$\Xi$ and $\Theta$ are Lipshitz functions, the derivative of
$F_t(w)$ satisfy the Lipschitz condition
\begin{equation}
\label{roststlambdglkFFjjjapljjjkhkl}
\|DF_t(w)\|_{\mathcal{L}(H;X^*)}\leq C\quad\forall w\in H,\;\forall
t\in[0,T_0]\,,
\end{equation}
for some $C>0$.
%
%
%
%
%
%
Thus
all the conditions of Theorem \ref{defHkkkkglkjjj} are satisfied.
Applying this Theorem completes the proof.
\end{proof}
\begin{remark}\label{vhhgyhgjfgghfggh}
If in the framework of Proposition \ref{divform} we suppose $q=2$
and that $D_A\Psi(A,x,t)$ and $\Gamma(A,x,t)$ are linear by the
first argument $A$, however we assume that $\O$ is unbounded, we
obtain the similar existence result as in Proposition \ref{divform},
as a consequence of Theorem \ref{thhypppggghhhj} with $Z=X$.

 Indeed in the case of unbounded $\O$, let $V_j=L^2(\O\cap
B_{R_j}(0),\R^k)$ for some sequence $R_j\to +\infty$ and set $L_j\in
\mathcal{L}(H,V_j)$ by
$$L_j\cdot \big(h(x)\big):=h(x)\llcorner\O\cap B_{R_j}(0)\in L^2(\O\cap B_{R_j}(0),\R^k)=V_j\quad\quad\forall h(x)\in L^2(\O,\R^k)=H\,.$$
Then by the standard embedding theorems in the Sobolev Spaces the
operator $L_j\circ T\in \mathcal{L}(X,V_j)$ is compact for every
$j$. Moreover, if $\{h_n\}\subset H$ is a sequence such that
$h_n\rightharpoonup h_0$ weakly in $H$ and $L_j\cdot h_n\to L_j\cdot
h_0$ strongly in $V_j$ as $n\to +\infty$ for every $j$, then we have
$h_n\to h_0$ strongly in $L^2_{loc}(\O,\R^k)$ and thus, by
\er{jhfjggkjkjhhkhkloopll} and \er{roststlambdglkFFjjjapljjjkhkl} we
must have $F_t(h_n)\rightharpoonup F_t(h_0)$ weakly in $X^*$.
\end{remark}

\subsection{Parabolic systems in a non-divergent form}
Let $\Psi(L,x,t):\R_L^{k}\times\R_x^N\times\R_t\to\R$ be a
nonnegative measurable function. Moreover, assume that $\Psi(L,x,t)$
is $C^1$ as a function of the first argument $L$ when $(x,t)$ are
fixed, which satisfies $\Psi(0,x,t)=0$ and it is convex by the first
argument $L$ when $(x,t)$ are fixed, i.e.
$$\Psi\big(\alpha L_1+(1-\alpha)L_2,x,t\big)\leq\alpha\Psi(L_1,x,t)+(1-\alpha)\Psi(L_2,x,t)$$
for every $\alpha\in[0,1]$, $L_1,L_2\in\R^{k}$, $x\in\R^N$ and
$t\in\R$. Moreover, we assume that $\Psi$ satisfies the following
growth condition
\begin{equation}\label{roststglkjjjaplneabstnedv} (1/C)|L|^q-C\leq
\Psi(L,x,t)\leq C|L|^q+C \quad\quad\forall L\in\R^{k},\;\forall
x\in\R^N,\;\forall t\in\R\,,
\end{equation}
where $C>0$ is some constant and $q\in [2,+\infty)$. Next let
$\Gamma(L,x,t):\R_L^{k}\times\R_x^N\times\R_t\to\R^{k}$ be a
measurable function. Moreover, assume that $\Gamma(L,x,t)$ is $C^1$
as a function of the first argument $L$ when $(x,t)$ are fixed,
which satisfies
\begin{equation}\label{jeheefkeplplkjjkgj}
\Gamma(0,x,t)\in L^{q^*}\big(\R;L^{2}(\R^N,\R^k)\big)\,,
\end{equation}
the following monotonicity condition
\begin{equation}\label{Monotoneglkjjjaplneabstnedv}
\sum\limits_{1\leq i,j\leq k} h_{i}h_{j}\frac{\partial
\Gamma_{i}}{\partial L_{j}}(L,x,t)\geq 0 \quad\quad\forall\, h,L\in
\R^{k},\;\forall x\in\R^N,\;\forall t\in\R\,,
\end{equation}
and the following growth condition
\begin{multline}\label{roststlambdglkjjjaplneabstnedv}
\bigg|\frac{\partial \Gamma}{\partial L_{j}}(L,x,t)\bigg|\leq
C\,|L|^{q-2}+ C\quad\quad \forall L\in \R^{k},\;\forall x\in
\R^N,\;\forall t\in\R,\quad\forall j\in\{1,\ldots,k\}\,.
\end{multline}
Finally let $\Theta(A,L,x,t):\R_A^{k\times
N}\times\R_L^k\times\R_x^N\times\R_t\to\R^{k}$ be a measurable
function. Moreover, assume that $\Theta(A,L,x,t)$ is $C^1$ as a
function of the first two arguments $A$ and $L$ when $(x,t)$ are
fixed. We also assume that $\Theta(A,L,x,t)$ is globally Lipschitz
by the first two arguments $A$ and $L$ and
\begin{equation}\label{jeheefkeplpllhkhlkhllh}
\Theta(0,0,x,t)\in L^{q^*}\big(\R;L^2(\R^N,\R^k)\big)\,.
\end{equation}
\begin{proposition}\label{divformnedv}
Let $\Psi,\Gamma,\Theta$ be as above and let $\Omega\subset\R^N$ be
a bounded open set,
$2\leq q<+\infty$ and $T_0>0$.
Then for every $w_0(x)\in W^{1,2}_0(\O,\R^k)$ there exists
$u(x,t)\in L^q\big(0,T_0;W^{2,q}_{loc}(\O,\R^k)\big)$, such that
$\Delta_x u(x,t)\in L^q\big(0,T_0;L^q(\O,\R^k)\big)$, $u(x,t)\in
L^\infty\big(0,T_0;W^{1,2}_0(\O,\R^k)\big)\cap W^{1,q^*}\big(0,T_0;
L^{q^*}(\O,\R^k)\big)$ ,where $q^*:=q/(q-1)$, $u(x,t)$ is
$W^{1,2}_0(\O,\R^k)$-weakly continuous on $[0,T_0]$, $u(x,0)=w_0(x)$
and $u(x,t)$ is a solution to
\begin{multline}\label{uravnllllgsnachlklimjjjaplneabstnedv}
\frac{d u}{dt}(x,t)=\Theta\big(\nabla_x u(x,t),u(x,t),x,t\big)+
\Gamma\big(\Delta_x u(x,t),x,t\big)+ \nabla_L\Psi\big(\Delta_x
u(x,t),x,t\big)\quad\quad\text{in}\;\;\O\times(0,T_0)\,,
\end{multline}
where $\nabla_L\Psi(L,x,t)$ is a partial gradient by the first
variable $L$.
Moreover if $\Psi(L,x,t)$ is uniformly convex function by the first
argument $L$ then such a solution $u$ is unique.
\end{proposition}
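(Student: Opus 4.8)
The plan is to cast \eqref{uravnllllgsnachlklimjjjaplneabstnedv} as an abstract evolution equation of the form treated in Theorem \ref{defHkkkkglkjjj}, with the non-divergence structure absorbed by a careful choice of the evolution triple. The key observation is that the natural functional-analytic setting here has $H:=W^{1,2}_0(\O,\R^k)$ as the pivot Hilbert space (not $L^2$, as in the divergence-form case), because the time derivative $\frac{du}{dt}$ pairs against the solution through the $W^{1,2}_0$ inner product once we integrate against $-\Delta_x\delta$. Concretely, I would set $X$ to be the space of $u\in W^{1,2}_0(\O,\R^k)$ with $\Delta_x u\in L^q(\O,\R^k)$, normed by $\|u\|_X:=\|u\|_{W^{1,2}_0}+\|\Delta_x u\|_{L^q}$; this is a separable reflexive Banach space. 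The inclusion $T:X\to H=W^{1,2}_0(\O,\R^k)$ is injective with dense image, and $\widetilde T:H\to X^*$ is defined by \eqref{tildet}, so $I=\widetilde T\circ T$. Using the identity $\langle\delta,\widetilde T\cdot w\rangle_{X\times X^*}=(\delta,w)_{W^{1,2}_0}=\int_\O\nabla\delta:\nabla w\,dx = -\int_\O\delta\cdot\Delta w\,dx$ (for $w$ smooth enough), the abstract equation $\frac{d}{dt}(I\cdot u)+\Lambda_t(u)+D\Phi_t(u)=0$ reproduces \eqref{uravnllllgsnachlklimjjjaplneabstnedv} when we choose $\Phi_t$, $\Lambda_t$, $F_t$ so that $-\Delta$ of the right-hand side operators appears in the duality pairing against test functions in $X$.

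Next I would define the constitutive operators. Set $\Phi_t(u):=\int_\O\Psi(\Delta_x u(x),x,t)\,dx$; by \eqref{roststglkjjjaplneabstnedv} this is convex, G\^ateaux differentiable, vanishes at $0$, and satisfies the growth bound \eqref{roststglkjjj} with respect to $\|\cdot\|_X$ (here the lower bound on $\Psi$ is not needed, only $0\le\Phi_t(x)\le C\|x\|_X^q+C$). Its differential is $\langle\delta,D\Phi_t(u)\rangle=\int_\O\nabla_L\Psi(\Delta u,x,t)\cdot\Delta\delta\,dx$, which is the weak form of $-\Delta\big(\nabla_L\Psi(\Delta u,x,t)\big)$ paired correctly. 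Define $\Lambda_t(u)$ by $\langle\delta,\Lambda_t(u)\rangle:=-\int_\O\Gamma(\Delta_x u,x,t)\cdot\Delta_x\delta\,dx$; by \eqref{Monotoneglkjjjaplneabstnedv}, \eqref{roststlambdglkjjjaplneabstnedv}, \eqref{jeheefkeplplkjjkgj} this is G\^ateaux differentiable, satisfies the monotonicity \eqref{Monotoneglkjjj} (the quadratic form of $D\Lambda_t$ is nonnegative by the chain rule applied to the monotonicity hypothesis on $\Gamma$), satisfies the derivative growth bound \eqref{roststlambdglkjjj}, and $\Lambda_t(0)\in L^{q^*}(a,b;X^*)$. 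Finally define $F_t(u):=$ the operator associated to the lower-order term $-\Delta\big(\Theta(\nabla_x u,u,x,t)\big)$; since $\Theta$ is globally Lipschitz, and since $\nabla_x$ maps $H=W^{1,2}_0$ into $L^2$ compactly into $L^2_{loc}$ after composing with the inclusion from $X$... actually I should define $F_t$ as a function of $w\in H$: $\langle\delta,F_t(w)\rangle:=-\int_\O\Theta(\nabla_x w,w,x,t)\cdot\Delta_x\delta\,dx$, which is Lipschitz from $H$ to $X^*$ because $\Theta$ is Lipschitz and $w\mapsto(\nabla w,w)$ is bounded $H\to L^2\times L^2$; its weak-to-strong continuity from $H$ to $X^*$ follows from compactness of the embedding $W^{1,2}_0(\O,\R^k)\hookrightarrow L^2(\O,\R^k)$ (for bounded $\O$) applied to $\nabla w_n$, hence $\Theta(\nabla w_n,w_n,\cdot)\to\Theta(\nabla w,w,\cdot)$ strongly in $L^2$, which via $-\Delta$ gives strong convergence in $X^*$. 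Then I would verify the coercivity condition \eqref{Monotoneglkjjjuhuhgutuityiyikkkjggghghjgh}: the leading term $\langle u,D\Phi_t(u)\rangle\ge(1/C)\int_\O|\Delta u|^q - C \ge (1/C')\|u\|_X^q - C'$ uses the lower bound in \eqref{roststglkjjjaplneabstnedv} together with elliptic regularity / Poincar\'e to control $\|u\|_{W^{1,2}_0}$ by $\|\Delta u\|_{L^2}$; the $\Lambda_t$ and $F_t$ contributions are absorbed using Young's inequality and the Lipschitz bound on $\Theta$ (the $F_t$ term gives at worst a term like $\bar C(\|u\|_X+1)(\|T\cdot u\|_H+1)$), and the compact operator $L$ in \eqref{Monotoneglkjjjuhuhgutuityiyikkkjggghghjgh} can be taken as the embedding of $X$ into $L^2(\O,\R^k)$ or simply absorbed since $\|T\cdot u\|_H$ already appears.

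With all hypotheses of Theorem \ref{defHkkkkglkjjj} in place, that theorem produces $u(t)\in L^q(a,b;X)$ with $w(t)=T\cdot u(t)\in L^\infty(a,b;H)$, $v(t)=\widetilde T\cdot w(t)\in W^{1,q^*}(a,b;X^*)$, $w$ being $H$-weakly continuous, $w(0)=w_0$, solving $\frac{dv}{dt}+F_t(u)+\Lambda_t(u)+D\Phi_t(u)=0$; unwinding the definitions of $T$, $\widetilde T$, $\Phi_t$, $\Lambda_t$, $F_t$ via the duality identity gives exactly \eqref{uravnllllgsnachlklimjjjaplneabstnedv} tested against all $\delta\in X$, hence in the sense of distributions on $\O\times(0,T_0)$, together with the claimed regularity: $u\in L^q(0,T_0;W^{2,q}_{loc})$ with $\Delta u\in L^q(0,T_0;L^q)$ (from membership in $X$), $u\in L^\infty(0,T_0;W^{1,2}_0)$ (from $w\in L^\infty(a,b;H)$), $\frac{du}{dt}\in W^{1,q^*}(0,T_0;L^{q^*})$ after identifying $X^*$ with a subspace of $L^{q^*}$ via $\widetilde T$ — here I use Lemma \ref{vlozhenie} to transfer the time-derivative information from $v=\widetilde T\cdot w$ back to $w$ itself as an $L^{q^*}(L^{q^*})$-valued derivative. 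For uniqueness under the assumption that $\Psi(L,x,t)$ is uniformly convex in $L$, I would verify the strong monotonicity condition \eqref{roststghhh77889lkhjhfvffuhuh} of Theorem \ref{defHkkkkglkjjjgkjgkjgggk}: uniform convexity of $\Psi$ gives $\langle h, D\Phi_t(x+h)-D\Phi_t(x)\rangle\ge c\int_\O|\Delta h|^2\,dx = c\|h\|_{?}^2$, and one checks this dominates the error terms from $D\Lambda_t$ (monotone, so contributes $\ge0$) and $DF_t$ (Lipschitz, giving a term controlled by $\|T\cdot h\|_H^2$ after another use of the $X\hookrightarrow H$ mapping), which is exactly the structure of \eqref{roststghhh77889lkhjhfvffuhuh} with $f(h,t)=\|\Delta h\|_{L^2}$ and $p=0$, $k_0=1$; then Theorem \ref{EulerLagrangeInt} (invoked inside Theorem \ref{defHkkkkglkjjjgkjgkjgggk}) yields uniqueness. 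The main obstacle I anticipate is the precise functional-analytic bookkeeping around the non-divergence structure — in particular making sure that $X$ (with the $\Delta$-based norm) is genuinely reflexive and separable, that $T:X\to W^{1,2}_0$ is compact (this needs $\O$ bounded and uses that $X$ embeds into $W^{1,2}_0$ boundedly while $W^{2,q}_{loc}\cap W^{1,2}_0$-type bounds give compactness into $W^{1,2}_0$ via Rellich on a boundary-layer exhaustion), and that the identification $X^*\cong$ (a quotient/subspace of) $L^{q^*}$ under $\widetilde T$ correctly renders the regularity statement. Once those identifications are pinned down, everything else is a routine translation of the hypotheses on $\Psi,\Gamma,\Theta$ into the abstract conditions already verified above.
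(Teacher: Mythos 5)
Your construction is, in its architecture, exactly the paper's: the same space $X=\{u\in W^{1,2}_0(\O,\R^k):\;\Delta u\in L^q(\O,\R^k)\}$, the pivot Hilbert space $H=W^{1,2}_0(\O,\R^k)$, the representation of elements of $X^*$ through the pairing $h\mapsto-\int_\O h\cdot\Delta\delta\,dx$, the same three constitutive operators built from $\Psi,\Gamma,\Theta$, an appeal to Theorem \ref{defHkkkkglkjjj} together with Lemma \ref{vlozhenie} to recover $\frac{du}{dt}\in L^{q^*}(0,T_0;L^{q^*})$, and the uniform-convexity monotonicity condition for uniqueness. The differences are bookkeeping: the paper augments $\Phi_t$ by $\frac12\int_\O|\nabla u|^2dx$ and compensates with a $+w$ inside $F_t$ so that coercivity in the full $X$-norm is automatic, whereas you extract the missing $\|\nabla u\|_{L^2}$-control from $\|\nabla u\|^2_{L^2}=-\int_\O u\cdot\Delta u\,dx\le C\|\nabla u\|_{L^2}\|\Delta u\|_{L^q}$ on the bounded domain; both work. (Your signs for $\Lambda_t$ and $F_t$ are opposite to what the duality identity forces: with $\langle\delta,\Lambda_t(u)\rangle=-\int_\O\Gamma\cdot\Delta\delta\,dx$ the quadratic form of $D\Lambda_t$ comes out $\le 0$ rather than $\ge 0$ and the recovered PDE carries the wrong sign on $\Gamma$ and $\Theta$; this is trivially fixable but does need fixing, since the monotonicity hypothesis is sign-sensitive.)

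The one step whose justification genuinely fails is the weak-to-strong continuity of the $\Theta$-term. You assert that it ``follows from compactness of the embedding $W^{1,2}_0(\O,\R^k)\hookrightarrow L^2(\O,\R^k)$ applied to $\nabla w_n$.'' Rellich cannot be applied to $\nabla w_n$: if $w_n\rightharpoonup w$ merely weakly in $H=W^{1,2}_0$, the gradients are only bounded in $L^2$ and converge only weakly, and since $\Theta$ is a nonlinear (Lipschitz) function of its first argument, $\Theta(\nabla w_n,w_n,x,t)$ need not converge to $\Theta(\nabla w,w,x,t)$ even weakly. So $F_t$ is \emph{not} weak-to-strong continuous as a map on $H$. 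What Theorem \ref{defHkkkkglkjjj} actually demands, and what rescues the argument, is continuity along sequences $x_n\rightharpoonup x$ weakly in $X$: then $\Delta x_n$ is bounded in $L^q\subset L^2$, $x_n\to x$ strongly in $L^2$ by Rellich, and the identity $\|\nabla(x_n-x)\|^2_{L^2}=-\int_\O(x_n-x)\cdot\Delta(x_n-x)\,dx$ shows that the inclusion $T:X\to H$ is \emph{compact}; only then do you get $\nabla x_n\to\nabla x$ strongly in $L^2$, whence the Lipschitz property of $\Theta$ yields strong convergence of $\Theta(\nabla x_n,x_n,\cdot,t)$ in $L^2\subset L^{q^*}$ and of $F_t(T\cdot x_n)$ in $X^*$. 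You list the compactness of $T:X\to W^{1,2}_0$ separately as something to verify, but you never connect it to this step; as written, the continuity claim rests on a false application of Rellich, and the compactness of $T$ is the missing link that must be supplied.
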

\begin{proof}
Let
\begin{equation}\label{hfhfvgfibihfbhfhbfhbh}
X:=\Big\{u(x)\in W_0^{1,2}(\O,\R^k):\;\Delta u(x)\in
L^q(\O,\R^k)\Big\}\,,
\end{equation}
for $2\leq q<+\infty$ endowed with the norm
\begin{equation}\label{nrmpoprgbjuyh}
\|u\|_X:=\|\Delta u\|_{L^q(\O,\R^k)}+\|\nabla
u\|_{L^2(\O,\R^{k\times N})}
\quad\quad\forall u\in X\subset W_0^{1,2}(\O,\R^k)\,.
\end{equation}
Thus $X$ is a separable reflexive Banach space. Next let
$H:=W^{1,2}_0(\O,\R^k)$ endowed with the standard scalar product
$<\phi_1,\phi_2>_{H\times
H}=\int_\O\nabla\phi_1(x):\nabla\phi_2(x)\,dx$ (a Hilbert space) and
$T\in \mathcal{L}(X;H)$ be a trivial embedding operator from
$X\subset W_0^{1,2}(\O,\R^k)$ into $H=W_0^{1,2}(\O,\R^k)$. Then $T$
is an injective inclusion with dense image. Moreover,
$T$ is a compact operator. In order to follow the definitions above
we identify the dual space $H^*$ with $H$. So in our notations
$\big\{W^{1,2}_0(\O,\R^k)\big\}^*=W^{1,2}_0(\O,\R^k)$ (although in
the usual notations $\big\{W^{1,2}_0(\O,\R^k)\big\}^*$ identified
with the isomorphic space $W^{-1,2}(\O,\R^k)$). Next define
$S\in\mathcal{L}\big(L^{q*}(\O,\R^k), X^*\big)$ by the formula
\begin{equation}\label{funcoprrrthhj}
\Big<\delta,S\cdot h\Big>_{X\times X^*}=-\int_\O
h(x)\cdot\Delta\delta(x)\,dx\quad\quad\forall \delta\in X
,\;\forall h\in L^{q^*}(\O,\R^k)\,.
\end{equation}
Then, since for every $\phi\in L^q(\O,\R^k)$ there exists unique
$\delta_\phi\in X$ such that $\Delta\delta_\phi=\phi$ we deduce
that $S$ is an injective inclusion (i.e. $\ker S=0$).
For the corresponding operator $\widetilde{T}\in
\mathcal{L}(H;X^*)$, by \er{tildet} and \er{funcoprrrthhj} we must
have
\begin{multline}\label{tildetfddgfjhnhj}
\big<u,\widetilde{T}\cdot w\big>_{X\times X^*}:=\big<T\cdot
u,w\big>_{H\times H}=\int_\O\nabla u(x):\nabla w(x)\,dx=-\int_\O
w(x)\cdot\Delta u(x)\,dx=\Big<u,S\cdot(L\cdot w)\Big>_{X\times X^*}\\
\text{for every}\; w\in H\;\text{and}\;u\in X\,,
\end{multline}
where $L$ is a trivial inclusion of $W^{1,2}_0(\O,\R^k)$ into
$L^{q^*}(\O,\R^k)$ ($q^*\leq 2$). So
\begin{equation}\label{gbgyfyffftfdfy687t8k}
\widetilde{T}=S\circ L\,.
\end{equation}
Then $\{X,H,X^*\}$ is an evolution triple with the corresponding
inclusion operators $T\in \mathcal{L}(X;H)$ and $\widetilde{T}\in
\mathcal{L}(H;X^*)$, as it was defined in Definition \ref{7bdf}.

 Next, for every $t\in[0,T_0]$ let $\Phi_t(x):X\to[0,+\infty)$ be
defined by
\begin{equation*}
\Phi_t(u):=\int_\O \Big( \Psi\big(\Delta u(x),x,t\big)
+\frac{1}{2}\big|\nabla u(x)\big|^2\Big) dx\quad\forall u\in
X\,.
\end{equation*}
Then
$\Phi_t(x)$ is G\^{a}teaux differentiable at every $x\in X$,
satisfies $\Phi_t(0)=0$ and
it satisfies the
growth condition
\begin{equation*}
(1/C)\,\|x\|_X^q-C\leq \Phi_t(x)\leq C\,\|x\|_X^q+C\quad\forall x\in
X,\;\forall t\in[0,T_0]\,.
\end{equation*}
Furthermore, for every $t\in[0,T_0]$ let $\Lambda_t(x):X\to X^*$ be
defined by
\begin{equation*}
\Big<\delta,\Lambda_t(u)\Big>_{X\times X^*}:=\int_\O
\Gamma\big(\Delta u(x),x,t\big)\cdot\Delta\delta(x)\,dx\quad\forall
u,\delta\in
X\,,
\end{equation*}
i.e.
\begin{equation}\label{fhdhvhdfhivhiihf}
\Lambda_t(u)=-S\cdot\Big(\Gamma\big(\Delta
u(x),x,t\big)\Big)\quad\forall u\in X\,.
\end{equation}
Then
$\Lambda_t(x):X\to X^*$ is G\^{a}teaux differentiable at every $x\in
X$, and, by \er{roststlambdglkjjjaplneabst} its derivative satisfies
the growth condition
\begin{equation*}
\|D\Lambda_t(x)\|_{\mathcal{L}(X;X^*)}\leq C\,\|x\|_X^{q-2}+
C\quad\forall x\in X,\;\forall t\in[0,T_0]\,,
\end{equation*}
for some $C>0$. Moreover, by \er{Monotoneglkjjjaplneabst},
$\Lambda_t$ satisfies the following monotonicity conditions
\begin{equation*}
\Big<h,D\Lambda_t(x)\cdot h\Big>_{X\times X^*}\geq 0
\quad\quad\forall x,h\in X\;\forall t\in[0,T_0]\,.
\end{equation*}
Finally for every $t\in[0,T_0]$ let $F_t(w):H\to X^*$ be defined by
\begin{multline*}
\Big<\delta,F_t(w)\Big>_{X\times X^*}:=\int_\O \Big(
\Theta\big(\nabla w(x),w(x),x,t\big) +w(x)\Big) \cdot\Delta\delta(x)
dx\quad \forall w\in W^{1,2}_0(\O,\R^k)\equiv H,\; \forall\delta\in
X\,,
\end{multline*}
i.e.
\begin{equation}\label{fhdhvhdfhivhiihfhugugugulll}
F_t(w)=-S\cdot\Big( \Theta\big(\nabla w(x),w(x),x,t\big) +w(x) \Big)
\quad\forall w\in H\,.
\end{equation}
Then
$F_t(w)$ is G\^{a}teaux differentiable at every $w\in H$, and, since
$\Theta$ is a Lipshitz function, the derivative of $F_t(w)$
satisfies Lipschitz condition
\begin{equation}
\label{roststlambdglkFFjjjapljjjhhhhh}
\|DF_t(w)\|_{\mathcal{L}(H;X^*)}\leq C\quad\forall w\in H,\;\forall
t\in[0,T_0]\,.
\end{equation}
%
%
%

 Thus all the conditions of Theorem \ref{defHkkkkglkjjj} are
satisfied. Applying this Theorem and \er{funcoprrrthhj}, we obtain
that
for every $w_0(x)\in W^{1,2}_0(\O,\R^k)$ there exists $u(x,t)\in
L^q\big(0,T_0;W^{2,q}_{loc}(\O,\R^k)\big)$, such that $u(x,t)\in
L^\infty\big(0,T_0;W^{1,2}_0(\O,\R^k)\big)$ ,where $q^*:=q/(q-1)$,
$u(x,t)$ is $W^{1,2}_0(\O,\R^k)$-weakly continuous on $[0,T_0]$,
$u(x,0)=w_0(x)$ and $u(x,t)$ is a solution to
\begin{equation}\label{uravnllllgsnachlklimjjjapltggtjgffyyfiyllll}
\frac{d
v}{dt}(t)+\Lambda_t\big(u(t)\big)+F_t\big(w(t)\big)+D\Phi_t\big(u(t)\big)=0\quad\text{for
a.e.}\; t\in(0,T_0)\,.
\end{equation}
Thus, by \er{uravnllllgsnachlklimjjjapltggtjgffyyfiyllll},
\er{funcoprrrthhj}, \er{gbgyfyffftfdfy687t8k},
\er{fhdhvhdfhivhiihf}, \er{fhdhvhdfhivhiihfhugugugulll} and Lemma
\ref{vlozhenie} we infer that $u(x,t)\in W^{1,q^*}\big(0,T_0;
L^{q^*}(\O,\R^k)\big)$ and
\begin{multline}\label{uravnllllgsnachlklimjjjaplneabstnedvggg}
\int\limits_\O\Bigg\{-\frac{d u}{dt}(x,t)+\Theta\big(\nabla_x
u(x,t),u(x,t),x,t\big)\\+ \Gamma\big(\Delta_x u(x,t),x,t\big)+
\nabla_L\Psi\big(\Delta_x
u(x,t),x,t\big)\Bigg\}\cdot\Delta\delta(x)\,dx=0\\ \forall
t\in(0,T_0),\;\forall \delta(x)\in X\,.
\end{multline}
Therefore
\begin{multline}\label{uravnllllgsnachlklimjjjaplneabstnedvgggjkjhi}
\frac{d u}{dt}(x,t)=\Theta\big(\nabla_x u(x,t),u(x,t),x,t\big)+
\Gamma\big(\Delta_x u(x,t),x,t\big)+ \nabla_L\Psi\big(\Delta_x
u(x,t),x,t\big)\\ \forall (x,t)\in\O\times(0,T_0)\,,
\end{multline}
and the result follows.
\end{proof}

\subsection{Hyperbolic systems of second order}
%
%
%
%
%
%
%
%
%
%
\begin{proposition}\label{divformnedvkkk}
Let $\Omega\subset\R^N$ be an open set and $T_0>0$. Furthermore, let
$\Xi(L,x,t):\R_L^k\times\R_x^N\times\R_t\to\R^{k\times N}$,
$\Upsilon(L,x,t):\R_L^k\times\R_x^N\times\R_t\to\R^{k}$ and
$\Theta(L,x,t):\R_L^k\times\R_x^N\times\R_t\to\R^{k}$ be measurable
functions. Moreover, assume that $\Xi(L,x,t)$, $\Upsilon(L,x,t)$ and
$\Theta(L,x,t)$ are $C^1$ as a functions of the first argument $L$
when $(x,t)$ are fixed. We also assume that $\Upsilon(L,x,t)$
$\nabla_x\Upsilon(L,x,t)$, $\Theta(L,x,t)$, $\Xi(L,x,t)$ and
$\nabla_x\Xi(L,x,t)$ are globally Lipschitz by the first argument
$L$, $\Upsilon(L,x,t)$ is globally Lipschitz by the last argument
$t$, $\Theta(0,x,t)\in L^2\big(\R;L^2(\R^N,\R^k)\big)$,
$\Xi(0,x,t)\in L^2\big(\R;W^{1,2}(\R^N,\R^{k\times N})\big)$
and $\Upsilon(0,x,t)\in L^2\big(\R;W^{1,2}_0(\O,\R^k)\big)$.
Then for every $w_0(x)\in W^{1,2}_0(\O,\R^k)$ and $h_0(x)\in
L^2(\O,\R^k)$ there exists
$\,u(x,t)\in L^\infty\big(0,T_0;W^{1,2}_0(\O,\R^k)\big)$ such that
$\frac{d u}{dt}(x,t)\in L^\infty\big(0,T_0;L^2(\O,\R^k)\big)\cap
W^{1,2}\big(0,T_0; W^{-1,2}(\O,\R^k)\big)$,
$u(x,t)$ is $W^{1,2}_0(\O,\R^k)$-weakly continuous on $[0,T_0]$,
$\frac{d u}{dt}(x,t)$ is $L^2(\O,\R^k)$-weakly continuous on
$[0,T_0]$, $u(x,0)=w_0(x)$, $\frac{d u}{dt}(x,0)=h_0(x)$ and
$u(x,t)$ is a solution to
\begin{multline}\label{uravnllllgsnachlklimjjjaplneabstnedvkkk}
\frac{d^2 u}{dt^2}(x,t)-\Delta_x
u(x,t)+\partial_t\big\{\Upsilon\big(u(x,t),x,t\big)\big\}+
div_x\big\{\Xi\big(u(x,t),x,t\big)\big\}+\Theta\big(u(x,t),x,t\big)=0\\
\text{in}\;\;\O\times(0,T_0)\,.
\end{multline}
\end{proposition}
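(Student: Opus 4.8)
\textbf{Reduction to a first order system and choice of the evolution triple.} I would first introduce $p:=\frac{du}{dt}+\Upsilon(u,\cdot,\cdot)$; since $\partial_t\{\Upsilon(u,x,t)\}=\frac{dp}{dt}-\frac{d^2u}{dt^2}$, equation \er{uravnllllgsnachlklimjjjaplneabstnedvkkk} is formally equivalent to
\[
\frac{du}{dt}=p-\Upsilon(u,\cdot,t),\qquad \frac{dp}{dt}=\Delta_x u-\Div_x\{\Xi(u,\cdot,t)\}-\Theta(u,\cdot,t),
\]
whose conserved energy is $\tfrac12\int_\O|\nabla_x u|^2+\tfrac12\int_\O|p|^2$. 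Hence the abstract state should be $w:=(\nabla_x u,\,p)$, \emph{not} $(u,p)$, and $u$ will be recovered a posteriori. Accordingly I would work in the evolution triple $\{X,H,X^*\}$ with $H:=L^2(\O,\R^{k\times N})\times L^2(\O,\R^k)$, $X:=W^{1,2}_0(\O,\R^{k\times N})\times W^{1,2}_0(\O,\R^k)$ (separable, reflexive, densely and injectively included in $H$), $X^*=W^{-1,2}(\O,\R^{k\times N})\times W^{-1,2}(\O,\R^k)$, and $T,\widetilde T,I$ the canonical inclusions, as in Definition~\ref{7bdf}.

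\textbf{The operators and verification of the hypotheses.} For $q\in L^2(\O,\R^{k\times N})$ set $u_q:=(-\Delta)^{-1}(-\Div_x q)\in W^{1,2}_0(\O,\R^k)$, so that $\nabla_x u_q$ is the orthogonal projection of $q$ onto the closed subspace of gradients and $q\mapsto u_q$ is bounded, weak-to-weak continuous and, by Rellich/Lemma~\ref{ComTem1PP}, weak-to-$L^2_{loc}$-strongly continuous. Define $\Lambda_t(q,p):=(-\nabla_x p,\,-\Div_x q)$ and $F_t(q,p):=\big(\nabla_x\{\Upsilon(u_q,\cdot,t)\},\ \Div_x\{\Xi(u_q,\cdot,t)\}+\Theta(u_q,\cdot,t)\big)$. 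Then: $\Lambda_t\equiv\Lambda\in\mathcal L(H,X^*)$ is bounded and time-independent, and a plain integration by parts gives the key skew-symmetry $\langle h,\Lambda(T\cdot h)\rangle_{X\times X^*}=0$ for all $h\in X$; $F_t:H\to X^*$ is globally Lipschitz with $F_t(0)\in L^2(0,T_0;X^*)$, by the global Lipschitz assumptions on $\Upsilon,\Xi,\Theta,\nabla_x\Upsilon,\nabla_x\Xi$ and the integrability of $\Upsilon(0,\cdot,t),\Xi(0,\cdot,t),\Theta(0,\cdot,t)$; $F_t$ is weak-to-weak continuous from $H$ to $X^*$, since $(q_n,p_n)\rightharpoonup(q,p)$ in $H$ forces $u_{q_n}\to u_q$ in $L^2_{loc}(\O)$, hence $\Upsilon(u_{q_n},\cdot,t)\rightharpoonup\Upsilon(u_q,\cdot,t)$, $\Xi(u_{q_n},\cdot,t)\rightharpoonup\Xi(u_q,\cdot,t)$, $\Theta(u_{q_n},\cdot,t)\rightharpoonup\Theta(u_q,\cdot,t)$ in $L^2(\O)$; and the lower bound $\langle h,\Lambda(T\cdot h)+F_t(T\cdot h)\rangle=\langle h,F_t(T\cdot h)\rangle\geq-\mu(t)(\|T\cdot h\|_H^2+1)$ follows from Cauchy--Schwarz, $\|\nabla_x u_q\|_{L^2}\leq\|q\|_{L^2}$ and Poincar\'e's inequality, with $\mu(t)$ built from the $L^1$-in-time norms of $\|\nabla_x\Upsilon(0,\cdot,t)\|_{L^2}^2$, $\|\Div_x\Xi(0,\cdot,t)\|_{L^2}^2$, $\|\Theta(0,\cdot,t)\|_{L^2}^2$. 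When $\O$ is unbounded Poincar\'e fails and I would use Theorem~\ref{thhypppggghhhj} instead, with the localizing spaces $V_j:=L^2(\O\cap B_j(0),\cdot)$ and their compact restrictions, as in Remark~\ref{vhhgyhgjfgghfggh}. Taking $w_0:=(\nabla_x w_0,\,h_0+\Upsilon(w_0,\cdot,0))\in H$, Corollary~\ref{thhypppggg} then yields $w=(q,p)$ with $w\in L^\infty(0,T_0;H)$, $\widetilde T\cdot w\in W^{1,2}(0,T_0;X^*)$, $w$ $H$-weakly continuous on $[0,T_0]$, $w(0)=w_0$, solving $\frac{d}{dt}(\widetilde T\cdot w)+\Lambda w+F_t w=0$.

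\textbf{Return to the PDE.} The first component of the abstract equation reads $\frac{dq}{dt}=\nabla_x\{p-\Upsilon(u_q,\cdot,t)\}$; testing against an arbitrary smooth compactly supported divergence-free matrix field $v$ gives $\frac{d}{dt}\int_\O q:v=0$, and since $\int_\O q(0):v=-\int_\O w_0\cdot\Div_x v=0$, we get $\int_\O q(t):v=0$ for a.e.\ $t$, i.e.\ $q(t)$ is a gradient; hence $q=\nabla_x u$ with $u:=u_q\in L^\infty(0,T_0;W^{1,2}_0(\O,\R^k))$, $W^{1,2}_0$-weakly continuous, $u(0)=w_0$, and $\frac{du}{dt}=p-\Upsilon(u,\cdot,t)\in L^\infty(0,T_0;L^2(\O,\R^k))$. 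The second component becomes $\frac{dp}{dt}=\Delta_x u-\Div_x\{\Xi(u,\cdot,t)\}-\Theta(u,\cdot,t)$ in $W^{-1,2}(\O,\R^k)$, and since $\partial_t\{\Upsilon(u,x,t)\}=D_L\Upsilon(u)\frac{du}{dt}+(\partial_t\Upsilon)(u,\cdot,t)\in L^2(0,T_0;L^2)$ (using the boundedness of $D_L\Upsilon$ and the Lipschitz dependence of $\Upsilon$ on $t$), we obtain $\frac{d^2u}{dt^2}=\frac{dp}{dt}-\partial_t\{\Upsilon(u,\cdot,t)\}\in L^2(0,T_0;W^{-1,2})$, so $\frac{du}{dt}\in W^{1,2}(0,T_0;W^{-1,2})$ and \er{uravnllllgsnachlklimjjjaplneabstnedvkkk} holds in $W^{-1,2}$ for a.e.\ $t$; the asserted weak continuity of $\frac{du}{dt}$ and the identity $\frac{du}{dt}(0)=p(0)-\Upsilon(w_0,\cdot,0)=h_0$ follow from Lemma~\ref{lem2} and Corollary~\ref{vbnhjjmcor} applied to $p$.

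\textbf{Main obstacle.} The delicate point is the functional-analytic set-up: one is forced to carry $\nabla_x u$ (not $u$) in the state, because only then is the $H$-norm the genuine hyperbolic energy while $\Lambda_t$ remains a bounded operator into $X^*$ whose quadratic form vanishes; the price is the a posteriori recognition of the first component of the abstract solution as a gradient of a $W^{1,2}_0$ function with the right boundary behaviour (the divergence-free test above), and, for unbounded $\O$, the construction of the compactness family needed for the weak continuity of $F_t$. Once this is arranged, the bounds on $\Lambda_t$ and $F_t$, the lower-bound inequality, and the translation back to the PDE are routine consequences of the Lipschitz and integrability hypotheses on $\Xi,\Upsilon,\Theta$.
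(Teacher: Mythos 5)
Your reduction is genuinely different from the paper's: you take the state to be $(\nabla_x u,\,u_t+\Upsilon)$ in $L^2(\O,\R^{k\times N})\times L^2(\O,\R^k)$, whereas the paper keeps $u$ itself as the first component, working in $H=W^{1,2}_0(\O,\R^k)\times L^2(\O,\R^k)$ with the full $W^{1,2}$ inner product on the first factor (and inserting lower-order terms $\pm u$ into $\Lambda$ and $F_t$ so that the skew-symmetry survives), together with an auxiliary space $X_0$ and an operator $S_0$ that realize the first component of the system in $X_0^*$. Unfortunately your version has a genuine gap at the step where you return to the PDE. Orthogonality of $q(t)$ to all compactly supported divergence-free fields only tells you, via de Rham, that $q(t)=\nabla_x\pi(t)$ for some $\pi(t)\in W^{1,2}_{loc}(\O,\R^k)$; it does not give $\pi(t)\in W^{1,2}_0(\O,\R^k)$. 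The orthogonal complement in $L^2(\O,\R^{k\times N})$ of $\{v\in C_c^\infty:\ \Div_x v=0\}$ strictly contains the closed subspace $G:=\{\nabla_x u:\,u\in W^{1,2}_0(\O,\R^k)\}$ (the gradient of a nonconstant harmonic function on a disc lies in the former but not in the latter), and nothing in your first-order formulation encodes the Dirichlet condition: integrating your equation for $q$ gives $q(t)=\nabla_x w_0+\nabla_x\int_0^t(p-\Upsilon)\,ds$ with $\int_0^t(p-\Upsilon)\,ds$ merely in $W^{1,2}$, not in $W^{1,2}_0$, and the approximating solutions supplied by Corollary \ref{thhypppggg} have first components in $W^{1,2}_0(\O,\R^{k\times N})$ that are not gradients at all. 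Consequently $\nabla_x u_{q(t)}$ is only the $G$-projection of $q(t)$ and need not equal $q(t)$, so the $u$ you construct does not in general solve \er{uravnllllgsnachlklimjjjaplneabstnedvkkk} with the right boundary behaviour.

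A second, independent problem is that the proposition allows $\O$ to be an arbitrary open set. For unbounded $\O$ the Poincar\'e inequality fails, $G$ is not closed in $L^2(\O,\R^{k\times N})$, and the map $q\mapsto u_q=(-\Delta)^{-1}(-\Div_x q)$ is not a bounded operator into $W^{1,2}_0(\O,\R^k)$; the very definition of $F_t$, its global Lipschitz bound, and your lower bound all rely on these facts, and switching to Theorem \ref{thhypppggghhhjggg} with localizing spaces repairs only the compactness hypothesis, not these. The natural cure for both defects is to replace the first factor $L^2(\O,\R^{k\times N})$ of $H$ by $W^{1,2}_0(\O,\R^k)$ itself, normed with the full $W^{1,2}$ inner product so that no Poincar\'e inequality is needed --- which is exactly the paper's construction.
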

\begin{proof}
Let $X_0:=\big\{\varphi\in W_0^{1,2}(\O,\R^k)\cap
W^{2,2}_{loc}(\O,\R^k):\Delta\varphi\in L^2(\O,\R^k) \big\}$ endowed
with the norm
\begin{equation}\label{nrmpoprgbjuyhkkk}
\|\varphi\|_{X_0}:=\sqrt{\|\Delta
\varphi\|^2_{L^2(\O,\R^k)}+\|\nabla \varphi\|^2_{L^2(\O,\R^{k\times
N})}+\|\varphi\|^2_{L^2(\O,\R^{k})}} \quad\quad\forall \varphi\in
X_0\subset W^{2,2}_{loc}(\O,\R^k)\cap W_0^{1,2}(\O,\R^k)\,,
\end{equation}
Then
$X_0$ is a separable reflexive Banach space. Next let
$H_0:=W^{1,2}_0(\O,\R^k)$ endowed with the standard scalar product
$<\phi_1,\phi_2>_{H\times
H}=\int_\O\big(\nabla\phi_1(x):\nabla\phi_2(x)+\phi_1(x)\cdot\phi_2(x)\big)\,dx$
(a Hilbert space) and $\mathcal{T}_0\in \mathcal{L}(X_0;H_0)$ be a
trivial embedding operator from $X_0\subset W_0^{1,2}(\O,\R^k)$ into
$H_0=W_0^{1,2}(\O,\R^k)$. Then $\mathcal{T}_0$ is an injective
inclusion with dense image. As before, in out notations,
$\big\{W^{1,2}_0(\O,\R^k)\big\}^*=W^{1,2}_0(\O,\R^k)$ (although in
the usual notations $\big\{W^{1,2}_0(\O,\R^k)\big\}^*$ identified
with the isomorphic space $W^{-1,2}(\O,\R^k)$ ). Next,
define $S_0\in\mathcal{L}\big(L^2(\O,\R^k),X_0^*\big)$ by
\begin{equation}
\label{funcoprrrthhjjjkkkk} \big<\delta,S_0\cdot h\big>_{X_0\times
X_0^*}=\int_{\O}\Big(\delta(x)-\Delta\delta(x)\Big)\cdot
h(x)\,dx\quad\quad \forall \delta\in X_0
,\;\forall h\in L^2(\O,\R^k)\,.
\end{equation}
Then, since for every $\phi\in L^2(\O,\R^k)$ there exists unique
$\delta_\phi\in X_0$ such that
$(\Delta\delta_\phi-\delta_\phi)=\phi$ we deduce that $S_0$ is
injective inclusion (i.e. $\ker S_0=0$). As before,
$\{X_0,H_0,X_0^*\}$ is an evolution triple with the corresponding
inclusion operators $\mathcal{T}_0\in \mathcal{L}(X_0;H_0)$ and
$\widetilde{\mathcal{T}}_0\in \mathcal{L}(H_0;X_0^*)$, as it was
defined in Definition \ref{7bdf} by
\begin{equation}\label{tildetkkkklllllkkkkk}
\big<\delta,\widetilde{\mathcal{T}}_0\cdot \varphi\big>_{X_0\times
X_0^*}:=\big<\mathcal{T}_0\cdot \delta,\varphi\big>_{H_0\times
H_0}\quad\quad\text{for every}\; \varphi\in
H_0\;\text{and}\;\delta\in X_0\,.
\end{equation}
However,
\begin{multline}\label{tildetkkkklllllkklklklklkkkkkk}
\big<\mathcal{T}_0\cdot \delta,\varphi\big>_{H_0\times
H_0}=\int_\O\Big(\nabla
\delta(x):\nabla\varphi(x)+\delta(x)\cdot\varphi(x)\Big)dx=\\
\int_\O\Big(\delta(x)-\Delta
\delta(x)\Big)\cdot\varphi(x)dx=\big<\delta,(S_0\circ L)\cdot
\varphi\big>_{X_0\times X_0^*}\quad\quad \text{for every}\;
\varphi\in H_0\;\text{and}\;\delta\in X_0\,,
\end{multline}
where $L\in\mathcal{L}\big(W^{1,2}_0(\O,\R^k),L^2(\O,\R^k)\big)$ is
a trivial inclusion of $W^{1,2}_0(\O,\R^k)$ into $L^2(\O,\R^k)$.
Thus plugging \er{tildetkkkklllllkklklklklkkkkkk} into
\er{tildetkkkklllllkkkkk} we obtain
\begin{equation}\label{tildetkkkklllllkkklklkklklklkkjjkkkkkkkk}
\widetilde{\mathcal{T}}_0\cdot \varphi=S_0\cdot (L\cdot
\varphi)\quad\quad\text{for every}\; \varphi\in H_0\,.
\end{equation}
Next, as in the proof of Proposition \ref{divform}, let
$X_1:=W_0^{1,2}(\O,\R^k)$, $H_1:=L^2(\O,\R^k)$ and $T_1\in
\mathcal{L}(X_1;H_1)$ be a usual embedding operator from
$W_0^{1,2}(\O,\R^k)$ into $L^2(\O,\R^k)$. Then $T_1$ is an injective
inclusion with dense image. Furthermore, $X_1^*=W^{-1,2}(\O,\R^k)$
and the corresponding operator $\widetilde{T}_1\in
\mathcal{L}(H_1;X_1^*)$, defined as in \er{tildet}, is a usual
inclusion of $L^2(\O,\R^k)$ into $W^{-1,2}(\O,\R^k)$. Thus
$\{X_1,H_1,X_1^*\}$ is another evolution triple with the
corresponding inclusion operators $T_1\in \mathcal{L}(X_1;H_1)$ and
$\widetilde{T}_1\in \mathcal{L}(H_1;X_1^*)$, as it was defined in
Definition \ref{7bdf}.
Finally set
\begin{multline}\label{obshspkkkl}
X:=\Big\{\big(u(x),v(x)\big):\;u(x):\O\to\R^k,\;v(x):\O\to\R^k \\
u(x)\in X_0\subset W^{2,2}_{loc}(\O,\R^k)\cap
W_0^{1,2}(\O,\R^k),\;v(x)\in X_1\equiv W_0^{1,2}(\O,\R^k)\Big\}\,.
\end{multline}
In this space we consider the norm
\begin{equation}\label{nrmojiihguf}
\|z\|_X:=\sqrt{\|u\|^2_{X_0}+\|v\|^2_{X_1}}=\sqrt{\|\Delta
u\|^2_{L^2(\O,\R^k)}+\|u\|^2_{W^{1,2}_0(\O,\R^k)}+\|v\|^2_{W^{1,2}_0(\O,\R^k)}}\quad\quad\forall
z=(u,v)\in X\,.
\end{equation}
Thus $X$ is a separable reflexive Banach space. Next set
\begin{multline}\label{obshspkkklHHH}
H:=\Big\{\big(u(x),v(x)\big):\;u(x):\O\to\R^k,\;v(x):\O\to\R^k \\
u(x)\in H_0\equiv  W_0^{1,2}(\O,\R^k),\;v(x)\in H_1\equiv
L^2(\O,\R^k)\Big\}\,.
\end{multline}
In this space we consider the scalar product
\begin{multline}\label{nrmojiihgufHHH}
<z_1,z_2>_{H\times H}:=<u_1,u_2>_{H_0\times
H_0}+<v_1,v_2>_{H_1\times H_1}\\=\int_\O\big\{\nabla u_1(x):\nabla
u_2(x)+u_1(x)\cdot u_2(x)+v_1(x)\cdot
v_2(x)\big\}dx\quad\quad\forall z_1=(u_1,v_1),z_2=(u_2,v_2)\in H\,.
\end{multline}
Then $H$ is a Hilbert space. Furthermore, consider
$T\in\mathcal{L}(X,H)$ by
\begin{equation}\label{nrmojiihgufoper}
T\cdot z=\big(\mathcal{T}_0\cdot u,T_1\cdot v\big)\quad\quad\forall
z=(u,v)\in X\,.
\end{equation}
Thus $T$ is an injective inclusion with dense image. Furthermore,
\begin{equation}
\label{obshspkkklsopr} X^*:=\Big\{\big(u,v\big):\;
u\in X^*_0,\;v\in X^*_1\equiv W^{-1,2}(\O,\R^k)\Big\}\,,
\end{equation}
where
\begin{equation}\label{funcoprrrthhjkkkhhhbjhjhjhyh}
\big<\delta,h\big>_{X\times X^*}=\big<\delta_0,h_0\big>_{X_0\times
X_0^*}+\big<\delta_1,h_1\big>_{X_1\times X_1^*}\quad\quad\forall
\delta=(\delta_0,\delta_1)\in X,\;\forall h=(h_0,h_1)\in X^*\,,
\end{equation}
and
\begin{equation}\label{nrmojiihgufdghsopr}
\|z\|_{X^*}:=\Big(\|u\|^2_{X^*_0}+\|v\|^2_{X^*_1}\Big)^{1/2}\quad\quad\forall
z=(u,v)\in X^*\,.
\end{equation}
Moreover, the corresponding operator $\widetilde{T}\in
\mathcal{L}(H;X^*)$, defined as in \er{tildet}, is defined by
\begin{equation}\label{nrmojiihgufoperhutild}
\widetilde{T}\cdot z=\big(\widetilde {\mathcal{T}_0}\cdot
u,\widetilde{T}_1\cdot v\big)\quad\quad\forall z=(u,v)\in H\,.
\end{equation}
Thus $\{X,H,X^*\}$ is an evolution triple with the corresponding
inclusion operators $T\in \mathcal{L}(X;H)$ and $\widetilde{T}\in
\mathcal{L}(H;X^*)$, as it was defined in Definition \ref{7bdf}.

 Next let $\Lambda\in \mathcal{L}(H,X^*)$ be defined by by
\begin{equation}\label{dvbgfghtfgjgyhjkghliuj}
\Lambda\cdot z:=(S_0\cdot v,\Delta u-u)\quad\quad\forall z=(u,v)\in
H\;\;(\text{i.e.}\;u\in W^{1,2}_0(\O,\R^k),\,v\in L^2(\O,\R^k))
\end{equation}
Then using \er{funcoprrrthhjkkkhhhbjhjhjhyh} and
\er{funcoprrrthhjjjkkkk}
we deduce
\begin{multline}\label{rgtrutyiuibcgsgtuikik}
\Big<h,\Lambda\cdot (T\cdot h)\Big>_{X\times
X^*}=\Big<u,S_0\cdot(T_1\cdot v)\Big>_{X_0\times
X_0^*}+\Big<v,\Delta (\mathcal{T}_0\cdot u)-\mathcal{T}_0\cdot
u\Big>_{X_1\times X_1^*}\\=\int_\O v(x)\cdot\Big(u(x)-\Delta
u(x)\Big)dx-\int_\O \Big(\nabla v(x):\nabla u(x)+v(x)\cdot
u(x)\Big)\,dx=0\quad\quad\forall h=(u,v)\in X\,.
\end{multline}
%
%
%
%
%
%
%
%
%
%
Furthermore, for $t\in[0,T_0]$ let $F_t(z):H\to H$ be a function
defined by
\begin{multline}\label{jhgdueowuwyhfyklkl}
F_t(z):=
\Big(\Upsilon\big(u(x),x,t\big),u(x)-\Theta\big(u(x),x,t\big)-div_x\Xi\big(u(x),x,t\big)\Big)\quad\quad
\forall z=(u,v)\in H\,,
\end{multline}
(We have $\Upsilon(u(x),x,t)\in W^{1,2}_0(\O,\R^k)$ for a.e. $t$),
i.e.
\begin{multline}\label{jhgdueowuwyhfy}
\Big<F_t(z),z_0\Big>_{H\times H}=
\int_\O\Big(\nabla_x\big\{\Upsilon\big(u(x),x,t\big)\big\}:\nabla
u_0(x)+\Upsilon\big(u(x),x,t\big)\cdot
u_0(x)\Big)dx+\\\int_\O\bigg\{u(x)-\Theta\big(u(x),x,t\big)-div_x\Xi\big(u(x),x,t\big)\bigg\}
\cdot v_0(x)dx\\
\forall z=(u,v)\in H,\;\forall z_0=(u_0,v_0)\in H\,.
\end{multline}
Then it satisfies the following
conditions
\begin{equation}\label{roststlambdglkFFyhuhtyui99999999aplvjhf}
\|F_t(z)\|_{H}\leq C\|z\|_H+f(t)\quad\quad\forall z\in H,\;\forall
t\in[0,T_0]\,,
\end{equation}
and
\begin{equation}\label{roststlambdglkFFyhuhtyui99999999aplvjhfjkjkkljjk}
\|\widetilde{T}\circ DF_t(z)\|_{\mathcal{L}(H;X^*)}\leq
C\quad\quad\forall z\in H,\;\forall t\in[0,T_0]\,,
\end{equation}
for some $C>0$ and some $f(t)\in L^2(0,T_0;\R)$. Moreover, for
bounded $\O$, since the embedding of $W^{1,2}_0(\O,\R^k)$ into
$L^2(\O,\R^k)$ is compact we obtain that $F_t$ is weak to weak
continuous on $H$. If we assume $\O$ to be unbounded then, for every
$\O'\subset\subset\O$, $F_t$ is weak to weak continuous, as a
mapping defined on $H$ with the valued functions, restricted to the
smaller set $\O'$. Therefore, since $\O'$ is arbitrary, using
\er{roststlambdglkFFyhuhtyui99999999aplvjhf} we deduce that in any
case $F_t$ is weak to weak continuous on $H$.
Then all the conditions of Corollary \ref{thhypppggg} satisfied and
by this Corollary for every $w_0\in W^{1,2}_0(\O,\R^k)$ and every
$h_0\in L^2(\O,\R^k)$ there exists $\zeta(t)\in L^\infty(0,T_0;H)$,
such that $\xi(t):=\widetilde T\cdot \big(\zeta(t)\big)\in
W^{1,2}(0,T_0;X^*)$ and $\zeta(t)$ satisfies the following equation
\begin{equation}\label{uravnllllgsnachlklimhjhghaplhkgujgukyuiioy8utr}
\begin{cases}\frac{d \xi}{dt}(t)+\Lambda\cdot \big(\zeta(t)\big)+
\widetilde{T}\cdot F_t\big(\zeta(t)\big)=0\quad\text{for a.e.}\; t\in(0,T_0)\,,\\
\zeta(0)=\Big(w_0(x),\;-h_0(x)-\Upsilon\big(w_0(x),x,0\big)\Big)\,,
\end{cases}
\end{equation}
where we assume that $\zeta(t)$ is $H$-weakly continuous on
$[0,T_0]$, as it was stated in Corollary \ref{vbnhjjmcor}. We can
rewrite \er{uravnllllgsnachlklimhjhghaplhkgujgukyuiioy8utr} as
follows. Let $\big(u(x,t),v(x,t)\big)=\zeta(t)$. Then by
\er{uravnllllgsnachlklimhjhghaplhkgujgukyuiioy8utr},
\er{nrmojiihgufoper}, \er{dvbgfghtfgjgyhjkghliuj},
\er{jhgdueowuwyhfy}, \er{tildetkkkklllllkkklklkklklklkkjjkkkkkkkk}
and Lemma \ref{vlozhenie}, $u(x,t)\in
L^\infty\big(0,T_0;W^{1,2}_0(\O,\R^k)\big)\cap
W^{1,2}\big(0,T_0;L^2(\O,\R^k)\big)$, $v(x,t)\in
L^\infty\big(0,T_0;L^2(\O,\R^k)\big)\cap W^{1,2}\big(0,T_0;
W^{-1,2}(\O,\R^k)\big)$, $u(x,t)$ is $W^{1,2}_0(\O,\R^k)$-weakly
continuous on $[0,T_0]$, $v(x,t)$ is $L^2(\O,\R^k)$-weakly
continuous on $[0,T_0]$, $u(x,0)=w_0(x)$,
$v(x,0)=-h_0(x)-\Upsilon\big(w_0(x),x,0\big)$ and in
$\O\times(0,T_0)$ $\big(u(x,t), v(x,t)\big)$ solves
\begin{equation}\label{uravnllllgsnachlklimjjjaplneabstnedvgggkkk}
\begin{cases}\frac{du}{dt}(x,t)+v(x,t)+\Upsilon\big(u(x,t),x,t)=0\,,\\
\frac{dv}{dt}(x,t)+\Delta_x
u(x,t)-\Theta\big(u(x,t),x,t)-div_x\Xi\big(u(x,t),x,t)=0\,.
\end{cases}
\end{equation}
Thus in particular $\frac{du}{dt}(x,t)\in
L^\infty\big(0,T_0;L^2(\O,\R^k)\big)\cap W^{1,2}\big(0,T_0;
W^{-1,2}(\O,\R^k)\big)$ and $\frac{du}{dt}(x,0)=h_0(x)$. Moreover,
differentiating the equality
$v(x,t)=-\frac{du}{dt}(x,t)-\Upsilon\big(u(x,t),x,t)$ by the
argument $t$ and inserting it into the second equation in
\er{uravnllllgsnachlklimjjjaplneabstnedvgggkkk} we finally deduce
\er{uravnllllgsnachlklimjjjaplneabstnedvkkk}.
\end{proof}

\subsection{Schr\"{o}dinger type nonlinear systems}
\begin{proposition}\label{divformnedvkkkljl}
Let $\O\subset\R^N$ be an open set and $T_0>0$. Furthermore, let
$\Theta(a,b,x,t):\R_a^k\times\R_b^k\times\R_x^N\times\R_t\to\R^{k}$
and
$\;\Xi(a,b,x,t):\R_a^k\times\R_b^k\times\R_x^N\times\R_t\to\R^{k}$
be measurable functions. Moreover, assume that $\Theta(a,b,x,t)$ and
$\Xi(a,b,x,t)$ are $C^1$ as a functions of the first two arguments
$a$ and $b$ when $(x,t)$ are fixed. We also assume that
$\Theta(a,b,x,t)$, $\nabla_x\Theta(a,b,x,t)$, $\Xi(a,b,x,t)$ and
$\nabla_x\Xi(a,b,x,t)$ are globally Lipschitz by the first two
arguments $a$ and $b$, and $\Theta(0,0,x,t)\in
L^2\big(\R;W^{1,2}_0(\O,\R^k)\big)$ and
$\Xi(0,0,x,t)\in L^2\big(\R;W^{1,2}_0(\O,\R^k)\big)$.
Then for every $w_0(x)\in W^{1,2}_0(\O,\R^k)$ and $h_0(x)\in
W^{1,2}_0(\O,\R^k)$ there exists
$u(x,t)\in L^\infty\big(0,T_0;W^{1,2}_0(\O,\R^k)\big)\cap
W^{1,2}\big(0,T_0; W^{-1,2}(\O,\R^k)\big)$ and $v(x,t)\in
L^\infty\big(0,T_0;W^{1,2}_0(\O,\R^k)\big)\cap W^{1,2}\big(0,T_0;
W^{-1,2}(\O,\R^k)\big)$,
$u(x,t)$ and $v(x,t)$ are $W^{1,2}_0(\O,\R^k)$-weakly continuous on
$[0,T_0]$, $u(x,0)=w_0(x)$, $v(x,0)=h_0(x)$ and
$\big(u(x,t),v(x,t)\big)$ is a solution to
\begin{multline}\label{uravnllllgsnachlklimjjjaplneabstnedvkkkhh}
\begin{cases}\frac{du}{dt}(x,t)-\Delta_x
v(x,t)+\Theta\big(u(x,t),v(x,t),x,t\big)=0\quad\quad
\text{in}\;\;\O\times(0,T_0)\,,\\
\frac{dv}{dt}(x,t)+\Delta_x
u(x,t)+\Xi\big(u(x,t),v(x,t),x,t\big)=0\quad\quad
\text{in}\;\;\O\times(0,T_0)\,.\end{cases}
\end{multline}
\end{proposition}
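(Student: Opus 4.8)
The plan is to recast the pair $(u,\eta)$ as a single unknown $z$ and to reduce \er{uravnllllgsnachlklimjjjaplneabstnedvkkkhh} to an abstract first–order evolution of the type treated in Corollary \ref{thhypppggg}, following almost verbatim the scheme used in the proof of Proposition \ref{divformnedvkkk}; the essential new feature is that the ``principal'' part of the system is now purely skew–symmetric, so no coercivity is available and one must work with the lower–bound (not the coercive) abstract result. Mimicking the hyperbolic proof, I would set $X_0:=\{\varphi\in W^{1,2}_0(\O,\R^k)\cap W^{2,2}_{loc}(\O,\R^k):\Delta\varphi\in L^2(\O,\R^k)\}$ with its graph norm, $H_0:=W^{1,2}_0(\O,\R^k)$, $\mathcal T_0\in\mathcal L(X_0,H_0)$ the inclusion (injective, dense range, compact for bounded $\O$), and the injective $S_0\in\mathcal L\big(L^2(\O,\R^k),X_0^*\big)$ with $\widetilde{\mathcal T}_0:=S_0\circ L$ ($L\colon W^{1,2}_0\hookrightarrow L^2$), exactly as there, so that $\{X_0,H_0,X_0^*\}$ is an evolution triple. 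Then $X:=X_0\times X_0$, $H:=H_0\times H_0=W^{1,2}_0(\O,\R^k)\times W^{1,2}_0(\O,\R^k)$ with the product structure, $T:=(\mathcal T_0,\mathcal T_0)$, $\widetilde T:=(\widetilde{\mathcal T}_0,\widetilde{\mathcal T}_0)$; this is again an evolution triple with $X$ separable, and by Lemma \ref{lem2} and Corollary \ref{vbnhjjmcor} any solution of the abstract equation will be $H$–weakly continuous, which produces the weak continuity and the initial conditions $u(\cdot,0)=w_0$, $\eta(\cdot,0)=h_0$ asserted in the Proposition.

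Next I would introduce the operators. Let $\Lambda\in\mathcal L(H,X^*)$ encode the first–order–in–time, second–order–in–space skew part of \er{uravnllllgsnachlklimjjjaplneabstnedvkkkhh}: on $(u,\eta)\in H$ its two components are the images in $X_0^*$ of $-\Delta\eta$ and $\Delta u$ (both lie in $W^{-1,2}(\O,\R^k)\hookrightarrow X_0^*$ since $u,\eta\in W^{1,2}_0$). The crucial algebraic fact is that, after one integration by parts, $\langle h,\Lambda(T\cdot h)\rangle_{X\times X^*}=0$ for every $h\in X$ — the contributions $-\int\nabla\eta\!:\!\nabla u$ and $\int\nabla u\!:\!\nabla\eta$ cancel — so $\Lambda$ meets the lower–bound requirement of Corollary \ref{thhypppggg} with $\mu\equiv0$. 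The lower–order (nonlinear) terms are put into $F_t$, built from $\widetilde T$ applied to $\big(\Theta(u,\eta,\cdot,t),\Xi(u,\eta,\cdot,t)\big)$, which is a well–defined element of $H=W^{1,2}_0(\O,\R^k)^2$ because $\Theta,\nabla_x\Theta,\Xi,\nabla_x\Xi$ are globally Lipschitz in the first two arguments and $\Theta(0,0,\cdot,t),\Xi(0,0,\cdot,t)\in L^2(\R;W^{1,2}_0)$. Using only first derivatives of $\Theta,\Xi$ (which are bounded) one then checks that $F_t$ is G\^ateaux differentiable with $\|\widetilde T\circ DF_t\|_{\mathcal L(H;X^*)}$ bounded and $F_t(0)\in L^2(0,T_0;X^*)$.

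The one hypothesis of Corollary \ref{thhypppggg} that is not purely formal is weak–to–weak continuity of $F_t$ from $H$ to $X^*$, and this I would obtain exactly as in Remark \ref{vhhgyhgjfgghfggh}: if $(u_n,\eta_n)\rightharpoonup(u,\eta)$ weakly in $W^{1,2}_0(\O,\R^k)^2$, then by Rellich on each $\O'\subset\subset\O$ (using the auxiliary family $V_j=L^2(\O\cap B_{R_j},\R^k)$ when $\O$ is unbounded) one has $u_n\to u$, $\eta_n\to\eta$ in $L^2_{loc}$, hence $\Theta(u_n,\eta_n,\cdot,t)\to\Theta(u,\eta,\cdot,t)$ in $L^2_{loc}$; being bounded in $W^{1,2}_0$, this sequence converges weakly in $W^{1,2}_0$, and therefore $F_t(u_n,\eta_n)\rightharpoonup F_t(u,\eta)$ weakly in $X^*$. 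Applying Corollary \ref{thhypppggg} with the datum $(w_0,h_0)\in H$ then yields $z=(u,\eta)\in L^\infty(0,T_0;H)$ with $\widetilde Tz\in W^{1,2}(0,T_0;X^*)$ solving the abstract equation; testing the two components of that identity against elements of $X_0$, invoking the injectivity of $S_0$, Lemma \ref{vlozhenie} to identify the time derivatives, and Lemma \ref{lem2} for the traces, one recovers the regularity stated in the Proposition and the fact that $(u,\eta)$ solves \er{uravnllllgsnachlklimjjjaplneabstnedvkkkhh} in $\O\times(0,T_0)$.

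The delicate point — and where I expect the real work to be — is precisely this last ``decoding'' step combined with the regularity claim: one has to arrange the spaces $X_0,H_0$ and the operators $\Lambda,F_t$ so that the abstract identity translates \emph{exactly} into \er{uravnllllgsnachlklimjjjaplneabstnedvkkkhh} rather than into a resolvent–smoothed variant of it, while the a priori estimate coming from $\langle h,\Lambda(Th)\rangle=0$ still furnishes the $L^\infty(0,T_0;W^{1,2}_0)$ control on \emph{both} components $u$ and $\eta$. Once the bookkeeping is set up consistently, the remaining analytic inputs — the Lipschitz growth bounds on $\Theta,\Xi$ and the local compactness of Sobolev embeddings — are entirely parallel to Propositions \ref{divform}, \ref{divformnedv} and \ref{divformnedvkkk}, and there is no new difficulty of principle.
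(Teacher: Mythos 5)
Your overall strategy is the paper's: form the product evolution triple, put the skew part $(-\Delta v,\Delta u)$ into a linear $\Lambda$ with $\big<h,\Lambda\cdot(T\cdot h)\big>=0$, put the Lipschitz nonlinearities into $F_t$, and invoke Corollary \ref{thhypppggg}. But there is a genuine gap in your choice of spaces, and it sits exactly at the point you defer as ``the real work.'' You take $X_0=\{\varphi\in W^{1,2}_0\cap W^{2,2}_{loc}:\Delta\varphi\in L^2\}$ with the operator $S_0\in\mathcal{L}\big(L^2(\O,\R^k),X_0^*\big)$ from the hyperbolic proof, i.e. $\big<\delta,S_0\cdot h\big>=\int_\O(\delta-\Delta\delta)\cdot h\,dx$, which is only defined for $h\in L^2$. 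The component of $\Lambda$ acting on $(u,v)\in H=W^{1,2}_0\times W^{1,2}_0$ must carry $\Delta v$, which is merely in $W^{-1,2}$, so you are forced to inject it into $X_0^*$ through the duality embedding $W^{-1,2}\hookrightarrow X_0^*$, i.e. $\big<\delta,-\Delta v\big>=\int_\O\nabla\delta:\nabla v\,dx$. These two embeddings of overlapping spaces into $X_0^*$ are inconsistent: on $L^2$ the first gives $\int(\delta-\Delta\delta)\cdot h$ and the second gives $\int\delta\cdot h$. Since the time-derivative term necessarily pairs as $\frac{d}{dt}\big<T\cdot\delta,u\big>_{H\times H}=\frac{d}{dt}\int(\delta-\Delta\delta)\cdot u$, while your $\Lambda$-term pairs as $\int\nabla\delta:\nabla v$, the decoded identity is (schematically) $(1-\Delta)\partial_t u-\Delta v+(1-\Delta)\Theta=0$ — precisely the ``resolvent--smoothed variant'' you were worried about — and there is no way to remove the $(1-\Delta)$ without putting $\Delta v$ itself into the range of $S_0$, which your $S_0$ cannot do.

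The missing idea, which is how the paper proceeds, is to enlarge the test space to the \emph{third-order} space $X_0=\big\{\varphi\in W^{1,2}_0(\O,\R^k)\cap W^{3,2}_{loc}(\O,\R^k):\Delta\varphi\in W^{1,2}_0(\O,\R^k)\big\}$ and to define $S_0$ on all of $W^{-1,2}(\O,\R^k)$ via the resolvent of the Dirichlet Laplacian: for $h\in W^{-1,2}$ let $H_h\in W^{1,2}_0$ solve $\Delta H_h-H_h=h$ and set
\begin{equation*}
\big<\delta,S_0\cdot h\big>_{X_0\times X_0^*}=\int_{\O}\Big\{\big((\nabla\Delta)\delta-\nabla\delta\big):\nabla H_h+\big(\Delta\delta-\delta\big)\cdot H_h\Big\}\,dx\,,
\end{equation*}
which requires $\nabla\Delta\delta\in L^2$, i.e. exactly the extra order of regularity. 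With this single operator one has $\widetilde{\mathcal{T}}_0=S_0\circ L$, one can set $\Lambda\cdot(u,v)=\big(-S_0\cdot(\Delta v-v),\,S_0\cdot(\Delta u-u)\big)$ and $F_t(u,v)=\big(\Theta-v,\,\Xi+u\big)$ (the compensating $\mp v,\pm u$ absorbing the $(1-\Delta)$ structure of the $H_0$ inner product), and every term of the abstract identity is the $S_0$-image of an element of $W^{-1,2}$; injectivity of $S_0$ then yields the genuine system \er{uravnllllgsnachlklimjjjaplneabstnedvkkkhh}, with Lemma \ref{vlozhenie} identifying the time derivatives in $W^{-1,2}$. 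The rest of your outline (the cancellation for $\Lambda$, the need for $\Theta,\Xi$ to take values in $W^{1,2}_0$ via the Lipschitz bounds on $\nabla_x\Theta,\nabla_x\Xi$, and the weak-to-weak continuity of $F_t$ from the compactness of $W^{1,2}_0\hookrightarrow L^2_{loc}$) is correct and matches the paper once the spaces are fixed.
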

\begin{proof}
Let $X_0:=\big\{\varphi\in W_0^{1,2}(\O,\R^k)\cap
W^{3,2}_{loc}(\O,\R^k):\Delta\varphi\in W_0^{1,2}(\O,\R^k) \big\}$
endowed with the norm
\begin{multline}\label{nrmpoprgbjuyhkkkjhjhh}
\|\varphi\|_{X_0}:=\sqrt{\|\nabla\Delta
\varphi\|^2_{L^2(\O,\R^{k\times N})}+\|\Delta
\varphi\|^2_{L^2(\O,\R^{k})}+\|\nabla
\varphi\|^2_{L^2(\O,\R^{k\times N})}+\|\varphi\|^2_{L^2(\O,\R^{k})}}
\\ \forall \varphi\in X_0\subset W_0^{1,2}(\O,\R^k)\cap
W_{loc}^{3,2}(\O,\R^k)\,.
\end{multline}
So $X_0$ is a separable reflexive Banach space (in fact it is a
Hilbert space). Next let $H_0:=W^{1,2}_0(\O,\R^k)$ endowed with the
standard scalar product $<\phi_1,\phi_2>_{H\times
H}=\int_\O\big(\nabla\phi_1(x):\nabla\phi_2(x)+\phi_1(x)\cdot\phi_2(x)\big)dx$
(a Hilbert space) and $\mathcal{T}_0\in \mathcal{L}(X_0;H_0)$ be a
trivial embedding operator from $X_0\subset W_0^{1,2}(\O,\R^k)$ into
$H_0=W_0^{1,2}(\O,\R^k)$. Then $\mathcal{T}_0$ is an injective
inclusion with dense image.
As before, in out notations,
$\big\{W^{1,2}_0(\O,\R^k)\big\}^*=W^{1,2}_0(\O,\R^k)$.

 Next, clearly, for every $h\in W^{-1,2}(\O,\R^k)$, there exists
unique $H_h\in W^{1,2}_0(\O,\R^k)$ such that $\Delta H_h-H_h=h$.
Then define $S_0\in\mathcal{L}\big(W^{-1,2}(\O,\R^k),X_0^*\big)$ by
\begin{multline}
\label{funcoprrrthhjjj} \big<\delta,S_0\cdot h\big>_{X_0\times
X_0^*}=\int_{\O}\bigg\{\Big((\nabla\Delta)\delta(x)-\nabla\delta(x)\Big):\nabla
H_h(x)+\Big((\Delta\delta(x)-\delta(x)\Big)\cdot H_h(x)\bigg\}dx\\
\forall \delta\in X_0
,\;\forall h\in
W^{-1,2}(\O,\R^k)\,.
\end{multline}
Then, since for every $\phi\in W^{1,2}_0(\O,\R^k)$ there exists
unique $\delta_\phi\in X_0$ such that
$\Delta\delta_\phi-\delta_\phi=\phi$ we deduce that $S_0$ is
injective inclusion (i.e. $\ker S_0=0$).
As before, $\{X_0,H_0,X_0^*\}$ is an evolution triple with the
corresponding inclusion operators $\mathcal{T}_0\in
\mathcal{L}(X_0;H_0)$ and $\widetilde{\mathcal{T}}_0\in
\mathcal{L}(H_0;X_0^*)$, as it was defined in Definition \ref{7bdf}
by
\begin{equation}\label{tildetkkkklllll}
\big<\delta,\widetilde{\mathcal{T}}_0\cdot \varphi\big>_{X_0\times
X_0^*}:=\big<\mathcal{T}_0\cdot \delta,\varphi\big>_{H_0\times
H_0}\quad\quad\text{for every}\; \varphi\in
H_0\;\text{and}\;\delta\in X_0\,.
\end{equation}
However,
\begin{multline}\label{tildetkkkklllllkklklklkl}
\big<\mathcal{T}_0\cdot \delta,\varphi\big>_{H_0\times
H_0}=\int_\O\Big(\nabla
\delta(x):\nabla\varphi(x)+\delta(x)\cdot\varphi(x)\Big)dx=\\
\int_\O\Big(\delta(x)-\Delta \delta(x)\Big)\cdot\varphi(x)\,dx=
\int_\O\Big(\delta(x)-\Delta \delta(x)\Big)\cdot\Big(\Delta
H_{L\cdot\varphi}(x)-H_{L\cdot\varphi}(x)\Big)dx=\\
\int_{\O}\bigg\{\Big((\nabla\Delta)\delta(x)-\nabla\delta(x)\Big):\nabla
H_{L\cdot\varphi}(x)+\Big((\Delta\delta(x)-\delta(x)\Big)\cdot
H_{L\cdot\varphi}(x)\bigg\}dx\\
=\big<\delta,(S_0\circ L)\cdot \varphi\big>_{X_0\times
X_0^*}\quad\quad\text{for every}\; \varphi\in
H_0\;\text{and}\;\delta\in X_0\,,
\end{multline}
where
$L\in\mathcal{L}\big(W^{1,2}_0(\O,\R^k),W^{-1,2}(\O,\R^k)\big)$ is a
trivial inclusion of $W^{1,2}_0(\O,\R^k)$ in $W^{-1,2}(\O,\R^k)$.
Thus plugging \er{tildetkkkklllllkklklklkl} into
\er{tildetkkkklllll} we obtain
\begin{equation}\label{tildetkkkklllllkkklklkklklklkkjjk}
\widetilde{\mathcal{T}}_0\cdot \varphi=S_0\cdot (L\cdot
\varphi)\quad\quad\text{for every}\; \varphi\in H_0\,.
\end{equation}
Next set
\begin{equation}\label{obshspkkklbjhjh}
X:=\Big\{\big(u(x),v(x)\big):\;u(x):\O\to\R^k,\;v(x):\O\to\R^k,\;
u(x)\in X_0,\;v(x)\in X_0\Big\}\,.
\end{equation}
In this space we consider the norm
\begin{equation}\label{nrmojiihgufhhh}
\|z\|_X:=\sqrt{\|u\|^2_{X_0}+\|v\|^2_{X_0}}\quad\quad\forall
z=(u,v)\in X\,.
\end{equation}
Thus $X$ is a separable reflexive Banach space. Next set
\begin{equation}\label{obshspkkklHHHggkg}
H:=\Big\{\big(u(x),v(x)\big):\;u(x):\O\to\R^k,\;v(x):\O\to\R^k,\;
u(x)\in H_0,\;v(x)\in H_0\Big\}\,.
\end{equation}
In this space we consider the scalar product
\begin{multline}\label{nrmojiihgufHHHguittui}
<z_1,z_2>_{H\times H}:=<u_1,u_2>_{H_0\times
H_0}+<v_1,v_2>_{H_0\times H_0}=\\ \int_\O\Big\{\nabla u_1(x):\nabla
u_2(x)+u_1(x)\cdot u_2(x)+\nabla v_1(x): \nabla v_2(x)+v_1(x)\cdot
v_2(x)\Big\}dx\\ \forall z_1=(u_1,v_1),z_2=(u_2,v_2)\in H\,.
\end{multline}
Then $H$ is a Hilbert space. Furthermore, consider
$T\in\mathcal{L}(X,H)$ by
\begin{equation}\label{nrmojiihgufoperjhjhjh}
T\cdot z=\big(\mathcal{T}_0\cdot u,\mathcal{T}_0\cdot
v\big)\quad\quad\forall z=(u,v)\in X\,.
\end{equation}
Then $T$ is an injective inclusion with dense image. Furthermore,
\begin{equation}\label{obshspkkklsoprgkgkg}
X^*:=\Big\{\big(u,v\big):\;
u\in X^*_0,\;v\in X^*_0\Big\}\,,
\end{equation}
where
\begin{equation}\label{funcoprrrthhjkkkhhhbjhjhjhyhgug}
\big<\delta,h\big>_{X\times X^*}=\big<\delta_0,h_0\big>_{X_0\times
X_0^*}+\big<\delta_1,h_1\big>_{X_0\times X_0^*}\quad\quad\forall
\delta=(\delta_0,\delta_1)\in X,\;\forall h=(h_0,h_1)\in X^*\,,
\end{equation}
and
\begin{equation}\label{nrmojiihgufdghsoprfyguik}
\|z\|_{X^*}:=\Big(\|u\|^2_{X^*_0}+\|v\|^2_{X^*_0}\Big)^{1/2}\quad\quad\forall
z=(u,v)\in X^*\,.
\end{equation}
Moreover, the corresponding operator $\widetilde{T}\in
\mathcal{L}(H;X^*)$, defined as in \er{tildet}, is defined by
\begin{equation}\label{nrmojiihgufoperhutildjkgbgk}
\widetilde{T}\cdot z=\big(\widetilde{\mathcal{T}_0}\cdot
u,\widetilde{\mathcal{T}_0}\cdot v\big)=\big(S_0\cdot (L\cdot
u),S_0\cdot (L\cdot v)\big)\quad\quad\forall z=(u,v)\in H\,.
\end{equation}
Thus $\{X,H,X^*\}$ is an evolution triple with the corresponding
inclusion operators $T\in \mathcal{L}(X;H)$ and $\widetilde{T}\in
\mathcal{L}(H;X^*)$, as it was defined in Definition \ref{7bdf}.

 Next let $\Lambda\in \mathcal{L}(H,X^*)$ be defined by
\begin{multline}\label{dvbgfghtfgjgyhjkghliujgjh}
\Lambda\cdot z:=\Big(-S_0\cdot(\Delta v-v),S_0\cdot(\Delta
u-u)\Big)\quad\quad\forall z=(u,v)\in H\\ (\text{i.e.}\;(\Delta
u-u)\in W^{-1,2}(\O,\R^k),\,(\Delta v-v)\in W^{-1,2}(\O,\R^k))\,,
\end{multline}
where $S_0$ is defined in \er{funcoprrrthhjjj}. Then using
\er{funcoprrrthhjkkkhhhbjhjhjhyhgug}
we deduce
\begin{multline}\label{rgtrutyiuibcgsgtuikikhljkl}
\Big<h,\Lambda\cdot (T\cdot h)\Big>_{X\times
X^*}=-\Big<u,S_0\cdot(\Delta v-v)\Big>_{X_0\times
X_0^*}+\Big<v,S_0\cdot(\Delta u-u)\Big>_{X_0\times
X_0^*}\\=-\int_{\O}\bigg\{\Big((\nabla\Delta u)(x)-\nabla
u(x)\Big):\nabla v(x)+\Big(\Delta u(x)-u(x)\Big)\cdot
v(x)\bigg\}dx\\+\int_{\O}\bigg\{\Big((\nabla\Delta v)(x)-\nabla
v(x)\Big):\nabla u(x)+\Big(\Delta v(x)-v(x)\Big)\cdot
u(x)\bigg\}dx=0\quad\quad\forall h=(u,v)\in X\,.
\end{multline}
Furthermore, for $t\in[0,T_0]$ let $F_t(z):H\to H$ be a function
defined by
\begin{multline}\label{jhgdueowuwyhfyguhikgjhng}
F_t(z):=\Big(\Theta\big(u(x,t),v(x,t),x,t\big)-v(x),\Xi\big(u(x,t),v(x,t),x,t\big)+u(x)\Big)\quad\quad
\forall z=(u,v)\in H\,,
\end{multline}
(we have $\Theta(u(x,t),v(x,t),x,t),\Xi(u(x,t),v(x,t),x,t)\in
W^{1,2}_0(\O,\R^k)$ for a.e. $t$), i.e.
\begin{multline}\label{jhgdueowuwyhfyguhik}
\Big<F_t(z), z_0\Big>_{H\times H}=\\
\int_\O\Bigg\{\bigg(\nabla_x\Big\{\Theta\big(u(x,t),v(x,t),x,t\big)\Big\}-\nabla
v(x)\bigg):\nabla
u_0(x)+\Big(\Theta\big(u(x,t),v(x,t),x,t\big)-v(x)\Big)\cdot
u_0(x)\\+\bigg(\nabla_x\Big\{\Xi\big(u(x,t),v(x,t),x,t\big)\Big\}+\nabla
u(x)\bigg):\nabla
v_0(x)+\Big(\Xi\big(u(x,t),v(x,t),x,t\big)+u(x)\Big)\cdot
v_0(x)\Bigg\}
dx\\
\forall z=(u,v)\in H,\;\forall z_0=(u_0,v_0)\in H\,.
\end{multline}
Then
\begin{equation}\label{roststlambdglkFFyhuhtyui99999999aplvjhfhjhfjgjfjg}
\|F_t(z)\|_{H}\leq C\|z\|_H+f(t)\quad\quad\forall z\in H,\;\forall
t\in[0,T_0]\,,
\end{equation}
for some constant $C>0$ and some $f(t)\in L^2(0,T_0;\R)$.
Furthermore, it satisfies the Lipschitz condition
\begin{equation}\label{roststlambdglkFFyhuhtyui99999999aplvjhfhjh}
\big\|\widetilde T\circ D F_t(z)\big\|_{\mathcal{L}(H;X^*)}\leq
C\quad\quad\forall z\in H,\;\forall t\in[0,T_0]\,.
\end{equation}
Moreover, since the embedding of $H=W^{1,2}_0(\O,\R^k)$ in
$L^2_{loc}(\O,\R^k)$ is compact, we obtain that if
$z_n\rightharpoonup z_0$ weakly in $H$ then $z_n\to z_0$ strongly in
$L^2_{loc}(\O,\R^k)$. Thus, by
\er{roststlambdglkFFyhuhtyui99999999aplvjhfhjhfjgjfjg} we obtain
$F_t(z_n)\rightharpoonup F_t(z_0)$ weakly in $H$. So $F_t$
is weak to weak continuous in $H$.
Then all the conditions of Corollary \ref{thhypppggg} satisfied and
by this Corollary for every $w_0\in W^{1,2}_0(\O,\R^k)$ and every
$h_0\in W^{1,2}_0(\O,\R^k)$ there exists
$\zeta(t)=\big(u(x,t),v(x,t)\big)\in L^\infty(0,T_0;H)$, such that
$\xi(t):=\widetilde T\cdot \big(\zeta(t)\big)\in W^{1,2}(0,T_0;X^*)$
and $\zeta(t)$ satisfy the following equation
\begin{equation}\label{uravnllllgsnachlklimhjhghaplhkgujgukyuiioy8utrgg}
\begin{cases}\frac{d \xi}{dt}(t)+\Lambda\cdot \zeta(t)+
\widetilde{T}\cdot F_t\big(\zeta(t)\big)=0\quad\text{for a.e.}\; t\in(0,T_0)\,,\\
\zeta(0)=\big(w_0(x),h_0(x)\big)\,,
\end{cases}
\end{equation}
where we assume that $\zeta(t)$ is $H$-weakly continuous on
$[0,T_0]$, as it was stated in Corollary \ref{vbnhjjmcor}. We can
rewrite \er{uravnllllgsnachlklimhjhghaplhkgujgukyuiioy8utrgg} as
follows. Let $\big(u(x,t),v(x,t)\big)=\zeta(t)$. Then $u(x,t)\in
L^\infty\big(0,T_0;W^{1,2}_0(\O,\R^k)\big)$, $v(x,t)\in
L^\infty\big(0,T_0;W^{1,2}_0(\O,\R^k)\big)$, $u(x,t)$ and $v(x,t)$
are $W^{1,2}_0(\O,\R^k)$-weakly continuous on $[0,T_0]$,
$u(x,0)=w_0(x)$, $v(x,0)=h_0(x)$ and by
\er{tildetkkkklllllkkklklkklklklkkjjk} and the definitions of
$\Lambda$ and $F_t$ we obtain
\begin{multline}\label{uravnllllgsnachlklimjjjaplneabstnedvgggkkkghhhggh}
-\bigg<\frac{\partial\delta}{\partial t}(x,t),S_0\cdot
u(x,t)\bigg>_{X_0\times X^*_0} +\bigg<\delta(x,t),S_0\cdot
\Big(-\Delta_x
v(x,t)+\Theta\big(u(x,t),v(x,t),x,t\big)\Big)\bigg>_{X_0\times
X_0^*}=0\\ \quad\quad\forall\delta(x,t)\in
C^1_c\big(0,T_0;X_0\big)\,,
\end{multline}
\begin{multline}\label{uravnllllgsnachlklimjjjaplneabstnedvgggkkkghhhgghkuouil}
-\bigg<\frac{\partial\delta}{\partial t}(x,t),S_0\cdot
v(x,t)\bigg>_{X_0\times X^*_0}
+\bigg<\delta(x,t),S_0\cdot\Big(\Delta_x
u(x,t)+\Xi\big(u(x,t),v(x,t),x,t\big)\Big)\bigg>_{X_0\times
X_0^*}\\=0\quad\quad\forall\delta(x,t)\in
C^1_c\big(0,T_0;X_0\big)\,.
\end{multline}
%
%
%
%
%
%
%
%
%
%
Then, by Lemma \ref{vlozhenie} we obtain $\frac{du}{dt}(x,t)\in
L^2\big(0,T_0; W^{-1,2}(\O,\R^k)\big)$ and $\frac{dv}{dt}(x,t)\in
L^2\big(0,T_0; W^{-1,2}(\O,\R^k)\big)$ and thus $u(x,t)\in
L^\infty\big(0,T_0;W^{1,2}_0(\O,\R^k)\big)\cap W^{1,2}\big(0,T_0;
W^{-1,2}(\O,\R^k)\big)$ and $v(x,t)\in
L^\infty\big(0,T_0;W^{1,2}_0(\O,\R^k)\big)\cap W^{1,2}\big(0,T_0;
W^{-1,2}(\O,\R^k)\big)$. Moreover
$\big(u(x,t), v(x,t)\big)$ solves
\er{uravnllllgsnachlklimjjjaplneabstnedvkkkhh}.
\end{proof}

\subsection{Incompressible Navier-Stokes equations and Magneto-Hydrodynamics}
Let $\O\subset\R^N$ be a domain. The initial-boundary value problem
for the incompressible Navier-Stokes Equations is the following one,
\begin{equation}\label{IBNSnew}
\begin{cases}\;(i)\;\;\,\frac{\partial v}{\partial t}+\Div_x (v\otimes
v)+\nabla_x p=\nu_h\Delta_x v+f\quad\quad
\forall(x,t)\in\O\times(0,T_0)\,,\\\,(ii)\;\; \Div_x
v=0\quad\quad\forall(x,t)\in\O\times(0,T_0)\,,
\\(iii)\;\;v(x,t)=\gamma(x,t)\quad\quad\forall(x,t)\in\partial\O\times(0,T_0)\,,
\\(iv)\,\;\;v(x,0)=v_0(x)\quad\quad\forall x\in\O\,.
\end{cases}
\end{equation}
Here $v=v(x,t):\O\times(0,T_0)\to\R^N$ is an unknown velocity,
$p=p(x,t):\O\times(0,T_0)\to\R$ is an unknown pressure, associated
with $v$, $\nu_h>0$ is a given constant hydrodynamical viscosity,
$f:\O\times(0,T_0)\to\R^N$ is a given force field
$\gamma=\gamma(x,t)$ is a given velocity on the boundary (which can
be nontrivial for fluid driven by its boundary) and $v_0:\O\to\R^N$
is a given initial velocity.

 The initial-boundary value problem for the incompressible
Magneto-Hydrodynamics is the following one,
\begin{equation}\label{IBNSmhdnew}
\begin{cases}\;(i)\;\;\,\frac{\partial v}{\partial t}+\Div_x (v\otimes
v)-\Div_x (b\otimes b)+\nabla_x p=\nu_h\Delta_x v+f\quad\quad
\forall(x,t)\in\O\times(0,T_0)\,,\\ \,(ii)\;\;\,\frac{\partial
b}{\partial t}+\Div_x (b\otimes v)-\Div_x (v\otimes b)=\nu_m\Delta_x
b\quad\quad \forall(x,t)\in\O\times(0,T_0)\,,\\
(iii)\;\; \Div_x v=0\quad\quad\forall(x,t)\in\O\times(0,T_0)\,,
\\
(iv)\;\; \Div_x b=0\quad\quad\forall(x,t)\in\O\times(0,T_0)\,,
\\
(v)\;\;v(x,t)=0
\quad\quad\forall(x,t)\in\partial\O\times(0,T_0)\,,
\\
(vi)\;\;b\cdot\vec
n=0\quad\quad\forall(x,t)\in\partial\O\times(0,T_0)\,,
\\(vii)
\;\;\sum_{j=1}^{N}\Big(\frac{\partial b_i}{\partial
x_j}-\frac{\partial b_j}{\partial x_i}\Big)\vec
n_j=0\quad\quad\forall(x,t)\in\partial\O\times(0,T_0),\;\forall
i=1,2,\ldots N\,,
\\(viii)\,\;\;v(x,0)=v_0(x)\quad\quad\forall x\in\O\,,\\
(ix)\,\;\;b(x,0)=b_0(x)\quad\quad\forall x\in\O\,.
\end{cases}
\end{equation}
Here $v=v(x,t):\O\times(0,T_0)\to\R^N$ is an unknown velocity,
$b=b(x,t):\O\times(0,T_0)\to\R^N$ is an unknown magnetic field,
$p=p(x,t):\O\times(0,T_0)\to\R$ is an unknown total pressure
(hydrodynamical+magnetic), $\nu_h>0$ and $\nu_m>0$ are given
constant hydrodynamical and magnetic viscosities,
$f:\O\times(0,T_0)\to\R^N$ is a given force field,
$v_0:\O\to\R^N$ is a given initial velocity, $b_0:\O\to\R^N$ is a
given initial magnetic field and $\vec n$ is a normal to
$\partial\O$.

Next if for some constant $\lambda\in\{0,1\}$ we consider the
system:
\begin{equation}\label{IBNSmhdnewgen}
\begin{cases}\frac{\partial v}{\partial t}+\Div_x (v\otimes
v)-\lambda \Div_x (b\otimes b)+\nabla_x p=\nu_h\Delta_x
v+f\quad\quad \forall(x,t)\in\O\times(0,T_0)\,,\\
\frac{\partial b}{\partial t}+\lambda \Div_x (b\otimes v)-\lambda
\Div_x (v\otimes b)=\nu_m\Delta_x
b\quad\quad \forall(x,t)\in\O\times(0,T_0)\,,\\
\Div_x v=0\quad\quad\forall(x,t)\in\O\times(0,T_0)\,,
\\
\Div_x b=0\quad\quad\forall(x,t)\in\O\times(0,T_0)\,,
\\
v(x,t)=\gamma(x,t)\quad\quad\forall(x,t)\in\partial\O\times(0,T_0)\,,
\\
b\cdot\vec n=0\quad\quad\forall(x,t)\in\partial\O\times(0,T_0)\,,
\\ \sum_{j=1}^{N}\Big(\frac{\partial b_i}{\partial
x_j}-\frac{\partial b_j}{\partial x_i}\Big)\vec
n_j=(\lambda/\nu_m)\big(\gamma\cdot\vec
n\big)b\quad\quad\forall(x,t)\in\partial\O\times(0,T_0),\;\forall
i=1,2,\ldots N\,,
\\ v(x,0)=v_0(x)\quad\quad\forall x\in\O\,,\\
b(x,0)=b_0(x)\quad\quad\forall x\in\O\,,
\end{cases}
\end{equation}
then for $\lambda=1$ and $\gamma\equiv 0$ this system will coincide
with \er{IBNSmhdnew}. On the other hand if $(v,b,p)$ is a solution
to \er{IBNSmhdnewgen} for $\lambda=0$, then $(v,p)$ will be a
solution to \er{IBNSnew}.

 If there exists a sufficiently regular
function $r=r(x,t):\O\times(0,T_0)\to\R^N$ such that
$r(x,t)=\gamma(x,t)$ $\forall(x,t)\in\partial\O\times(0,T_0)$ and
$\Div_x r\equiv 0$, then fix it and define the new unknown function
$u(x,t):=v(x,t)-r(x,t)$ and its initial value
$u_0(x):=v_0(x)-r(x,0)$. Then we can rewrite \er{IBNSmhdnewgen} in
the terms of $(u,b,p)$ as
\begin{equation}\label{IBNSmhdnewgenhom}
\begin{cases}\frac{\partial u}{\partial t}+\Div_x \big(u\otimes
u+r\otimes u+u\otimes r-\lambda b\otimes b\big)+\nabla_x p=\nu_h\Delta_x u+\hat f\quad \forall(x,t)\in\O\times(0,T_0)\,,\\
\frac{\partial b}{\partial t}+\lambda \Div_x (b\otimes u-u\otimes
b+b\otimes r-r\otimes b)=\nu_m\Delta_x
b\quad\quad \forall(x,t)\in\O\times(0,T_0)\,,\\
\Div_x u=0\quad\quad\forall(x,t)\in\O\times(0,T_0)\,,
\\
\Div_x b=0\quad\quad\forall(x,t)\in\O\times(0,T_0)\,,
\\
u=0\quad\quad\forall(x,t)\in\partial\O\times(0,T_0)\,,
\\
b\cdot\vec n=0\quad\quad\forall(x,t)\in\partial\O\times(0,T_0)\,,
\\ \sum_{j=1}^{N}\Big(\frac{\partial b_i}{\partial
x_j}-\frac{\partial b_j}{\partial x_i}\Big)\vec
n_j=(\lambda/\nu_m)\big(r\cdot\vec
n\big)b\quad\quad\forall(x,t)\in\partial\O\times(0,T_0),\;\forall
i=1,2,\ldots N\,,
\\ u(x,0)=u_0(x)\quad\quad\forall x\in\O\,,\\
b(x,0)=b_0(x)\quad\quad\forall x\in\O\,,
\end{cases}
\end{equation}
where $\hat f:=f+\Delta_x r-\partial_t r-div_x\,(r\otimes r)$. We
will provide
with the existence of solution  for the system \er{IBNSmhdnewgenhom}
for every constant $\lambda\in\{0,1\}$.

 We need some preliminaries.
\begin{definition}\label{dXYmhdnew}
Let $\O\subset\R^N$ be an open set. We denote:
\begin{itemize}
\item
By $\mathcal{V}_N=\mathcal{V}_N(\O)$ the space $\{\f\in
C_c^\infty(\O,\R^N):\,div\,\f=0\}$ and by $L_N=L_N(\O)$ the space,
which is the closure of $\mathcal{V}_N$ in the space $L^2(\O,\R^N)$,
endowed with the scalar product $\big<\f_1,\f_2\big>_{B_N}:=\int_\O
\f_1\cdot \f_2\, dx$ and the norm
$\|\f\|:=\big(\int_\O|\f|^2dx\big)^{1/2}$.


\item
By $V_N=V_N(\O)$ the closure of $\mathcal{V}_N$ in
$W^{1,2}_0(\O,\R^N)$ endowed with the scalar product
$\big<\f_1,\f_2\big>_{V_N}:=\int_\O \big(\nabla\f_1:\nabla
\f_2+\f_1\cdot\f_2\big) dx$ and the norm
$\|\f\|:=\big(\int_\O|\nabla \f|^2dx+\int_\O|\f|^2dx\big)^{1/2}$.


\item $C_c^\infty(\ov\O,\R^N):=\big\{\f:\O\to\R^N:\;\exists\bar \f\in
C_c^\infty(\R^N,\R^N)\;\text{s.t.}\;\bar\f(x)=\f(x)\;\forall
x\in\O\big\}$.

\end{itemize}
Furthermore, given $\f\in \mathcal{D}'(\O,\R^N)$ denote
\begin{equation}\label{bhygytbhdrhthjmhdmhdnew}
rot_x \f:=\bigg\{\frac{\partial \f_i}{\partial x_j}-\frac{\partial
\f_j}{\partial x_i}\bigg\}_{1\leq i,j\leq N}=\big(\nabla_x
f\big)-\big(\nabla_x f\big)^T\in \mathcal{D}'\big(\O,\R^{N\times
N}\big)\,,
\end{equation}
and define the linear space
\begin{equation}\label{bhygytbhdmhdnew}
B'_N=B'_N(\O):=\Bigg\{\f\in L_N:\;rot_x \f\in L^2\big(\O,\R^{N\times
N}\big)\Bigg\}\,,
\end{equation}
endowed with the scalar product $\big<\f_1,\f_2\big>_{B'_N}:=\int_\O
\big(\f_1\cdot \f_2+ (1/2)rot_x \f_1\cdot rot_x \f_2 \big)\,dx$ and
the corresponding norm
$\|\f\|_{B'_N}:=\big(<\f,\f>_{B'_N}\big)^{1/2}$. Then $B'_N$ is a
Hilbert space. Moreover, clearly $B'_N$ is continuously embedded in
$W^{1,2}_{loc}(\O,\R^N)\cap L_N$. We also denote a smaller space
$B_N=B_N(\O)$ as a closure of $B'_N(\O)\cap C_c^\infty(\ov\O,\R^N)$
in $B'_N(\O)$
endowed with the norm of $B'_N(\O)$ (clearly if the boundary of
domain $\O$ is sufficiently regular then $B_N$ and $B'_N$ coincide).
\end{definition}
\begin{proposition}\label{premainkkkmhdnew}
For every $r\in L^2\big(0,T_0;W^{1,2}(\O,\R^N)\big)\cap L^\infty$,
$f\in L^2\big(0,T_0;L^2(\O,\R^{N})\big)$, $g\in
L^2\big(0,T_0;L^2(\O,\R^{N\times N})\big)$, $\nu_h>0$, $\nu_m>0$,
$\lambda\in\{0,1\}$, $v_0(\cdot)\in L_N$ and $b_0(\cdot)\in L_N$
there exist $u(x,t)\in L^2(0,T_0;V_N)\cap L^\infty(0,T_0;L_N)$ and
$b(x,t)\in L^2(0,T_0;B_N)\cap L^\infty(0,T_0;L_N)$, such that
$u(\cdot,t)$ and $b(\cdot,t)$ are $L_N$-weakly continuous in $t$ on
$[0,T_0]$, $u(x,0)=v_0(x)$, $b(x,0)=b_0(x)$ and $u(x,t)$ and
$b(x,t)$ satisfy
\begin{multline}\label{nnhogfcuykuylk48mhdnew}
\int_0^{T_0}\int_\O \Bigg\{ \Big(u(x,t)\otimes u(x,t)+r(x,t)\otimes
u(x,t)+u(x,t)\otimes r(x,t)-\lambda b(x,t)\otimes
b(x,t)+g(x,t)\Big):\nabla_x
\psi(x,t)\\-f(x,t)\cdot\psi(x,t)+u(x,t)\cdot\partial_t\psi(x,t)\Bigg\}\,dxdt=\int_0^{T_0}\int_\O\nu_h
\nabla_x u(x,t):\nabla_x\psi(x,t)\,dxdt-\int_\O
v_0(x)\cdot\psi(x,0)\,dx\,,
\end{multline}
for every $\psi(x,t)\in C^1_c\big(\O\times[0,T_0),\R^N\big)\cap
C^1\big([0,T_0];V_N\big)$
and
\begin{multline}\label{nnhogfcuykuylk48mhdjjjmhdnew}
\int_0^{T_0}\int_\O \Bigg\{ \lambda\Big(b(x,t)\otimes
u(x,t)-u(x,t)\otimes b(x,t)+b(x,t)\otimes r(x,t)-r(x,t)\otimes
b(x,t)\Big):\nabla_x
\phi(x,t)\\+b(x,t)\cdot\partial_t\phi(x,t)\Bigg\}\,dxdt=\int_0^{T_0}\int_\O\frac{\nu_m}{2}rot_x
b(x,t):rot_x\phi(x,t)\,dxdt-\int_\O b_0(x)\cdot\phi(x,0)\,dx\,,
\end{multline}
for every $\phi(x,t)\in C^1_c\big(\R^N\times[0,T_0),\R^N\big)\cap
C^1\big([0,T_0];B_N\big)$.
I.e.
\begin{equation}\label{IBNSmhdnewgenhomhjkhgjjhfgjh}
\begin{cases}\frac{\partial u}{\partial t}+\Div_x \big(u\otimes
u+r\otimes u+u\otimes r-\lambda b\otimes b\big)+\nabla_x p=\nu_h\Delta_x u-f-\Div_x g\quad \forall(x,t)\in\O\times(0,T_0)\,,\\
\frac{\partial b}{\partial t}+\lambda \Div_x (b\otimes u-u\otimes
b+b\otimes r-r\otimes b)=\nu_m\Delta_x
b\quad\quad \forall(x,t)\in\O\times(0,T_0)\,,\\
\Div_x u=0\quad\quad\forall(x,t)\in\O\times(0,T_0)\,,
\\
\Div_x b=0\quad\quad\forall(x,t)\in\O\times(0,T_0)\,,
\\
u=0\quad\quad\forall(x,t)\in\partial\O\times(0,T_0)\,,
\\
b\cdot\vec n=0\quad\quad\forall(x,t)\in\partial\O\times(0,T_0)\,,
\\
rot_x b\cdot\vec n=(\lambda/\nu_m)\big(r\cdot\vec
n\big)b\quad\quad\forall(x,t)\in\partial\O\times(0,T_0),
\\ u(x,0)=u_0(x)\quad\quad\forall x\in\O\,,\\
b(x,0)=b_0(x)\quad\quad\forall x\in\O\,,
\end{cases}
\end{equation}
Moreover, if either $\lambda=0$ and $\O$ is bounded or $r(x,t)\equiv
0$,
then $u(x,t)$ and $b(x,t)$ satisfy
the energy inequality
\begin{multline}\label{yteqmhdnew}
\frac{1}{2}\int_\O \big|u(x,\tau)\big|^2dx+\frac{1}{2}\int_\O
\big|b(x,\tau)\big|^2dx+\int_0^\tau\int_\O\nu_h\big|\nabla_x
u(x,t)\big|^2\,dxdt\\
+\int_0^\tau\int_\O\frac{\nu_m}{2}\big|rot_x
b(x,t)\big|^2\,dxdt\leq\frac{1}{2}\int_\O
\big|v_0(x)\big|^2dx+\frac{1}{2}\int_\O
\big|b_0(x)\big|^2dx\\+\int_0^\tau\int_\O
\bigg(\Big\{g(x,t)+r(x,t)\otimes u(x,t)+u(x,t)\otimes
r(x,t)\Big\}:\nabla_x u(x,t)\\+\lambda\big\{b(x,t)\otimes
r(x,t)\big\}:rot_x b(x,t)-f(x,t)\cdot
u(x,t)\bigg)\,dxdt\quad\quad\forall \tau\in[0,T_0]\,.
\end{multline}
\end{proposition}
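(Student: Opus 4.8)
The plan is to realize the solution of \er{IBNSmhdnewgenhomhjkhgjjhfgjh} as the limit of a family of regularized problems that fall within the scope of the abstract existence theorems of Section 3, and then to recover the energy inequality \er{yteqmhdnew} from the approximate energy balances by weak lower semicontinuity. First I would build an evolution triple adapted to divergence-free fields: $X:=V_N\times B_N$, $H:=L_N\times L_N$ with the scalar products of Definition \ref{dXYmhdnew}, and $X^*:=V_N^{-1}\times B_N^{-1}$, the trivial injective dense-range inclusions $V_N\hookrightarrow L_N$ and $B_N\hookrightarrow L_N$ assembling into $T\in\mathcal{L}(X,H)$ so that $\{X,H,X^*\}$ is an evolution triple with $\widetilde T$ as in Definition \ref{7bdf}, exactly as in the proof of Proposition \ref{divformnedvkkk}. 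The dissipation $-\nu_h\Delta_x$ on the $u$-component and $-\frac{\nu_m}{2}\Div_x(rot_x\,\cdot)$ on the $b$-component I would place either in a convex quadratic coercive functional $\Phi$ on $X$ (coercivity of the $b$-part using that $\|\cdot\|_{B_N}$ coincides with $\|\cdot\|_{B'_N}$ and dominates $\int|rot_x\,b|^2+\int|b|^2$) or, after passing to a more regular test space, in a bounded operator of $\mathcal{L}(X,X^*)$-type; the energy-neutral first-order coupling between the two components is put into a bounded skew operator $\Lambda$ as in \er{dvbgfghtfgjgyhjkghliuj}--\er{rgtrutyiuibcgsgtuikik}.

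The only obstruction to applying the Section 3 results is that the convective terms are not Lipschitz. I would therefore replace each product $z_iz_j$ occurring in $u\otimes u$, $b\otimes b$, $b\otimes u$, $u\otimes b$ by $f_n(|u|^2+|b|^2)\,z_iz_j$, where $f_n(s):=h(s/n)$ for a fixed $h\in C^\infty(\R,[0,1])$ with $h\equiv1$ on $(-\infty,1]$, $h\equiv0$ on $[2,+\infty)$, leaving the $r$-dependent terms and the data $f,g$ affine in the unknown. The resulting truncated terms are globally Lipschitz as maps $H\to X^*$ (their $L^2$-norm is controlled by $\sqrt n\,\|T\cdot(\cdot)\|_H$) and weak-to-weak continuous in the sense the abstract theorems require, using the restriction operators onto bounded subdomains $\O_j\uparrow\O$ together with the compact embeddings $V_N,B_N\hookrightarrow\hookrightarrow L^2_{\mathrm{loc}}$, as in the earlier examples. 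The truncation also preserves the divergence-free structure, so the zero-energy identities persist: e.g.\ $\int_\O f_n(|u|^2+|b|^2)(u\otimes u):\nabla_x u\,dx=-\int_\O\frac12 G_n(|u|^2+|b|^2)\,\Div_x u\,dx=0$ (with $G_n'=f_n$), and similarly for the mixed $b$-terms, the companion $\int_\O(u_n\otimes r):\nabla_x u_n$ also vanishing. Applying Theorem \ref{defHkkkkglkjjj} (for bounded $\O$) or Theorem \ref{thhypppggghhhj} / Corollary \ref{thhypppggg} (in general) to the $n$-th regularized equation yields $(u_n,b_n)$ with $u_n\in L^2(0,T_0;V_N)\cap L^\infty(0,T_0;L_N)$, $b_n\in L^2(0,T_0;B_N)\cap L^\infty(0,T_0;L_N)$, $H$-weakly continuous, attaining the initial data, with the energy equality of Lemma \ref{lem2}. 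Since the truncated convective terms contribute nothing to that balance and the $r$-terms leave only $\int_\O(r\otimes u_n):\nabla_x u_n$ and its $rot$-analogue, absorbed by $\|r\|_{L^\infty}$ and Gronwall's lemma, the energy equality gives bounds for $\{u_n\}$ in $L^2(0,T_0;V_N)\cap L^\infty(0,T_0;L_N)$, for $\{b_n\}$ in $L^2(0,T_0;B_N)\cap L^\infty(0,T_0;L_N)$, and for $\{f_n(|u_n|^2+|b_n|^2)(u_n\otimes u_n)\}$ and the other quadratic terms in $L^1(\O\times(0,T_0))$, all independent of $n$.

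The passage $n\to\infty$ is where the work concentrates. Up to a subsequence $u_n\rightharpoonup u$ weakly in $L^2(0,T_0;V_N)$, $b_n\rightharpoonup b$ weakly in $L^2(0,T_0;B'_N)$, and the truncated quadratic terms converge weakly-$*$ to finite Radon measures on $\O\times(0,T_0)$. Testing the $n$-th equation against a fixed $\phi\in\mathcal{V}_N$ and using these measure limits shows $\int_\O u_n(t)\cdot\phi\,dx$ converges for every $t$; by density of $\mathcal{V}_N$ in $L_N$ and the uniform $L^\infty(L_N)$-bound this gives $u_n(t)\rightharpoonup u(t)$ weakly in $L_N$ for every $t$, and similarly for $b_n$. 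Lemma \ref{ComTem1PP}, applied with the compact embeddings $V_N,B_N\hookrightarrow\hookrightarrow L^2(\O')$ for bounded $\O'\subset\O$, upgrades this to $u_n\to u$, $b_n\to b$ strongly in $L^2\big(0,T_0;L^2(\O')\big)$, hence a.e.\ on $\O\times(0,T_0)$ along a further subsequence; in particular $f_n(|u_n|^2+|b_n|^2)\to1$ a.e. On each bounded $\O'$ one then gets $f_n(|u_n|^2+|b_n|^2)(u_n\otimes u_n)\to u\otimes u$ in $L^1(\O'\times(0,T_0))$, by writing $u_n\otimes u_n-u\otimes u=(u_n-u)\otimes u_n+u\otimes(u_n-u)$, bounding each term by $\|u_n-u\|_{L^2(\O'\times(0,T_0))}$ times a bounded factor, and handling the $f_n-1$ defect by dominated convergence; likewise for the mixed $b$-terms. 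Consequently the weak-$*$ Radon-measure limits have no concentrated part and equal the corresponding products, so one may pass to the limit in the regularized identities to obtain \er{nnhogfcuykuylk48mhdnew} and \er{nnhogfcuykuylk48mhdjjjmhdnew}, while the regularity, $L_N$-weak continuity and attainment of initial data follow from Lemma \ref{vlozhenie} and Corollary \ref{vbnhjjmcor}.

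Finally, when $r\equiv0$ the energy equalities for $(u_n,b_n)$ pass in the limit to the inequality \er{yteqmhdnew} by weak lower semicontinuity of $\|\cdot\|_{L_N}$ and of the dissipation terms, together with weak $L^2$-convergence of $\nabla_x u_n$, $rot_x b_n$ against the fixed data $f,g$; when $\lambda=0$ and $\O$ is bounded the extra term $\int_\O(r\otimes u_n):\nabla_x u_n$ converges as well, since $u_n\to u$ strongly in $L^2(\O\times(0,T_0))$ now holds with $\O'=\O$, again giving \er{yteqmhdnew}; for unbounded $\O$ with $r\not\equiv0$ this last convergence fails, which is why the energy inequality is not claimed there. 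The main obstacle is precisely the identification of the weak limits of the quadratic terms in arbitrary space dimension, where only $L^1$ a priori control is available: it forces the interplay of the Radon-measure extraction, the pointwise-in-time weak $L_N$-convergence obtained by testing against fixed smooth fields, the compactness Lemma \ref{ComTem1PP}, and the no-concentration argument; the verification that the regularized problems genuinely satisfy the hypotheses of Theorem \ref{defHkkkkglkjjj} / Theorem \ref{thhypppggghhhj} (in particular the growth, coercivity and weak-continuity conditions for the truncated nonlinearities, uniformly on bounded subdomains for unbounded $\O$) is a secondary but nontrivial point.
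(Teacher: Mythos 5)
Your strategy --- truncate the quadratic terms to make them Lipschitz, solve the regularized problems by the abstract theorems, extract Radon-measure limits and remove concentrations by local strong compactness --- is not the route the paper takes, and in the magnetic case $\lambda=1$ it has a genuine gap. The uniform-in-$n$ a priori bounds in your second paragraph rest on the claim that the truncated convective terms still drop out of the energy balance. For the pure transport part this is true: with $s=|u|^2+|b|^2$ one has $f_n(s)\big(\tfrac12 u\cdot\nabla_x|u|^2+\tfrac12 u\cdot\nabla_x|b|^2\big)=\tfrac12\,u\cdot\nabla_x G_n(s)$, which integrates to zero against $\Div_x u=0$. But the magnetic coupling contributes
\begin{equation*}
-\lambda\int_\O f_n\big(|u|^2+|b|^2\big)\;b\cdot\nabla_x\big(u\cdot b\big)\,dx\,,
\end{equation*}
and $f_n(s)\,\nabla_x(u\cdot b)$ is not the gradient of any function of $(s,\,u\cdot b)$ unless $f_n$ is constant; this integral does not vanish. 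Consequently the approximate energy identity acquires an uncontrolled cubic term, the bounds for $\{u_n\},\{b_n\}$ are no longer independent of $n$, and the whole compactness argument collapses for $\lambda=1$. (A repair would require either a Galerkin-type mollification $b\otimes K_n(u)-u\otimes K_n(b)$ with divergence-free $K_n(u)$, or a cutoff depending on the \emph{global} norm $\|\sigma(t)\|_H$ --- an $x$-independent factor that can be pulled out of the spatial integral --- rather than a pointwise truncation.) For $\lambda=0$ your scheme is essentially the classical Leray construction and is sound, but it does not prove the stated proposition.

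The paper avoids all of this by a single application of Theorem \ref{thhypppggghhhj} to the \emph{untruncated} nonlinearity. The key device is the choice of the space $X$: one first forms the non-reflexive spaces $U_N'$, $D_N'$ (closures of $\mathcal{V}_N$ and of $B_N\cap C^\infty_c(\ov\O,\R^N)$ in norms controlling $\sup|\f|+\sup|\nabla\f|$), then uses Lemma \ref{hilbcomban} to embed separable Hilbert spaces $U_N$, $D_N$ densely into them, and takes $X=U_N\oplus D_N$, $Z=V_N\oplus B_N$, $H=L_N\oplus L_N$. Because every test element $A_1\cdot\delta$ has bounded gradient, the quadratic map $\sigma\mapsto F_t(\sigma)$ is already a well-defined map $H\to X^*$ with $\|DF_t(\sigma)\|_{\mathcal{L}(H;X^*)}\le C(\|\sigma\|_H+1)$, which is exactly the growth hypothesis of the theorem; the divergence-free cancellation is then verified once, exactly, for the true nonlinearity, yielding the coercivity condition, and the energy inequality \er{yteqmhdnew} comes out of the last assertion of Theorem \ref{thhypppggghhhj} rather than from a lower-semicontinuity passage. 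No truncation, no measure-theoretic identification of limits, and no loss of the cancellation structure ever occurs. If you want to keep an approximation argument, you must at minimum redesign the regularization so that the cancellation $\int_\O\big\{(u\otimes u-\lambda b\otimes b):\nabla_x u+\lambda(b\otimes u-u\otimes b):\nabla_x b\big\}\,dx=0$ survives it exactly.
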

\begin{proof}
Fix $\nu_h>0$, $\nu_m>0$, $\lambda\in\{0,1\}$, $f\in
L^2\big(0,T_0;L^2(\O,\R^{N})\big)$ $g\in
L^2\big(0,T_0;L^2(\O,\R^{N\times N})\big)$, $r\in
L^2\big(0,T_0;W^{1,2}(\O,\R^N)\big)\cap L^\infty$, $v_0(\cdot)\in
L_N$ and $b_0(\cdot)\in L_N$. Next define the space $U'_N$ as a
closure of $\mathcal{V}_N$ with respect to the norm
\begin{equation}\label{efreuethjjhh}
\|\f\|_{U'_N}:=\|\f\|_{V_N}+\sup\limits_{x\in\O}|\f(x)|+\sup\limits_{x\in\O}|\nabla\f(x)|\,.
\end{equation}
and the space $D'_N$ as a closure of $B_N\cap
C_c^\infty(\ov\O,\R^N)$ with respect to the norm
\begin{equation}\label{efreuetghuytjhhh}
\|\f\|_{D'_N}:=\|\f\|_{B_N}+\sup\limits_{x\in\O}|\f(x)|+\sup\limits_{x\in\O}|\nabla\f(x)|\,.
\end{equation}
Then clearly $U'_N$ and $D'_N$ are separable Banach spaces, which,
however, are not reflexive. On the other hand, by Lemma
\ref{hilbcomban} there exist separable Hilbert spaces $U_N$ and
$D_N$ and bounded linear inclusion operators $A_1\in
\mathcal{L}(U_N;U'_N)$ and $A_2\in \mathcal{L}(D_N;D'_N)$, such that
$A_1$ and $A_2$ are injective, the image of $A_1$ is dense in $U'_N$
and the image of $A_2$ is dense in $B'_N$
On the other hand, clearly $U'_N$ is trivially embedded in $V_N$,
and the trivial embedding operator $I_1\in \mathcal{L}(U'_N;V_N)$ is
injective and has dense range in $V_N$. Similarly, $D'_N$ is
trivially embedded in $B_N$, and the trivial embedding operator
$I_2\in \mathcal{L}(D'_N;B_N)$ is injective and has dense range in
$B_N$. Therefore if we define
\begin{equation}\label{vyffuyfyfffbm}
Q_1:=I_1\circ A_1\in\mathcal{L}(U_N;V_N)\quad\text{and}\quad
Q_2:=I_2\circ A_2\in\mathcal{L}(D_N;B_N)\,,
\end{equation}
then $Q_1$ and $Q_2$ are injective
and having dense ranges in $V_N$ and $B_N$ respectively. Next define
$P_1\in \mathcal{L}(V_N;L_N)$ as a trivial inclusion of $V_N$ into
$L_N$ and $P_2\in \mathcal{L}(B_N;L_N)$ as a trivial inclusion of
$B_N$ into $L_N$. Then clearly $P_1$ and $P_2$ are injective and
having dense ranges in $L_N$. Finally define
\begin{equation}\label{vyffuyfyfffbmuiuyi}
\mathcal{T}_1:=P_1\circ
Q_1\in\mathcal{L}(U_N;L_N)\quad\text{and}\quad
\mathcal{T}_2:=P_2\circ Q_2\in\mathcal{L}(D_N;L_N)\,.
\end{equation}
Then $\mathcal{T}_1$ and $\mathcal{T}_2$ are injective
and
having dense ranges in $L_N$. Next set
\begin{equation}\label{obshspkkklbjhjhmhdnewjhhyuiy}
X:=\Big\{\big(\psi,\varphi\big):\; \psi\in U_N,\;\varphi\in
D_N\Big\}\,,
\end{equation}
In this spaces we consider the norm
\begin{equation}\label{nrmojiihgufhhhmhdnewvvfv}
\|x\|_X:=\sqrt{\|\psi\|^2_{U_N}+\|\varphi\|^2_{D_N}}\quad\quad\forall
x=(\psi,\varphi)\in X\,.
\end{equation}
Thus $X$
is a separable reflexive Banach space. Similarly set
\begin{equation}\label{obshspkkklbjhjhmhdnew}
Z:=\Big\{\big(\psi(x),\varphi(x)\big):\;\psi(x):\O\to\R^N,\;\varphi(x):\O\to\R^N,\;
\psi(x)\in V_N,\;\varphi(x)\in B_N\Big\}\,,
\end{equation}
In this spaces we consider the norm
\begin{equation}\label{nrmojiihgufhhhmhdnewtc bvb}
\|z\|_Z:=\sqrt{\|\psi\|^2_{V_N}+\|\varphi\|^2_{B_N}}\quad\quad\forall
z=(\psi,\varphi)\in Z\,.
\end{equation}
Thus $Z$
is also a separable reflexive Banach space.
Finally set
\begin{equation}\label{obshspkkklHHHggkgmhdnew}
H:=\Big\{\big(\psi(x),\f(x)\big):\;\psi(x):\O\to\R^N,\;\f(x):\O\to\R^N,\;
\psi(x)\in L_N,\;\f(x)\in L_N\Big\}\,.
\end{equation}
In this space we consider the scalar product
\begin{multline}\label{nrmojiihgufHHHguittuimhdnew}
<h_1,h_2>_{H\times H}:=<\psi_1,\psi_2>_{L_N\times
L_N}+<\f_1,\f_2>_{L_N\times L_N}\\ =\int_\O\Big\{\psi_1(x)\cdot
\psi_2(x)+\f_1(x)\cdot \f_2(x)\Big\}dx\quad\quad \forall
h_1=(\psi_1,\f_1),h_2=(\psi_2,\f_2)\in H\,.
\end{multline}
Then $H$ is a Hilbert space.
Furthermore, consider $Q\in\mathcal{L}(X,Z)$ by
\begin{equation}\label{nrmojiihgufoperjhjhjhmhdnewgjhbh}
Q\cdot h=\big(Q_1\cdot \psi,Q_2\cdot \f\big)\quad\quad\forall
h=(\psi,\f)\in X\,.
\end{equation}
Similarly set $P\in\mathcal{L}(Z,H)$ by
\begin{equation}\label{nrmojiihgufoperjhjhjhmhdnewgjhbhjhjhggg}
P\cdot z=\big(P_1\cdot \psi,P_2\cdot \f\big)\quad\quad\forall
z=(\psi,\f)\in Z\,,
\end{equation}
and consider $T\in\mathcal{L}(X,H)$ by
\begin{equation}\label{nrmojiihgufoperjhjhjhmhdnew}
T\cdot h=\big(\mathcal{T}_1\cdot \psi,\mathcal{T}_2\cdot
\f\big)\quad\quad\forall h=(\psi,\f)\in X\,,
\end{equation}
Thus clearly $T=P\circ Q$ and $T$ is an injective inclusion with
dense image. Furthermore,
\begin{equation}\label{obshspkkklsoprgkgkgmhdnew}
X^*:=\Big\{\big(\psi,\f\big):\;\psi\in (U_N)^*,\;\f\in
(D_N)^*\Big\}\,,
\end{equation}
where
\begin{equation}\label{funcoprrrthhjkkkhhhbjhjhjhyhgugmhdnew}
\big<\delta,h\big>_{X\times X^*}=\big<\delta_0,h_0\big>_{U_N\times
(U_N)^*}+\big<\delta_1,h_1\big>_{D_N\times (D_N)^*}\quad\quad\forall
\delta=(\delta_0,\delta_1)\in X,\;\forall h=(h_0,h_1)\in X^*\,.
\end{equation}
Thus $\{X,H,X^*\}$ is an evolution triple with the corresponding
inclusion operators $T\in \mathcal{L}(X;H)$ and $\widetilde{T}\in
\mathcal{L}(H;X^*)$, as it was defined in Definition \ref{7bdf}.
Next, let $\Phi(h):Z\to[0,+\infty)$ be defined by
\begin{multline*}
\Phi(h):=\frac{1}{2}\int_\O \bigg(\nu_h\big|\nabla_x
\psi(x)\big|^2+\frac{\nu_m}{2}\big|rot_x
\f(x)\big|^2+\big|\psi(x)\big|^2+\big|\f(x)\big|^2\bigg)dx\\ \forall
h=(\psi,\f)\in Z=\big(V_N, B_N\big)\,.
\end{multline*}
So the mapping $D\Phi(h):Z\to Z^*$ is linear
and monotone.
Furthermore,
for every $t\in[0,T_0]$ let $\Theta_t(\sigma):H\to (U_N)^*$ be
defined by
\begin{multline}\label{jhfjggkjkjhhkhkloopllpppmhdnew}
\Big<\delta,\Theta_t(\sigma)\Big>_{U_N\times (U_N)^*}:=\\-\int_\O
\Bigg\{\bigg(w(x)\otimes w(x)+r(x,t)\otimes w(x,t)+w(x,t)\otimes
r(x,t)-\lambda b(x)\otimes b(x)\bigg)+g(x,t)\Bigg\}:\nabla
\{A_1\cdot\delta\}(x)\,dx\\+\int_\O\Big(f(x,t)-w(x)\Big)\cdot\{A_1\cdot\delta\}(x)\,dx\quad\quad
\forall \sigma=(w,b)\in L_N\oplus L_N\equiv H,\; \forall\delta\in
U_N\,,
\end{multline}
Next for every $t\in[0,T_0]$ let $\Xi_t(\sigma):H\to (D_N)^*$ be
defined by
\begin{multline}\label{jhfjggkjkjhhkhkloopllpppmhdhoghughnew}
\Big<\delta,\Xi_t(\sigma)\Big>_{D_N\times (D_N)^*}:=\\-\int_\O
\lambda\Big(b(x)\otimes w(x)-w(x)\otimes b(x)+b(x)\otimes
r(x,t)-r(x,t)\otimes
b(x)\Big):\nabla\{A_2\cdot\delta\}(x)\,dx\\-\int_\O
b(x)\cdot\{A_2\cdot\delta\}(x)\,dx\quad\quad \forall \sigma=(w,b)\in
L_N\oplus L_N\equiv H,\; \forall\delta\in D_N\,,
\end{multline}
Finally for every
$t\in[0,T_0]$ let $F_t(\sigma):H\to X^*$ be defined by
\begin{equation}
\label{jhfjggkjkjhhkhkloopllpppmhdjnhuinew}
F_t(\sigma):=\big(\Theta_t(\sigma),\Xi_t(\sigma)\big)\quad\forall\sigma\in
H
\end{equation}
Then
$F_t(\sigma)$ is G\^{a}teaux differentiable at every $\sigma\in H$,
and
the derivative of $F_t(\sigma)$ satisfy the condition
\begin{equation}
\label{roststlambdglkFFjjjapljjjkhklpppmhdnew}
\|DF_t(\sigma)\|_{\mathcal{L}(H;X^*)}\leq
C\big(\|\sigma\|_H+1\big)\quad\forall \sigma\in H,\;\forall
t\in[0,T_0]\,,
\end{equation}
for some constant $C>0$. Moreover,
\begin{multline}
\label{roststlambdglkFFjjjapljjjkhklpppmhdnewghhjgj}
\Big<\delta,F_t(T\cdot \delta)\Big>_{X\times
X^*}=\Big<\psi,\Theta_t(T\cdot \delta)\Big>_{U_N\times
(U_N)^*}+\Big<\varphi,\Xi_t(T\cdot \delta)\Big>_{D_N\times
(D_N)^*}=\\
-\int_\O \Bigg\{\bigg(w(x)\otimes w(x)+r(x,t)\otimes
w(x,t)+w(x,t)\otimes r(x,t)-\lambda b(x)\otimes
b(x)\bigg)+g(x,t)\Bigg\}:\nabla
w(x)\,dx\\+\int_\O\Big(f(x,t)-w(x)\Big)\cdot w(x)\,dx-\int_\O
\lambda\Big(b(x)\otimes w(x)-w(x)\otimes b(x)+b(x)\otimes
r(x,t)-r(x,t)\otimes b(x)\Big):\nabla b(x)\,dx\\-\int_\O b(x)\cdot
b(x)\,dx \quad\quad\text{where}\;
w=A_1\cdot\psi,\;b=A_2\cdot\varphi\quad\forall
\delta=(\psi,\varphi)\in U_N\oplus D_N=X,\;\forall t\in[0,T_0]\,,
\end{multline}
Thus since $w=A_1\cdot\psi\in U'_N$ and $b=A_2\cdot\varphi\in D'_N$
we rewrite \er{roststlambdglkFFjjjapljjjkhklpppmhdnewghhjgj} as
follows,
\begin{multline}
\label{roststlambdglkFFjjjapljjjkhklpppmhdnewghhjgjjhggghjjhffgh}
\Big<\delta,F_t(T\cdot \delta)\Big>_{X\times X^*}=\int_\O
\Big(f(x,t)\cdot w(x)-g(x,t):\nabla w(x)\Big)\,dx-\int_\O
\Big(\big|w(x)\big|^2+\big|b(x)\big|^2 \Big)\,dx\\-\int_\O
\bigg(\big\{r(x,t)\otimes w(x)+w(x)\otimes r(x,t)\big\}:\nabla
w(x)+\lambda\big\{b(x)\otimes
r(x,t)\big\}:rot_x b(x)\,\bigg)dx\\
-\int_\O \frac{1}{2}\bigg\{w(x)\cdot\nabla_x
\big|w(x)\big|^2+\lambda w(x)\cdot\nabla_x \big|b(x)\big|^2-2\lambda
b(x)\cdot\nabla_x\Big(w(x)\cdot b(x)\Big)\bigg\}\,dx\\
\quad\quad\text{where}\;
w=A_1\cdot\psi,\;b=A_2\cdot\varphi\quad\forall
\delta=(\psi,\varphi)\in U_N\oplus D_N=X,\;\forall t\in[0,T_0]\,,
\end{multline}
On the other hand $w(x),b(x)\in L_N$
and thus $\Div_x\{\chi_\O w\}=\Div_x\{\chi_\O b\}$
in the sense of distributions (here $\chi_\O$ is characteristic
function of the set $\O$). Thus the last integral in
\er{roststlambdglkFFjjjapljjjkhklpppmhdnewghhjgjjhggghjjhffgh}
vanishes, and therefore, since $r(x,t)\in L^\infty$ we obtain
\begin{multline}
\label{roststlambdglkFFjjjapljjjkhklpppmhdnewghhjgjjhggghjjhffghhjkkgjghgfgghihhj}
\Big<\delta,F_t(T\cdot \delta)\Big>_{X\times X^*}=\int_\O
\Big(f(x,t)\cdot w(x)-g(x,t):\nabla w(x)\Big)\,dx-\int_\O
\Big(\big|w(x)\big|^2+\big|b(x)\big|^2 \Big)\,dx\\-\int_\O
\bigg(\big\{r(x,t)\otimes w(x)+w(x)\otimes r(x,t)\big\}:\nabla
w(x)+\lambda\big\{b(x)\otimes
r(x,t)\big\}:rot_x b(x)\,\bigg)dx\geq\\
-C\Big(\big\|Q\cdot\delta\big\|_Z+1\Big)\Big(\big\|T\cdot\delta\big\|_H+1\Big)-\mu(t)
\quad\text{where}\; w=A_1\cdot\psi,\;b=A_2\cdot\varphi \quad\forall
\delta\in =(\psi,\varphi)\in X
,\;\forall t\in[0,T_0].
\end{multline}
Here $\mu(t)\in L^1(0,T_0;\R)$ is some nonnegative function.

 Next  consider the sequence
of open sets $\{\O_j\}_{j=1}^{\infty}$ such that for every
$j\in\mathbb{N}$, $\O_j$ is compactly embedded in $\O_{j+1}$, and
$\cup_{j=1}^{\infty}\O_j=\O$. Then set $Z_j:=L^2\big(\O_j,\R^N\big)$
and
$\bar L_j\in \mathcal{L}(L_N,Z_j)$ by
$$\bar L_j\cdot \big(h(x)\big):=h(x)\llcorner\O_j\in L^2\big(\O_j,\R^N\big)=Z_j\quad\quad\forall h(x)\in L_N(\O)\,.$$
Thus, by the standard embedding theorems in the Sobolev Spaces, the
operators $\bar L_j\circ P_1\in \mathcal{L}(V_N,Z_j)$ and $\bar
L_j\circ P_2\in \mathcal{L}(B_N,Z_j)$ are compact for every $j$.
Moreover, if $\{\sigma_n\}_{n=1}^{\infty}\subset H$ is a sequence
such that $\sigma_n=(h_n,w_n)\rightharpoonup \sigma_0=(h_0,w_0)$
weakly in $H$ and $\bar L_j\cdot h_n\to \bar L_j\cdot h_0$ and $\bar
L_j\cdot w_n\to \bar L_j\cdot w_0$, strongly in $Z_j$ as $n\to
+\infty$ for every $j$,
then we have $h_n\to h_0$ and $w_n\to w_0$ strongly in
$L^2_{loc}(\O,\R^N)$ and thus, by
\er{jhfjggkjkjhhkhkloopllpppmhdjnhuinew} and
\er{roststlambdglkFFjjjapljjjkhklpppmhdnew} we must have
$F_t(\sigma_n)\rightharpoonup F_t(\sigma_0)$ weakly in $X^*$.

 Thus all the conditions of Theorem \ref{thhypppggghhhj}
are satisfied. Applying this Theorem we deduce that there exists a
function $h(t)\in L^2\big(0,T_0;Z\big)$ such that $\sigma(t):=P\cdot
h(t)$ belongs to $L^\infty(0,T_0;H)$, $\gamma(t):=\widetilde T\cdot
\sigma(t)$ belongs to $W^{1,2}(0,T_0;X^*)$ and $h(t)$ is a solution
to
\begin{equation}\label{uravnllllgsnachlklimjjjapltggtkojjlnmmhdnew}
\begin{cases}\frac{d \gamma}{dt}(t)+F_t\big(\sigma(t)\big)+Q^*\cdot D\Phi\big(h(t)\big)=0\quad\text{for a.e.}\; t\in(0,T_0)\,,\\
\sigma(0)=\big(v_0(x), b_0(x)\big)\,,
\end{cases}
\end{equation}
where we assume that $\sigma(t)$ is $H$-weakly continuous on
$[0,T_0]$ and $Q^*\in \mathcal{L}(Z^*,X^*)$ is the adjoint to $Q$
operator. Then by the definitions of $\Phi$ and $F_t$,
$h(x,t):=\big(u(x,t),b(x,t)\big)$ satisfies that $u(x,t)\in
L^2(0,T_0;V_N)\cap L^\infty(0,T_0;L_N)$ and $b(x,t)\in
L^2(0,T_0;B_N)\cap L^\infty(0,T_0;L_N)$,
$u(\cdot,t)$ and $b(\cdot,t)$ are $L_N$-weakly continuous in $t$ on
$[0,T_0]$, $u(x,0)=v_0(x)$, $b(x,0)=b_0(x)$ and $u(x,t)$ and
$b(x,t)$ satisfy
\begin{multline}\label{nnhogfcuykuylk48mhdnewjkklhl}
\int_0^{T_0}\int_\O \Bigg\{ \Big(u(x,t)\otimes u(x,t)+r(x,t)\otimes
u(x,t)+u(x,t)\otimes r(x,t)-\lambda b(x,t)\otimes
b(x,t)+g(x,t)\Big):\nabla_x
\big\{A_1\cdot\psi(t)\big\}(x)\\-f(x,t)\cdot\big\{A_1\cdot\psi(t)\big\}(x)+u(x,t)\cdot
\big\{A_1\cdot\partial_t\psi(t)\big\}(x)\Bigg\}\,dxdt\\=\int_0^{T_0}\int_\O\nu_h
\nabla_x u(x,t):\nabla_x\big\{A_1\cdot\psi(t)\big\}(x)\,dxdt-\int_\O
v_0(x)\cdot\big\{A_1\cdot\psi(0)\big\}(x)\,dx\,,
\end{multline}
for every $\psi(t)\in C^1\big([0,T_0];U_N\big)$ such that
$\psi(T_0)=0$ and
\begin{multline}\label{nnhogfcuykuylk48mhdjjjmhdnewhugyguiujh}
\int_0^{T_0}\int_\O \Bigg\{ \lambda\Big(b(x,t)\otimes
u(x,t)-u(x,t)\otimes b(x,t)+b(x,t)\otimes r(x,t)-r(x,t)\otimes
b(x,t)\Big):\nabla_x\big\{A_2\cdot
\phi(t)\big\}(x)\\+b(x,t)\cdot\big\{A_2\cdot\partial_t\phi(t)\big\}(x)\Bigg\}\,dxdt\\=\int_0^{T_0}\int_\O\frac{\nu_m}{2}rot_x
b(x,t):rot_x\big\{A_2\cdot\phi(t)\big\}(x)\,dxdt-\int_\O
b_0(x)\cdot\big\{A_2\cdot\phi(0)\big\}(x)\,dx\,,
\end{multline}
for every $\phi(t)\in C^1\big([0,T_0];D_N\big)$ such that
$\phi(T_0)=0$. Thus since the image of $A_1$ is dense in $U_N'$ and
the image of $A_2$ is dense in $D_N'$, we deduce that $u(x,t)$ and
$b(x,t)$ are solutions of \er{nnhogfcuykuylk48mhdnew} and
\er{nnhogfcuykuylk48mhdjjjmhdnew}.
%
%
%
%
%
%
%
%

 Next by
\er{roststlambdglkFFjjjapljjjkhklpppmhdnewghhjgjjhggghjjhffghhjkkgjghgfgghihhj}
and by the definition of $\Phi$ we have
\begin{multline}
\label{roststlambdglkFFjjjapljjjkhklpppmhdnewghhjgjjhggghjjhffghhjkkgjghgfgghihhjkjjhghggh}
\Big<\delta,Q^*\cdot D\Phi(Q\cdot\delta)+F_t(T\cdot
\delta)\Big>_{X\times X^*}=\\ \int_\O \bigg(\nu_h\big|\nabla_x
w(x)\big|^2+\frac{\nu_m}{2}\big|rot_x b(x)\big|^2+\int_\O
\Big(f(x,t)\cdot w(x)-g(x,t):\nabla w(x)\Big)\,dx\\-\int_\O
\bigg(\big\{r(x,t)\otimes w(x)+w(x)\otimes r(x,t)\big\}:\nabla
w(x)+\lambda\big\{b(x)\otimes r(x,t)\big\}:rot_x b(x)\,\bigg)dx\\
\quad\text{where}\; w=A_1\cdot\psi,\;b=A_2\cdot\varphi \quad\forall
\delta=(\psi,\varphi)\in X
,\;\forall t\in[0,T_0]\,.
\end{multline}
However, if $\O$ is bounded then the embedding operator $P_1$ is
compact. On the other hand, either $\lambda=0$ and $\O$ is bounded
or $r(x,t)\equiv 0$. Thus, by
\er{roststlambdglkFFjjjapljjjkhklpppmhdnewghhjgjjhggghjjhffghhjkkgjghgfgghihhjkjjhghggh}
together with Theorem \ref{thhypppggghhhj}, we finally deduce
\er{yteqmhdnew}.
%
%
%
%
%
%
%
%
%
%
\end{proof}


\begin{thebibliography}{66}
\bibitem{Brez}
H.~Brezis and  I.~Ekeland, {\em Un principe variationnel associ\'e
\`a certaines equations paraboliques, Le cas independant du temps},
C.R. Acad. Sci. Paris S\'{e}r. A 282, A971--A974 (1976)

\bibitem{Gal}
Galdi, Giovanni P.  {\em An introduction to the Navier-Stokes
initial-boundary value problem}, Fundamental directions in
mathematical fluid mechanics, 1--70, Adv. Math. Fluid Mech.,
Birkhauser, Basel, (2000).

\bibitem{CD}
Dafermos, Constantine M. {\em Hyperbolic conservation laws in
continuum physics}, Second Edition, Springer, Heidelberg, 2005.

\bibitem{DalM}
Dal Maso, G. {\em An introduction to $\Gamma$-convergence}, Progress
in Nonlinear Differential Equations and Their Aplications, 8.
Birkh\"{a}user, Boston, 1993.

\bibitem{DeGior}
De Giorgi, E. {\em Sulla convergenza di alcune successioni
d'integrali del tipo dell'area}, Rend. Mat. (6) {\bf 8} (1975),
277-294.

\bibitem{Diestel}
Diestel, J. {\em Geometry of Banach Spaces - Selected Topics},
Lecture Notes in Math. 485, Springer (1975).

\bibitem{NG} N.~Ghoussoub, {\em Self-dual Partial Differential Systems and Their Variational
Principles}, Springer Monographs in Mathematics.

\bibitem{Gho} N.~Ghoussoub, {\em Antisymmetric Hamiltonians:
variational resolutions for Navier-Stokes and other nonlinear
evolutions},  Comm. Pure Appl. Math.  {\bf 60}  (2007), 619--653.

\bibitem{GosM}
N.~Ghoussoub and  A.~Moameni, {\em Anti-symmetric Hamiltonians (II):
Variational resolutions for Navier-Stokes and other nonlinear
evolutions}, preprint: arXiv:math/0702339v1.

\bibitem{GosM1}
N.~Ghoussoub, A.~Moameni, {\em Selfdual variational principles for
periodic solutions of Hamiltonian and other dynamical systems},
Comm. Partial Differential Equations  {\bf 32}  (2007), 771--795.

\bibitem{GosTz}
N.~Ghoussoub and  L.~Tzou, {\em A variational principle for gradient
flows}, Math. Ann. {\bf 330} (2004), 519--549.


\bibitem{Lad}
Lady\v{z}enskaja, O.A.; Solonnikov, V.A.; Ural'ceva, N.N. {\em
Linear and Quasi-linear Equations of Parabolic Type}, Translations
of Mathematical Monographs, Vol. 23, 1968.

\bibitem{Kruz}
Kruzkov, S.N. {\em First order quasilinear equations in several
independent variables}, Math USSR-Sbornik, {\bf 10}:217-243, 1970.






\bibitem{P4} A.~Poliakovsky, {\em On a
variational approach to the Method of Vanishing Viscosity for
Conservation Laws}, Advances in Mathematical Sciences and
Applications, {\bf 18} (2008), no. 2., 429--451.

\bibitem{PI} A.~Poliakovsky, {\em Variational resolution for some general classes
of nonlinear evolutions I}, preprint: http://arxiv.org/abs/1112.2304

\bibitem{RS1} T. Rivi\`{e}re and S. Serfaty, {\em Limiting domain wall energy for a problem
related to micromagnetics}, Comm. Pure Appl. Math., 54 No 3 (2001),
294-338.

\bibitem{RS2} T. Rivi\`{e}re and S. Serfaty, {\em Compactness, kinetic formulation and entropies for a
problem related to mocromagnetics}, Comm. Partial Differential
Equations 28 (2003), no. 1-2, 249--269.

\bibitem{Tm}
Temam, R. {\em Navier-Stokes Equations}, North Holland, (1977).


\end{thebibliography}

\begin{thebibliography}{66}
\bibitem{adm} L.~Ambrosio, C.~De Lellis and C.~ Mantegazza, {\em  Line
energies for gradient vector fields in the plane}, Calc. Var. PDE
{\bf 9 }(1999), 327--355.
\bibitem{amb} L.~Ambrosio, N.~Fusco and D.~Pallara, Functions of
Bounded Variation and Free Discontinuity Problems, Oxford
Mathematical Monographs. Oxford University Press, New York, 2000.
\bibitem{ag1} P.~Aviles and  Y.~Giga, {\em A mathematical problem related to the physical theory of liquid
 crystal configurations}, Proc. Centre Math. Anal. Austral. Nat. Univ. {\bf 12} (1987), 1--16.
\bibitem{ag2} P.~Aviles and  Y.~Giga, {\em On lower semicontinuity of a
defect energy obtained by a singular limit of the
 Ginzburg-Landau type energy for gradient
 fields}, Proc. Roy. Soc. Edinburgh Sect. A  {\bf 129} (1999), 1--17.
\bibitem{conti} S.~Conti and C.~De Lellis, {\em Sharp upper bounds for a variational problem
with singular perturbation}, preprint.
\bibitem{dav} J. D\'{a}vila and R. Ignat, Lifting of $BV$ functions
with values in $S^1$, C. R. Acad. Sci. Paris, Ser. I 337 (2003)
159-164.
\bibitem{dl} C.~De Lellis, {\em An example in the gradient theory of phase
transitions} ESAIM Control Optim. Calc. Var. {\bf 7} (2002), 285--289
(electronic).
\bibitem{otto} A.~DeSimone,  S.~M\"uller, R.V.~Kohn and F.~Otto, {\em A compactness result
in the gradient theory of phase transitions}, Proc. Roy. Soc.
Edinburgh Sect. A  {\bf 131} (2001), 833--844.
\bibitem{erc} N.M.~Ercolani, R.~Indik, A.C.~Newell and T.~Passot, {\em The
      geometry of the phase diffusion equation}, J. Nonlinear Sci. {\bf
        10}  (2000),  223--274.
\bibitem{evansbook} L.C.~Evans, Partial Differential Equations, Graduate
 Studies in Mathematics, Vol.~{\bf 19}, American Mathematical Society, 1998.
\bibitem{evans}  L.C.~Evans and   R.F.~Gariepy,  Measure Theory and Fine
Properties of Functions, Studies in Advanced Mathematics, CRC
Press, Boca Raton, FL, 1992.
\bibitem{gt} D.~Gilbarg and N.~Trudinger, Elliptic Partial Differential
 Equations of Elliptic Type, 2nd ed., Springer-Verlag,
  Berlin-Heidelberg, 1983.
\bibitem{giusti} E.~Giusti, Minimal Surfaces and Functions of Bounded
  Variation, Monographs in Mathematics, {\bf 80}, Birkh{\"a}user Verlag,
  Basel, 1984.
\bibitem{jin} W.~Jin and R.V.~Kohn, {\em Singular perturbation and
the energy of folds}, J. Nonlinear Sci. {\bf 10} (2000), 355--390.
\bibitem{polcras} A.~Poliakovsky, {\em A method for establishing upper bounds for  singular
perturbation problems}, to appear in C. R. Math. Acad. Sci. Paris.
\bibitem{pol} A.~Poliakovsky, {\em Upper bounds for  singular perturbation problems involving gradient
fields}, to appear in J.~Eur.~Math.~Soc..
\bibitem{pol1} A.~Poliakovsky,...??????????
\bibitem{RS1} T. Rivi\`{e}re and S. Serfaty, {\em Limiting domain wall energy for a problem
related to micromagnetics}, Comm. Pure Appl. Math., 54 No 3
(2001), 294-338.

\bibitem{PI} A.~Poliakovsky, {\em Variational resolution for some general classes
of nonlinear evolutions I}, preprint: http://arxiv.org/abs/1112.2304


\bibitem{RS2} T. Rivi\`{e}re and S. Serfaty, {\em Compactness, kinetic formulation and entropies for a
problem related to mocromagnetics}, Comm. in PDE. ????????
\bibitem{vol}  A.I.~Volpert and S.I.~Hudjaev, Analysis in Classes of Discontinuous Functions and
Equations of Mathematical Physics,  Martinus Nijhoff Publishers,
Dordrecht, 1985.
\end{thebibliography}
\end{document}